\DeclareMathOperator{\End}{End} 
\DeclareMathOperator{\Ext}{Ext}
\DeclareMathOperator{\ad}{ad}
\DeclareMathOperator{\Hom}{Hom}
\DeclareMathOperator{\Sym}{Sym}
\DeclareMathOperator{\Supp}{Supp}
\DeclareMathOperator{\Soc}{Soc}
\DeclareMathOperator{\Spec}{Spec} 
\definecolor{lblue}{rgb}{0.3,0.0,4.4}
\definecolor{lred}{rgb}{4.3,0.0,0.4}
\newcommand{\Qcoh}[1]{\mathcal{M}(#1)}
\newcommand{\Coh}[1]{\mathcal{M}_{\mathrm{Coh}}(#1)}
\newcommand{\Hol}[1]{\mathcal{M}_{\mathrm{Hol}}(#1)}
\newcommand{\Lmod}[1]{#1\text{-}{\mathrm{mod}}}
\newcommand{\Pa}{W'}
\newcommand{\N}{\mathbb{N}} 
\newcommand{\Ind}{\mathrm{Ind}}
\newcommand{\bc}{\mathbf{c}}
\renewcommand{\o}{\otimes}
\newcommand{\iso}{\stackrel{\sim}{\rightarrow}} 
\renewcommand{\H}{\mathsf{H}}
\newcommand{\Tr}{\mathrm{Tr}}
\newcommand{\Z}{\mathbb{Z}} 
\newcommand{\Ker}{\mbox{Ker }}
\newcommand{\bpi}{\boldsymbol{\pi}}
\newcommand{\h}{\mathfrak{h}}
\newcommand{\op}{\mathrm{op}}
\newcommand{\DR}{\mathrm{DR}}
\newcommand{\C}{\mathbb{C}}
\newcommand{\Ham}{\mathbb{H}}
\newcommand{\dd}{\mathscr{D}}
\newcommand{\gr}{\mathrm{gr}}
\newcommand{\mc}{\mathcal}
\newcommand{\rk}{\mathrm{rk}}
\newcommand{\ms}{\mathscr}
\newcommand{\idot}{\bullet}
\newcommand{\ds}{\dots}
\newcommand{\Irr}{\mathrm{Irr}\, }
\newcommand{\reg}{\mathrm{reg}}
\renewcommand{\SS}{\mathrm{SS}}
\newcommand{\Stab}{\mathrm{Stab}}
\newcommand{\uN}{\underline{N}}
\newcommand{\Cs}{\mathbb{C}^{\times}}
\newcommand{\mf}{\mathfrak}
\newcommand{\eu}{\mathsf{eu}}
\newcommand{\git}{\ensuremath{/\!\!/\!}}
\newcommand{\D}{\mathbb{D}}
\newcommand{\sH}{\mc{H}}
\newcommand{\F}{\mc{F}}
\newcommand{\K}{\mathbb{K}}
\newcommand{\kk}{\ensuremath{\Bbbk}}
\newcommand{\mr}{\mathrm}
\newcommand{\St}{\mathscr{X}}
\newcommand{\an}{\mathrm{an}}
\newtheorem{thm}{Theorem}[section]
\newtheorem{lem}[thm]{Lemma}
\newtheorem{defn}[thm]{Definition}
\newtheorem{prop}[thm]{Proposition}
\newtheorem{cor}[thm]{Corollary}
\newtheorem{assume}[thm]{Assumption}
\newtheorem{remark}[thm]{Remark}
\newtheorem{conjecture}[thm]{Conjecture}
\newtheorem{example}[thm]{Example}
\newtheorem{question}[thm]{Question}
\newtheorem{problem}[thm]{Problem}
\newtheorem{condition}[thm]{Condition}
\definecolor{lightblue}{rgb}{0.8,0.8,0.9}
\definecolor{lightred}{rgb}{0.9,0.8,0.8}
\numberwithin{equation}{section}
\begin{document}

	\title[Pull-back and push-forward for holonomic modules]{Pull-back and push-forward functors for holonomic modules over Cherednik algebras} 
	
	\author{Gwyn Bellamy}
	
	\address{School of Mathematics and Statistics, University Place, University of Glasgow, Glasgow,  G12 8QQ, UK.}
	\email{gwyn.bellamy@glasgow.ac.uk}
	
\author{Pavel Etingof}

	\address{Department of Mathematics, Massachusetts Institute of Technology, 77 Massachusetts Avenue, Cambridge, MA 02139, USA.}
\email{etingof@math.mit.edu}

\author{Daniel Thompson}
	
	\address{Department of Mathematics, 120 E Cameron Avenue, CB \#3250, 329 Phillips Hall, Chapel Hill, NC 27599, USA.}

	\begin{abstract}
		In this article we continue the study of holonomic modules over sheaves of Cherednik algebras, initiated by the third author in \cite{ThompsonHolI}. Under mild assumptions on the parameters, we first develop a theory of $b$-functions to prove that push-forward along open embeddings preserves holonomicity. This implies that pull-back along closed embeddings also preserves holonomicity. We use these facts to show that both push-forward and pull-back under a large class of morphisms, which we call ``melys'', preserve holonomicity. Since duality preserves holonomicity, we deduce that extraordinary push-forward and extraordinary pull-back also exist for holonomic modules. 
		
		As a consequence, we give a general classification of irreducible holonomic modules similar to the classification of irreducible holonomic $\dd$-modules as minimal extensions of integrable connections on locally closed subsets. Finally, we prove that Ext-groups between holonomic modules are finite-dimensional and explore applications of our work to the classification of aspherical parameters and existence of finite-dimensional modules for sheaves of Cherednik algebras.   
	\end{abstract}
	
	\maketitle
	
{\small
	\tableofcontents
}

\section{Introduction}\label{sec:intro} 

Associated to a finite group $W$ acting on a smooth variety $X$, defined over an algebraically closed field $\kk$ of characteristic zero, is a family of sheaves $\sH_{\omega,\bc}(X,W)$ of algebras, called Cherednik algebras. Here, $\omega$ is a cohomology class of differential forms defining a sheaf of twisted differential operators on $X$ and $\bc$ is a function on the set of $W$-orbits of reflections on $X$; see \Cref{sec:melysintro} below. First introduced in \cite{EG} in the case where $X = \h$ is a linear representation, they were defined and studied in full generality by the second author in \cite{ChereSheaf}. In \cite{BernsteinLosev}, Losev introduced the notion of holonomic modules for Cherednik algebras, generalizing the usual definition of holonomic $\dd$-modules. Examples of holonomic modules include modules in category $\mc{O}$ when $X = \h$, or modules coherent over $\mc{O}_X$ for general $X$. Holonomic modules were studied by the third author in \cite{ThompsonHolI}, where results such as Kashiwara's Theorem for modules over Cherednik algebras were established.

\subsection{Melys morphisms}\label{sec:melysintro}

As for $\dd$-modules, one would like to use morphisms between varieties in order to construct and study holonomic modules. Unfortunately, it is clear that one cannot expect pull-back or push-forward of modules for Cherednik algebras to exist for arbitrary $W$-equivariant morphisms; see \Cref{rem:nopullbackmelys}. In order to give a sufficient condition for existence, we first recall how the algebras $\sH_{\omega,\bc}(X,W)$ are parametrized. We begin by choosing a twist $\omega \in \mathbb{H}^2(X,\Omega_X^{1,2})^W$, just as in the usual construction of the ring of twisted differential operators. Here $\Omega_X^{1,2}$ is a truncation of the de Rham complex, described in \Cref{sec:truncatedeRham}. In the introduction, \textit{we assume $\omega$ is Zariski locally trivial.} A pair $(w,Z)$ is said to be a \textit{reflection} if $w \in W$ and $Z$ is a connected component of the fixed point set $X^w$ having codimension one in $X$. The group $W$ acts on the set $\mc{S}(X)$ of all such pairs and we choose $\bc \colon \mc{S}(X) \to \kk$, a function constant on $W$-orbits. 

\newtheorem*{defn:melys}{Definition~\ref{defn:melys}}
\begin{defn:melys}
	A ($W$-equivariant) morphism $\varphi \colon Y \to X$ is said to be $\bc$-melys\footnote{``Melys'' is the Welsh word for ``sweet''.} if 
\[
\varphi^{-1}(Z) \subset Y^w
\]
for all $(w,Z) \in \mc{S}(X)$ with $\bc(w,Z) \neq 0$.
\end{defn:melys}

We note that any locally closed embedding $Y \hookrightarrow X$ is melys. Also, if $\bc = 0$ then every equivariant morphism is melys. The following result then forms the foundation of the article. 

\newtheorem*{thm:melyspullbackgeneral}{\Cref{thm:melyspullbackgeneral}}
\begin{thm:melyspullbackgeneral}
		If $\varphi \colon Y \to X$ is a $\bc$-melys morphism then $\varphi^* \sH_{\omega,\bc}(X,W)$ is a $\sH_{\varphi^* \omega,\varphi^* \bc}(Y,W)$-$\varphi^{-1} \sH_{\omega,\bc}(X,W)$-bimodule. 
\end{thm:melyspullbackgeneral}

As a consequence we can define the pull-back $\varphi^0 \ms{M}$ and push-forward $\varphi_0 \ms{N}$ of $\sH$-modules for any $\bc$-melys morphism, together with their derived extensions $\varphi^!$ and $\varphi_+$.  
 
\subsection{Open embeddings}

The aim of this work is to prove that the functors $\varphi^!$ and $\varphi_+$ preserve complexes with holonomic cohomology. This must be done in stages, the most important of which is to consider the case where $\varphi$ is an open embedding. Before that is possible, two key results need establishing. Currently, these results can only be established under a certain (weak) genericity condition (Condition~\ref{cond:good2}) on the function $\bc$. This condition roughly says that, for every $x \in X$, if $W^{\circ}$ is the subgroup of the stabilizer $W_x \subset \mathrm{GL}(T_x X)$ generated by all pseudo-reflections then $\bc$ is spherical on every irreducible factor of $W^{\circ}$ isomorphic to $G(m,p,n)$ and is regular on the other irreducible factors. 

If $X$ is affine and $f \in \mc{O}(X)^W$ non-zero then $D(f) = (f \neq 0)$ is a $W$-stable open set. 

\newtheorem*{thm:holonomicextension}{\Cref{thm:holonomicextension}}
\begin{thm:holonomicextension}
	Assume Condition~\ref{cond:good2} holds. If $\ms{M}$ is an irreducible holonomic $\sH_{\omega,\bc}(D(f),W)$-module then, for each $x \in X$, there exists a $W$-stable affine open neighbourhood $V$ of $x$ in $X$ and an irreducible holonomic $\sH_{\omega,\bc}(V,W)$-module $\ms{N}$ with $\ms{N} |_{D(f) \cap V} \cong \ms{M} |_{D(f) \cap V}$. 
\end{thm:holonomicextension}

When the conclusion of Theorem~\ref{thm:holonomicextension} holds, we say that the extension property holds for holonomic modules. The second key result is that holonomic modules should have finite length. We show:

\newtheorem*{prop:finLength}{\Cref{prop:finLength}}
\begin{prop:finLength}
If Condition~\ref{cond:good2} holds then holonomic $\sH_{\omega,\bc}(X,W)$-modules have finite length, as objects in the category of all quasi-coherent $\sH_{\omega,\bc}(X,W)$-modules. 
\end{prop:finLength}

We expect that both Theorem~\ref{thm:holonomicextension} and Proposition~\ref{prop:finLength} hold \textit{without any conditions} on the function $\bc$. Indeed, in the original version \cite{BETv1} of this article, we used Theorem~1.1 and Theorem~1.2 of \cite{BernsteinLosev} to prove Theorem~\ref{thm:holonomicextension} and Proposition~\ref{prop:finLength} without any conditions on the function $\bc$. However, I. Losev recently announced \cite{BernsteinLosevV4} that the proof of Theorem~1.1 of \cite{BernsteinLosev} contains a gap. In the case of $\dd$-modules, these results can be established using standard ring-theoretic results since, for a smooth affine variety $X$, the holonomic modules are precisely those of minimal Gelfand-Kirillov dimension (equal to $\dim X$). But, for Cherednik algebras, the Gelfand-Kirillov dimension of a holonomic module can take any value $0 \le d \le \dim X$ if the parameter $\bc$ is chosen appropriately so these ring-theoretic results are not sufficient.

\textit{For the remainder of the introduction,} we assume that the conclusion of both Theorem~\ref{thm:holonomicextension} and Proposition~\ref{prop:finLength} hold.

\newtheorem*{thm:jpushforwardholo}{\Cref{thm:jpushforwardholo}}
\begin{thm:jpushforwardholo}
	Let $j \colon U \hookrightarrow X$ be an open embedding. If $\ms{M}$ is a holonomic $\sH_{\omega,\bc}(U,W)$-module then the complex $j_+ \ms{M}$ has holonomic cohomology. 
\end{thm:jpushforwardholo}

We introduce a theory of $b$-functions for Cherednik algebras in order to prove the above theorem. Assume that $X$ is affine and $U = D(f)$ for some $W$-invariant function $f$. Let $s$ be a formal variable and $\sH_{\omega,\bc}(U,W)[s]$ the polynomial ring with coefficients in $\sH_{\omega,\bc}(U,W)$. Just as for $\dd$-modules, one can introduce, for each $\sH_{\omega,\bc}(U,W)$-module $\ms{M}$, the $\sH_{\omega,\bc}(U,W)[s]$-module $\ms{M} f^s$. Then the analogue of Bernstein's Theorem on the existence of a $b$-function is the following statement, which is a consequence of \Cref{prop:Msholnomic}. 

\begin{thm}\label{thm:melyspullbackgeneralholo}
	The $\sH_{\omega,\bc}(X,W)_{\kk(s)}$-module $j_0 (\ms{M} f^s)_{\kk(s)}$ is holonomic and for any $m \in \ms{M}$ there exists $0 \neq b(s) \in \kk[s]$ and an operator $D \in \sH_{\omega,\bc}(X,W)[s]$ such that 
	$$
	D(f m \otimes f^{s}) = b(s) m \otimes f^s.
	$$
\end{thm}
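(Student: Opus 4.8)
The plan is to imitate the classical proof of Bernstein's theorem on the existence of $b$-functions for $\dd$-modules, working over the field $\C(s)$ and using holonomicity to force the descending chain of submodules generated by the shifts $f^{-k}m\otimes f^s$ to stabilize. First I would pass to the localized algebra: since $U=(f\neq 0)$ with $f$ a $W$-invariant function, multiplication by $f$ acts invertibly on $\ms{M}$ and on $\ms{M}f^s$, and the module $\ms{M}f^s$ is a module over $\sH_{\omega,\bc}(U,W)[s]$ with the twisted action of vector fields $\xi\cdot(n\otimes f^s)=(\xi n)\otimes f^s+s\,\tfrac{\xi(f)}{f}\,n\otimes f^s$. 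Applying $\git\otimes_{\C[s]}\C(s)$ everywhere, one gets $j_0(\ms{M}f^s)_{\C(s)}$ as a module over $\sH_{\omega,\bc}(X,W)_{\C(s)} := \sH_{\omega,\bc}(X,W)[s]\otimes_{\C[s]}\C(s)$, which is a flat base change of a finitely generated algebra over a field.

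Second, I would establish holonomicity of $j_0(\ms{M}f^s)_{\C(s)}$. The key point is a good-filtration estimate on the characteristic variety: one equips $\ms{M}f^s$ with a good filtration induced from a good filtration on $\ms{M}$ (which exists since $\ms{M}$ is holonomic, hence coherent) together with the filtration on $\C(s)$, and checks that the associated graded is supported, inside $T^*X\times\Spec\C(s)$, on the pullback of the characteristic variety of $\ms{M}$ — the $s$-dependent correction terms $s\,\xi(f)/f$ lie in degree zero and do not enlarge the support over the generic point $\Spec\C(s)$. Since $f$ is invertible on $U$ this support has dimension $\dim U=\dim X$ over $\C(s)$, i.e. $j_0(\ms{M}f^s)_{\C(s)}$ is holonomic over $\sH_{\omega,\bc}(X,W)_{\C(s)}$. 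Here I expect to lean on the reduction of Proposition~\ref{prop:goodembedding}, embedding $X$ $W$-equivariantly and closed into some $\h$, so that the dimension bounds for Cherednik algebras reduce to the known linear (rational Cherednik algebra) case; one must be mildly careful that all the relevant results of \cite{ThompsonHolI} on characteristic varieties and holonomicity go through verbatim after the base change to $\C(s)$, but since $\C(s)$ is a field this is formal.

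Third, with holonomicity in hand I would run the length argument. Consider the $\sH_{\omega,\bc}(X,W)_{\C(s)}$-submodules $N_k$ generated by $f^{-k}(m\otimes f^s)$, $k\ge 0$; these form an increasing chain inside $j_0(\ms{M}f^s)_{\C(s)}$. Because $\sH_{\omega,\bc}(X,W)_{\C(s)}$-holonomic modules have finite length (Proposition~\ref{prop:holofiniteintro} after base change, again via the closed embedding into $\h$), the chain stabilizes: $N_k=N_{k+1}$ for some $k$. Hence $f^{-k}(m\otimes f^s)\in N_{k+1}$, which says precisely that there exists $\widetilde D\in\sH_{\omega,\bc}(X,W)_{\C(s)}$ with $\widetilde D\bigl(f^{-(k+1)}(m\otimes f^s)\bigr)=f^{-k}(m\otimes f^s)$; equivalently $\widetilde D\bigl(m\otimes f^s\bigr)=f\cdot(m\otimes f^s)$ after shifting by $f^{k+1}$ and using $W$-invariance of $f$ to commute it past $\widetilde D$ up to the $s$-shift. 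Clearing denominators in $s$ — multiplying $\widetilde D$ by a suitable $0\neq b(s)\in\C[s]$ to land back in $\sH_{\omega,\bc}(X,W)[s]$ — and unwinding the identification $j_0(\ms{M}f^s)=\ms{M}f^s$ over $U$ yields an operator $D\in\sH_{\omega,\bc}(X,W)[s]$ with $D(fm\otimes f^s)=b(s)\,m\otimes f^s$, as desired.

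The main obstacle is the second step: proving holonomicity of $j_0(\ms{M}f^s)_{\C(s)}$ over the new algebra $\sH_{\omega,\bc}(X,W)_{\C(s)}$. One has to set up the filtration on $\ms{M}f^s$ carefully so that the $\C(s)$-degree-zero twist terms do not contribute to the symbol, control the characteristic variety of a non-quasi-coherent-over-$\mc O_X$ module like $\ms M f^s$ (it is only coherent after localizing $f$), and verify that Losev's dimension bound still applies over the generic point — all of which is routine for $\dd$-modules but must be transported through the Cherednik-algebra formalism and the closed-embedding reduction $X\hookrightarrow\h$. Once holonomicity and finite length are available over $\C(s)$, the remainder of the argument is the standard Bernstein chain-stabilization plus denominator clearing.
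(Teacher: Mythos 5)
There is a genuine gap at your second step. You propose to prove that $j_0(\ms{M}f^s)_{\C(s)}$ is holonomic over $\sH_{\omega,\bc}(X,W)_{\C(s)}$ by a direct characteristic-variety estimate: equip $\ms{M}f^s$ with a good filtration and argue that the associated graded lives over the pullback of $\SS(\ms{M})$. But the good filtration you construct is a filtration of a $\sH_{\omega,\bc}(U,W)[s]$-module; it gives information about the characteristic variety inside $T^*U$, not $T^*X$. To speak of the characteristic variety of $j_0(\ms{M}f^s)_{\C(s)}$ over $X$, you first need to know that this push-forward is \emph{coherent} over $\sH_{\omega,\bc}(X,W)_{\C(s)}$ — and that is not a priori clear: as an $\mc{O}_X$-module, $j_*(\ms{M}f^s)$ is only quasi-coherent, and the whole point of Theorem~\ref{thm:preshol} (the result this proposition is feeding) is that push-forward along open embeddings preserves coherence and holonomicity. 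Concluding holonomicity of the push-forward from the support over $U$ begs exactly the question at issue, because the characteristic variety could a priori blow up along $X\setminus U$, or the module could fail to be finitely generated altogether. And since your third step (chain stabilization) requires finite length of $j_0(\ms{M}f^s)_{\C(s)}$, the gap propagates: you never get the stabilization off the ground.

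The paper closes this gap in a different way. It first proves that each irreducible holonomic $\sH_{\omega,\bc}(U,W)_{\K}$-module admits an irreducible holonomic extension over $X$: Lemma~\ref{lem:simpleextlem} and Theorem~\ref{thm:simpleholoextRCA} achieve this over $\overline{\K}$ using the Gabber--Kashiwara--Saito filtration (Theorem~\ref{thm:Gabberfiltration}), the finite bimodule length of $\sH$ over itself (Corollary~\ref{cor:Hhasfinitelength}), and Losev's Bernstein-inequality bounds, and Proposition~\ref{prop:holosextf} descends the extension from $\overline{\K}$ to $\K$. With that extension result in hand, the proof of Proposition~\ref{prop:Msholnomic} proceeds by induction on the (finite) length of $M_\K f^s$ over $\sH_{\omega,\bc}(U,W)_\K$: for each composition factor one extends it irreducibly and holonomically, shows the push-forward is in fact equal to this extension (no extra submodules supported on $X\setminus U$), and extracts the $b$-function equation along the way. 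So the essential missing idea in your sketch is the coherent/holonomic extension theorem built on the GKS filtration — without it, you have no control over $j_0$ and no finite length to feed into the stabilization argument.
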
 

From this, we deduce Theorem~\ref{thm:jpushforwardholo} above. Taking $\omega = 0$ and $\ms{M} = \mc{O}$, we recover the classical $b$-function equation (but now for Cherednik algebras). 

\begin{cor}
	For any $W$-invariant function $f$ there exists $0 \neq b(s) \in \kk[s]$ and an operator $D \in \H_{\bc}(X,W)[s]$ such that 
	$$ 
	D(f^{s+1}) = b(s) f^s. 
	$$
\end{cor}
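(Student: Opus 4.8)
The plan is to deduce the corollary as the special case $\omega = 0$, $\ms{M} = \mc{O}$ of the preceding theorem on $b$-functions, evaluated at the generator $m = 1$; no fresh Bernstein-type argument is needed. Concretely, I would first set $\omega = 0$, so that $\sH_{0,\bc}(X,W)$ is the untwisted sheaf of Cherednik algebras, written $\H_{\bc}(X,W)$ as in the statement, which acts on $\mc{O}_X$ through its Dunkl (polynomial) representation. Since $X$ is affine and $f$ is $W$-invariant, $U = (f \neq 0)$ is $W$-stable, $\mc{O}_U = \mc{O}_X[f^{-1}]$ is an $\H_{\bc}(U,W)$-module, and it is holonomic because it is coherent over $\mc{O}_U$; thus $\ms{M} = \mc{O}_U$ is a legitimate input to the theorem.

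Next I would unwind the module $\ms{M}f^s$: by definition $\mc{O}_U f^s$ is free of rank one over $\mc{O}_U[s]$ on the symbol $f^s$, with $\mc{O}_U$ acting by multiplication and the Dunkl operators acting by the usual twisted-by-$f^s$ rule. Because $f$ is $W$-invariant, the reflection/difference contributions to this action agree with those on $\mc{O}_U$ itself, so there is nothing extra to verify. Pushing forward along $j \colon U \hookrightarrow X$ gives $j_0(\mc{O}_U f^s) = \mc{O}_X[f^{-1}][s]\cdot f^s$, inside which the elements $m \otimes f^s$ and $fm\otimes f^s$ for $m=1$ are simply $f^s$ and $f^{s+1}$.

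Applying the theorem with this $\ms{M}$ and $m = 1$ then yields $0 \neq b(s) \in \C[s]$ and an operator $D \in \sH_{0,\bc}(X,W)[s] = \H_{\bc}(X,W)[s]$ with $D(fm \otimes f^s) = b(s)\, m\otimes f^s$, which under the identifications above reads exactly $D(f^{s+1}) = b(s) f^s$; this is an honest identity in $\mc{O}_X[f^{-1}][s]f^s$, which is $\C[s]$-free and hence embeds into its $\C(s)$-form, so the equality holds on the nose rather than merely up to $s$-torsion. I expect no genuine obstacle here: all the substance, namely existence of the $b$-function, is already contained in the preceding theorem, and what remains is only the bookkeeping of checking that $\omega = 0$ recovers the untwisted algebra acting on $\mc{O}$ and that the push-forward generator $1 \otimes f^s$ is identified with $f^s$, so that the two displayed relations coincide.
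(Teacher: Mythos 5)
Your proposal is correct and is essentially the paper's own derivation: the corollary is obtained by specializing Proposition~\ref{prop:Msholnomic} to $\omega=0$, $\ms{M}=\mc{O}$ (holonomic over $\H_{\bc}(U,W)$ since it is coherent over $\mc{O}_U$), and $m=1$, under which $fm\otimes f^s$ and $m\otimes f^s$ become $f^{s+1}$ and $f^s$. The extra remarks about $\C[s]$-freeness and $W$-invariance of $f$ are sound but not needed beyond what the proposition already provides.
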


If $i \colon Y \to X$ is a closed embedding then it is straightforward to show that $i_+$ preserves holonomicity (Corollary~\ref{cor:holoclosedpushforward}). It is then a consequence of Theorem~\ref{thm:jpushforwardholo}, applied to the usual open-closed diagram, that $i^!$ also preserves holonomicity; see Corollary~\ref{cor:pullbackclosedholo}.

\subsection{Preservation of holonomicity}

Once the case of an open (or closed) embedding is understood, there is a standard argument for $\dd$-modules that reduces the case of a general morphism to the composition of a closed embedding and a projection. Moreover, one may reduce to the situation where all spaces involved are affine $n$-spaces $\mathbb{A}^n$, for various $n$. In the key example of the projection $p \colon \mathbb{A}^n \times \mathbb{A}^m \to \mathbb{A}^n$, the functor $p_+$ is shown to preserve holonomicity by Bernstein's trick, using the Fourier transform on $\mathbb{A}^n \times \mathbb{A}^m$ to relate the cohomology of $p_+$ to that of $i^!$. In the case of Cherednik algebras, our approach is broadly the same. However, it becomes far more involved because an arbitrary $\bc$-melys morphism cannot be factored as a melys closed embedding followed by a melys projection. 

Working backwards, we first show that if $\mf{k}, \h$ are linear representations of $W$ and $\varphi \colon \mf{k} \to \h$ is $\bc$-melys, then $\varphi^!$ and $\varphi_+$ preserve holonomicity; Theorem~\ref{thm:linearpreservholonomic}. This is done by showing that there is a factorization 
\begin{equation}\label{eq:factor1}
    \mf{k} \hookrightarrow \h \times \mf{k}^W \stackrel{\varphi(\mathbf{r})}{\longrightarrow} \h \times \mf{k}^W \stackrel{p}{\twoheadrightarrow} \h
\end{equation}
into melys morphisms, where the first is a closed embedding, the last is the projection and the finite morphism $\varphi(\mathbf{r})$ is a composition of simple morphisms of the form $x \mapsto x^{r_i}$ on each $1$-dimensional irreducible factor $\h_i$ of $\h$. One can show directly that $\varphi(\mathbf{r})^!$ and $\varphi(\mathbf{r})_+$ preserve holonomicity. As was already shown in the paper \cite{ThompsonHolI}, Bernstein's trick to show that $p_+$ and $p^!$ preserve holonomicity works equally well for Cherednik algebras.  

Finally, we use \'etale covers to reduce to a local situation where $y \in Y^W$ and $x = \varphi(y) \in X^W$. We then show in Proposition~\ref{prop:melysliftstolinear} that there exists a commutative diagram
\[
	\begin{tikzcd}
	\mf{k} \ar[rr,"\Phi"] & & \h \\
	Y' \arrow[u,hook] \ar[r,"\phi"'] 	& Y  \ar[r,"\varphi"']   & X \ar[u,hook]
	\end{tikzcd} 
\]
of melys morphisms, where the vertical ones are (melys) closed embeddings, $\phi$ is \'etale and $\mf{k},\mf{h}$ are vector spaces on which $W$ acts linearly. This allows us to conclude (\Cref{thm:preservholonomicback} and \Cref{thm:preservholonomicforward}):

\begin{thm}\label{thm:mainholonomicintro}
	Let $\varphi \colon Y \to X$ be a $\bc$-melys morphism. Then $\varphi^!$ and $\varphi_+$ restrict to functors between the derived categories $D^b_{\mr{Hol}}(\sH_{\varphi^* \omega,\varphi^* \bc}(Y,W))$ and $D^b_{\mr{Hol}}(\sH_{\omega,\bc}(X,W))$. 
\end{thm}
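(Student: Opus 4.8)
The plan is to bootstrap from the three special cases already established — open embeddings (Theorem~\ref{thm:mainextensionthm}), closed embeddings (Corollaries~\ref{cor:holoclosedpushforward} and~\ref{cor:pullbackclosedholo}), and $\bc$-melys morphisms between linear representations (Theorem~\ref{thm:linearpreservholonomic}) — using the linearization square of Proposition~\ref{prop:melysliftstolinear} to cut an arbitrary melys $\varphi$ down to the linear case. That $\varphi^!\ms{M}$ and $\varphi_+\ms{M}$ have bounded cohomology, coherent over $\sH$, is part of the general construction of these functors, so the only substance is holonomicity of the cohomology sheaves, and this can be tested locally.

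First I would reduce to a convenient local model. Holonomicity of a complex of $\sH$-modules is local on the base and is both preserved and reflected under pull-back along an \'etale cover; moreover $\varphi_+$ commutes with restriction to open subsets of $X$, while $\varphi^!$ commutes with restriction to open subsets of $Y$, and both are compatible with the relevant \'etale base changes. Combining these compatibilities with the $W$-equivariant \'etale-local structure theory underlying Proposition~\ref{prop:melysliftstolinear}, and with Kashiwara's theorem for $\sH$-modules (used to descend from a point-stabilizer $W_y\le W$ back to $W$), the statement reduces to the situation in which there are marked fixed points $y\in Y^W$ and $x=\varphi(y)\in X^W$. Proposition~\ref{prop:melysliftstolinear} then supplies, after further shrinking, a commutative square of melys morphisms with $\Phi\colon\mf{k}\to\h$ linear, closed embeddings $a\colon Y'\hookrightarrow\mf{k}$ and $b\colon X\hookrightarrow\h$, and $\phi\colon Y'\to Y$ \'etale; replacing $Y'$ by a disjoint union of such squares over a cover of $Y$, I may assume $\phi$ is an \'etale cover.

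For $\varphi^!$ this is now essentially formal. Since $\phi$ is \'etale, $\varphi^!\ms{N}$ is holonomic if and only if $\phi^!\varphi^!\ms{N}=(\varphi\phi)^!\ms{N}$ is, and, using Kashiwara's isomorphism $b^!b_+\simeq\id$ together with commutativity of the square,
\[
(\varphi\phi)^!\ms{N}\;=\;(\varphi\phi)^!\,b^!\,b_+\ms{N}\;=\;(b\varphi\phi)^!\,b_+\ms{N}\;=\;(\Phi a)^!\,b_+\ms{N}\;=\;a^!\,\Phi^!\,b_+\ms{N}.
\]
Here $b_+\ms{N}$ is holonomic on $\h$ by Corollary~\ref{cor:holoclosedpushforward}, hence $\Phi^!b_+\ms{N}$ is holonomic on $\mf{k}$ by Theorem~\ref{thm:linearpreservholonomic}, hence $a^!\Phi^!b_+\ms{N}$ is holonomic on $Y'$ by Corollary~\ref{cor:pullbackclosedholo}; \'etale descent then gives that $\varphi^!\ms{N}$ is holonomic.

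The case of $\varphi_+$ runs along the same lines — the square yields $(\varphi\phi)_+=b^!\Phi_+a_+$ on $D^b(Y')$, and $b^!\Phi_+a_+\phi^0\ms{M}$ is holonomic on $X$ by the same chain of four invocations — but here lies the main obstacle: unlike $\varphi^!$, the functor $\varphi_+$ does not localize on the source, so one cannot simply replace $\ms{M}$ by $\phi^0\ms{M}$ on $Y'$. When $\phi$ can be chosen to be a \emph{finite} \'etale cover, $\ms{M}$ is a direct summand of $\phi_+\phi^0\ms{M}$ via the trace, so $\varphi_+\ms{M}$ is a direct summand of $(\varphi\phi)_+\phi^0\ms{M}=b^!\Phi_+a_+\phi^0\ms{M}$ and hence holonomic. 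In general I would instead argue by Noetherian induction on $\dim Y$: stratify $Y$ by stabilizer type into a dense $W$-stable open $j\colon Y_0\hookrightarrow Y$ (on which the preceding argument applies to $(\varphi|_{Y_0})_+$) and its closed complement $i\colon Z\hookrightarrow Y$ of strictly smaller dimension; applying $\varphi_+$ to the open--closed triangle attached to $j$ and $i$, and using $\varphi_+(j_+j^0\ms{M})=(\varphi|_{Y_0})_+(\ms{M}|_{Y_0})$ together with $\varphi_+(i_+i^!\ms{M})=(\varphi|_Z)_+(i^!\ms{M})$, the two outer terms are holonomic — the first by the generic case, the second by the inductive hypothesis — and hence so is $\varphi_+\ms{M}$. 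Finally, since duality is an involution preserving holonomicity, the analogous preservation statements for the extraordinary functors obtained from $\varphi_+$ and $\varphi^!$ by conjugation with $\D$ follow immediately.
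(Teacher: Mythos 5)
Your treatment of $\varphi^!$ is sound and is essentially the route the paper takes: reduce via the equivariant \'etale cover $W\times_{W_y} U_0\to X$ to a point with full isotropy, embed in a linear space by Proposition~\ref{prop:goodembedding} and Kashiwara's theorem, pass to the commutative square of Proposition~\ref{prop:melysliftstolinear}, and chain through Corollary~\ref{cor:holoclosedpushforward}, Theorem~\ref{thm:linearpreservholonomic}, Corollary~\ref{cor:pullbackclosedholo}, and Corollary~\ref{cor:holEtLoc}. (The aside that Kashiwara is what "descends from $W_y$ to $W$" is imprecise: that reduction comes from the \'etale--Morita equivalence of Proposition~\ref{prop:etalemelysiso}, while Kashiwara handles the embedding $U_0\hookrightarrow\h$. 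This doesn't affect the outcome.)

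For $\varphi_+$, however, the Noetherian induction you set up has a genuine gap. You stratify $Y$ by stabilizer type, take the dense open stratum $Y_0$ with open immersion $j$, and let $Z=Y\setminus Y_0$ with inclusion $i$. You then apply $\varphi_+$ to the triangle $i_+i^!\ms{M}^\idot\to\ms{M}^\idot\to j_+j^!\ms{M}^\idot$ and invoke the inductive hypothesis on $(\varphi|_Z)_+(i^!\ms{M}^\idot)$. The problem is that $Z$, being the union of closures of the smaller strata, is in general \emph{not smooth}, so Lemma~\ref{lem:localcohomology}(ii) and Kashiwara's theorem do not apply: you may not rewrite $R\Gamma_Z(\ms{M}^\idot)$ as $i_+i^!\ms{M}^\idot$, and you cannot identify $\varphi_+R\Gamma_Z(\ms{M}^\idot)$ with a pushforward along a melys morphism from a smooth variety of lower dimension. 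What the triangle of Lemma~\ref{lem:localcohomology}(i) actually gives you is a holonomic complex on $Y$ supported on $Z$, not a complex on $Z$; that does not feed into an induction on $\dim Y$. Compounding this, your claim that "the preceding argument applies to $(\varphi|_{Y_0})_+$" is not justified: the generic stratum of $Y$ is distinguished by the $W$-isotropy of points of $Y$, which has no bearing on the reflection hyperplanes of the \emph{target} $X$ that the $\bc$-melys condition tracks, nor on the finiteness of the \'etale covers produced by Proposition~\ref{prop:melysliftstolinear} (which are finite only locally, after shrinking). The paper instead runs its induction on the reflection rank $\rk_x W$ at points $x$ of the \emph{target}: the base case $\rk_x W=0$ lands in the complement of the reflection hypersurfaces of $X$, where one has genuine (twisted) $\dd$-modules, and the inductive step uses the factorization of $\varphi$ through $\h\setminus\h^W$ together with Proposition~\ref{prop:etalepushpullsummand} applied to locally finite \'etale covers and a Čech/spectral-sequence gluing. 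That target-side induction is the device that sidesteps the non-smoothness of stratum closures, and your argument needs either to adopt it or to replace your single open--closed step by a careful transfinite descent into the singular stratification of $Z$, which you have not supplied.
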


As noted in \cite{ThompsonHolI}, there is a duality on the derived category of complexes with coherent cohomology that preserves holonomicity (it does \textit{not} induce an exact duality on the abelian category of holonomic modules). Therefore, Theorem~\ref{thm:mainholonomicintro} allows us to define the adjoints $\varphi_!,\varphi^+$ to $\varphi^!$ and $\varphi_+$. 

When $\kk = \C$, there is a notion of regular singularities for holonomic modules; see Definition~\ref{defn:regsingdefn}. It is natural to expect:

\newtheorem*{conj:preservregsing}{\Cref{conj:preservregsing}}
\begin{conj:preservregsing}
The functors $\varphi^!,\varphi_!,\varphi^+, \varphi_+$ and $\D_X$ preserve regular holonomic modules. 
\end{conj:preservregsing}

\subsection{A classification of irreducible holonomic modules}

The existence of pull-back and push-forward functors for holonomic modules allows one to give a general classification of irreducible holonomic modules. This is done in Section~\ref{sec:classifcationholonomic}. In order to state the main result, we introduce some definitions. 

A subgroup $\Pa$ of $W$ is said to be parabolic if it is the stabilizer of some point $x \in X$. The $W$-connected components $\St$ of the locally closed subset of $X$ consisting of points with stabilizer conjugate to $\Pa$, as $\Pa$ runs over all conjugacy classes of parabolics, define a finite stratification (the stabilizer stratification) of $X$. Let $N = N_W(\Pa)$ be the normalizer of $\Pa$ in $W$. A smooth affine subset $Z$ of a stratum $\St$ is said to be \textit{parallelizable} if its normal bundle $\mc{N}_{X/Z}$ is $\Pa$-equivariantly trivial and $n(Z) = Z$ or $n(Z) \cap Z = \emptyset$ for each $n \in N$; see Definition~\ref{defn:admissibleset} for the precise definition. The support of an irreducible holonomic module is the closure of the $W$-saturation of some parallelizable subset. 

To each parallelizable subset $Z$ we associate \textit{admissible data} (Definition~\ref{defn:admissibledatum}) $(Z,M,\lambda,\xi)$ consisting of an irreducible integrable $\omega$-connection $M$ on $Z$, an irreducible representation $\lambda$ of $\Pa$ and an irreducible representation $\xi$ of the twisted group algebra $\kk_{\tau} \uN(M,\lambda)$. This final object requires some explanation. One can show that the pair $(M,\lambda)$ defines a $\sH_{\omega,\bc}(U_0,\Pa)$-module $M \boxtimes L(\lambda)$ on some neighbourhood $U_0$ of $Z$ in $X$ (with support equal to $Z$). Clifford Theory implies that there exists a subgroup $\uN(M,\lambda)$ of $N/\Pa$ and a $2$-cocycle $\tau$ of $\uN(M,\lambda)$ such that 
\[
\End_{\sH_{\omega,\bc}(U_0,N)} \Ind_{\Pa}^N (M \boxtimes L(\lambda))\cong \kk_{\tau} \uN(M,\lambda)
\] 
is isomorphic to the associated twisted group algebra; see Theorem~\ref{thm:CliffordDade}. We choose an irreducible representation $\xi$ of this twisted group algebra. 

\newtheorem*{thm:mainclassificationnRCA}{\Cref{thm:mainclassificationnRCA}}
\begin{thm:mainclassificationnRCA}
	The irreducible holonomic $\sH_{\omega,\bc}(X,W)$-modules are parametrized, up to isomorphism, by the equivalence classes of admissible data as $\St$ runs over all strata.  
\end{thm:mainclassificationnRCA}

More specifically, we describe how one constructs an irreducible holonomic $\sH_{\omega,\bc}(X,W)$-module from a datum $(Z,M,\lambda,\xi)$. There is an obvious equivalence $(Z,M,\lambda,\xi) \sim (Z',M',\lambda',\xi')$ on the set of all admissible data and we show that the resulting irreducible holonomic modules are isomorphic if and only if the data are equivalent.      

As an application of the classification of irreducible holonomic modules, we establish the following finiteness result on Ext-groups between holonomic modules.

\newtheorem*{cor:dimextRCA2}{\Cref{cor:dimextRCA2}}
\begin{cor:dimextRCA2}
	Let $\ms{M},\ms{N}$ be holonomic $\sH := \sH_{\omega,\bc}(X,W)$-modules. Then 
$$
\dim_{\kk} \Ext_{\sH}^i(\ms{M},\ms{N}) < \infty, \quad \forall i \ge 0. 
$$ 
\end{cor:dimextRCA2}

\subsection{Other applications}\label{sec:otherapplications}

Finally, we note other applications of our main results. 

First, one can ask when $\sH_{\omega,\bc}(X,W)$ is a sheaf of simple algebras. We show:

\newtheorem*{prop:simplealgebraH}{\Cref{prop:simplealgebraH}}
\begin{prop:simplealgebraH}
Assume that $X$ is irreducible and $W$ acts effectively on $X$. The set of parameters $(\omega,\bc) \in \mathbb{H}^2(X,\Omega^{1,2}_X)^W \oplus \mc{S}(X)^W$ where the sheaf of algebras $\sH_{\omega,\bc}(X,W)$ is not simple is contained in a countable union of affine hyperplanes. 
\end{prop:simplealgebraH}

Recall that a parameter $(\omega,\bc)$ is said to be aspherical if there exists a non-zero $\sH_{\omega,\bc}(X,W)$-module $\ms{M}$ killed by the trivial idempotent $e \in \kk W$. 

\newtheorem*{prop:asphericalvalueshyper}{\Cref{prop:asphericalvalueshyper}}
\begin{prop:asphericalvalueshyper}
Assume that $X$ is irreducible and $W$ acts effectively on $X$. The set of aspherical values is contained in a finite union of affine hyperplanes in $\mathbb{H}^2(X,\Omega^{1,2}_X)^W \oplus \mc{S}(X)^W$.
\end{prop:asphericalvalueshyper}

We define shift functors $\mc{S}_{\bc \to \bc'}$ on the category of holonomic modules. If $\mc{A}$ denotes the set of all reflection hypersurfaces in $X$ then fix a $W$-stable subset $\Omega \subset \mc{A}$ and let $\bc, \bc'$ be parameters such that $\bc(w,Z) = \bc'(w,Z)$ for all $(w,Z)$ with $Z \notin \Omega$. We construct an exact shift functor 
\[
\mc{S}_{\bc \to \bc'} \colon \mc{M}(\sH_{\omega,\bc}(X,W)) \to \mc{M}(\sH_{\omega,\bc'}(X,W))
\]
from $\sH_{\omega,\bc}(X,W)$-modules to $\sH_{\omega,\bc'}(X,W)$-modules. Theorem~\ref{thm:jpushforwardholo} implies that $\mc{S}_{\bc \to \bc'}$ restricts to a functor between the respective categories of holonomic modules; see Proposition~\ref{prop:shiftfunctor}.  
	
Finally, for each $x \in X$ we also introduce a \textit{Jacquet functor} 
\[
\mc{J}_x \colon \mc{M}_{\mr{Coh}({\mc{O}})}(\sH_{\omega,\bc}(X,W)) \to \mc{O}_{\bc_x}(\h,W_x)
\]
from the category of $\sH_{\omega,\bc}(X,W)$-modules coherent over $\mc{O}_X$ to the category $\mc{O}$ for the pair $(\h,W_x)$, where $\h = T_x X$. We show in Lemma~\ref{lem:Jacqeutfunctor} that $\mc{J}_x$ is an exact functor and $\mc{J}_x(\ms{M}) = 0$ if and only if $x \notin \Supp_X \ms{M}$.

\subsection{Notation}

As in \cite{BorelDmod}, pull-back and push-forward of sheaves are denoted $\varphi^{-1}$ and $\varphi_{\idot}$ respectively. For a quasi-coherent $\mc{O}$-module $\ms{M}$, pull-back and push-forward are denoted $\varphi^{*} \ms{M} = \mc{O}_Y \o_{\varphi^{-1} \mc{O}_X} \varphi^{-1} \ms{M}$ and $\varphi_*(\ms{M}) = \varphi_{\idot}(\ms{M})$. Finally, for modules over Cherednik algebras and $\bc$-melys morphisms $\varphi$, we have $\varphi^0, \varphi_0$ and their associated derived functors $\varphi^!, \varphi_+$. 

\subsection*{Acknowledgements}

We would like to thank the referee for an extremely detailed review which greatly improved the exposition in the article. The first author was partially supported by a Research Project Grant from the Leverhulme Trust and by the EPSRC grants EP-W013053-1 and EP-R034826-1. The work of the second author was partially supported by the NSF grant DMS-2001318. We would like to thank Ivan Losev for helpful correspondence. 

\section{Sheaves of Cherednik algebras}

In this section we introduce sheaves of Cherednik algebras on a smooth variety. 

\subsection{Conventions}\label{sec:conv} Let $\kk$ be an algebraically closed field of characteristic zero. By a scheme, we will always mean a separated scheme of finite type over $\kk$. A variety will mean a reduced scheme, not necessarily irreducible. In particular, they may be disconnected. Sheaves of modules over a sheaf of rings will be denoted $\ms{M},\ms{N}$,... whilst modules over a ring will be denoted $M,N$,... Unadorned tensor products will always be over $\kk$. The sheaf of vector fields, resp. one-forms, on a smooth variety $X$ is denoted $\Theta_X$, resp. $\Omega^1_X$. If $X$ is a variety then $\mc{O}(X)$ is shorthand for $\Gamma(X,\mc{O}_X)$. 

All our spaces will be equipped with the action of a finite group $W$. We do not assume that this action is effective. Throughout, we make the following:

\begin{assume}
All spaces admit a (finite) cover by $W$-stable affine open sets. 
\end{assume}

Under the above assumption, the quotient $X/W$ exists as a scheme \cite[Proposition~1.3]{BertinNotes} with the closed points of $X/W$ in bijection with the $W$-orbits in $X$ since $\kk$ is algebraically closed. We note that if $X$ is quasi-projective, then it admits a cover by $W$-stable affine open sets \cite[Lemma~1.2]{BertinNotes}. 

\begin{remark}
As explained in \cite[Remark~2.32]{ChereSheaf}, we do not need to assume that our spaces admit a cover by $W$-stable affine open subsets in order to define sheaves of Cherednik algebras. Instead, one can take \'etale $W$-stable covers by affine open sets. Our assumption is made to simplify the exposition.
\end{remark}

A subset $Z \subset X$ is said to be \textit{$W$-connected} if it is $W$-stable and $W$ acts transitively on the connected components of $Z$. The subset $Z$ is said to be \textit{$W$-irreducible} if it is $W$-stable and $W$ acts transitively on the irreducible components of $Z$. If $\pi \colon X \to X/W$ is the quotient map then $Z$ is $W$-connected if and only if $\pi(Z)$ is connected and it is $W$-irreducible if and only if $\pi(Z)$ is irreducible. If $w \in W$, then $X^w$ denotes the set of all points fixed under the automorphism $w$.

Since we wish to deal with objects such as the skew group ring $\mc{O}_X \rtimes W$, we work throughout on the quotient variety $X/W$. Note that one could equivalently work with the \textit{$W$-equivariant Zariski topology on $X$}: a subset $U \subset X$ is an open subset in this topology if and only if it is  open in the Zariski topology and $W$-stable. Pushing forward by $\pi$, we consider $\mc{O}_X,\Omega^1_X$ and $\mc{O}_X \rtimes W$ etc. as sheaves on $X/W$. 

If $\ms{F}$ is a locally free $W$-equivariant coherent sheaf on $X$ then each $x \in X$ admits an affine $W$-stable neighbourhood $U$ such that $\ms{F} |_U$ is free (though not necessarily equivariantly trivial). This follows from the fact that finitely generated projective modules over the semilocal ring $\mc{O}_{X,W \cdot x}$ of rational functions regular on the orbit $W \cdot x$ are actually free.  

The morphisms $\varphi : Y \rightarrow X$ that we consider will always be assumed to be $W$-equivariant.

\subsection{}\label{sec:truncatedeRham} Let $X$ be a smooth variety over $\kk$. Let $Z$ be a smooth subvariety of $X$ of codimension one and write $\mc{O}_X(Z)$ for the line bundle associated to $Z$, thought of as an effective divisor. Then $\mc{O}_X(Z)$ is the sheaf of functions with a pole of order at most one along $Z$. Locally, the ideal defining $Z$ is principal, generated by one section, $f_Z$ say. Then the element  
$$
d \log f_Z := \frac{d f_Z}{f_Z} 
$$
is a section of $\Omega^1_X(Z) = \Omega^1_X \o \mc{O}_X(Z)$. Contraction $i$ of differential forms by vector fields defines a pairing $\Theta_X \o \Omega^1_X (Z) \rightarrow \mc{O}_X(Z)$ given by $(\nu,\omega) \mapsto i_{\nu} (\omega)$. 

Let $\Omega_X^{1,2}$ be the two term subcomplex $\Omega_X^1 \stackrel{d}{\longrightarrow} (\Omega_X^2)^{\mathrm{cl}}$, concentrated in degrees $1$ and $2$, of the algebraic de-Rham complex of $X$, where $(\Omega_X^2)^{\mathrm{cl}}$ denotes the subsheaf of closed forms in $\Omega_X^2$. Sheaves of twisted differential operators on $X$ are parameterized, up to isomorphism, by the second hypercohomology group $\mathbb{H}^2(X,\Omega_X^{1,2})$; see \cite[Section~2.1]{BBJantzen}. Given $\omega \in \mathbb{H}^2 ( X,\Omega_X^{1,2})$, the corresponding sheaf of twisted differential operators is denoted $\dd_{\omega}(X)$.

\subsection{Dunkl-Opdam operators}\label{sec:Dunkloperator} Recall that $X$ is assumed smooth. We do not assume $X$ is connected, but assume it is $W$-connected; the general case is obtained by treating $W$-connected components separately. In particular, $X$ is equidimensional. Let $\mc{S}(X)$ be the set of pairs $(w,Z)$ where $w \in W$ and $Z$ is a connected component of $X^w$ of codimension one. Any such $Z$ is smooth. A pair $(w,Z)$ in $\mc{S}(X)$ will be called a \textit{reflection} of $(X,W)$ and $Z$ a \textit{reflection hypersurface}. The group $W$ acts on $\mc{S}(X)$, and we fix $\bc : \mc{S}(X) \rightarrow \kk$ to be a $W$-equivariant function with $W$ acting trivially on $\kk$.

Recall from \cite{BBJantzen} that an $\mc{O}_X$-module $\mc{P}$ is a \textit{Picard algebroid} if there exists a Lie bracket $[ - , - ] \colon \mc{P} \o \mc{P} \to \mc{P}$ with morphisms $i \colon \mc{O}_X \to \mc{P}$ and $\sigma \colon \mc{P} \to \Theta_X$ (the anchor map) forming a short exact sequence $0 \to \mc{O}_X \to \mc{P} \to \Theta_X \to 0$ such that $\sigma$ is a morphism of Lie algebras, $[i(1),p] = 0$ for all $p \in \mc{P}$, and 
$$
[p_1,fp_2] = f[p_1,p_2] + \sigma(p_1)(f) p_2, \quad \forall \, p_1,p_2 \in \mc{P}, f \in \mc{O}_X. 
$$
The Picard algebroid is said to be $W$-equivariant if there are isomorphisms $\psi_w : w^* (\mc{P}) \stackrel{\sim}{\rightarrow} \mc{P}$ of algebroids, satisfying the usual cocycle condition, such that the inclusion $\mc{O}_X \hookrightarrow \mc{P}$ and anchor map $\sigma : \mc{P} \rightarrow \Theta_X$ are $W$-equivariant. Each class $[\omega] \in \mathbb{H}^2(X,\Omega_X^{1,2})^W$ can be represented by an invariant $2$-cocycle $\omega$. The corresponding Picard algebroid $\mc{P}_{\omega}$ is $W$-equivariant. We fix one such $W$-equivariant Picard algebroid $\mc{P}_{\omega}$. 

We say that an affine open neighbourhood $U$ of $x \in  X$ is \textit{good} if it is $W$-stable and $Z \cap U$ is principal for all $(w,Z) \in \mc{S}(X)$. 

\begin{lem}\label{lem:goodneighbourhood}
	Each point $x \in X$ admits a good open neighbourhood. 
\end{lem}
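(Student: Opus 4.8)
The plan is to show that each $x \in X$ has a $W$-stable affine open neighbourhood $U$ on which every reflection hypersurface passing near $x$ becomes principal. First I would work on the quotient $X/W$ and pick a $W$-stable affine open neighbourhood $V$ of the orbit $W \cdot x$; such a $V$ exists by the standing Assumption (and the semilocal ring remark in Section~\ref{sec:conv}). Shrinking $V$ further, I may assume that $V$ meets only those reflection hypersurfaces $Z$ with $x \in Z$, since $\mc{S}(X)$ is finite (as $W$ is finite and $X$ has only finitely many irreducible components, each hypersurface $X^w$ of codimension one has finitely many components) and the union of the remaining (finitely many) hypersurfaces not through $x$ is closed and avoids $W \cdot x$, so we can remove it.

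Next I would make the relevant ideals principal. For a reflection $(w,Z) \in \mc{S}$ with $x \in Z$, the subvariety $Z$ is smooth of codimension one in the smooth variety $X$, so its ideal sheaf $\mathcal{I}_Z$ is a line bundle on $X$. Restricted to the semilocal ring $\mc{O}_{X,W\cdot x}$, this line bundle is free (again by the fact that finitely generated projective modules over $\mc{O}_{X,W\cdot x}$ are free, cited in Section~\ref{sec:conv}), hence there is a single section $f_Z$ generating $\mathcal{I}_Z$ in a neighbourhood of the whole orbit $W \cdot x$; shrinking $V$ to a $W$-stable affine open $U$ contained in the common locus where all these finitely many generators $f_Z$ are defined and generate, we get that $Z \cap U$ is principal for every $(w,Z) \in \mc{S}$ meeting $U$. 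Since we arranged that these are the only reflections meeting $U$, the neighbourhood $U$ is good.

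The one point requiring care — and the main (minor) obstacle — is the bookkeeping that guarantees $U$ can be taken $W$-stable simultaneously for all the finitely many $(w,Z)$: the set $\mc{S}$ is a $W$-set, so the reflections through $x$ are permuted by the stabilizer of $x$ and by $W$ among the orbit points, and one must choose the generators $f_Z$ compatibly (or simply choose each $f_Z$ independently and then intersect the finitely many resulting $W$-stable loci, using that a finite intersection of $W$-stable affine opens is again $W$-stable affine open). Once one observes that only finitely many conditions must be imposed and each cuts out a $W$-stable affine open containing $W \cdot x$, taking the intersection finishes the proof.
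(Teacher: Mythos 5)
Your route is essentially the paper's: localize at the semilocal ring $\mc{O}_{X,W\cdot x}$, observe that each $I(Z)$ is principal there, and spread a generator out to a $W$-stable affine open. The one technical ingredient differs: you use that the ideal sheaf of a smooth divisor on the smooth variety $X$ is a line bundle, hence free over the semilocal ring (the projective-implies-free fact recalled in Section~\ref{sec:conv}); the paper instead invokes \cite[Exercise~20.4]{MatCom} to say each indecomposable factor of $\mc{O}_{X,W\cdot x}$ is a UFD, so every height-one prime is principal. Both ingredients are valid here and buy the same thing.

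Two small slips to fix. First, you say you may shrink $V$ so that it ``meets only those reflection hypersurfaces $Z$ with $x\in Z$''; that is impossible when the orbit is nontrivial, since $V$ must contain all of $W\cdot x$ and hence meets every $Z$ passing through any orbit point. The correct statement is that $V$ may be taken to meet only those $Z$ with $Z\cap W\cdot x\ne\emptyset$, which is what you actually need (the complementary union is $W$-stable, so this shrinking is legitimate). Second, in the bookkeeping at the end you assert that each condition ``cuts out a $W$-stable affine open containing $W\cdot x$'': choosing $f_Z$ independently, the locus where $f_Z$ generates $I_Z$ is an open set containing $W\cdot x$ but has no reason to be $W$-stable. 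One must therefore further intersect the finite collection of such (not necessarily stable) opens and then pass to a $W$-stable affine open around $W\cdot x$ contained in the intersection, or replace $f_Z$ by a $W$-symmetrized version — which is precisely what the paper does by passing from $g_Z$ to $\prod_{w\in W}w(g_Z)$. With those corrections the argument goes through.
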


\begin{proof}
	We may assume $X$ is affine. Let $\mc{O}_{X,W \cdot x}$ denote the semilocal ring of functions regular on the orbit $W \cdot x$. Since $X$ is smooth, \cite[Exercise~20.5]{MatCom} says that each indecomposable summand of $\mc{O}_{X,W \cdot x}$ is a UFD. Thus, for each $(w,Z) \in \mc{S}(X)$ there exists $g_Z \in \mc{O}(X)$ such that $g_Z$ has no zeros in $W \cdot x$ and $Z \cap (g_Z \neq 0)$ is principal. Replacing $g_Z$ by $\prod_{w \in W} w(g_Z)$, we may assume $g_Z$ is $W$-invariant. Finally, let $g = \prod_{(w,Z)} g_Z$. Then $(g \neq 0)$ is the required good open neighbourhood of $x$. 
\end{proof}

We choose a covering $\{ U_i \}$ of $X$ consisting of good open sets. This means we can fix functions $f_{Z,i}$ defining $U_i \cap Z$ for all $(w,Z) \in \mc{S}(X)$. The union of all $Z$'s is denoted $E$. If $j \colon X \setminus E \hookrightarrow X$ is the inclusion, then write $\mc{P}_{\omega} (E)$ for the sheaf $j_{\idot} (\mc{P}_{\omega} |_{X \setminus E})$.   

\begin{defn}
	To each $v \in \Gamma(U_i,\mc{P}_{\omega})$, the associated Dunkl-Opdam operator is 
	\begin{equation}\label{eq:DOop}
	D_{v} = v + \sum_{(w,Z) \in \mc{S}(X)} \frac{2 \bc (w,Z)}{1 - \lambda_{w,Z}} i_{ \sigma (v)} (d \log f_{Z,i}) (w - 1),
	\end{equation}
	where $\lambda_{w,Z}$ is the eigenvalue of $w$ on each fiber of the conormal bundle of $Z$ in $X$. 
\end{defn}

The operator $D_v$ is a section of $\mc{P}_{\omega}(E) \o \kk W$ over $U_i$. The $\Gamma(U_i,\mc{O}_X \o \kk W)$-submodule of $\mc{P}_{\omega}(E) \o \kk W$ generated by all Dunkl-Opdam operators $\{ D_v \ | \ v \in \Gamma(U_i,\mc{P}_{\omega}) \}$ and the subspace $\Gamma(U_i,\mc{O}_X \o \kk W)$ is denoted $\Gamma(U_i, \mc{F}^1)$. The modules $\Gamma(U_i, \mc{F}^1)$ glue to form a sheaf $\mc{F}^1$ in the $W$-equivariant Zariski topology on $X$. The anchor map $\sigma : \mc{P}_{\omega}(E) \o \kk W \rightarrow \Theta_X(E) \o \kk W$ restricts to a map $\mc{F}^1 \rightarrow \Theta_X \o \kk W$ which fits into a short exact sequence
\begin{equation}\label{eq:ses}
0 \rightarrow \mc{O}_X \o \kk W \rightarrow \mc{F}^1 \stackrel{\sigma}{\longrightarrow} \Theta_X \o \kk W \rightarrow 0.
\end{equation}

\begin{defn}
	The subsheaf of algebras of $j_{\idot} (\dd_{\omega}(X \setminus E) \rtimes W)$ generated by $\mc{F}^1$ is called the \textit{sheaf of Cherednik algebras} associated to $W$, $\omega$ and $\bc$. It is denoted $\sH_{\omega,\bc}(X,W)$.   
\end{defn}

The global sections of $\sH_{\omega,\bc}(X,W)$ are denoted $\H_{\omega,\bc}(X,W)$. The space of parameters for the family $\sH$ is $\mathbb{H}^2(X,\Omega^{1,2}_X)^W \oplus \mc{S}(X)^W$. 

\begin{example}
	The following example shows that $\mathbb{H}^2(X,\Omega^{1,2}_X)^W$ need not be finite-dimensional when $X$ is not affine. Let $X = \kk^2 \setminus \{ (0,0) \}$, acted on by $W = (\Z/\ell \Z)^2$ in the obvious way. Let $\mr{DR}_2$ be the de Rham complex truncated at degree 2 (ending with closed forms). We have a short exact sequence $0 \to \Omega^{1,2}_X \to \mr{DR}_2 \to \mc{O}_X \to 0$ of complexes. Now, the cohomology of $\mr{DR}_2$ is $H^{\idot}(X,\kk)$ in degrees $\le 2$ and we get a $W$-equivariant sequence 
	\[
	H^1(X,\kk) \to H^1(X,\mc{O}_X) \to \mathbb{H}^2(X,\Omega^{1,2}_X) \to H^2(X,\kk).	
	\]
	Since $H^1(X,\kk) = H^2(X,\kk) = 0$, this implies that $\mathbb{H}^2(X,\Omega^{1,2}_X)$ is finite-dimensional if and only if $H^1(X,\mc{O}_X)$ is finite-dimensional. Similarly, $\mathbb{H}^2(X,\Omega^{1,2}_X)^W$ is finite-dimensional if and only if $H^1(X,\mc{O}_X)^W$ is finite-dimensional. But ${H}^1(X,\mc{O}_X)$ is spanned by $x^iy^j$, for $i,j \le -1$, when $X = \kk^2\setminus \{ (0,0) \}$; see \cite[III, Exercise~4.3]{Hartshorne}.
	
One can construct the corresponding sheaves of twisted differential operators as follows. Let us take the usual sheaves of differential operators on $\kk^{\times} \times \kk$ and $\kk \times \kk^{\times}$. We have to glue them on $\kk^{\times} \times \kk^{\times}$, preserving the order filtration. Let us glue by preserving functions and mapping
\[
\frac{\partial}{\partial x} \mapsto \frac{\partial}{\partial x} + \frac{\partial F}{\partial x}, \quad \frac{\partial}{\partial y} \mapsto \frac{\partial}{\partial y} + \frac{\partial F}{\partial y},
\] 
where $F=(xy)^{-1}P(x^{-1},y^{-1})$. If $F_1 \neq F_2$ are Laurent polynomials of this form then one can check that the corresponding sheaves of twisted differential operators are not isomorphic as filtered algebras. 

When $F$ is $W$-invariant, that is, $F=(xy)^{-\ell}P(x^{-\ell},y^{-\ell})$, the sheaf of twisted differential operators can be deformed to sheaves of Cherednik algebras.   
\end{example}

\subsection{}\label{sec:PBW} There is a natural order filtration $\F^{\idot}$ on $\sH_{\omega,\bc}(X,W)$, defined in one of two ways. Either one defines $\F^{\idot}$ to be the restriction to $\sH_{\omega,\bc}(X,W)$ of the order filtration on $j_{\idot} (\dd_{\omega}(X \setminus E) \rtimes W)$, or, equivalently, by giving elements in $\F^1$ degree at most one, with $D \in \F^1$ having degree one if and only if $\sigma(D) \neq 0$, and then defining the filtration inductively by setting $\F^i = \F^{1}  \F^{i-1}$. By definition, the filtration is exhaustive. Let $p \colon T^* X \rightarrow X$ be the projection map. The second author has shown in Theorem 2.11 of \cite{ChereSheaf} that the algebras $ \sH_{\omega,\bc}(X,W)$ are a flat deformation of $\dd(X) \rtimes W$ over the base $\mathbb{H}^2(X,\Omega^{1,2}_X)^W \oplus \mc{S}(X)^W$. Equivalently, the PBW property holds for Cherednik algebras: 

\begin{thm}\label{thm:PBW}
	$\gr_{\F} \sH_{\omega,\bc}(X,W) \cong p_{\idot} \mc{O}_{T^* X} \rtimes W$. 
\end{thm}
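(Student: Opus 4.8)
The statement is local on $X/W$, so I may assume $X=U$ is a good affine open set, and after shrinking further that $X$ carries \'etale coordinates $x_1,\dots,x_n\colon X\to\mathbb{A}^n$ with dual vector fields $\partial_1,\dots,\partial_n$ trivialising $\Theta_X$. Since $\mc{P}_\omega$ is a Picard algebroid it splits as an $\mc{O}_X$-module locally, so I may lift each $\partial_i$ to $\tilde\partial_i\in\Gamma(X,\mc{P}_\omega)$ and set $D_i:=D_{\tilde\partial_i}$. As $v\mapsto D_v$ is $\mc{O}_X$-linear and $D_1=1$, the sequence \eqref{eq:ses} shows $\sH:=\sH_{\omega,\bc}(X,W)$ is generated as a sheaf of algebras by $\mc{O}_X$, $\kk W$ and $D_1,\dots,D_n$, with $\F^1\sH=(\mc{O}_X\rtimes W)+\sum_i\mc{O}_X D_i(\kk W)$.

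The crux of the proof is \textbf{Step 1 (regularity of commutators):} although the $D_i$ are written inside $\dd_\omega(X\setminus E)\rtimes W$ with poles along the reflection hypersurfaces, one must show
$$[D_i,D_j],\quad [D_i,f],\quad [D_i,w]\ \in\ \mc{O}_X\otimes\kk W\qquad (f\in\mc{O}_X,\ w\in W).$$
For $[D_i,f]$ and $[D_i,w]$ this is a direct local computation: whenever $w$ fixes $Z$, both $w(f)-f$ and $w(g)-g$ are divisible by the local equation $f_{Z,i}$, so the terms $i_{\sigma(\tilde\partial_i)}(d\log f_{Z,i})(w(f)-f)$ are regular; one also uses the anchor/cocycle compatibility of $\mc{P}_\omega$. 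The commutator $[D_i,D_j]$ is the genuinely delicate one: expanding it produces the curvature of $\mc{P}_\omega$ (already regular), the diagonal Dunkl terms, and cross terms between the corrections along different hypersurfaces $Z,Z'$; the apparent simple poles along each $Z$ cancel precisely because of the normalisation $\tfrac{2\bc(w,Z)}{1-\lambda_{w,Z}}$ of the coefficients — the "no-monodromy'' identity underlying Dunkl--Opdam operators. I expect this to be the main obstacle and the only genuinely computational part; I would organise it one reflection hypersurface at a time, localising at the generic point of each $Z$ to isolate its contribution and checking that the residue there vanishes.

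\textbf{Step 2 (spanning):} granting Step 1, an induction on order shows $\F^d\sH=\sum_{|\alpha|\le d,\,w\in W}\mc{O}_X\,D^\alpha w$ with $D^\alpha=D_1^{\alpha_1}\cdots D_n^{\alpha_n}$, since the commutation relations let one move $\mc{O}_X$ and $W$ to the left and re-order the $D_i$ at the cost of lower-order terms. Consequently there is a surjection of sheaves of graded algebras $p_\idot\mc{O}_{T^*X}\rtimes W=\Sym_{\mc{O}_X}\Theta_X\rtimes W\twoheadrightarrow\gr_\F\sH$ sending the class of $\partial_i$ to the principal symbol of $D_i$ (well defined because $\sigma$ is $W$-equivariant and $D_v-v$ has order $0$, so Step 1 guarantees the defining relations of $\Sym_{\mc{O}_X}\Theta_X\rtimes W$ hold in $\gr_\F\sH$). \textbf{Step 3 (freeness):} to see this surjection is injective it suffices to show the $D^\alpha w$ are $\mc{O}_X$-linearly independent in $\sH$ and that $D^\alpha w$ has order exactly $|\alpha|$. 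Restrict to $X\setminus E$: there $\sH|_{X\setminus E}=\dd_\omega(X\setminus E)\rtimes W$, the symbol of $D_i$ is $\partial_i$, which trivialises $\Theta_{X\setminus E}$, and the classical PBW theorem for a t.d.o.\ smashed with $\kk W$ gives both assertions over $\mc{O}_{X\setminus E}$. If $\sum_{\alpha,w}f_{\alpha,w}D^\alpha w\in\F^{d-1}\sH$ with $f_{\alpha,w}\in\mc{O}_X$, then restricting to $X\setminus E$ and comparing top-order symbols forces $f_{\alpha,w}|_{X\setminus E}=0$ for $|\alpha|=d$; since $E$ is a divisor in the smooth variety $X$, the map $\mc{O}_X\to j_\idot\mc{O}_{X\setminus E}$ is injective, so $f_{\alpha,w}=0$ for $|\alpha|=d$, and descending induction finishes. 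Hence $\F^d\sH=\bigoplus_{|\alpha|\le d,\,w}\mc{O}_X D^\alpha w$ with each generator in filtration degree exactly $|\alpha|$, and $\gr^d_\F\sH\cong(\Sym^d_{\mc{O}_X}\Theta_X)\rtimes W$, which is the assertion.

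An alternative to Step 3 is a Rees-algebra argument: rescaling $\bc\mapsto t\bc$ yields a $\kk[t]$-flat family $\widetilde{\sH}$ with special fibre $\dd_\omega(X)\rtimes W$ at $t=0$; since the surjection of Step 2 is an isomorphism there, flatness of the family propagates the PBW property to $t=1$. Either way, the regularity statement of Step 1 is the only hard input; everything else is formal, with the restriction to $X\setminus E$ reducing linear independence to the classical PBW theorem.
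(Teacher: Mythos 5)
The paper does not actually supply a proof of this statement: it records it as Theorem~\ref{thm:PBW} and cites Theorem~2.11 of \cite{ChereSheaf} (Etingof's original construction paper) for the flat--deformation/PBW property. So there is no ``paper's proof'' to compare against; I am evaluating your argument on its own.

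Your Steps~2 and~3 are sound and standard (restriction to $X\setminus E$, where $\sH$ becomes $\dd_\omega(X\setminus E)\rtimes W$, does deliver linear independence once Step~1 is in place). The trouble is concentrated in Step~1, which you correctly identify as the crux but leave unproven, and your description of what needs to cancel is off in two respects. First, the claim $[D_i,w]\in\mc{O}_X\otimes\kk W$ is false whenever $\partial_i$ is not $w$-invariant (and it never is, for all $w$, if the $\partial_i$ come from an \'etale chart $X\to\mathbb{A}^n$ with trivial $W$-action on the target). Indeed $[D_{\tilde\partial_i},w]=D_{\tilde\partial_i-w(\tilde\partial_i)}\,w$, which has order one; the correct and still adequate statement for Step~2 is $[D_i,w]\in\F^1$. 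Second, your accounting of $[D_i,D_j]$ omits the order-one contributions. Writing $D_i=\tilde\partial_i+A_i$ with $A_i=\sum_{(w,Z)}\frac{2\bc(w,Z)}{1-\lambda_{w,Z}}\frac{\partial_i(f_Z)}{f_Z}(w-1)$, the cross terms $[\tilde\partial_i,A_j]-[\tilde\partial_j,A_i]$ produce, for each hypersurface $Z$, not only the double-pole, order-zero terms $\partial_i\bigl(\tfrac{\partial_j f_Z}{f_Z}\bigr)(w-1)-\partial_j\bigl(\tfrac{\partial_i f_Z}{f_Z}\bigr)(w-1)$ (which do cancel immediately because $\partial_i\partial_j\log f_Z=\partial_j\partial_i\log f_Z$), but also order-one terms with simple poles of the form $\tfrac{\partial_j(f_Z)}{f_Z}\bigl(\tilde\partial_i-w(\tilde\partial_i)\bigr)w-\tfrac{\partial_i(f_Z)}{f_Z}\bigl(\tilde\partial_j-w(\tilde\partial_j)\bigr)w$, coming from $[\tilde\partial_i,w]$. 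The vanishing of the $\gr^1$-symbol of these terms is a genuine (if elementary) identity, and even after it is established one is left with a residual order-zero term carrying a simple pole, which must then cancel against poles in $[A_i,A_j]$ and against the scalar parts of $\tilde\partial_i-w(\tilde\partial_i)\in\mc{P}_\omega$. None of this is in your sketch, and ``cancel because of the normalisation $\tfrac{2\bc(w,Z)}{1-\lambda_{w,Z}}$'' is not a proof. This is exactly the content of the theorem, and it cannot be waved through. Finally, the Rees-algebra alternative you float for Step~3 is circular: flatness (or $\kk[t]$-torsion-freeness) of the $\bc\mapsto t\bc$ family is precisely a form of PBW and is not available before the theorem is proved.

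A cleaner route, and one much more in line with the techniques used throughout this paper (Lemma~\ref{lem:completelocalCA}, Proposition~\ref{prop:goodembedding}, the reductions of \S\ref{sec:redtrivstab}), is to observe that each filtered piece $\F^d\sH$ is a coherent $\mc{O}_X$-module, so that the surjection $\Sym^{\le d}\Theta_X\rtimes W\to\F^d\sH$ is an isomorphism if and only if it is so after completing at every closed point. Passing to the formal neighbourhood of $x$, reducing the group to $W_x$ via Proposition~\ref{prop:etalemelysiso}, and identifying $\widehat X_x$ with $\widehat{T_xX}$ (the point of Proposition~\ref{prop:goodembedding}) turns the question into the PBW theorem for $\H_{\bc_x}(\h,W_x)$, where the commutativity of Dunkl operators is the classical (and already nontrivial, but well-documented) computation. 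If you prefer your direct route, Step~1 has to be carried out honestly, tracking both the order-one residue along each $Z$ and the order-zero residues, with the splitting-dependent scalar parts of $\tilde\partial_i-w(\tilde\partial_i)$ kept in play.
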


Theorem \ref{thm:PBW} implies that, for any affine $W$-stable open set $U \subset X$, the algebra of global sections $\Gamma(U, \sH_{\omega,\bc}(X,W))$ has finite global dimension; namely, its global dimension is bounded by $2 \dim X$. Moreover, it implies that $\sH_{\omega,\bc}(X,W)$ is a coherent sheaf of algebras. A $\sH_{\omega,\bc}(X,W)$-module $\ms{M}$ can be regarded as an $\mc{O}_{X/W}$-module. Equation \eqref{eq:DOop} implies that each section of $\sH_{\omega,\bc}(X,W)$ acts on $\ms{M}$ as a locally $\mathrm{ad} (\mc{O}_{X/W})$-nilpotent endomorphism. Therefore, we get a morphism $\sH_{\omega,\bc}(X,W) \rightarrow \dd_{\mc{O}_{X/W}}(\ms{M})$, where the latter is Grothendieck's sheaf of differential operators on $\ms{M}$. This morphism preserves filtrations. When $\omega = [\mc{L}]$, for some equivariant line bundle $\mc{L}$, the algebra $\sH_{\omega,\bc}(X,W)$ acts on $\mc{L}$ and the morphism $\sH_{\omega,\bc}(X,W) \to \dd_{\mc{O}_{X/W}}(\mc{L})$ is an embedding. 

\subsection{} Throughout, a $\sH_{\omega,\bc}(X,W)$-module will always mean a \textit{left} $\sH_{\omega,\bc}(X,W)$-module that is quasi-coherent over $\mc{O}_X$. The category of all $\sH_{\omega,\bc}(X,W)$-modules is denoted $\Qcoh{\sH_{\omega,\bc}(X,W)}$ and the full subcategory of all modules coherent over $\sH_{\omega,\bc}(X,W)$ is $\Coh{\sH_{\omega,\bc}(X,W)}$. The categories $\Qcoh{\sH_{\omega,\bc}(X,W)}$ and $\Coh{\sH_{\omega,\bc}(X,W)}$ are abelian. 

\subsection{Singular support}

Since the group $W$ acts symplectically on $T^* X$, the quotient scheme $(T^* X)/W$ has a natural Poisson structure. It has finitely many symplectic leaves by \cite[Proposition~2.9]{BonnafeAutomorphisms}. We recall that a (reduced) locally closed subvariety $Z \subset (T^* X) / W$ is said to be \textit{coisotropic} (resp. \textit{isotropic}) if the smooth locus of $Z \cap \mc{L}$ is coisotropic (resp. isotropic) in $\mc{L}$, for each symplectic leaf $\mc{L}$ of $(T^* X) / W$. Let $\bpi \colon T^* X \to (T^* X)/W$ be the quotient map. The following result is implicit in \cite{ThompsonHolI}. 

\begin{lem}\label{lem:isoclosed}
	Let $K$ be a (reduced) locally closed subvariety of $(T^* X)/W$. 
	\begin{enumerate}
		\item[(i)] $K$ is isotropic if and only if $\bpi^{-1}(K)$ is isotropic in $T^* X$. 
		\item[(ii)] If $K$ is isotropic then so too is its closure.  
	\end{enumerate}
\end{lem}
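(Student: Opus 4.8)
The plan is to reduce both parts to standard facts about the quotient map $\bpi\colon T^*X\to (T^*X)/W$ and then exploit the relationship between symplectic leaves upstairs and downstairs. Recall the key structural fact: $W$ acts on $T^*X$ preserving the symplectic form, and $\bpi$ is a finite surjective morphism which restricts, over each symplectic leaf $\mathcal{L}\subset (T^*X)/W$, to an \'etale cover onto $\mathcal{L}$ from (the smooth locus of) $\bpi^{-1}(\mathcal{L})$, with the pulled-back Poisson/symplectic structure on $\bpi^{-1}(\mathcal{L})$ agreeing with the restriction of the symplectic form on $T^*X$. In particular, a symplectic leaf of $(T^*X)/W$ is covered by $W$-orbits of (open pieces of) $W$-orbit-type strata in $T^*X$, and the natural map on smooth loci is a local symplectomorphism.

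For part (i), I would argue as follows. Being isotropic is checked leaf-by-leaf, so fix a symplectic leaf $\mathcal{L}$ of $(T^*X)/W$ and let $\wt{\mathcal{L}}=\bpi^{-1}(\mathcal{L})$; the smooth locus of $\wt{\mathcal{L}}$ maps \'etale-locally symplectomorphically onto $\mathcal{L}$. Given $K$ isotropic, the smooth locus of $K\cap\mathcal{L}$ is isotropic in $\mathcal{L}$; pulling back along the \'etale cover, and noting that an \'etale morphism is a local isomorphism so preserves both smoothness and the symplectic form, one sees that the preimage of the smooth locus of $K\cap\mathcal{L}$ inside $\wt{\mathcal{L}}^{\mathrm{sm}}$ is isotropic in $T^*X$ (restricted to that leaf-preimage). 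One must then check this preimage is dense in the smooth locus of $\bpi^{-1}(K)\cap\wt{\mathcal L}$ and that isotropy on a dense open subset of a smooth locus suffices; this follows since the complement has strictly smaller dimension and isotropy is a closed condition on the smooth locus (vanishing of the restriction of a $2$-form). Conversely, if $\bpi^{-1}(K)$ is isotropic, then because $\bpi$ is finite, $\dim K=\dim\bpi^{-1}(K)$, and a generic smooth point of $K\cap\mathcal{L}$ lifts to a smooth point of $\bpi^{-1}(K)\cap\wt{\mathcal{L}}$ where the symplectic form restricts to zero; since the cover is a local symplectomorphism this forces the restriction of the form to $T_{[p]}(K\cap\mathcal{L})$ to vanish, so $K\cap\mathcal{L}$ is isotropic. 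Running over all leaves gives (i).

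For part (ii), I would first treat the situation upstairs in $T^*X$: the closure of an isotropic subvariety of a smooth symplectic variety (such as each symplectic leaf, or $T^*X$ itself after intersecting with leaves of the trivial stratification) is again isotropic — this is the classical fact that $\mathrm{SS}$ of an isotropic set stays isotropic, proved via the local structure of the symplectic form and upper semicontinuity of the dimension of the kernel of $\omega|_{T_pZ}$, or equivalently by a dimension count using that an irreducible isotropic subvariety has dimension $\le \tfrac12\dim$. Actually the clean route is: apply part (i) to reduce (ii) to the corresponding statement for $\bpi^{-1}(K)$ in $T^*X$, using that $\bpi^{-1}(\overline{K})=\overline{\bpi^{-1}(K)}$ because $\bpi$ is finite (hence closed and with the preimage of a closure equal to the closure of the preimage for a surjective finite map). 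So it suffices to show: if $L\subset T^*X$ is $W$-stable, locally closed and isotropic, then $\overline{L}$ is isotropic. For this, decompose $\overline L\setminus L$ by the orbit-type stratification and, on each symplectic leaf $\mathcal{M}$ of $T^*X$ meeting it, note that $\overline{L}\cap\mathcal{M}$ is contained in the closure of $L\cap(\text{nearby leaves})$; a standard argument (e.g. using a local model where $\mathcal{M}$ is cut out by vanishing of moment-type functions and the symplectic form degenerates transversally) shows the dimension bound $\dim(\overline{L}\cap\mathcal{M})\le\tfrac12\dim\mathcal{M}$ persists and, more precisely, that the tangent spaces at smooth points of $\overline L\cap\mathcal M$ are isotropic.

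The main obstacle I expect is part (ii): controlling what happens at points of $\overline{K}$ lying on a \emph{smaller} symplectic leaf than the generic points of $K$. On $T^*X$ this is the content of the non-trivial fact that closures of isotropic subvarieties are isotropic (as used for singular supports of $\dd$-modules), and one cannot avoid some version of the local analysis of how a symplectic leaf sits inside the ambient Poisson variety near a smaller leaf. I would handle this by reducing, via part (i) and finiteness of $\bpi$, entirely to $T^*X$, where either one invokes the known result directly (it is ``implicit in \cite{ThompsonHolI}'' and is classical in the $\dd$-module setting, since $T^*X$ is a genuine symplectic manifold and the only relevant stratification is by orbit type on the base, pulled back) or one re-proves it by the semicontinuity-of-$\dim\ker(\omega|_{T_pZ})$ argument on the smooth locus of $\overline{L}$.
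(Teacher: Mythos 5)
Your plan matches the paper's proof in all essentials: part (i) is done leaf-by-leaf via the \'etale local symplectomorphism $\bpi|_{\mc{L}'}\colon\mc{L}'\to\mc{L}$ over each leaf $\mc{L}$, and part (ii) is reduced via (i) and finiteness of $\bpi$ (so $\bpi^{-1}(\overline K)=\overline{\bpi^{-1}(K)}$) to the classical fact that the closure of an isotropic subvariety of the smooth symplectic variety $T^*X$ is isotropic, which the paper simply cites as \cite[Proposition~1.3.30]{CG}. The only caveat is that the intermediate paragraph about ``orbit-type strata'' and ``smaller leaves'' inside $T^*X$ is a red herring --- $T^*X$ is itself symplectic, so it is a single symplectic leaf --- but the clean route you settle on at the end makes that detour unnecessary.
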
 

\begin{proof}
	Part (i). We assume that $K$ is isotropic in $(T^* X)/W$. Let $K' = \bpi^{-1}(K)$. We must show that there is a dense open subset of $K'$ such that $\omega |_{T_y K'} = 0$ for all $y$ in this open set. Here $\omega$ is the symplectic form on $T^* X$. We may assume that $K$ is irreducible. Then there exists a unique leaf $\mc{L}$ such that $K \cap \mc{L}$ is open dense in $K$ and hence $K' \cap \mc{L}'$ is open dense in $K'$, where $\mc{L}' := \bpi^{-1}(\mc{L})$. Shrinking $K$ if necessary, we can assume that $K \cap \mc{L}$ is smooth. Hence $\omega_{\mc{L}} |_{T_k (K \cap \mc{L})} = 0$ for all $k \in K \cap \mc{L}$, where $\omega_{\mc{L}}$ is the symplectic form on $\mc{L}$. The restriction $\bpi |_{\mc{L}'} \colon \mc{L}' \to \mc{L}$ is \'etale and, by definition, 
	\begin{equation}\label{eq:leafomega}
		\omega_{\mc{L}} (d \bpi(-), d \bpi(-)) = \omega |_{\mc{L}'}.
	\end{equation}
	This implies that $\omega |_{T_y K'} = \omega |_{T_y (K' \cap \mc{L}')} = 0$ for all $y \in K' \cap \mc{L}'$. Hence $K'$ is isotropic in $T^* X$. 
	
	Assume now that $K'$ is isotropic in $T^* X$ and that $\mc{L}$ is now an arbitrary leaf in $(T^* X) / W$. Again, shrinking $K$ if necessary, we may assume that the intersection $K \cap \mc{L}$ is smooth. Then $K' \cap \mc{L}'$ is smooth since $\bpi |_{\mc{L}'}$ is \'etale. Then \cite[Proposition 1.3.30]{CG} says that $\omega |_{T_y (K' \cap \mc{L}')} = 0$ for all $y \in K' \cap \mc{L}'$. Equation~\eqref{eq:leafomega} now implies that $\omega_{\mc{L}} |_{T_k (K \cap \mc{L})} = 0$ for all $k \in K \cap \mc{L}$.      

	Part (ii). By part (i), it suffices to show that $\bpi^{-1}(\overline{K}) = \overline{\bpi^{-1}(K)}$ is isotropic in $T^* X$, given that $\bpi^{-1}(K)$ is isotropic. This is a consequence of \cite[Proposition 1.3.30]{CG}.
\end{proof}

Assume that $X$ is affine. Let $\iota_i$ be a linear splitting of the principal symbol map 
$$
\sigma_i : \mc{F}^i \rightarrow \gr_{\mc{F}}^i \H_{\omega,\bc}(X,W) = \mc{O}(T^* X)_i \o \kk W.
$$
Let $C = \mc{O}(T^*X)^W \subset Z(\mc{O}(T^* X) \rtimes W)$, and write $\iota: C \rightarrow \H_{\omega,\bc}(X,W)$ for the linear map induced by $\iota_i$. The map $\iota$ satisfies properties (i) and (ii) of \cite[Section 2.1]{BernsteinLosev} (with $d = 1$). Therefore, associated to any finitely generated $\H_{\omega,\bc}(X,W)$-module $M$ is its singular support $\SS (M)$, a closed subset of $(T^* X)/W$. This set does not depend on the choice of good filtration.

More generally, if $X$ is not affine, we can cover $X$ by $W$-stable affine open subsets $U_i$. If $\ms{M}$ is a coherent $\sH_{\omega,\bc}(X,W)$-module and $M_i = \Gamma(U_i,\ms{M})$, then the singular supports $\SS(M_i)$ agree on overlaps $U_i \cap U_j$, and hence define a global variety $\SS(\ms{M})$ in $(T^* X)/W$. 

If $\ms{M}$ is finitely generated, then  for each leaf $\mc{L} \subset (T^* X)/W$, the intersection $\mc{L} \cap \SS(\ms{M})$ is a coisotropic subvariety of $\mc{L}$ since Gabber's Theorem \cite{Gabber} says that the radical ideal defining $\SS(\ms{M})$ is involutive. That is, $\SS(\ms{M})$ is coisotropic.

\begin{defn}
	A coherent $\sH_{\omega,\bc}(X,W)$-module $\ms{M}$ is called \textit{holonomic} if $\SS(\ms{M})$ is an isotropic subvariety of $(T^* X) / W$. 
\end{defn}

The full subcategory of $\Coh{\sH_{\omega,\bc}(X,W)}$ consisting of holonomic modules is denoted $\Hol{\sH_{\omega,\bc}(X,W)}$.

\begin{example}
	If $\ms{M}$ is a $\sH_{\omega,\bc}(X,W)$-module coherent over $\mc{O}_X$ then $\ms{M}$ is holonomic. Indeed, giving $\ms{M}$ the (good) filtration with $\mc{F}_{-1}\ms{M} = 0$ and $\mc{F}_i \ms{M} = \ms{M}$ for all $i \ge 0$ shows that $\SS(\ms{M})$ is contained in the zero section $X/W$ of $(T^* X)/W$. 
\end{example} 

Fixing a good filtration $\mc{F}_{\idot}$ on a coherent module $\ms{M}$, we may consider $\mr{gr}_{\mc{F}} \ms{M}$ as a module over either $\mc{O}_{(T^*X)/W}$ or $\mc{O}_{T^*X}$. This defines singular support $\SS(\ms{M}) \subset (T^* X)/W$ as above, or $\SS_X(\ms{M}) \subset T^* X$. Then $\SS_X(\ms{M}) = \bpi^{-1}(\SS(\ms{M}))$ and Lemma~\ref{lem:isoclosed} implies that $\ms{M}$ is holonomic if and only if $\SS_X(\ms{M})$ is isotropic in $T^* X$.  

Similarly, the support $\Supp \ms{M}$ of an $\H_{\omega,\bc}(X,W)$-module is a subset of $X/W$, closed if $\ms{M}$ is coherent. If we instead wish to consider the support of $\ms{M}$ thought of as a sheaf on $X$ then we write $\Supp_X \ms{M}$ for this. We have $\Supp_X \ms{M} = \pi^{-1}(\Supp \ms{M})$.

\subsection{Stabilizer stratification}\label{sec:stabstrata}

If $X$ is a smooth $W$-variety then it admits a \emph{stabilizer stratification}, which we now describe. A subgroup $\Pa \subset W$, which occurs as the isotropy group of a point of $X$, is called a \emph{parabolic} subgroup of $W$ (with respect to its action on $X$). For each parabolic subgroup $\Pa$ of $W$, we let $\St_0$ be a connected component of $\{ x \in X \, | \, W_x = \Pa \}$. Then we say that $\St := W(\St_0)$ is a \textit{stratum} of $X$. This defines a partition of $X$ into smooth, locally closed $W$-connected sets. The image of these strata in $X/W$ defines a partition of the latter by smooth locally closed and connected subvarieties. 

The stratification of $X$ induces a filtration on the category $\Qcoh{\sH_{\omega,\bc}(X,W)}$ of (quasi-coherent) modules, and by restriction on the full subcategories of coherent, respectively holonomic, modules.

\begin{defn}\label{defn:SerreYXsubcat}
	For any locally closed $W$-stable subset $Y\subset X$, the category $\Qcoh{\sH_{\omega,\bc}(X,W)}_Y$ (resp. $\Coh{\sH_{\omega,\bc}(X,W)}_Y$, $\Hol{\sH_{\omega,\bc}(X,W)}_Y$) denotes the category of all (resp., coherent, holonomic) $\sH_{\omega,\bc}(X,W)$-modules $\ms{M}$ with $\Supp_X \ms{M} \subset \overline{Y}$.
\end{defn}

\section{Melys morphisms}\label{sec:Melys}

In this section, we introduce melys morphisms and show that pull-back of modules is well-defined for melys morphisms. 

\subsection{The main result}

Let $\varphi \colon Y \to X$ be a $W$-equivariant morphism with $X,Y$ smooth. For each parameter $\bc$, let $\mc{S}_{\bc}(X)$ denote the set of pairs $(w,Z)$ in $\mc{S}(X)$ with $\bc(w,Z) \neq 0$. The following notion was introduced in \cite{ProjAffine}, but for a much more restrictive class of morphisms.

\begin{defn}\label{defn:melys}
	Let $\varphi \colon Y \to X$ be a $W$-equivariant morphism. 
	\begin{enumerate}
		\item $\varphi$ is \textit{$\bc$-melys} if $\varphi^{-1}(Z) \subset Y^w$ for each $(w,Z) \in \mc{S}_{\bc}(X)$.
		\item $\varphi$ is \textit{strongly $\bc$-melys} if it is $\bc$-melys and $Y_i \cap \varphi^{-1}(Z) \neq \emptyset$ implies $Y_i^w \subsetneq Y_i$ for each $(w,Z) \in \mc{S}_{\bc}(X)$ and each connected component $Y_i$ of $Y$. 
	\end{enumerate}
\end{defn}

If $Y$ is connected then every $\bc$-melys morphism can be factored as a strongly $\bc$-melys morphism $Y \to X^K$ followed by a closed embedding $X^K \hookrightarrow X$, for some normal subgroup $K \subset W$ acting trivially on $Y$.

\begin{example}\label{ex:melysbasic}
	We list two key examples of melys morphisms. 
	\begin{enumerate}
	\item If $\bc = 0$ then every equivariant morphism is $\bc$-melys.
	\item If $Y \subset X$ is any $W$-stable smooth locally closed subset then $\varphi \colon Y \hookrightarrow X$ is $\bc$-melys. 
	\end{enumerate}
	In particular, both open embeddings and closed embeddings are $\bc$-melys. 
\end{example}

\begin{example}\label{ex:melysbasic2}
	An example of a melys morphism that is not strongly melys can be constructed as follows. Take a collection of reflection hypersurfaces $(w_1,Z_1), \ds, (w_k,Z_k) \in \mc{S}_{\bc}(X)$, closed under conjugation by $W$, with $Y = \bigcap_i Z_i$ non-empty and smooth. Then $Y$ will be a $W$-stable closed subset of $X$ and the embedding $Y \hookrightarrow X$ is $\bc$-melys but not strongly $\bc$-melys. 
	
For an example of a morphism that is not $\bc$-melys, take any $X$ with $\mc{S}_{\bc}(X) \neq \emptyset$ and let $\varphi \colon Y = X \times X \to X$ be projection onto the second factor. If $W$ acts diagonally on $Y$ then $\varphi$ is not $\bc$-melys.   
\end{example}

Given a $\bc$-melys morphism $\varphi$, we define $\varphi^* \bc$ on $\mc{S}(Y)$ as follows. Let $(w,Z) \in \mc{S}(Y)$. If there exists $Z'$ (necessarily unique) with $(w,Z') \in \mc{S}_{\bc}(X)$ such that $Z$ is a connected component of $\varphi^{-1}(Z')$ then we set 
\begin{equation}\label{eq:bcpullback}
    (\varphi^* \bc)(w,Z) := \frac{1 - \lambda_{w,Z}}{1 - \lambda_{w,Z'}}\mr{val}_{Z}(\varphi^* (f_{Z'})) \bc(w,Z')
\end{equation}
to be (roughly speaking) multiplication by the order of vanishing of $\varphi^* (f_{Z'})$ along $Z$. Here, if $r = \mr{val}_{Z}(\varphi^* (f_{Z'}))$ then $\lambda_{w,Z'} = \lambda_{w,Z}^r$. If no such $Z'$ exists then we set $(\varphi^* \bc)(w,Z) = 0$. For any $W$-equivariant $\mc{O}_X$-module $\ms{M}$, let $\varphi^* \ms{M} = \mc{O}_Y \o_{\varphi^{-1} \mc{O}_X} \varphi^{-1} \ms{M}$ be the usual pull-back in the category of quasi-coherent sheaves.

The following result forms the basis of the remainder of the paper. 
	 
\begin{thm}\label{thm:melyspullbackgeneral}
	If $\varphi$ is $\bc$-melys then $\varphi^* \sH_{\omega,\bc}(X,W)$ is a $\sH_{\varphi^* \omega,\varphi^* \bc}(Y,W)$-$\varphi^{-1} \sH_{\omega,\bc}(X,W)$-bimodule. Hence, for any $\sH_{\omega,\bc}(X,W)$-module $\ms{M}$, the $\mc{O}_Y$-module $\varphi^* \ms{M}$ is a $\sH_{\varphi^* \omega,\varphi^* \bc}(Y,W)$-module. 
\end{thm}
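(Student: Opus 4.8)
The plan is to build the two commuting module structures separately. The assertion may be checked on a cover of $Y$ by $W$-stable affine open sets, each mapping into a $W$-stable affine open of $X$, and since $\varphi^{*}$, $\varphi^{-1}$ and the formation of the sheaves $\sH$ all commute with such restrictions, I would first reduce to the case that $X$ and $Y$ are affine and good in the sense of Lemma~\ref{lem:goodneighbourhood}, with defining equations fixed for all reflection hypersurfaces of $X$ and of $Y$. The right $\varphi^{-1}\sH_{\omega,\bc}(X,W)$-action is then immediate and forced: set $(f\o P)\cdot Q:=f\o PQ$; this is well defined over $\varphi^{-1}\mc{O}_X$ and commutes with left multiplication by $\mc{O}_Y$ on the first tensor factor. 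So the entire content is to produce a left $\sH_{\varphi^{*}\omega,\varphi^{*}\bc}(Y,W)$-action commuting with it.

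For the left action I would pass to a generic locus. Put $E^{\bc}_X=\bigcup_{(w,Z)\in\mc{S}_{\bc}(X)}Z$; over $X\setminus E^{\bc}_X$ the Dunkl--Opdam correction terms are regular, so $\sH_{\omega,\bc}(X,W)$ restricts there to $\dd_\omega(X\setminus E^{\bc}_X)\rtimes W$, and the same holds for $\sH_{\varphi^{*}\omega,\varphi^{*}\bc}(Y,W)$ over $V:=Y\setminus\varphi^{-1}(E^{\bc}_X)$, since any reflection hypersurface of $Y$ meeting $V$ carries $\varphi^{*}\bc$-parameter zero. After the standard preliminary reduction — factoring $\varphi$ on each connected component of $Y$ through a strongly $\bc$-melys morphism to a fixed-point subvariety $X^{K}$ followed by the closed embedding $X^{K}\hookrightarrow X$, which lets one assume $\varphi$ dominates no $\bc$-reflection hypersurface, so $V$ is dense — $\varphi$ restricts over $V$ to a $W$-equivariant morphism into $X\setminus E^{\bc}_X$, and the pull-back of twisted differential operators is the usual transfer bimodule: $(\varphi|_V)^{*}(\dd_\omega(X\setminus E^{\bc}_X)\rtimes W)$ is a $\bigl(\dd_{\varphi^{*}\omega}(V)\rtimes W,\ (\varphi|_V)^{-1}(\dd_\omega(X\setminus E^{\bc}_X)\rtimes W)\bigr)$-bimodule, with left action generated by $\mc{O}_V\rtimes W$ and by $v\cdot(1\o 1)=\widetilde{d\varphi}(v)$ for $v$ a local section of $\mc{P}_{\varphi^{*}\omega}$, where $\widetilde{d\varphi}$ is the canonical algebroid morphism lifting $d\varphi$. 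By Theorem~\ref{thm:PBW} the sheaf $\sH_{\omega,\bc}(X,W)$ is $\mc{O}_X$-torsion free, so $\varphi^{*}\sH_{\omega,\bc}(X,W)$ embeds into its restriction to $V$, which is exactly this transfer bimodule; likewise $\sH_{\varphi^{*}\omega,\varphi^{*}\bc}(Y,W)$ embeds into the pushforward from $V$ of $\dd_{\varphi^{*}\omega}(V)\rtimes W$. Thus it remains to show that the transfer-bimodule left action of $\sH_{\varphi^{*}\omega,\varphi^{*}\bc}(Y,W)$ preserves the subsheaf $\varphi^{*}\sH_{\omega,\bc}(X,W)$ inside it.

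Since $\sH_{\varphi^{*}\omega,\varphi^{*}\bc}(Y,W)$ is generated by $\mc{O}_Y\rtimes W$, which plainly preserves $\varphi^{*}\sH_{\omega,\bc}(X,W)$, together with the Dunkl--Opdam operators $D^Y_v$, and since the transfer-bimodule identity $D^Y_v\cdot(1\o P)=\bigl(D^Y_v\cdot(1\o 1)\bigr)\cdot P$ together with $[D^Y_v,f]\in\mc{O}_Y\rtimes W$ reduces everything to the single assertion $D^Y_v\cdot(1\o 1)\in\varphi^{*}\sH_{\omega,\bc}(X,W)$, the heart of the proof is a computation of $D^Y_v\cdot(1\o 1)$. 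Here I would write $d\varphi(\sigma(v))=\sum_j g_j\o\theta_j$ with $g_j\in\mc{O}_Y$ and $\theta_j$ a local frame of $\Theta_X$, lift the $\theta_j$ to sections $u_j$ of $\mc{P}_\omega$ so that $\widetilde{d\varphi}(v)=\sum_j g_j\o u_j+h\o 1$, and then expand, using the definitions of $D^Y_v$ and $D^X_{u_j}$ and the identity $w\cdot(1\o 1)=1\o w$, to exhibit $D^Y_v\cdot(1\o 1)$ as $h\o 1+\sum_j g_j\o D^X_{u_j}$ — which lies in $\varphi^{*}\sH_{\omega,\bc}(X,W)$ — up to polar terms supported on the divisors $\varphi^{-1}(Z)$, $(w,Z)\in\mc{S}_{\bc}(X)$. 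The key point, and the only place the hypotheses enter, is that these polar terms cancel: melys-ness forces $\varphi^{-1}(Z)\subset Y^{w}$, so each codimension-one component $Z'$ of $\varphi^{-1}(Z)$ is a genuine reflection of $Y$ and the scalars $\lambda_{w,Z}$ and $\lambda_{w,Z'}$ are linked through $\mr{val}_{Z'}(\varphi^{*}f_Z)$, while the definition of $\varphi^{*}\bc$ recalled above is arranged precisely so that — via the elementary identities $\sum_j g_j\,\varphi^{*}\!\bigl(i_{\theta_j}(d f_Z)\bigr)=\sigma(v)(\varphi^{*}f_Z)$ and $i_{\sigma(v)}(d\log\varphi^{*}f_Z)=\mr{val}_{Z'}(\varphi^{*}f_Z)\,i_{\sigma(v)}(d\log f_{Z'})+(\text{regular along }Z')$ — the residue along $Z'$ of the pulled-back $X$-Dunkl term attached to $(w,Z)$ is absorbed by the $Y$-Dunkl term attached to $(w,Z')$. (For a pair with $\bc(w,Z)=0$ there is no Dunkl term to compare, which is exactly why the melys condition is imposed only on $\mc{S}_{\bc}(X)$.)

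Granting this cancellation, the rest is routine. That the left action just defined is a module structure and commutes with the right action — and that the construction does not depend on the chosen defining equations or frames — all hold over the dense open $V$, where the structure is literally the transfer bimodule of twisted differential operators, hence on all of $Y$ by $\mc{O}_Y$-torsion freeness of $\varphi^{*}\sH_{\omega,\bc}(X,W)$; and the local bimodule structures glue over an affine good cover of $Y$ because they agree on overlaps. The final sentence is then formal: for a left $\sH_{\omega,\bc}(X,W)$-module $\ms{M}$ one has $\varphi^{*}\ms{M}=\varphi^{*}\sH_{\omega,\bc}(X,W)\o_{\varphi^{-1}\sH_{\omega,\bc}(X,W)}\varphi^{-1}\ms{M}$, which is a left $\sH_{\varphi^{*}\omega,\varphi^{*}\bc}(Y,W)$-module. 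I expect the cancellation of the polar terms in $D^Y_v\cdot(1\o 1)$ to be the main obstacle: it is the only place where the melys hypothesis is used, and the whole statement rests on the bookkeeping of orders of vanishing built into $\varphi^{*}\bc$; over a reflection divisor $Z$ with $\bc(w,Z)\neq 0$ but $\varphi^{-1}(Z)\not\subset Y^{w}$ these terms do not cancel and the left action genuinely fails to exist.
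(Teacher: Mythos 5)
Your overall plan --- reduce to an affine good pair, show the desired left action exists over a dense open locus where both sides are twisted differential operators, then check it descends into $\varphi^{*}\sH_{\omega,\bc}(X,W)$ by showing the polar terms in $D^Y_v\cdot(1\o1)$ cancel against the $Y$-Dunkl terms --- is essentially the paper's argument, and your bookkeeping for what you might call the \emph{generic} reflection hypersurfaces (those $(w,Z)\in\mc{S}_{\bc}(X)$ whose preimage is a genuine hypersurface in $Y$; ``type (III)'' in the paper) is correct and matches how $\varphi^{*}\bc$ was designed. The gap is the ``standard preliminary reduction.'' You invoke the factorization of $\varphi$ (on a connected component) as a strongly $\bc$-melys morphism to $X^{K}$ followed by the closed embedding $X^{K}\hookrightarrow X$, so as to assume $V=Y\setminus\varphi^{-1}(E^{\bc}_X)$ is dense. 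But the closed embedding $X^{K}\hookrightarrow X$ is itself a $\bc$-melys morphism that is \emph{not} strongly melys: whenever $(w,Z)\in\mc{S}_{\bc}(X)$ with $w\in K$, one has $X^{K}\subset Z$, which is precisely the case your reduction was supposed to eliminate. So replacing $\varphi$ by the pair $(Y\to X^{K},\ X^{K}\hookrightarrow X)$ does not avoid the problem, it relocates it; and using it tacitly requires the theorem (or composability of the transfer bimodules) already in hand for the closed embedding, none of which is available at this stage of the logical development.

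Concretely, what is missing is the treatment of what the paper calls type (I) hypersurfaces: those $(w,Z)\in\mc{S}_{\bc}(X)$ with $\varphi(Y_i)\subset Z$. For these, $\varphi^{*}f_Z=0$ on $Y_i$, so there is no $Y$-Dunkl term to absorb the pole of $\frac{\sigma(u)(f_Z)}{f_Z}$, and your residue-cancellation mechanism does not apply. The resolution in the paper is different: after averaging the algebroid lift so that $u\in\Gamma(X,\mc{P}_\omega)^{P}$ (with $P$ the pointwise stabilizer of $Y_i$), the melys condition gives $Y_i^{w}=Y_i$ hence $w\in P$, so $w(\sigma(u)(f_Z))=\lambda_{w,Z}\sigma(u)(f_Z)$, forcing $\sigma(u)(f_Z)\in I(Z)$; thus $\sigma(u)(f_Z)/f_Z$ is already a \emph{regular} function on $X$ and contributes a term of $\mc{O}(X)\o\kk W$, not a pole. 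Your write-up never invokes the $P$-averaging of the lift, which is the decisive step for this case and is the one remaining place where the melys hypothesis must be used. Without it, your argument yields a module structure only on the locally closed piece where $V$ is dense, and the theorem is not established for closed embeddings into reflection hypersurfaces (in particular, not for $X^{w}\hookrightarrow X$ itself).
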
 

\begin{proof}
	Let $Y^{\circ} = Y \setminus \left(\bigcup_{Z} Z \right)$, where the union is over all $Z$ with $(w,Z) \in \mc{S}_{\varphi^* \bc}(Y)$. We will first show that, locally on $Y^{\circ}$, the sheaf $\mc{O}_{Y^{\circ}} \o_{\mc{O}_X} \sH_{\omega,\bc}(X,W)$ admits a $\dd_{\varphi^*\omega}(Y^{\circ}) \rtimes W$-module structure. Then we check that this glues to a global module structure. Finally, we check that $\mc{O}_{Y} \o_{\mc{O}_X} \sH_{\omega,\bc}(X,W)$ is an $ \sH_{\varphi^* \omega,\varphi^* \bc}(Y,W)$-submodule of $\mc{O}_{Y^{\circ}} \o_{\mc{O}_X} \sH_{\omega,\bc}(X,W)$. 
	
	Let $p \in Y$ and $q = \varphi(p)$. We replace $X$ by some good neighbourhood of $q$ (still denoted $X$) and replace $Y$ by some good neighbourhood of $p$ contained in $\varphi^{-1}(X)$. This way, we may assume that both $X$ and $Y$ are affine varieties whose reflection hypersurfaces are all principal. Fix $(w,Z) \in \mc{S}(X)$. There exists a non-trivial root of unity $\lambda_{w,Z}$ such that $I(Z) = \langle f \in \mc{O}(X) \, | \, w(f) = \lambda_{w,Z} f \rangle$. In particular, since $I(Z)$ is principal, we may assume that $w(f_Z) = \lambda_{w,Z} f_Z$. 
	
	 Since $Y$ need not be connected, we consider each connected component $Y_i$ in turn. Relative to $Y_i$, we categorize the reflection hypersurfaces $(w,Z)$ in $\mc{S}_{\bc}(X)$ into three types
	\begin{enumerate}
		\item[(I)] $Y_i \subset \varphi^{-1}(Z)$ (and hence $Y_i^w = Y_i$ by the melys condition),
		\item[(II)] $\varphi^{-1}(Z) \cap Y_i = \emptyset$; or 
		\item[(III)] $\varphi^{-1}(Z) \cap Y_i$ is a union of connected components of $Y_i^w$, each one of codimension one in $Y_i$.  
	\end{enumerate}
	Let us justify why only these three cases can occur. If we are not in (I) or (II) then $\varphi^{-1}(Z) \cap Y_i$ is a non-empty but proper subset of $Y_i$. By the $\bc$-melys condition, $\varphi^{-1}(Z) \cap Y_i \subset Y_i^w$. Hence $Y_i^w \neq \emptyset$. Since $Y_i$ is a connected component of $Y$, either $w(Y_i) = Y_i$ or $w(Y_i) \cap Y_i = \emptyset$. Thus, we must have $w(Y_i) = Y_i$. Moreover, since we are not in (I), $\varphi^*(f_Z) |_{Y_i} \neq 0$. Then $w (\varphi^*(f_Z) |_{Y_i}) = \lambda_{w,Z}( \varphi^*(f_Z) |_{Y_i})$ implies that $w$ acts non-trivially on $\mc{O}(Y_i)$. In other words, $Y_i^w$ is a proper closed subset of $Y_i$. On the other hand, $\varphi^{-1}(Z) \cap Y_i$ is a closed subset of $Y_i$ of codimension at most one since $Z$ is a hypersurface. This forces us to be in situation (III). 
	
	As explained in \cite[Section~2.2]{BBJantzen} (see also \cite[Lemma~A.2.2.]{ProjAffine}), there exists an action map $\gamma \colon \Gamma(Y,\mc{P}_{\varphi^* \omega}) \to \Gamma(Y,\varphi^* \mc{P}_{\omega})$ making $\varphi^* \ms{N}$ into a $\dd_{\varphi^* \omega}(Y)$-module for every $\dd_{\omega}(X)$-module $\ms{N}$. The morphism $\gamma$ is $W$-equivariant. In particular, if $\Pa$ is the pointwise stabilizer of $Y_i$ in $W$, then by averaging over $\Pa$ we may assume that $\gamma(v) = \sum_j g_j \o u_j$, with $u_j \in \Gamma(X,\mc{P}_{\omega})^{\Pa}$, for all $v\in \Gamma(Y_i,\mc{P}_{\varphi^* \omega})$. For any $u \in \Gamma(X,\mc{P}_{\omega})^{\Pa}$ and $(w,Z)$ of type (I), we have $w(u) = u$ since $Y_i^w = Y_i$ (melys condition) and hence $w \in \Pa$. Then $w(\sigma(u)(f_Z)) = \lambda_{w,Z} \sigma(u)(f_Z)$ implying that $\sigma(u)(f_Z) \in I(Z)$. This means that $\sigma(u)(f_Z)f_Z^{-1} \in \mc{O}(X)$. If $(w,Z)$ is of type (II) then $\varphi^*(f_Z) |_{Y_i}$ is an invertible function and hence $1 \o \sigma(u)(f_Z)f_Z^{-1} \in \mc{O}(Y_i) \o_{\mc{O}(X)} \mc{O}(X)$. We deduce that if
	\begin{equation*}
	h(u) := \sum_{\stackrel{(w,Z) \in \mc{S}_{\bc}(X)}{\textrm{in (I) or (II)}}} \frac{2 \bc(w,Z)}{1 - \lambda_{w,Z}} \frac{\sigma(u)(f_Z)}{f_Z}(w - 1)
	\end{equation*}
	then $1 \o h(u) \in \mc{O}(Y_i) \o_{\mc{O}(X)} (\mc{O}(X) \o \kk W)$. 
	
	Finally, we note that if $(w,Z)$ is of type (III) then $\varphi^*(f_Z)$ is invertible on the (principal) open set $Y_i^{\circ}$ of $Y_i$. Therefore, if $\gamma(v) = \sum_j g_j \o u_j$ with $u_j \in \Gamma(X,\mc{P}_{\omega})^{\Pa}$ then 
	\begin{equation}\label{eq:Pvarphiomega}
		\sum_j g_j \o \left( D_{u_j}- h(u_j) -  \sum_{\stackrel{(w,Z) \in \mc{S}_{\bc}(X)}{\textrm{in (III)}}} \frac{2 \bc(w,Z)}{1 - \lambda_{w,Z}} \frac{\sigma(u_j)(f_Z)}{f_Z}(w - 1)  \right) = \sum_j g_j \o u_j 
	\end{equation}
	belongs to $\mc{O}(Y_i^{\circ}) \o_{\mc{O}(X)} \H_{\omega,\bc}(X,W)$ and we define a left action of the algebra $\dd_{\varphi^* \omega}(Y_i^{\circ})$ on $\mc{O}(Y_i^{\circ}) \o_{\mc{O}(X)} \H_{\omega,\bc}(X,W)$ via the expression on the left of \eqref{eq:Pvarphiomega}. The action map
	\[
	\dd_{\varphi^* \omega}(Y^{\circ}) \times \left( \mc{O}(Y^{\circ}) \o_{\mc{O}(X)} \H_{\omega,\bc}(X,W) \right) \to \mc{O}(Y^{\circ}) \o_{\mc{O}(X)} \H_{\omega,\bc}(X,W)
	\] 
	is $W$-equivariant and hence the action of $\dd_{\varphi^* \omega}(Y^{\circ})$ on $\mc{O}(Y^{\circ}) \o_{\mc{O}(X)} \H_{\omega,\bc}(X,W)$ extends to an action of $\dd_{\varphi^* \omega}(Y^{\circ}) \rtimes W$ on $\mc{O}(Y^{\circ}) \o_{\mc{O}(X)} \H_{\omega,\bc}(X,W)$. 
	
	To show that this action globalizes (dropping the local assumptions on $X$ and $Y$), it suffices to note that the map $\gamma$ is a map of sheaves $\mc{P}_{\varphi_* \omega} \to \varphi^* \mc{P}_{\omega}$. Therefore, the expressions on the right hand side of \eqref{eq:Pvarphiomega} agree on overlaps. This implies that the expressions on the left hand side of \eqref{eq:Pvarphiomega} agree on overlaps (even though the choice of $f_Z$ depends on the local charts).  
	
	Finally, we check that this construction gives an action of $\sH_{\varphi^* \omega,\varphi^* \bc}(Y,W)$	on $\varphi^* \sH_{\omega,\bc}(X,W)$. Since $ \sH_{\omega,\bc}(X,W)$ is flat over $\mc{O}_X$, the inclusion $\mc{O}_Y \hookrightarrow \mc{O}_{Y^{\circ}}$ tensors up to an inclusion $\varphi^* \sH_{\omega,\bc}(X,W) \hookrightarrow \mc{O}_{Y^{\circ}} \o_{\mc{O}_X} \sH_{\omega,\bc}(X,W)$. The right hand side is a $\sH_{\varphi^* \omega,\varphi^* \bc}(Y,W)$-module since this algebra is a subalgebra of $\dd_{\varphi^* \omega}(Y^{\circ}) \rtimes W$. Therefore, we just need to check locally that $\varphi^* \sH_{\omega,\bc}(X,W)$ is a $\sH_{\varphi^* \omega,\varphi^* \bc}(Y,W)$-submodule. 
	
As above, if $f \in \mc{O}(Y), r \in \H_{\omega,\bc}(X,W)$ and $v \in \Gamma(Y_i,\mc{P}_{\varphi_* \omega})$, then, applying equation \eqref{eq:Pvarphiomega} and using the key definition \eqref{eq:bcpullback}, we have
	\begin{align*}
		D_{v} \cdot(f \o r) & = [D_{v},f] \o r + \sum_j f g_j \o u_j r + \sum_{(w,Z) \in \mc{S}_{\varphi^* \bc}(Y)} f \frac{2 (\varphi^* \bc)(w,Z)}{1 - \lambda_{w,Z}} \frac{\sigma(v)(f_Z)}{f_Z}(w - 1) \o r \\
		& = [D_{v},f] \o r + \sum_j f g_j \o (D_{u_j} - h(u_j)) r \in \mc{O}(Y) \o_{\mc{O}(X)} \H_{\omega,\bc}(X,W),
	\end{align*}
	since $\varphi^* \bc$ is defined so that the equality
	\[
	\sum_{\stackrel{(w,Z) \in \mc{S}_{\varphi^* \bc}(Y)}{Z \subset Y_i}} \frac{2 (\varphi^* \bc)(w,Z)}{1 - \lambda_{w,Z}} (d \log f_Z) (w - 1) = \sum_{\textrm{(III)}} \frac{2 \bc(w,Z')}{1 - \lambda_{w,Z'}} \varphi^*(d \log f_{Z'}) (w - 1) 
	\]
holds in $\Omega_{Y^{\circ}_i}^1 \o \kk W$. 
\end{proof}

\begin{remark}\label{rem:nopullbackmelys}
	One cannot expect a $\sH_{\varphi^* \omega,\bc'}(Y,W)$-module structure on the pullback $\varphi^* \ms{M}$ of a $\sH_{\omega,\bc}(X,W)$-module $\ms{M}$ (for some $\bc'$) if we drop the melys condition. For instance, consider the case where $X = \h$ and $W$ acts as a complex reflection group. If we take $Y = \h \times \h$ with diagonal $W$-action and $\varphi$ projection onto the second factor then $\mc{S}(Y)$ is empty and so $\H_{\bc'}(Y,W) = \dd(Y) \rtimes W$. But we can choose $\bc$ such that $\H_{\bc}(X,W)$ has modules of GK-dimension less than $\dim \h$. This would give rise to (non-zero) modules over $\dd(Y) \rtimes W$ of GK-dimension less than $2 \dim \h$, a contradiction. 
\end{remark}

Each reflection hypersurface $Z \subset X$ is a smooth divisor. We denote by $\omega_Z$ the class of the associated line bundle $\mc{O}_X(Z)$ in $\mathbb{H}^2(X,\Omega^{1,2}_X)$. The class of the canonical divisor $K_X$ in $\mathbb{H}^2(X,\Omega^{1,2}_X)$ is denoted $\omega^{\mr{can}}$. For a parameter $\bc \in \mc{S}(X)^W$, define $\overline{\bc}(w,Z) := \bc(w^{-1},Z)$. Then it was shown in \cite[Lemma~4.1]{ThompsonHolI} that there is an isomorphism of sheaves of algebras
\[
\sH_{\omega,\bc}(X,W)^{\op} \cong \sH_{\overline{\omega},\overline{\bc}}(X,W), 
\] 
where\footnote{In the main isomorphism of \cite[Lemma~4.1]{ThompsonHolI}, the expression $\omega^{\mr{can}} - \nu + \sum 2 \bc(w,Z) \omega_Z$ should be corrected to $\omega^{\mr{can}} - \nu + \sum 2 \overline{\bc}(w,Z) \omega_Z$.}
\begin{equation}\label{eq:omegabarrightleft}
	\overline{\omega} := \omega^{\mr{can}} - \omega + \sum_{(w,Z) \in \mc{S}(X)} 2 \overline{\bc}(w,Z) \omega_Z. 
\end{equation}
As explained in \cite[Section~4.2]{ThompsonHolI}, tensoring by the canonical bundle $K_X$ gives an equivalence between left $\sH_{\omega,\bc}(X,W)$-modules and right $\sH_{\overline{\omega},\overline{\bc}}(X,W)$-modules. Therefore, just as for $\dd$-modules, one can associate to any $\bc$-melys morphism the transfer bimodules
$$
\sH_{Y \to X} := \varphi^* \sH_{\omega,\bc}(X,W) \quad \textrm{over} \quad (\sH_{\varphi^*\omega,\varphi^*\bc}(Y,W),\varphi^{-1} \sH_{\omega,\bc}(X,W)),
$$
and
$$
\sH_{X \leftarrow Y} := K_Y \o_{\mc{O}_Y} \varphi^* \sH_{\overline{\omega},\overline{\bc}}(X,W) \o_{\varphi^{-1}\mc{O}_X} \varphi^{-1} K_X^{-1} 
$$
over $(\varphi^{-1} \sH_{\omega,\bc}(X,W),\sH_{\varphi^*\omega,\varphi^*\bc}(Y,W))$.

\begin{remark}\label{rem:melysop}
	If $\varphi \colon Y \to X$ is $\bc$-melys then one can check that $\overline{\varphi^* \omega } = \varphi^* \overline{\omega}$ and $\overline{\varphi^* \bc} = \varphi^* \overline{\bc}$. 
\end{remark}

 We define the direct and inverse image functors $\varphi_0 \colon \Qcoh{\sH_{\varphi^*\omega,\varphi^*\bc}(Y,W)} \to \Qcoh{\sH_{\omega,\bc}(X,W)}$ and $\varphi^0 \colon \Qcoh{\sH_{\omega,\bc}(X,W)} \to \Qcoh{\sH_{\varphi^*\omega,\varphi^*\bc}(Y,W)}$ by 
 \begin{align*}
\varphi_0(\ms{M}) & := \varphi_{\idot}(\sH_{X \leftarrow Y} \o_{\sH_{\varphi^*\omega,\varphi^*\bc}(Y,W)} \ms{M}) \\
\varphi^0 (\ms{N}) & := \sH_{Y \to X} \o_{\varphi^{-1} \sH_{\omega,\bc}(X,W)} \varphi^{-1} \ms{N} \\
& \ (\cong \varphi^{*} \ms{N} \textrm{as an $\mc{O}_Y$-module.}) & 
\end{align*}

\begin{example}\label{ex:invariantfunctionmelys}
	Let $X,Y,Z$ be smooth $W$-spaces with $W$ acting trivially on $Z$. If  $g \colon X \to Y$ is $\bc$-melys and $f \colon X \to Z$ any equivariant morphism then $g \times f \colon X \to Y \times Z$ is $\bc$-melys. In particular, taking $X = Y$ and $g = \mr{Id}$ gives a $\bc$-melys morphism $X \to X \times Z$. This gives an action of the monoidal category $\Coh{\dd_Z}$ on $\Coh{\sH_{\omega,\bc}(X,W)}$; the module $\ms{A} \in \Coh{\dd_Z}$ acts by $\ms{M} \mapsto (\mr{Id} \times f)^0(\ms{M} \boxtimes \ms{A})$.    
\end{example}

Since we work over smooth quasi-compact and separated schemes, \cite[Proposition~2.2.7]{Globallocallyfreeres} says that every quasi-coherent $\mc{O}_X$-module is a quotient of a locally free $\mc{O}_X$-module, where we may choose the locally free module to be of finite rank if the $\mc{O}_X$-module is coherent. This means that many of the results on $\dd$-modules apply verbatim in our case without the usual assumption (\cite[Assumption~1.4.19]{HTT} or \cite[VI, Section~4]{BorelDmod}) that $X$ be quasi-projective. In particular, any (coherent) $\sH_{\omega,\bc}(X,W)$-module admits a resolution by (finite rank) locally free $\sH_{\omega,\bc}(X,W)$-modules and a resolution by (coherent) locally projective $\sH_{\omega,\bc}(X,W)$-modules of length at most $2 \dim X$; compare with \cite[Corollary~1.4.20]{HTT}. 

Therefore we may extend the functors $\varphi^0, \varphi_0$ to various derived categories. Assume that $X,Y$ are $W$-connected (so equidimensional). Let $ D^b_{\mathrm{qc}}(\sH_{\omega,\bc}(X,W))$ denote the bounded derived category of  $\sH_{\omega,\bc}(X,W)$-modules with quasi-coherent cohomology. Taking finite locally projective resolutions, we define $\varphi_+ \colon D^b_{\mathrm{qc}}(\sH_{\varphi^*\omega,\varphi^*\bc}(Y,W)) \to D^b_{\mathrm{qc}}(\sH_{\omega,\bc}(X,W))$ and $\varphi^! \colon D^b_{\mathrm{qc}}(\sH_{\omega,\bc}(X,W)) \to D^b_{\mathrm{qc}}(\sH_{\varphi^*\omega,\varphi^*\bc}(Y,W))$ by 
$$
\varphi_+(\ms{M}) = R \varphi_{\idot}(\sH_{X \leftarrow Y} \o_{\sH_{\varphi^*\omega,\varphi^*\bc}(Y,W)}^{{L}} \ms{M})
$$
and
$$
\varphi^!(\ms{N}) = \sH_{Y \to X} \o_{\varphi^{-1} \sH_{\omega,\bc}(X,W)}^{{L}} \varphi^{-1} \ms{N} [\dim Y - \dim X].
$$
so that $\varphi^!(\ms{N}) = {L} \varphi^{0} (\ms{N})[\dim Y - \dim X]$.

\begin{remark}
	A priori it is not clear that $\varphi_0$ preserves quasi-coherence (resp. $\varphi_+$ preserves complexes with quasi-coherent cohomology). However, the usual proof in the case of $\dd$-modules, by using a spectral sequence to reduce to the case where $X$ and $Y$ are affine, works verbatim for sheaves of Cherednik algebras. See \cite[VI, Section 5.1]{BorelDmod} for a detailed proof in the case of $\dd$-modules. 
\end{remark}

\begin{lem}
	If $\varphi \colon Y \to X$ is strongly $\bc$-melys and $\psi \colon U \to Y$ is $\varphi^*\bc$-melys then $\varphi \circ \psi$ is $\bc$-melys. If, in addition, $\psi$ is strongly $\varphi^* \bc$-melys then $\varphi \circ \psi$ is strongly $\bc$-melys. 
\end{lem}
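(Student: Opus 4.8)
The plan is to fix a single reflection $(w,Z)\in\mc{S}_{\bc}(X)$ and verify the two defining conditions of (strongly) melys for this pair, since $(\varphi\circ\psi)^{-1}(Z)=\psi^{-1}(\varphi^{-1}(Z))$ and both properties are checked reflection by reflection.

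The first step is to describe $\varphi^{-1}(Z)$ geometrically. As in the proof of Theorem~\ref{thm:melyspullbackgeneral}, for each connected component $Y_i$ of $Y$ exactly one of the following holds: (I) $Y_i\subseteq\varphi^{-1}(Z)$, in which case $Y_i^w=Y_i$; (II) $\varphi^{-1}(Z)\cap Y_i=\emptyset$; or (III) $\varphi^{-1}(Z)\cap Y_i$ is a disjoint union of connected components of $Y_i^w$, each of codimension one in $Y_i$. Because $\varphi$ is \emph{strongly} $\bc$-melys, case (I) cannot occur: if $Y_i\cap\varphi^{-1}(Z)\neq\emptyset$ then $Y_i^w\neq Y_i$, which contradicts $Y_i\subseteq Y_i^w$. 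Hence $\varphi^{-1}(Z)=\bigsqcup_j Z'_j$, a disjoint union of connected components of $Y^w$ of codimension one in $Y$; in particular $(w,Z'_j)\in\mc{S}(Y)$ for each $j$, and each $Z'_j$ is itself a connected component of $\varphi^{-1}(Z)$, the pieces being pairwise disjoint and connected.

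The second step is to upgrade this to $(w,Z'_j)\in\mc{S}_{\varphi^*\bc}(Y)$. Since $Z'_j$ is a connected component of $\varphi^{-1}(Z)$ and $(w,Z)\in\mc{S}_{\bc}(X)$, the definition of $\varphi^*\bc$ gives $(\varphi^*\bc)(w,Z'_j)=\mr{val}_{Z'_j}(\varphi^*f_Z)\cdot\bc(w,Z)$. The factor $\bc(w,Z)$ is nonzero by hypothesis, and $\mr{val}_{Z'_j}(\varphi^*f_Z)\ge 1$ because $Z'_j\subseteq\varphi^{-1}(Z)$ lies in the zero locus of the (locally defined) regular function $\varphi^*f_Z$ — the order of vanishing being independent of the good chart chosen, as $f_Z$ is determined up to a unit. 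Thus $(\varphi^*\bc)(w,Z'_j)\neq 0$. Applying that $\psi$ is $\varphi^*\bc$-melys, $\psi^{-1}(Z'_j)\subseteq U^w$ for every $j$, whence
\[
(\varphi\circ\psi)^{-1}(Z)=\psi^{-1}\Bigl(\bigsqcup_j Z'_j\Bigr)=\bigcup_j\psi^{-1}(Z'_j)\subseteq U^w.
\]
Since $(w,Z)\in\mc{S}_{\bc}(X)$ was arbitrary, $\varphi\circ\psi$ is $\bc$-melys. For the strongly melys claim, suppose $U_k$ is a connected component of $U$ with $U_k\cap(\varphi\circ\psi)^{-1}(Z)\neq\emptyset$; then $U_k\cap\psi^{-1}(Z'_j)\neq\emptyset$ for some $j$, and since $(w,Z'_j)\in\mc{S}_{\varphi^*\bc}(Y)$ and $\psi$ is strongly $\varphi^*\bc$-melys, we conclude $U_k^w\neq U_k$, as required.

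The only ingredient here that is not pure bookkeeping is the trichotomy (I)/(II)/(III) for $\varphi^{-1}(Z)$, which I regard as the crux of the argument; but this is exactly the case analysis already carried out inside the proof of Theorem~\ref{thm:melyspullbackgeneral}, so it can be invoked rather than reproved. Everything else is a direct unwinding of the definition of $\varphi^*\bc$ and of the two clauses in the (strongly) melys conditions, with the strong hypothesis on $\varphi$ being used precisely to rule out the ``type (I)'' components where $\varphi^{-1}(Z)$ could fail to be a union of reflection hypersurfaces of $Y$.
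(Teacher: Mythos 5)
Your proof is correct and follows essentially the same route as the paper: both recast the strong melys hypothesis on $\varphi$ as saying that $\varphi^{-1}(Z)$ is a disjoint union of reflection hypersurfaces $Z'$ of $Y$ with $(w,Z') \in \mc{S}_{\varphi^*\bc}(Y)$, and then conclude from $(\varphi\circ\psi)^{-1}(Z) = \bigcup \psi^{-1}(Z')$. The only difference is that you spell out the exclusion of type (I) components and the nonvanishing of $\varphi^*\bc$ in more detail than the paper's terse one-line argument.
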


\begin{proof}
	Being strongly melys can be stated as saying that for each $(w,Z) \in \mc{S}_{\bc}(X)$, the pre-image $\varphi^{-1}(Z)$ is a (disjoint) union of reflection hypersurfaces $\bigcup Z'$ with $(w,Z') \in \mc{S}(Y)$. By definition, we have $(w,Z') \in \mc{S}_{\varphi^* \bc}(Y)$ for all $Z'$ occurring in this union. The two statements of the lemma follow immediately from this fact since 
	\[
	(\varphi \circ \psi)^{-1}(Z) = \bigcup \psi^{-1}(Z').
	\]
\end{proof}

Whenever the composition of two melys morphisms $\varphi,\psi$ is again melys, we have $(\varphi \circ \psi)^! \cong \psi^! \circ \varphi^!$ and $(\varphi \circ \psi)_+ \cong \varphi_+ \circ \psi_+$. The proof of these statements is identical to the $\dd$-module case (see \cite[Proposition~1.5.11, Proposition~1.5.21]{HTT}), the key fact being that multiplication is an isomorphism 
\[
\sH_{Z \to X} \cong \sH_{Z \to Y} \o^{L}_{\psi^{-1}  \sH_{\varphi* \omega, \varphi^* \bc}(Y,W)} \psi^{-1} \sH_{Y \to X}.
\]

Let $W(\bc)$ be the subgroup of $W$ generated by all $w$ such that $(w,Z) \in \mc{S}_{\bc}(X)$. Then $\sH_{\omega,\bc}(X,W(\bc))$ is a subsheaf of $\sH_{\omega,\bc}(X,W)$, the latter being free of rank $|W/W(\bc)|$ over the former. We may consider the forgetful functor 
\[
\mr{For}_X \colon \Qcoh{\sH_{\omega,\bc}(X,W)} \to \Qcoh{\sH_{\omega,\bc}(X,W(\bc))}.
\]
Note that objects of $\Qcoh{\sH_{\omega,\bc}(X,W)}$ are sheaves on $X/W$ whilst those of $\Qcoh{\sH_{\omega,\bc}(X,W(\bc))}$ are sheaves on $X/W(\bc)$, so $\mr{For}_X(\ms{M}) = \eta^{-1} \ms{M}$ as sheaves, where $\eta \colon X / W(\bc) \to X/W$ is the quotient. The following is immediate. 

\begin{lem}\label{lem:forgetcommutepushpull}
	We have commutative diagrams
	$$
	\begin{tikzcd}
	 \Qcoh{\sH_{\varphi^*\omega,\varphi^*\bc}(Y,W)} \ar[r,"\varphi_0"] \ar[d,"\mr{For}_Y"'] &  \Qcoh{\sH_{\omega,\bc}(X,W)} \ar[d,"\mr{For}_X"] \\
	 \Qcoh{\sH_{\varphi^*\omega,\varphi^*\bc}(Y,W(\bc))} \ar[r,"\varphi_0"] & \Qcoh{\sH_{\omega,\bc}(X,W(\bc))},
	\end{tikzcd}
	$$
	$$
	\begin{tikzcd}
	 \Qcoh{\sH_{\varphi^*\omega,\varphi^*\bc}(Y,W)} \ar[d,"\mr{For}_Y"'] &  \Qcoh{\sH_{\omega,\bc}(X,W)} \ar[l,"\varphi^0"] \ar[d,"\mr{For}_X"] \\
	 \Qcoh{\sH_{\varphi^*\omega,\varphi^*\bc}(Y,W(\bc))} & \Qcoh{\sH_{\omega,\bc}(X,W(\bc))} \ar[l,"\varphi^0"],
	\end{tikzcd}
	$$
	and $\SS_X(\ms{M}) = \SS_X(\mr{For}_X(\ms{M}))$ for any coherent $\sH_{\omega,\bc}(X,W)$-module $\ms{M}$. 
	
	The analogous commutative diagrams exist when considering the derived functors $\varphi_+, \varphi^!$.  
\end{lem}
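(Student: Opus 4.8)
The plan is to reduce the lemma to two essentially formal facts: compatibility of the transfer bimodules with $\mr{For}$, and the fact that on underlying (complexes of) $\mc{O}$-modules the functors $\varphi_0,\varphi^0,\varphi_+,\varphi^!$ are built from ordinary sheaf operations along $\varphi$ together with tensoring over the structure algebra. To make the second point transparent I would first replace the description of $\sH_{\omega,\bc}(X,W)$-modules as sheaves on $X/W$ by the equivalent description as $W$-equivariant quasi-coherent $\mc{O}_X$-modules on $X$ carrying a compatible action; under this dictionary $\mr{For}_X$ becomes the functor that restricts the $W$-equivariant structure to $W(\bc)$ (this uses that $W(\bc)$ is normal in $W$: since $\bc$ is $W$-equivariant and $X^{gwg^{-1}} = g(X^w)$, conjugation permutes $\mc{S}_{\bc}(X)$). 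The three technical ingredients I would use throughout are: (i) $\sH_{\omega,\bc}(X,W)$ is free of rank $|W/W(\bc)|$ over $\sH_{\omega,\bc}(X,W(\bc))$, a consequence of the PBW theorem (Theorem~\ref{thm:PBW}); (ii) Remark~\ref{rem:melysop}, i.e. $\overline{\varphi^*\omega} = \varphi^*\overline\omega$ and $\overline{\varphi^*\bc} = \varphi^*\overline\bc$, together with the trivial equality $W(\overline\bc) = W(\bc)$ (as $(w,Z)\in\mc{S}(X)$ iff $(w^{-1},Z)\in\mc{S}(X)$, since $X^w = X^{w^{-1}}$); and (iii) the observation that the sum in the Dunkl--Opdam formula \eqref{eq:DOop}, and likewise the action formula \eqref{eq:Pvarphiomega} in the proof of Theorem~\ref{thm:melyspullbackgeneral}, only ever involves reflections $(w,Z)$ with $\bc(w,Z)\neq 0$, hence $w\in W(\bc)$.

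Next I would treat the two functors. For $\varphi^0$: since $\sH_{Y\to X} = \mc{O}_Y\o_{\varphi^{-1}\mc{O}_X}\varphi^{-1}\sH_{\omega,\bc}(X,W)$ as a right module, $\varphi^0\ms{N}$ has underlying $\mc{O}_Y$-module $\varphi^*\ms{N}$ regardless of whether we use $W$ or $W(\bc)$; and by (iii) the $\sH_{\varphi^*\omega,\varphi^*\bc}(Y,W)$-action from Theorem~\ref{thm:melyspullbackgeneral}, restricted to $\sH_{\varphi^*\omega,\varphi^*\bc}(Y,W(\bc))$, is given by the same formula as the action obtained directly on $\varphi^0(\mr{For}_X\ms{N})$, so $\mr{For}_Y\circ\varphi^0\cong\varphi^0\circ\mr{For}_X$. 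For $\varphi_0$: using (i) for the parameter $(\overline\omega,\overline\bc)$ and (ii), I would first establish an isomorphism of right $\sH_{\varphi^*\omega,\varphi^*\bc}(Y,W)$-modules, compatible with the left $\varphi^{-1}\sH_{\omega,\bc}(X,W(\bc))$-structure (the superscript records the group used to form the transfer bimodule),
\[
\sH_{X\leftarrow Y}^{W}\;\cong\;\sH_{X\leftarrow Y}^{W(\bc)}\o_{\sH_{\varphi^*\omega,\varphi^*\bc}(Y,W(\bc))}\sH_{\varphi^*\omega,\varphi^*\bc}(Y,W),
\]
then tensor with $\ms{M}$ and cancel to get $\sH_{X\leftarrow Y}^{W}\o_{\sH_{\varphi^*\omega,\varphi^*\bc}(Y,W)}\ms{M}\cong\sH_{X\leftarrow Y}^{W(\bc)}\o_{\sH_{\varphi^*\omega,\varphi^*\bc}(Y,W(\bc))}\mr{For}_Y\ms{M}$, and finally apply $\varphi_{\idot}$. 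Passing to the derived functors $\varphi_+,\varphi^!$ costs nothing: $\mr{For}$ is exact, by (i) it sends locally projective $\sH_{\omega,\bc}(X,W)$-modules to locally projective $\sH_{\omega,\bc}(X,W(\bc))$-modules, and it commutes with $R\varphi_{\idot}$ of the underlying equivariant complexes; the homological shift $[\dim Y - \dim X]$ is untouched.

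For the singular-support assertion I would argue directly. By (iii) the Dunkl--Opdam operators attached to $(X,W)$ and to $(X,W(\bc))$ literally coincide inside $j_{\idot}(\dd_{\omega}(X\setminus E)\rtimes W)$, so using \eqref{eq:ses} one gets $\F^1\sH_{\omega,\bc}(X,W) = \F^1\sH_{\omega,\bc}(X,W(\bc)) + \mc{O}_X\o\kk W$. Given a coherent $\ms{M}$ I would pick a $W$-stable good filtration $\mc{F}_{\idot}\ms{M}$ (average an arbitrary good filtration over $W$; still good, since each $w$ preserves the order filtration and $\F^1\sH_{\omega,\bc}(X,W)$ is conjugation-stable). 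Each $\mc{F}_i\ms{M}$ is then a $W$-stable coherent $\mc{O}_X$-submodule, whence $(\mc{O}_X\o\kk W)\mc{F}_i\ms{M} = \mc{F}_i\ms{M}$ and, for $i\gg0$, $\mc{F}_{i+1}\ms{M} = \F^1\sH_{\omega,\bc}(X,W)\mc{F}_i\ms{M} = \mc{F}_i\ms{M} + \F^1\sH_{\omega,\bc}(X,W(\bc))\mc{F}_i\ms{M} = \F^1\sH_{\omega,\bc}(X,W(\bc))\mc{F}_i\ms{M}$; so $\mc{F}_{\idot}\ms{M}$ is also good for the $\sH_{\omega,\bc}(X,W(\bc))$-module $\mr{For}_X\ms{M}$, and $\gr_{\mc{F}}\mr{For}_X\ms{M}$ and $\gr_{\mc{F}}\ms{M}$ coincide as sheaves on $T^*X$. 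Since $\SS_X(-)$ is by definition the support over $\mc{O}_{T^*X}$ of such a graded, this gives $\SS_X(\mr{For}_X(\ms{M})) = \SS_X(\ms{M})$.

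The hardest part is the transfer-bimodule step for $\varphi_0$: one must bookkeep the left and right bimodule structures carefully — in particular the canonical-bundle twists $K_Y\o(-)\o\varphi^{-1}K_X^{-1}$ entering $\sH_{X\leftarrow Y}$ and the passage to the opposite parameters $(\overline\omega,\overline\bc)$ — and verify that the induced $\sH_{\varphi^*\omega,\varphi^*\bc}(Y,W(\bc))$-module structures on the two sides genuinely agree. Once that is in place, the sheaf-theoretic compatibility, the passage to derived categories, and the singular-support claim are all routine.
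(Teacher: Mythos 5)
The paper records this lemma with the single line ``The following is immediate'' and gives no proof at all, so there is no argument in the text to compare against. Your write-up supplies the missing details and, as far as I can tell, does so correctly. You isolate exactly the mechanisms that make the statement formal: that $W(\bc)$ is automatically normal in $W$ (since $\bc$ is $W$-invariant and conjugation permutes $\mc{S}_{\bc}(X)$), that $\sH_{\omega,\bc}(X,W)$ is free of rank $|W/W(\bc)|$ over $\sH_{\omega,\bc}(X,W(\bc))$ (which the paper states just before the lemma, so you need not rederive it from PBW), that the Dunkl--Opdam sum in \eqref{eq:DOop} and the action formula \eqref{eq:Pvarphiomega} range only over reflections $(w,Z)$ with $\bc(w,Z)\neq 0$, i.e.\ $w\in W(\bc)$, and that $W(\overline{\bc})=W(\bc)$ together with Remark~\ref{rem:melysop}. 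On the level of the transfer bimodules and on singular supports your arguments go through. Two small remarks. First, you do not actually need to average a good filtration over $W$: since $\mc{O}_X\rtimes W$ sits in filtration degree $0$, every good filtration on an $\sH_{\omega,\bc}(X,W)$-module already has $W$-stable pieces, and the freeness of $\F^{\idot}\sH_{\omega,\bc}(X,W)$ over $\F^{\idot}\sH_{\omega,\bc}(X,W(\bc))$ with a degree-$0$ basis then directly shows the same filtration is good over the subalgebra. Second, for the $\varphi_0$-square you should name the remaining ingredient more explicitly: the whole point of the dictionary ``sheaves on $X/W$ versus $W$-equivariant sheaves on $X$'' is that $\varphi_{\idot}$ (and $R\varphi_{\idot}$) are computed on $Y\to X$ and visibly commute with restricting equivariance, so the base-change $\eta_X^{-1}\overline{\varphi}_{\idot}\cong \overline{\varphi}_{\idot}\eta_Y^{-1}$ between the quotient spaces is built into the dictionary rather than needing a separate verification; you allude to this only at the end when discussing derived functors, and it would be cleaner to state it up front.
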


\subsection{Completions}

The completion of the local ring $\mc{O}_{X,x}$ at its maximal ideal is denoted $\widehat{\mc{O}}_{X,x}$. Let $\widehat{X}_x = \mathrm{Spec}\, \widehat{\mc{O}}_{X,x}$. For each $x \in X$, with stabilizer $W_x$, let $\bc_x$ denote the restriction of $\bc$ to the set of pairs $(w,Z)$ with $x \in Z$.

\begin{lem}\label{lem:completelocalCA}
	If $\ms{M}$ is a $\sH_{\omega,\bc}(X,W)$-module then $\widehat{\mc{O}}_{X,x} \o_{\mc{O}_X} \ms{M}$ is a $\sH_{\bc_x}(\widehat{X}_x,W_x)$-module.
\end{lem}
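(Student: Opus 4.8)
The plan is to model the argument on the Bezrukavnikov--Etingof completion theorem for rational Cherednik algebras, reducing the general smooth situation to the linear one by a formal linearization of the $W_x$-action. First I would localize: replace $X$ by a good affine $W$-stable neighbourhood $U$ of $x$ (Lemma~\ref{lem:goodneighbourhood}) and $\ms{M}$ by $M = \Gamma(U,\ms{M})$, a module over $\H := \H_{\omega,\bc}(U,W)$. Since $\sH_{\omega,\bc}(X,W)$ acts on $M$ through Grothendieck's ring of differential operators on $U/W$ (Section~\ref{sec:PBW}), it is continuous for the $\mf{m}_{\bar x}$-adic topology ($\bar x$ the image of $x$ in $U/W$); hence the completed module $\widehat{M}_{W\cdot x} := \widehat{\mc{O}}_{X,W\cdot x}\o_{\mc{O}(U)} M$ is a module over the $\mf{m}_{\bar x}$-adic completion $\widehat{\H}_{W\cdot x}$. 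The ring $\widehat{\mc{O}}_{X,W\cdot x} = \prod_{x'\in W\cdot x}\widehat{\mc{O}}_{X,x'}$ sits inside $\widehat{\H}_{W\cdot x}$, so the idempotent $e_x$ projecting onto the $x$-factor does too, and $e_x \widehat{\H}_{W\cdot x} e_x$ acts on $e_x \widehat{M}_{W\cdot x} = \widehat{\mc{O}}_{X,x}\o_{\mc{O}_X}\ms{M}$, which is the module in the statement.

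Next I would linearize. Because $\mathrm{char}\,\kk = 0$ and $X$ is smooth, the $W_x$-action on $\widehat{\mc{O}}_{X,x}$ is linearizable, yielding a $W_x$-equivariant identification of $\widehat{X}_x$ with the formal neighbourhood of $0$ in $\h := T_x X$. Under it, each reflection hypersurface $Z$ with $x\in Z$ (necessarily $w\in W_x$, since $x\in Z\subset X^w$) becomes a reflection hyperplane of $(\h,W_x)$, the local equation $f_Z$ matching a linear form up to a unit, and $\bc_x$ — the restriction of $\bc$ to such pairs — is exactly the induced parameter, the order-of-vanishing factor being $1$ as $Z$ is smooth at $x$. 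Moreover $\mathbb{H}^2(\widehat{X}_x,\Omega^{1,2})=0$ by the formal Poincaré lemma, so $\mc{P}_{\omega}$ splits formally at $x$ and there is no twist, consistent with the notation $\sH_{\bc_x}(\widehat{X}_x,W_x)$.

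Then I would build an algebra homomorphism $\sH_{\bc_x}(\widehat{X}_x,W_x)\to e_x\widehat{\H}_{W\cdot x}e_x$ and compose it with the action above. On $\widehat{\mc{O}}_{X,x}\rtimes W_x$ take $f\mapsto fe_x$ and $w\mapsto e_x w$, using $e_x w = w e_x$ for $w\in W_x$ and $e_x w e_x = 0$ otherwise. For a formal Dunkl--Opdam operator $D^{\mathrm{loc}}_v$ as in \eqref{eq:DOop} (built from the split algebroid on $\widehat{X}_x$), $\mc{O}$-linearity of $v\mapsto D^{\mathrm{loc}}_v$ reduces matters to $v = \partial_i$ for a frame $\partial_1,\dots,\partial_n$ of $\Theta_U$; one then checks that $e_x D_{\partial_i} e_x$, where $D_{\partial_i}$ is the global Dunkl--Opdam operator on $U$ viewed inside $\widehat{\H}_{W\cdot x}$, equals $D^{\mathrm{loc}}_{\partial_i}$ plus a correction lying in $\widehat{\mc{O}}_{X,x}\rtimes W_x$: this correction comes from the reflections $(w,Z)$ with $x\notin Z$ — for which $\sigma(\partial_i)(f_Z)/f_Z\in\mc{O}_{X,x}$ and $e_x w e_x$ either vanishes or equals $e_x w$ with $w\in W_x$ — together with the discrepancy between $\omega$ on $U$ and its formal splitting at $x$. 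Hence $D^{\mathrm{loc}}_{\partial_i}\in e_x\widehat{\H}_{W\cdot x}e_x$, the homomorphism exists, and sheafifying over the formal polydisc (everything in sight is local) endows $\widehat{\mc{O}}_{X,x}\o_{\mc{O}_X}\ms{M}$ with a $\sH_{\bc_x}(\widehat{X}_x,W_x)$-module structure.

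The hard part will be the third step: verifying the Bezrukavnikov--Etingof-type compatibility $\sH_{\bc_x}(\widehat{X}_x,W_x)\subseteq e_x\widehat{\H}_{W\cdot x}e_x$, keeping careful track of the twist $\omega$ (present on $U$ but trivialized on $\widehat{X}_x$) and of the reflection hypersurfaces not passing through $x$. Everything else — the continuity of the action, the decomposition of the completed semilocal ring, and the linearization — is routine bookkeeping with completions and the PBW theorem (Theorem~\ref{thm:PBW}).
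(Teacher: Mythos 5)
Your proposal is correct and takes essentially the same route as the paper: you reconstruct in detail the Bezrukavnikov--Etingof completion/Morita equivalence (continuity of Dunkl operators in the $\mathfrak{m}_{\bar x}$-adic topology, decomposition of the completed semilocal ring into a product indexed by $W\cdot x$, the idempotent $e_x$, linearization of the $W_x$-action, and the comparison of Dunkl--Opdam operators up to corrections in $\widehat{\mc{O}}_{X,x}\rtimes W_x$). The paper's proof is a one-paragraph appeal to precisely this result, citing \cite{BE} and \cite{ChereSheaf}, so the only difference is that you have written out what the paper defers to the references.
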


See \cite{ChereSheaf} for the precise definition of $\sH_{\bc_x}(\widehat{X}_x,W_x)$. 

\begin{proof}
	Since Dunkl operators are $\mathrm{ad}(\mc{O}_{X/W})$-nilpotent, the multiplication on $\sH_{\omega,\bc}(X,W)$ extends to $\widehat{\mc{O}}_{X/W,\overline{x}} \o_{\mc{O}_{X/W}} \sH_{\omega,\bc}(X,W)$ and the latter algebra is Morita equivalent to the algebra $\widehat{\mc{O}}_{X,x} \o_{\mc{O}_X} \sH_{\omega,\bc}(X,W_x) = \sH_{\bc_x}(\widehat{X}_x,W_x)$ in such a way that if $\ms{M}$ is a $\sH_{\omega,\bc}(X,W)$-module then the space $\widehat{\mc{O}}_{X,x} \o_{\mc{O}_X} \ms{M}$ is a $\sH_{\bc_x}(\widehat{X}_x,W_x)$-module; see \cite{BE} and \cite{ChereSheaf}. 
\end{proof}	
	
If $\ms{M}$ is a coherent $\sH_{\omega,\bc}(X,W)$-module then each filtered piece in a good filtration on $\ms{M}$ is coherent over $\mc{O}_X$. This implies that 
\begin{equation}\label{eq:completesingularsupport}
\SS_{\widehat{X}_{x}}(\widehat{\mc{O}}_{X,x} \o_{\mc{O}_X} \ms{M}) = \widehat{X}_{x} \underset{X}{\times} \SS_X(\ms{M}),
\end{equation}
where $\widehat{\mc{O}}_{X,x} \o_{\mc{O}_X} \ms{M}$ is a module over $\sH_{\bc_x}(\widehat{X}_x,W_x)$.  

The $\bc$-melys map $\varphi \colon Y \to X$ restricts to a $\bc_{\varphi(y)}$-melys morphism $\widehat{\varphi} \colon \widehat{Y}_{y} \to \widehat{X}_{\varphi(y)}$ on the formal neighbourhood of $y$ in $Y$. Let $\bc_y = \widehat{\varphi}^* \bc_{\varphi(y)}$. 

\begin{lem}\label{lem:completepullbackbi}
Let $\varphi \colon Y \to X$ be a $\bc$-melys morphism and choose $y \in Y^W, x = \varphi(y)$.  
\begin{enumerate}
	\item[(i)] As $\sH_{\bc_y}(\widehat{Y}_y,W)$-$\sH_{\bc_x}(\widehat{X}_x,W)$-bimodules,
	$$
	\widehat{\mc{O}}_{Y,y} \o_{\mc{O}_Y} \sH_{Y \to X} \cong \sH_{\widehat{T_y Y} \to \widehat{T_x X}}
	$$
	and similarly for $\sH_{X \leftarrow Y}$.
	\item[(ii)] If $\widehat{\mc{O}}_{Y,y} \cong \mc{O}_{Y,y} \o_{\mc{O}_{X,x}} \widehat{\mc{O}}_{X,x}$ then in addition,
	$$
	\widehat{\mc{O}}_{Y,y} \o_{\mc{O}_Y} \sH_{Y \to X} \cong \sH_{Y \to X} \o_{\varphi^{-1} \mc{O}_X} \widehat{\mc{O}}_{X,x},
	$$
 and similarly for $\sH_{X \leftarrow Y}$. 
\end{enumerate} 
\end{lem}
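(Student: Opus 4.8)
The plan is to reduce both statements to the corresponding properties of the transfer bimodule $\sH_{Y \to X} = \varphi^* \sH_{\omega,\bc}(X,W)$ as an $\mc{O}_Y$-module, together with flatness of $\sH_{\omega,\bc}(X,W)$ over $\mc{O}_X$ (Theorem~\ref{thm:PBW}) and the description of completions from Lemma~\ref{lem:completelocalCA} and \eqref{eq:completesingularsupport}. For part (i), start from the definition $\sH_{Y\to X} = \mc{O}_Y \o_{\varphi^{-1}\mc{O}_X} \varphi^{-1}\sH_{\omega,\bc}(X,W)$ and apply $\widehat{\mc{O}}_{Y,y}\o_{\mc{O}_Y}(-)$. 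Since completion is flat and the stalk at $y$ computes the pullback, one gets
\[
\widehat{\mc{O}}_{Y,y}\o_{\mc{O}_Y}\sH_{Y\to X}\;\cong\;\widehat{\mc{O}}_{Y,y}\o_{\mc{O}_{X,x}}\sH_{\omega,\bc}(X,W)_x,
\]
and the right-hand side is by definition (after passing to $W=W_x$ via Lemma~\ref{lem:forgetcommutepushpull} and Lemma~\ref{lem:completelocalCA}) the completed transfer bimodule $\sH_{\widehat{T_yY}\to\widehat{T_xX}}$ — here one invokes the formal linearization of a smooth variety at a $W$-fixed point, $\widehat{X}_x\cong\widehat{T_xX}_0$ $W_x$-equivariantly, which is part of the setup in \cite{ChereSheaf}, and the fact (Theorem~\ref{thm:melyspullbackgeneral} applied to $\widehat{\varphi}$) that the bimodule structure is the one coming from the melys pullback. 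The bimodule structure on the left is the one induced from $\sH_{Y\to X}$, and it matches because the $\dd$-module action map $\gamma$ and the Dunkl terms in \eqref{eq:Pvarphiomega} are defined locally and commute with completion; the $\sH_{X\leftarrow Y}$ case follows identically using the right/left swap \eqref{eq:omegabarrightleft}, Remark~\ref{rem:melysop}, and that $K_Y$, $K_X$ are line bundles (so completing is harmless).

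For part (ii), the extra hypothesis $\widehat{\mc{O}}_{Y,y}\cong \mc{O}_{Y,y}\o_{\mc{O}_{X,x}}\widehat{\mc{O}}_{X,x}$ lets us rewrite
\[
\widehat{\mc{O}}_{Y,y}\o_{\mc{O}_Y}\sH_{Y\to X}
\;\cong\;\big(\mc{O}_{Y,y}\o_{\mc{O}_{X,x}}\widehat{\mc{O}}_{X,x}\big)\o_{\mc{O}_Y}\sH_{Y\to X}
\;\cong\;\sH_{Y\to X}\o_{\varphi^{-1}\mc{O}_X}\widehat{\mc{O}}_{X,x},
\]
by the associativity of tensor products and the observation that $\sH_{Y\to X}$ is already an $\mc{O}_Y$-module, so tensoring by $\mc{O}_{Y,y}$ just takes the stalk. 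One has to check that this chain of isomorphisms is compatible with the left $\sH_{\bc_y}(\widehat{Y}_y,W)$- and right $\sH_{\bc_x}(\widehat{X}_x,W)$-actions; the right action is transparent since it comes from the right $\varphi^{-1}\sH_{\omega,\bc}(X,W)$-factor and commutes with $\o_{\varphi^{-1}\mc{O}_X}\widehat{\mc{O}}_{X,x}$ (using again $\mr{ad}$-nilpotence of Dunkl operators, as in Lemma~\ref{lem:completelocalCA}), while the left action is inherited from part (i). Again the $\sH_{X\leftarrow Y}$ statement is the same argument.

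The main obstacle I expect is not the bookkeeping of tensor products — that is routine — but verifying that the \emph{bimodule} (not merely $\mc{O}$-module) structures are preserved under completion, i.e. that the left $\sH_{\varphi^*\omega,\varphi^*\bc}(Y,W)$-action on $\sH_{Y\to X}$ constructed in the proof of Theorem~\ref{thm:melyspullbackgeneral} agrees, after completing at $y$, with the corresponding action on $\sH_{\widehat{T_yY}\to\widehat{T_xX}}$ built from the linearized melys map $\widehat{\varphi}$. The point is that the formula \eqref{eq:Pvarphiomega} defining the action depends on a choice of local defining functions $f_Z$ and on the splitting $\gamma(v)=\sum_j g_j\o u_j$; one must argue that completing commutes with all of these choices. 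This follows because the construction is entirely local (it only uses a good affine neighbourhood of $y$, which one may shrink as needed, cf.\ Lemma~\ref{lem:goodneighbourhood}) and because $\widehat{\mc{O}}_{Y,y}$ is faithfully flat over $\mc{O}_{Y,y}$, so the identities \eqref{eq:Pvarphiomega} that hold before completion continue to hold after. I would spell this out by choosing, once and for all, a good chart around $y$ and tracking the defining data through the completion, rather than redoing the construction from scratch.
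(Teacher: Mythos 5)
Your proof is correct and follows essentially the same route as the paper: reduce to the affine/$\omega=0$ case, then carry out a chain of base-change isomorphisms factoring the completion $\widehat{\mc{O}}_{Y,y}$ through $\widehat{\mc{O}}_{X,x}$, with part (ii) a further reassociation using the hypothesized flatness isomorphism. One small simplification: since $y\in Y^W$ and $\varphi$ is equivariant, $x\in X^W$ automatically, so $W_x=W$ and the reduction "to $W=W_x$" via Lemma~\ref{lem:forgetcommutepushpull} is vacuous and can be dropped.
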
 
	
\begin{proof}
	Part (i). The statements are local so we assume $X,Y$ affine. Moreover, since Picard algebroids are trivial on formal neighbourhoods, we may also assume that $\omega = 0$. Note that the ring homomorphism $\mc{O}(X) \to \widehat{\mc{O}}_{Y,y}$ factors through $\widehat{\mc{O}}_{X,x}$. In particular, $\widehat{\mc{O}}_{Y,y}$ is a $\widehat{\mc{O}}_{X,x}$-module. This implies the second isomorphism in the chain of isomorphisms: 
	\begin{align*}
			\widehat{\mc{O}}_{Y,y} \o_{\mc{O}_Y} \sH_{Y \to X} & = \widehat{\mc{O}}_{Y,y} \o_{\mc{O}(Y)} (\mc{O}(Y) \o_{\mc{O}(X)} \H_{\bc}(X,W)) \\
			& \cong \widehat{\mc{O}}_{Y,y} \o_{\mc{O}(X)} \H_{\bc}(X,W) \\
			& \cong (\widehat{\mc{O}}_{Y,y} \o_{\widehat{\mc{O}}_{X,x}} \widehat{\mc{O}}_{X,x}) \o_{\widehat{\mc{O}}_{X,x}} (\widehat{\mc{O}}_{X,x}  \o_{\mc{O}(X)} \H_{\bc}(X,W)) \\
			& \cong (\widehat{\mc{O}}_{Y,y} \o_{\widehat{\mc{O}}_{X,x}} \widehat{\mc{O}}_{X,x}) \o_{\widehat{\mc{O}}_{X,x}} \H_{\bc}(\widehat{X}_x,W) \\
			& \cong \widehat{\mc{O}}_{Y,y} \o_{\widehat{\mc{O}}_{X,x}} \H_{\bc}(\widehat{X}_x,W) = \sH_{\widehat{T_y Y} \to \widehat{T_x X}}.
	\end{align*}
	The proof for $\sH_{X \leftarrow Y}$ and part (ii) are similar.
\end{proof}

\'Etale morphisms that are $\bc$-melys are especially well-behaved, as the lemma below shows. In particular, an \'etale $\bc$-melys morphism is automatically strongly $\bc$-melys. 

\begin{lem}\label{lem:etalebimodules}
	Let $\varphi \colon Y \to X$ be a $\bc$-melys \'etale morphism between smooth $W$-varieties. Then there exists an embedding $\varphi^{-1} \sH_{\omega,\bc}(X,W) \hookrightarrow \sH_{\varphi^* \omega,\varphi^* \bc}(Y,W)$ such that 
	\begin{equation}\label{eq:etalebi1}
			\sH_{Y\to X} \cong \sH_{\varphi^* \omega,\varphi^* \bc}(Y,W)
	\end{equation}
	as $(\sH_{\varphi^* \omega,\varphi^* \bc}(Y,W),\varphi^{-1} \sH_{\omega,\bc}(X,W))$-bimodules and 
\begin{equation}\label{eq:etalebi2}
		\sH_{X \leftarrow Y} \cong \sH_{\varphi^* \omega,\varphi^* \bc}(Y,W)
\end{equation}
	as $(\varphi^{-1} \sH_{\omega,\bc}(X,W), \sH_{\varphi^* \omega,\varphi^* \bc}(Y,W))$-bimodules.  
\end{lem}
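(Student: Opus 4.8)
The plan is to establish the étale case by reducing everything to a local computation with the explicit formula for Dunkl--Opdam operators. First I would use that $\varphi$ is étale, so that for each $(w,Z) \in \mc{S}_{\bc}(X)$ the pre-image $\varphi^{-1}(Z)$ is a smooth divisor in $Y$ whose components $Z'$ satisfy $\mr{val}_{Z'}(\varphi^*(f_Z)) = 1$ (since étale morphisms preserve multiplicities of divisors), and moreover $\varphi^{-1}(Z) \subset Y^w$ by the melys condition. Since $\varphi$ is étale and locally $\varphi^*(f_Z)$ generates the ideal of $\varphi^{-1}(Z)$ with multiplicity one, any local defining equation of a component $Z'$ of $\varphi^{-1}(Z)$ can be taken to be $\varphi^*(f_Z)$ itself, and the conormal eigenvalue of $w$ along $Z'$ equals $\lambda_{w,Z}$. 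This already forces $(\varphi^* \bc)(w,Z') = \bc(w,Z)$ for each such component. Combined with Lemma~\ref{lem:forgetcommutepushpull}, I may as well assume $W = W(\bc)$, so every reflection of $(Y,W)$ arises from one of $(X,W)$ in this way (no reflections of type (III) in the proof of Theorem~\ref{thm:melyspullbackgeneral} have vanishing $\varphi^*\bc$).

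Next I would construct the embedding $\varphi^{-1}\sH_{\omega,\bc}(X,W) \hookrightarrow \sH_{\varphi^*\omega,\varphi^*\bc}(Y,W)$. Working locally on good affine opens, $\varphi$ étale gives a canonical identification $\varphi^* \Theta_X \cong \Theta_Y$ and, via the action map $\gamma$ of \cite{BBJantzen} (which for an étale morphism is an isomorphism $\varphi^*\mc{P}_\omega \iso \mc{P}_{\varphi^*\omega}$), an identification of $\varphi^{-1}\mc{P}_\omega$ with a sub-$\mc{O}_X$-lattice of $\mc{P}_{\varphi^*\omega}$. Under this identification I claim the Dunkl--Opdam operator $D_v$ on $X$ is sent to the Dunkl--Opdam operator $D_{\gamma(v)}$ on $Y$: this is exactly the computation in equation~\eqref{eq:Pvarphiomega} in the proof of Theorem~\ref{thm:melyspullbackgeneral}, specialized to the étale case, where there are no type (I) or type (II) hypersurfaces (because $\varphi$ étale and surjective onto a neighbourhood of $Z$ means $\varphi^{-1}(Z)$ is a genuine divisor, never empty or all of a component, near the relevant point; more precisely after shrinking we are always in case (III)), so the correction term $h(u)$ vanishes and $D_v$ maps to $D_{\gamma(v)}$ on the nose. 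Since $\sH_{\omega,\bc}(X,W)$ is generated over $\mc{O}_X \rtimes W$ by the $D_v$, and $\mc{O}_X \hookrightarrow \mc{O}_Y$ commutes with everything, this produces the desired embedding of sheaves of algebras; checking it is well-defined globally uses that $\gamma$ is a map of sheaves, as in the globalization step of Theorem~\ref{thm:melyspullbackgeneral}.

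Granting the embedding, the bimodule identification~\eqref{eq:etalebi1} is almost formal: by definition $\sH_{Y\to X} = \varphi^*\sH_{\omega,\bc}(X,W) = \mc{O}_Y \o_{\varphi^{-1}\mc{O}_X} \varphi^{-1}\sH_{\omega,\bc}(X,W)$ as an $\mc{O}_Y$-module, and the right $\varphi^{-1}\sH_{\omega,\bc}(X,W)$-action is the obvious one. The left $\sH_{\varphi^*\omega,\varphi^*\bc}(Y,W)$-module structure from Theorem~\ref{thm:melyspullbackgeneral} is, by the local computation above, precisely left multiplication inside $\sH_{\varphi^*\omega,\varphi^*\bc}(Y,W)$ after the embedding, because $\sH_{\varphi^*\omega,\varphi^*\bc}(Y,W)$ is generated over $\mc{O}_Y \rtimes W$ by the images of the $D_v$'s and $\mc{O}_Y$ is generated over $\varphi^{-1}\mc{O}_X$ by... well, $\mc{O}_Y$ itself --- the point being that $\mc{O}_Y \o_{\varphi^{-1}\mc{O}_X} \varphi^{-1}\sH_{\omega,\bc}(X,W)$ with this left action is exactly $\sH_{\varphi^*\omega,\varphi^*\bc}(Y,W)$ as a left module over itself, since an étale base change of the (locally free over $\mc{O}_X \rtimes W$) sheaf $\sH_{\omega,\bc}(X,W)$ recovers $\sH_{\varphi^*\omega,\varphi^*\bc}(Y,W)$ by the PBW theorem (Theorem~\ref{thm:PBW}): both sides have associated graded $p_\idot \mc{O}_{T^*Y} \rtimes W$ and the map is filtered and an iso on associated graded. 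For~\eqref{eq:etalebi2}, recall $\sH_{X\leftarrow Y} = K_Y \o_{\mc{O}_Y} \varphi^*\sH_{\overline{\omega},\overline{\bc}}(X,W) \o_{\varphi^{-1}\mc{O}_X}\varphi^{-1}K_X^{-1}$; since $\varphi$ is étale, $K_Y \cong \varphi^* K_X$ canonically, so the twists by $K_Y$ and $\varphi^{-1}K_X^{-1}$ cancel, leaving $\varphi^*\sH_{\overline{\omega},\overline{\bc}}(X,W) \cong \sH_{\varphi^*\overline{\omega},\varphi^*\overline{\bc}}(Y,W) = \sH_{\overline{\varphi^*\omega},\overline{\varphi^*\bc}}(Y,W)$ by~\eqref{eq:etalebi1} and Remark~\ref{rem:melysop}, and then the right-module version of~\eqref{eq:omegabarrightleft} identifies this opposite-side object with $\sH_{\varphi^*\omega,\varphi^*\bc}(Y,W)$ acting on the correct side.

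The main obstacle is verifying that the left action coming from Theorem~\ref{thm:melyspullbackgeneral} really is honest left multiplication after the embedding --- i.e., carefully matching the abstractly-defined bimodule structure with the concrete one, and in particular confirming that in the étale situation one is always (after suitable shrinking) in case (III) of that proof so that the correction terms $h(u)$ genuinely drop out. One subtlety to handle: $\varphi$ need not be surjective, so near a point $p \in Y$ with $\varphi(p) \notin Z$ the hypersurface $Z$ is of type (II) relative to that component; but then $\varphi^*(f_Z)$ is invertible near $p$ and $\varphi^{-1}(Z)$ is empty there, so it contributes nothing to $\mc{S}(Y)$ and the formula for $D_v$ on $Y$ simply has no term for it --- matching the fact that $h(u)$ for a type-(II) hypersurface lies in $\mc{O}(Y_i) \o (\mc{O}(X)\o\kk W)$ and gets absorbed; one must check this absorption is consistent with the embedding, which it is because $\sigma(u)(f_Z)f_Z^{-1}$ pulls back to a regular (indeed the whole point is it is a genuine section, not just meromorphic) function on $Y$. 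I expect this bookkeeping, rather than any deep input, to be where the real work lies.
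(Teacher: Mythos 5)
Your approach is essentially the one the paper takes: reduce to the classical \'etale $\dd$-module result (Coutinho--Levcovitz) on the complement of the reflection hypersurfaces, and then use the local Dunkl--Opdam formula from the proof of Theorem~\ref{thm:melyspullbackgeneral} to check that the resulting embedding $\varphi^{-1}\dd_{\omega}(X^{\circ})\rtimes W \hookrightarrow \dd_{\varphi^*\omega}(Y^{\circ})\rtimes W$ restricts to the Cherednik subalgebras.

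One inaccuracy worth flagging. In the second paragraph you assert that ``there are no type (I) or type (II) hypersurfaces'' in the \'etale case, so that $h(u)$ vanishes and $D_v\mapsto D_{\gamma(v)}$ on the nose, and you justify this by saying $\varphi$ is ``surjective onto a neighbourhood of $Z$''. That justification is not available: an \'etale morphism is not assumed surjective (and indeed the key application in Proposition~\ref{prop:etalemelysiso} has $\varphi \colon W\times_N U_0 \to X$ surjecting only onto a neighbourhood of a single $W$-orbit), so type (II) hypersurfaces do occur. Type (I) is genuinely excluded by a codimension count since $\varphi$ is \'etale, but type (II) is not. What actually happens is that for a type (II) hypersurface $(w,Z)$ relative to a component $Y_i$, the pullback $\varphi^*(f_Z)$ is a unit on $Y_i$, so the corresponding correction term $h(u)$ lands in $\mc{O}_Y\rtimes W$; consequently the embedding sends $D_v$ not to $D_{\gamma(v)}$ but to $D_{\gamma(v)}+h_v$ with $h_v\in\mc{O}_Y\rtimes W$ a regular correction. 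Since $\mc{O}_Y\rtimes W\subset\sH_{\varphi^*\omega,\varphi^*\bc}(Y,W)$, the image still lies in the Cherednik algebra, and the embedding and the bimodule identifications go through --- but ``$D_v\mapsto D_{\gamma(v)}$ on the nose'' is false, and the embedding is not the naive map on Dunkl operators. You correct this implicitly in your last paragraph, but the earlier assertion should be scrapped rather than rescued by a ``more precisely after shrinking'' parenthetical: shrinking $Y$ alone does not remove type (II) hypersurfaces, since the sum in \eqref{eq:DOop} ranges over the reflection hypersurfaces of the fixed ambient $X$. Once this is fixed, the rest of your argument --- in particular noting that \'etale morphisms preserve the conormal eigenvalue and the multiplicity so that $\varphi^*\bc(w,Z')=\bc(w,Z)$, that the PBW theorem lets you check the bimodule identification on associated graded, and that $K_Y\cong\varphi^*K_X$ handles \eqref{eq:etalebi2} --- is sound.
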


\begin{proof}
	In the case of $\dd$-modules, these statements are well-known; see \cite[Theorem~2.2]{CoutinhoEtale} for a detailed proof. In particular, in the notation of the proof of Theorem~\ref{thm:melyspullbackgeneral}, they hold for $\dd$-modules relative to $\varphi \colon Y^{\circ} \to X^{\circ}$. It follows from the proof of Theorem~\ref{thm:melyspullbackgeneral} that the isomorphism $\dd_{Y^{\circ}\to X^{\circ}} \cong \dd_{\varphi^* \omega}(Y^{\circ}) \otimes \kk W$ of $(\dd_{\varphi^* \omega}(Y^{\circ}) \rtimes W,\varphi^{-1} \dd_{\omega}(X^{\circ}) \rtimes W)$-bimodules restricts to \eqref{eq:etalebi1} and similarly for \eqref{eq:etalebi2}. 
\end{proof}

\subsection{Reduction to trivial stabilizer}\label{sec:redtrivstab}

Let $N \subset W$ be a subgroup and $U_0 \subset X$ an open subset such that
\begin{equation}\label{eq:Eetale1}
	\textrm{$n(U_0) \subset U_0$ for all $n \in N$; and}
\end{equation}
\begin{equation}\label{eq:Eetale2}
	\textrm{$W_x \subset N$ for all $x \in U_0$.}
\end{equation}
If $\mr{Fun}_N(W,\sH_{\omega,\bc}(U_0,N))$ denotes the sheaf of $N$-equivariant functions $W \to \sH_{\omega,\bc}(U_0,N)$, then following \cite[Section~3.2]{BE} we write  
$$
Z(W,N,\sH_{\omega,\bc}(U_0,N)) := \End_{\sH_{\omega,\bc}(U_0,N)}(\mr{Fun}_N(W,\sH_{\omega,\bc}(U_0,N))).
$$
This is a matrix algebra over $\sH_{\omega,\bc}(U_0,N)$. Hence there is an equivalence
\begin{equation}\label{eq:matrixequivZ}
	\Qcoh{Z(W,N,\sH_{\omega,\bc}(U_0,N))} \cong \Qcoh{\sH_{\omega,\bc}(U_0,N)}.
\end{equation}
The smooth, but disconnected, variety $V := W \times_N U_0$ carries an action of $W$. Let $\varphi \colon V \to X$ be the map $\varphi(w,u) = w(u)$. 

\begin{prop}\label{prop:etalemelysiso}
	$\varphi$ is \'etale and strongly $\bc$-melys. Moreover, there is an embedding   
	\begin{equation}\label{eq:etalefactoriso}
		\varphi^{-1} \sH_{\omega,\bc}(X,W) \hookrightarrow \sH_{\varphi^*\omega,\varphi^*\bc}(V,W) \cong Z(W,N,\sH_{\varphi^*\omega,\varphi^*\bc}(U_0,N))
	\end{equation}
	of sheaves of algebras on $V / W = U_0/N$. 
\end{prop}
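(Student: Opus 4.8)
The plan is to verify the three assertions in turn, each reducing to a statement that has essentially been prepared in the preceding material. \emph{First}, \'etaleness of $\varphi$: since $V = W \times_N U_0$ is, as a variety, a disjoint union of copies of $U_0$ indexed by coset representatives of $N$ in $W$, with the copy indexed by $w$ mapping to $X$ via $u \mapsto w(u)$, and each such map is the composite of the open embedding $U_0 \hookrightarrow X$ with the automorphism $w$ of $X$, it is an open embedding; hence $\varphi$ is \'etale (it is a local isomorphism). \emph{Second}, the melys property: by Example~\ref{ex:melysbasic}(2) each open embedding $U_0 \hookrightarrow X$ is $\bc$-melys, and composing with the automorphism $w$ preserves this (automorphisms permute $\mc{S}_{\bc}(X)$); since $\varphi$ is a disjoint union of such maps it is $\bc$-melys. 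For the \emph{strong} melys condition I would invoke the remark immediately preceding Lemma~\ref{lem:etalebimodules}: an \'etale $\bc$-melys morphism is automatically strongly $\bc$-melys. Alternatively one argues directly: if a connected component $V_i$ of $V$ (a copy of $U_0$) meets $\varphi^{-1}(Z)$ for some $(w',Z) \in \mc{S}_{\bc}(X)$, then because $\varphi|_{V_i}$ is an open embedding the preimage $\varphi^{-1}(Z) \cap V_i$ is a nonempty open subset of $Z \cap (\text{image})$, of codimension one, so it cannot be all of $V_i$; hence $V_i^{w'} \neq V_i$.

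\emph{Third}, and this is where the real content lies, the factorization \eqref{eq:etalefactoriso}. The right-hand isomorphism $\sH_{\varphi^*\omega,\varphi^*\bc}(V,W) \cong Z(W,N,\sH_{\varphi^*\omega,\varphi^*\bc}(U_0,N))$ should follow from the construction of $V = W\times_N U_0$ together with the definition of $Z(W,N,-)$: an $N$-equivariant sheaf on $W\times_N U_0 = V$ together with its $\sH$-action is the same data as an $N$-equivariant object over $U_0$ with a compatible action, and the endomorphism/matrix-algebra description $Z(W,N,\sH_{\omega,\bc}(U_0,N)) = \End_{\sH}(\mr{Fun}_N(W,\sH))$ is exactly the transport of the skew-group-ring structure across the induction $W\times_N(-)$; this is the Cherednik-algebra analogue of the standard identification for $\dd$-modules on an induced variety, and is worked out in \cite[Section~3.2]{BE}, which I would cite. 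For the left-hand embedding $\varphi^{-1}\sH_{\omega,\bc}(X,W) \hookrightarrow \sH_{\varphi^*\omega,\varphi^*\bc}(V,W)$: since $\varphi$ is \'etale and $\bc$-melys, Lemma~\ref{lem:etalebimodules} gives $\sH_{V\to X} \cong \sH_{\varphi^*\omega,\varphi^*\bc}(V,W)$ as a $(\sH_{\varphi^*\omega,\varphi^*\bc}(V,W),\varphi^{-1}\sH_{\omega,\bc}(X,W))$-bimodule, and by definition $\sH_{V\to X} = \varphi^*\sH_{\omega,\bc}(X,W)$ receives a natural map of sheaves of algebras from $\varphi^{-1}\sH_{\omega,\bc}(X,W)$ (the right action, which on $\sH_{V\to X}\cong \sH_{\varphi^*\omega,\varphi^*\bc}(V,W)$ becomes a ring map into the target); injectivity follows because $\varphi$ is faithfully flat onto its image and $\sH_{V\to X}$ is faithfully flat over $\varphi^{-1}\sH_{\omega,\bc}(X,W)$ (indeed locally free, by Theorem~\ref{thm:melyspullbackgeneral} and flatness of $\mc{O}_V$ over $\varphi^{-1}\mc{O}_X$).

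The main obstacle I anticipate is bookkeeping the compatibility of the two descriptions of $\sH_{\varphi^*\omega,\varphi^*\bc}(V,W)$ — as a transfer bimodule via Lemma~\ref{lem:etalebimodules} on the one hand, and as the matrix algebra $Z(W,N,\sH_{\varphi^*\omega,\varphi^*\bc}(U_0,N))$ on the other — so that the composite in \eqref{eq:etalefactoriso} really is a morphism of sheaves of algebras on $V/W = U_0/N$ and not merely of bimodules; in particular one must check that $\varphi^*\omega$ and $\varphi^*\bc$ restricted to $U_0 \subset V$ agree with $\omega|_{U_0}$ and $\bc|_{U_0}$ (which holds because on the identity component $\varphi$ is the open embedding $U_0 \hookrightarrow X$, whose pullback of parameters is just restriction, cf.\ Remark~\ref{rem:melysop} and Lemma~\ref{lem:forgetcommutepushpull}). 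Everything else is a matter of assembling Example~\ref{ex:melysbasic}, Lemma~\ref{lem:etalebimodules}, and the results of \cite{BE}.
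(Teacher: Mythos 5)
Your proposal follows essentially the same route as the paper's proof: \'etaleness via the open-embedding decomposition of $V = \bigsqcup_i U_i$, strong melysness via the general remark on \'etale melys morphisms, and the embedding~\eqref{eq:etalefactoriso} via Lemma~\ref{lem:etalebimodules} together with the Bezrukavnikov--Etingof matrix-algebra identification of $Z(W,N,-)$ from \cite{BE}. The only point to repair is the ``alternative'' direct argument for the strong melys condition: from the observation that $\varphi^{-1}(Z)\cap V_i$ is a \emph{proper} codimension-one subset of $V_i$, you cannot conclude $V_i^{w'}\neq V_i$ --- that inclusion only tells you that the fixed locus contains a proper divisor, which is consistent with $V_i^{w'}$ being all of $V_i$. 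The correct direct argument uses $W$-equivariance of the differential: if $y\in\varphi^{-1}(Z)\cap V_i$ and $x=\varphi(y)$, then $y$ is fixed by $w'$ (melys), $d\varphi_y\colon T_yV_i\to T_xX$ is a $w'$-equivariant isomorphism (\'etaleness plus equivariance of $\varphi$), and $w'$ acts on $T_xX$ with a non-trivial eigenvalue $\lambda_{w',Z}\ne 1$ on the conormal direction because $Z$ is a codimension-one component of $X^{w'}$; hence $w'$ acts non-trivially on $T_yV_i$ and so $V_i^{w'}\ne V_i$. Since your primary route (invoking the remark preceding Lemma~\ref{lem:etalebimodules}) is sound, this is a minor repair to a back-up argument rather than a gap in the proof as a whole.
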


\begin{proof}
	We fix left coset representatives $w_0, \ds, w_k$ of $N$ in $W$ and let $U_i = w_i(U_0)$ so that $V = \bigsqcup_{i= 0}^k U_i$ and $U := \bigcup_{i = 0}^k U_i$. The fact that $\varphi$ is \'etale is immediate from the fact that it is an open embedding on each connected component $U_i$ of $V$. If $(w,Z) \in \mc{S}(X)$ and $U_i \cap Z \neq \emptyset$ then $U_i^{w} \neq U_i$ and $U_i \cap Z$ is a reflection hypersurface in $U_i$; in fact, $U_i \cap Z \neq \emptyset$ only if $w \in w_i N w_i^{-1}$. Therefore, if $Z \cap U \neq \emptyset$ then $\varphi^{-1}(Z) = \bigsqcup_i (U_i \cap Z)$ is a disjoint union (with multiplicity one) of reflection hypersurfaces in $V$. Thus, $\varphi$ is strongly $\bc$-melys. An element of $\mc{S}(V,W)$ is of the form $(w,Z_i)$ where $w \in W$ and $Z_i$ is a connected component of $U_i^w$ of codimension one. Then $(\varphi^*\bc)(w,Z_i) = \bc(w,\overline{Z}_i)$, the closure taken in $X$. 
	
The embedding of \eqref{eq:etalefactoriso} follows from Lemma~\ref{lem:etalebimodules} and the final isomorphism is well-known; see \cite{BE}, \cite[Section 2.3]{LosevHecke} and \cite{Wilcox}. 
\end{proof}

\begin{remark}\label{rem:Lunaopen}
	Typically, one chooses $x \in X$ with $\Pa = W_x$, $N = N_W(\Pa)$ and $U_0$ some $N$-stable open neighbourhood of $x$ with $W_{u} \subset \Pa$ for all $u \in U_0$. It is a consequence of Luna's slice theorem \cite{Luna} that $U_0$ can be chosen so that the image $U$ of $\varphi$ is affine. Then $\varphi \colon V \to U$ is surjective and \'etale, which means that it is finite and faithfully flat. 
\end{remark}

In order to use Lemma~\ref{lem:completepullbackbi} to understand the behaviour of singular support under various functors, it is necessary to reduce to a neighbourhood of a point $x \in X^W$. This is accomplished by the following application of Proposition~\ref{prop:etalemelysiso}. 

\begin{lem}\label{lem:localSSfixed}
	Let $x \in X$. There exists a $W_x$-stable affine neighbourhood $U$ of $x$ such that $\ms{M} |_U$ is a $\sH_{\omega,\bc}(U,W_x)$-module for any $\sH_{\omega,\bc}(X,W)$-module $\ms{M}$. Moreover, if $\ms{M}$ is coherent then  
	$$
	\SS_U(\ms{M} |_U) = \SS_X(\ms{M}) |_U.
	$$ 
\end{lem}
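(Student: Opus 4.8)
The plan is to realize $\ms{M}|_U$ as a module over $\sH_{\omega,\bc}(U,W_x)$ by descending along the étale morphism from Proposition~\ref{prop:etalemelysiso}, and then to compare singular supports using the fact that étale $\bc$-melys morphisms induce étale base change on transfer bimodules (Lemma~\ref{lem:etalebimodules}) together with the invariance of singular support under restriction to $W$-stable open sets. First I would apply Proposition~\ref{prop:etalemelysiso} with $P = W_x$, $N = N_W(P)$, and a suitable $N$-stable affine open set $U_0 \ni x$ with $W_u \subset P$ for all $u \in U_0$; shrinking $U_0$ further (using Lemma~\ref{lem:goodneighbourhood}) I may assume it is good. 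Then $V = W \times_N U_0$ maps étale and strongly $\bc$-melys to $X$ via $\varphi(w,u) = w(u)$, and $\sH_{\varphi^*\omega,\varphi^*\bc}(V,W) \cong Z(W,N,\sH_{\varphi^*\omega,\varphi^*\bc}(U_0,N))$ is a matrix algebra over $\sH_{\varphi^*\omega,\varphi^*\bc}(U_0,N)$. Since $\varphi$ restricted to the component $U_0 \subset V$ is an open embedding, $\varphi^*\omega$ and $\varphi^*\bc$ restricted to $U_0$ are just $\omega|_{U_0}$ and $\bc_x$ (here I should note that the reflection hypersurfaces of $(U_0, W_x)$ are precisely those $(w,Z)$ with $w \in P$ and $x \in Z$, by the condition $W_u \subset P$), so I will take $U := U_0$ as a $W_x$-stable affine neighbourhood of $x$.

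Next, given a $\sH_{\omega,\bc}(X,W)$-module $\ms{M}$, pull it back along $\varphi$. By Lemma~\ref{lem:etalebimodules}, since $\varphi$ is étale $\bc$-melys we have $\sH_{V \to X} \cong \sH_{\varphi^*\omega,\varphi^*\bc}(V,W)$ as bimodules, so $\varphi^0 \ms{M} = \sH_{V\to X} \o_{\varphi^{-1}\sH} \varphi^{-1}\ms{M}$ agrees with $\varphi^{-1}\ms{M} = \eta^{-1}\ms{M}$ as an $\mc{O}_V$-module and carries a $\sH_{\varphi^*\omega,\varphi^*\bc}(V,W)$-module structure. Via the matrix-algebra equivalence \eqref{eq:matrixequivZ}, this corresponds to a $\sH_{\varphi^*\omega,\varphi^*\bc}(U_0,N) = \sH_{\omega,\bc}(U,W_x)$-module; on the underlying sheaf this module is exactly the restriction $\ms{M}|_{U_0}$, which is the component of $\eta^{-1}\ms{M}$ sitting over $U_0 \subset V$. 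This proves the first assertion: $\ms{M}|_U$ is a $\sH_{\omega,\bc}(U,W_x)$-module, with $\mc{O}_U$-module structure the naive one.

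For the statement on singular support, suppose $\ms{M}$ is coherent. Singular support is local on $X/W$, so $\SS_X(\ms{M})|_U$ is computed from a good filtration on $\Gamma$ of $\ms{M}$ over a $W$-stable affine neighbourhood and then restricted. On the other hand, $\SS_U(\ms{M}|_U)$ is computed from the restricted filtration, which is again good; since $\gr$ of the restricted filtration is the restriction of $\gr$ of the original, and both are being regarded inside $(T^*X)/W$ via $T^*U/W_x \hookrightarrow (T^*X)/W$ over $U/W_x = U_0/N \hookrightarrow X/W$ (these spaces agreeing because $\varphi$ is étale and an open embedding on $U_0$), the two subvarieties coincide. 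Concretely I would phrase this through the étale base change $\SS_V(\varphi^0\ms{M}) = \varphi^{-1}(\SS_X(\ms{M}))$ (immediate from Lemma~\ref{lem:etalebimodules}, as $\gr \varphi^0 = \varphi^* \gr$ with $\varphi$ étale hence flat) combined with Lemma~\ref{lem:forgetcommutepushpull}, which gives $\SS_V(\varphi^0\ms{M}) = \SS_V(\mr{For}_V \varphi^0\ms{M})$, together with the matrix-algebra equivalence preserving singular support; restricting to the component $U_0$ then yields $\SS_U(\ms{M}|_U) = \SS_X(\ms{M})|_U$.

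The main obstacle I expect is bookkeeping rather than conceptual: one must carefully track that restricting $\omega$ and $\bc$ to the open piece $U_0$ of $V$ genuinely recovers $(\omega|_U, \bc_x)$ as parameters for $(U, W_x)$ — in particular that the reflection hypersurfaces of $(U,W_x)$ are exactly the traces of those $(w,Z)\in\mc{S}(X)$ with $w\in W_x$ and $x\in \overline{Z}$, with matching eigenvalues $\lambda_{w,Z}$ — and that the identification of the $U_0$-component of $\varphi^0\ms{M}$ with the naive restriction $\ms{M}|_U$ is compatible with the Cherednik-algebra actions on the nose, not merely up to isomorphism of $\mc{O}$-modules. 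Once this compatibility is pinned down, the singular-support equality is a formal consequence of flat (indeed étale) base change for $\gr$.
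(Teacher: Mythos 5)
Your proposal follows essentially the same route as the paper: realize $\ms{M}|_U$ as a $\sH_{\omega,\bc}(U,W_x)$-module via the \'etale strongly melys morphism $\varphi\colon W\times_N U_0\to X$ of Proposition~\ref{prop:etalemelysiso} and the matrix-algebra identification, then compare singular supports by pulling back a good filtration and cutting out the $U_0$-component with the diagonal idempotent.

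There is, however, one slip that needs repair. You take $N=N_W(W_x)$, yet later assert $\sH_{\varphi^*\omega,\varphi^*\bc}(U_0,N)=\sH_{\omega,\bc}(U,W_x)$; that equality only holds when $N=W_x$, which is generally a proper subgroup of $N_W(W_x)$. With your choice of $N$, the construction produces a $\sH_{\omega,\bc}(U_0,N_W(W_x))$-module, not directly a $\sH_{\omega,\bc}(U,W_x)$-module (one would have to append a restriction along the subalgebra $\sH_{\omega,\bc}(U_0,W_x)\subset\sH_{\omega,\bc}(U_0,N_W(W_x))$ and re-verify the singular support equality through that restriction). The paper simply takes $N=W_x$ from the outset, i.e.\ $V=W\times_{W_x}U$, which satisfies the hypotheses \eqref{eq:Eetale1}--\eqref{eq:Eetale2} once $U$ is shrunk so that $W_u\subset W_x$ for $u\in U$; with that fix the remainder of your argument goes through exactly as in the paper, and the invocation of Lemma~\ref{lem:forgetcommutepushpull} becomes unnecessary.
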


\begin{proof}
	We may assume $X$ affine and $W$-connected. Let $U$ be an affine neighbourhood of $x$ such that the stabilizer of every point in $U$ is contained in $W_x$. Shrinking $U$ if necessary, we assume it is $W_x$-stable. Let $V= W \times_{W_x} U$ and $\varphi \colon V \to X$ the natural map. Then we are in the situation of Proposition~\ref{prop:etalemelysiso}, implying that $\varphi$ is $\bc$-melys and \'etale and 
	\[
	\H_{\omega,\bc}(X,W) \hookrightarrow \mc{O}(V) \o_{\mc{O}(X)} \H_{\omega,\bc}(X,W) \cong Z(W,W_x,\H_{\omega,\bc}(U,W_x)).
	\]
If $e_U \in Z(W,W_x,\H_{\omega,\bc}(U,W_x))$ is the idempotent that satisfies $e_U(f)(w) = f(w)$ for $w \in W_x$ and $e_U(f)(w) = 0$ otherwise then $e_U \cdot (\varphi^0 \ms{M}) \cong \ms{M}|_U$ as a $\mc{O}_U$-module, and it carries an action of 
	\[
	e_U Z(W,W_x,\H_{\omega,\bc}(U,W_x)) e_U \cong \H_{\omega,\bc}(U,W_x). 
	\]
If $\ms{M}$ is coherent and we fix a good filtration $\mc{F}_{\idot}$ on $\ms{M}$ then $\varphi^* \mc{F}_{\idot}$ is a good filtration on $\varphi^0 \ms{M}$. If $\mc{G}_{\idot} := e_U \cdot  \varphi^* \mc{F}_{\idot}$ is the associated filtration on $e_U \cdot (\varphi^0 \ms{M})$ then  
\[
\gr_{\mc{G}} (e_U \cdot (\varphi^0 \ms{M})) = e_U \cdot (\gr_{\varphi^* \mc{F}} (\varphi^0 \ms{M})) = (\gr_{\mc{F}} \ms{M}) |_U
\]
meaning that $\SS_U(\ms{M} |_U) = \SS_X(\ms{M}) |_U$.   
\end{proof} 

\subsection{Reduction to the formal neighbourhood of a point.}

We note applications of reduction to the formal neighbourhood of a point. We say that the sheaf $\sH_{\omega,\bc}(X,W)$ is \textit{simple} if there does not exist any non-zero sheaf of ideals $\mc{J} \lhd \sH_{\omega,\bc}(X,W)$. Note that this does \textit{not} imply that $\Gamma(X,\sH_{\omega,\bc}(X,W))$ is a simple ring if $X$ is not affine.

\begin{prop}\label{prop:simplealgebraH}
Assume that $X$ is irreducible and $W$ acts effectively on $X$. The set of parameters $(\omega,\bc) \in \mathbb{H}^2(X,\Omega^{1,2}_X)^W \oplus \mc{S}(X)^W$ where the sheaf of algebras $\sH_{\omega,\bc}(X,W)$ is not simple is contained in a countable union of affine hyperplanes. 
\end{prop}

\begin{proof}
For each $x \in X$, consider the set $\mc{S}_x = \{ (w,Z) \in \mc{S}(X) \, | \, x \in Z \}$. Since $\mc{S}(X)$ is a finite set, the equivalence relation $x \sim x'$ iff $\mc{S}_x = \mc{S}_{x'}$ defines a finite partition $X = \bigsqcup_{i \in I} X_i$ of $X$ into locally closed subsets, where $X_i = \{ x \in X \, | \, \mc{S}_{x} = \mc{S}_i \}$. Restriction of parameters defines an injective linear map $p \colon \mc{S}(X) \to \bigoplus_{i \in I} \mc{S}_i$. Let $W_i = \langle w \, | \, (w,Z) \in \mc{S}_i \textrm{ for some $Z$} \rangle$, so that $W_i \subset W_x$ for all $x \in X_i$. Then $p$ restricts to an injection $\mc{S}(X)^W \to \bigoplus_{i \in I} \mc{S}_i^{W_i}$. Composing with the projection $\mathbb{H}^2(X,\Omega^{1,2}_X)^W \oplus \mc{S}(X)^W \to \mc{S}(X)^W$ gives a linear map 
	\[
	p \colon \mathbb{H}^2(X,\Omega^{1,2}_X)^W \oplus \mc{S}(X)^W \to \bigoplus_{i \in I} \mc{S}_i^{W_i}. 
	\]
    Our assumptions ensure $T_x X$ is a faithful $W_x$-representation for each $x \in X$. If we choose $x_i \in X_i$ and set $\mf{h}_i = T_{x_i} X$ then we let $C_i \subset \mc{S}_i^{W_i}$ denote the set of parameters $\bc$ such that $\H_{\bc_{x_i}}(\h_i,W_i)$ is not a simple ring. The result \cite[Theorem~1.4.2]{LosevSRAComplete} says that $C_i$ is contained in a countable union of affine hyperplanes, not containing $0$. 
	
	If $\mc{J} \lhd \sH_{\omega,\bc}(X,W)$ is a proper ideal then we choose an $i$ and $x_i \in X_i$ such that $x_i$ is contained in the support of $\sH_{\omega,\bc}(X,W) / \mc{J}$. Then Lemma~\ref{lem:completelocalCA} implies that $\H_{\bc_{x_i}}(\widehat{\h}_i,W_{x_i})$ is not simple. Lemma~\ref{lem:simpleiffcompletesimple}(ii) then says that $\H_{\bc_{x_i}}({\h}_i,W_{x_i})$ is not simple and hence, by Lemma~\ref{lem:simpleiffcompletesimple}(iii), $\H_{\bc_{x_i}}(\h_i,W_i)$ is not a simple ring. Hence $(\omega,\bc)$ is contained in $p^{-1}(\cdots \times \mc{S}_{i-1}^{W_{i-1}} \times C_i \times \mc{S}_{i+1}^{W_{i+1}} \times \cdots)$. Thus, the set of parameters $(\omega,\bc)$ where the algebra $\sH_{\omega,\bc}(X,W)$ is not simple belongs to the set 
	\begin{equation}\label{eq:nonsimpleset}
		\bigcup_{i \in I} p^{-1}(\cdots \times \mc{S}_{i-1}^{W_{i-1}} \times C_i \times \mc{S}_{i+1}^{W_{i+1}} \times \cdots).
	\end{equation}
	Since $0 \notin C_i$ and $p$ is linear, the set \eqref{eq:nonsimpleset} is a (possibly empty) countable union of affine hyperplanes.     
\end{proof}

If $\ms{M}$ is an $\sH_{\omega,\bc}(X,W)$-module and $e$ the trivial idempotent in $\kk W$ then $e \ms{M}$ is a representation of the spherical subalgebra $e \sH_{\omega,\bc}(X,W)e$. We say that $(\omega,\bc)$ is \textit{aspherical} if there exists a non-zero sheaf $\ms{M}$ such that $e \ms{M} = 0$.

\begin{prop}\label{prop:asphericalvalueshyper}
Assume that $X$ is irreducible and $W$ acts effectively on $X$. The set of aspherical values is contained in a finite union of affine hyperplanes in $\mathbb{H}^2(X,\Omega^{1,2}_X)^W \oplus \mc{S}(X)^W$.
\end{prop}

\begin{proof}
	We continue with the setup in the proof of Proposition~\ref{prop:simplealgebraH}. Assume that $\ms{M}$ is a non-zero module with $e \ms{M} = 0$. We choose $i \in I$ and $x_i \in X_i$ such that $\ms{M}_{x_i} \neq 0$. Then Lemma~\ref{lem:completelocalCA} says that $\widehat{\mc{O}}_{X,x_i} \o_{\mc{O}_X} \ms{M}$ is a non-zero $\H_{\bc_{x_i}}(\widehat{\h}_i,W_{x_i})$-module and if $e(i) \in \kk W_{x_i}$ is the trivial idempotent then 
	\[
	e(i)(\widehat{\mc{O}}_{X,x_i} \o_{\mc{O}_X} \ms{M}) = (e \ms{M})_{x_i} = 0. 
	\]
	Hence $\bc_{x_i}$ belongs to the set of aspherical values for $\H_{\bc_{x_i}}(\widehat{\h}_i,W_{x_i})$. Since $W_{x_i}$ acts faithfully on $\h_i$, Lemma~\ref{lem:asphericalhyperplanes} and Lemma~\ref{lem:asphericalcomplete} say that the set of aspherical values for $\H_{\bc_{x_i}}(\widehat{\h}_i,W_{x_i})$ is a finite union of affine hyperplanes not containing $0$. The proposition follows just as in the proof of Proposition~\ref{prop:simplealgebraH}.
\end{proof}

\begin{remark}
In Proposition~\ref{prop:simplealgebraH}, we expect the set of parameters where the algebra $\sH_{\omega,\bc}(X,W)$ is not simple is a countable union of affine hyperplanes. Similarly, in Proposition~\ref{prop:asphericalvalueshyper}, the set of aspherical parameters is expected to be a finite union of affine hyperplanes; see Remark~\ref{rem:preciseapsherical}.  
\end{remark}

If the parameter $(\omega,\bc)$ is not aspherical then the sheaves $\sH_{\omega,\bc}(X,W)$ and $e \sH_{\omega,\bc}(X,W)e$ are Morita equivalent.

\subsection{Finite-dimensional modules}

Abusing terminology, we say that an irreducible module $\ms{M}$ is \textit{finite-dimensional} if $\Supp \ms{M} \subset X/W$ is a single point and $\dim_{\kk} \Gamma(X,\ms{M}) < \infty$. If $X$ is affine then the irreducible finite-dimensional modules are precisely the finite-dimensional irreducible $\H_{\omega,\bc}(X,W)$-modules. Recall from Section~\ref{sec:stabstrata} that $X$ has a finite stratification by stabilizer type. Our classification of irreducible holonomic modules implies: 

\begin{cor}
	For any given $(\omega,\bc)$, there are only finitely many finite-dimensional irreducible $\sH_{\omega,\bc}(X,W)$-modules up to isomorphism and they are all supported on the zero-dimensional strata of $X$.  
\end{cor}

\begin{proof}
	Let $\ms{M}$ be an irreducible finite-dimensional module. If $\Supp_X \ms{M} = W \cdot p$ then $\ms{M} = \bigoplus_{q \in W \cdot p} \ms{M}_q$, with $\ms{M}_q$ the subsheaf of sections supported at $q$. Let $\Pa$ be the stabilizer of $p$ and assume that $W \cdot p$ is contained in the stratum $\St$. Since $\ms{M}_p$ is finite-dimensional, $\widehat{\mc{O}}_{X,p} \o_{\mc{O}_X} \ms{M} = \ms{M}_p$ is a finite-dimensional $\H_{\bc_p}(\widehat{X}_p,\Pa)$-module by Lemma~\ref{lem:completelocalCA}. By restriction, it becomes a $\H_{\bc_p}(\h,\Pa)$-module, where $\h = T_p X$. We have $\dim \h^{\Pa} = \dim \St$. If $\H_{\bc_p}(\h,\Pa)$ admits a finite-dimensional module then we must have $\h^{\Pa} = 0$. Thus, $\dim \St = 0$. This means that $\St = W \cdot p$ and $\St_0 = \{ p \}$ is a parallelizable closed subset of $X$. Since any irreducible finite-dimensional module is automatically holonomic, it follows directly from Theorem~\ref{thm:mainclassificationnRCA} that there are only finitely many up to isomorphism.    
\end{proof}

\subsection{Jacquet functor}

Let $\mc{M}_{\mr{Coh}({\mc{O}})}(\sH_{\omega,\bc}(X,W))$ denote the category of all $\sH_{\omega,\bc}(X,W)$-modules that are coherent over $\mc{O}_X$. Fix $x \in X$. Lemma~\ref{lem:localSSfixed} says that there exists a neighbourhood $U$ of $x$ such that $\ms{M} |_U$ is a $\sH_{\omega,\bc}(U,W_x)$-module for any $\sH_{\omega,\bc}(X,W)$-module $\ms{M}$. If $\h = T_x X$ then 
\[
\widehat{\mc{O}}_{U,x} \o_{\mc{O}_U} \sH_{\omega,\bc}(U,W_x) \cong \H_{\bc_x}(\widehat{\h},W_x).
\]
Thus, for any $\ms{M} \in \mc{M}_{\mr{Coh}({\mc{O}})}(\sH_{\omega,\bc}(X,W))$, the space $\widehat{\mc{O}}_{U,x} \o_{\mc{O}_U} (\ms{M} |_U)$ is a $\H_{\bc_x}(\widehat{\h},W_x)$-module, finitely generated over $\widehat{\kk[\h]}_0$. Let $\eu \in \H_{\bc_x}(\h,W_x)$ be the Euler operator, as defined in \eqref{eq:eulerelement}. Given a $\H_{\bc_x}(\h,W_x)$-module $M$, we write 
\[
E(M) = \{ m \in M \, | \, (\eu - a)^N (m) = 0 \textrm{ for some $a \in \kk$ and $N > 0$} \}
\]
for the space of $\eu$-locally finite vectors in $M$. It is a $\H_{\bc_x}(\h,W_x)$-submodule of $M$. As explained in \cite[Section~2.4]{BE}, the space 
\[
\mc{J}_x(\ms{M}) := E(\widehat{\mc{O}}_{U,x} \o_{\mc{O}_U} (\ms{M} |_U))
\] 
of $\eu$-locally finite vectors in $\widehat{\mc{O}}_{U,x} \o_{\mc{O}_U} (\ms{M} |_U)$ is a $\H_{\bc_x}(\h,W_x)$-module in category $\mc{O}_{\bc_x}(\h)$. 

\begin{lem}\label{lem:Jacqeutfunctor}
	The Jacquet functor $\mc{J}_x \colon \mc{M}_{\mr{Coh}({\mc{O}})}(\sH_{\omega,\bc}(X,W)) \to \mc{O}_{\bc_x}(\h)$ is exact. Moreover, $\mc{J}_x(\ms{M}) = 0$ if and only if $x \notin \Supp_X \ms{M}$. 
\end{lem}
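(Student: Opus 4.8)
\textbf{Proof proposal for Lemma~\ref{lem:Jacqeutfunctor}.}

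The plan is to reduce everything to the local structure near $x$ and then invoke the known exactness of the Jacquet (Frobenius/completion-and-locally-finite) functor for rational Cherednik algebras. First I would apply Lemma~\ref{lem:localSSfixed} to choose a $W_x$-stable affine neighbourhood $U$ of $x$ so that restriction $\ms{M}\mapsto \ms{M}|_U$ sends $\sH_{\omega,\bc}(X,W)$-modules to $\sH_{\omega,\bc}(U,W_x)$-modules; this restriction functor is exact. Next, completing at $x$, the functor $\ms{M}|_U \mapsto \widehat{\mc{O}}_{U,x}\o_{\mc{O}_U}(\ms{M}|_U)$ is exact because $\widehat{\mc{O}}_{U,x}$ is flat over $\mc{O}_U$, and it lands in the category of $\H_{\bc_x}(\widehat{\h},W_x)$-modules that are finitely generated over $\widehat{\kk[\h]}_0$ (using the isomorphism $\widehat{\mc{O}}_{U,x}\o_{\mc{O}_U}\sH_{\omega,\bc}(U,W_x)\cong\H_{\bc_x}(\widehat{\h},W_x)$ together with the coherence-over-$\mc{O}_X$ hypothesis on $\ms{M}$). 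Since $\mc{J}_x$ is by definition the composition of these two exact functors with $E(-)$, the extraction of $\eu$-locally finite vectors, the whole statement comes down to: (i) $E(-)$ is exact on the category of $\H_{\bc_x}(\widehat{\h},W_x)$-modules finitely generated over $\widehat{\kk[\h]}_0$, landing in $\mc{O}_{\bc_x}(\h)$; and (ii) $E(\widehat{\mc{O}}_{U,x}\o_{\mc{O}_U}(\ms{M}|_U))=0$ iff $x\notin\Supp_X\ms{M}$.

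For (i) I would cite the treatment in \cite[Section~2.4]{BE} (the "$E$-functor" or Jacquet functor for completed rational Cherednik algebras), where exactness is established: the point is that on a module finitely generated over the completed polynomial ring the $\eu$-action is locally finite modulo the ideal $\widehat{\kk[\h]}_+$, the generalized eigenspace decomposition of $\eu$ is compatible with the filtration by powers of that ideal, and taking locally finite vectors commutes with the relevant inverse limits; exactness on the right follows from Artin--Rees/completeness and exactness on the left is immediate since $E$ is a subfunctor. The output lies in $\mc{O}_{\bc_x}(\h)$ by the standard argument, already used in the proof of Lemma~\ref{lem:tensorprodsimpleZ} in this paper, that $\eu$-locally-finite plus finitely-generated-over-$\kk[\h]$ forces membership in category $\mc{O}$. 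I would keep this paragraph short, essentially a pointer to \cite{BE}.

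For (ii), the easy direction is: if $x\notin\Supp_X\ms{M}$ then $\ms{M}|_U=0$ after shrinking $U$, so $\mc{J}_x(\ms{M})=0$ trivially. For the converse, suppose $x\in\Supp_X\ms{M}$, so $\ms{M}_x\neq 0$ and hence $\widehat{\mc{M}}:=\widehat{\mc{O}}_{U,x}\o_{\mc{O}_U}(\ms{M}|_U)\neq 0$ (faithful flatness of completion at the maximal ideal). Because $\ms{M}$ is coherent over $\mc{O}_X$, $\widehat{\mc{M}}$ is finitely generated over $\widehat{\kk[\h]}_0$; I then need $E(\widehat{\mc{M}})\neq 0$. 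The key estimate is the one recalled in this paper inside the proof of Lemma~\ref{lem:tensorprodsimpleZ}: the eigenvalues of $\mathrm{ad}(\eu)$ on $\kk[\h^*]_+$ are strictly positive, those on $\kk[\h]_+$ strictly negative, and on any module finitely generated over $\kk[\h]$ the generalized eigenvalues of $\eu$ are bounded below by the finitely many lowest-weight values; since $\widehat{\mc{M}}\neq 0$ and every vector is $\kk[\h]_+$-nilpotent, the quotient $\widehat{\mc{M}}/\kk[\h^*]_+\widehat{\mc{M}}$ is nonzero, it is a finite-dimensional $W_x$-module, $\eu$ acts on it with generalized eigenvalues in $\bigcup_{\mu}(\bc_{x,\mu}+\Z_{\ge 0})$, and lifting a $\eu$-generalized-eigenvector of minimal weight from this quotient back into $\widehat{\mc{M}}$ (using completeness of $\widehat{\kk[\h]}_0$ and the positivity of $\eu$ on $\kk[\h^*]_+$) produces a nonzero $\eu$-locally-finite vector. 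Hence $E(\widehat{\mc{M}})\neq 0$, i.e. $\mc{J}_x(\ms{M})\neq 0$.

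The main obstacle is the converse half of (ii): making rigorous the statement that a nonzero module $\widehat{\mc{M}}$, finitely generated over $\widehat{\kk[\h]}_0$ and $\kk[\h]_+$-torsion, necessarily contains a nonzero $\eu$-finite vector. This is a genuine argument about interplay between the $\eu$-weight grading and $\widehat{\kk[\h]}$-adic completeness and is exactly the content referenced from \cite{BE} and mirrored in the proof of Lemma~\ref{lem:tensorprodsimpleZ}; I would present it by reducing to the graded-vs-complete comparison $\widehat{\mc{M}}/\widehat{\kk[\h]}_+^n\widehat{\mc{M}}$ and using that each such quotient is a finite-length $\eu$-locally-finite $W_x$-module, then taking a compatible system of lowest-weight vectors. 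Everything else—the two reductions at the start and the $\Supp$ statement's easy direction—is routine and I would dispatch it in a sentence each.
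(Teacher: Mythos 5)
Your overall reduction — restrict to a $W_x$‑stable neighbourhood via Lemma~\ref{lem:localSSfixed}, complete at $x$ (exact by flatness), and then analyse $E$ on $\H_{\bc_x}(\widehat{\h},W_x)$‑modules finitely generated over $\widehat{\kk[\h]}_0$ — matches the paper's reductions. But your treatment of $E$ itself, and in particular the conservativity argument in part (ii), has a genuine gap. You assert that for $\widehat{\mc{M}}:=\widehat{\mc{O}}_{U,x}\o_{\mc{O}_U}(\ms{M}|_U)$ every vector is $\kk[\h]_+$‑nilpotent, and that consequently $\widehat{\mc{M}}/\kk[\h^*]_+\widehat{\mc{M}}\neq 0$. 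Neither claim holds in general: for $\ms{M}=\mc{O}_X$ (which is certainly coherent over $\mc{O}_X$ and supported at $x$), $\widehat{\mc{M}}=\widehat{\kk[\h]}_0$, whose elements are not $\kk[\h]_+$‑torsion, and in the rank‑one case $\h=\mathbb{A}^1$ the operator $\partial_x$ is surjective on $\widehat{\kk[x]}_0$, so $\widehat{\mc{M}}/\kk[\h^*]_+\widehat{\mc{M}}=0$. So the lowest‑weight‑lifting argument you propose has nothing to lift, and the "main obstacle" you flagged is in fact where the proof breaks.

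The fix — and the route the paper actually takes — is to use the full force of \cite[Theorem~2.3]{BE}: the functor $E$ is not merely exact but is an \emph{equivalence} of abelian categories between $\H_{\bc_x}(\widehat{\h},W_x)$‑modules finitely generated over $\widehat{\kk[\h]}_0$ and $\mc{O}_{\bc_x}(\h)$. Exactness and conservativity (reflecting zero) both follow from this single citation, so both halves of the lemma drop out with no further work. In the example above, $E(\widehat{\kk[x]}_0)$ is the subspace of $\eu$‑locally‑finite formal power series, namely the polynomials $\kk[x]$, which is nonzero even though the coinvariants under $\kk[\h^*]_+$ vanish — showing that conservativity of $E$ really does need the equivalence statement and cannot be extracted from a lowest‑weight‑space argument of the kind you wrote. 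You should replace your paragraph on (ii) with a direct appeal to the fact that an equivalence of abelian categories is conservative, and likewise collapse your Artin–Rees discussion in (i) into the same citation.
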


\begin{proof}
	As shown in \cite[Theorem~2.3]{BE}, the functor $E$ is an equivalence between the category of $\H_{\bc_x}(\widehat{\h},W_x)$-modules that are finitely generated over $\widehat{\kk[\h]}_0$ and category $\mc{O}_{\bc_x}(\h)$. In particular, it is exact and conservative. This implies the statements of the lemma. 
\end{proof}

\section{Smooth morphisms and closed embeddings}

As in the case of $\dd$-modules, we consider next the case of smooth morphisms and closed embeddings. 

\subsection{Smooth morphisms}

If $\varphi \colon Y \to X$ is a morphism between smooth varieties then we have the standard correspondence of cotangent bundles
	\begin{equation}\label{eq:cotangentmap}
T^* Y \stackrel{\rho}{\longleftarrow} Y \underset{X}{\times} T^* X \stackrel{\varpi}{\longrightarrow} T^* X,
\end{equation}
where $\varpi$ is projection and $\rho(y,x,v) = (y, (d_y \varphi)^*(v))$. If $\varphi$ is $W$-equivariant then this descends to a correspondence
$$
(T^* Y)/W \stackrel{\rho/W}{\longleftarrow} (Y \underset{X}{\times} T^* X)/W \stackrel{{\varpi}/W}{\longrightarrow} (T^* X)/W.
$$
If $y \in Y$ and $x = \varphi(y)$ then we write $\widehat{\varphi} \colon \widehat{Y}_y \to \widehat{X}_x$ for the induced map on formal neighbourhoods.

\begin{lem}\label{lem:formalcotangentcorrespondence}
	Let $\Lambda \subset T^* X$ be a closed subvariety. Then 
	$$
	\widehat{Y}_y \underset{Y}{\times} \rho(\varpi^{-1}(\Lambda)) \cong \rho_{\widehat{\varphi}}(\varpi^{-1}_{\widehat{\varphi}}(\widehat{X}_{x} \underset{X}{\times} \Lambda)).
	$$
\end{lem}

\begin{proof}
	Let $\mc{A}_X = \Sym_{\mc{O}_X} \Theta_X$. The $\mc{O}_Y$-module $\varphi^* \mc{A}_X$ carries an $\mc{O}_Y$-linear action of $\Theta_Y$, making it into a $(\mc{A}_Y = \Sym_{\mc{O}_Y} \Theta_Y)$-module. In other words, it defines a (quasi-coherent) $\mc{O}$-module on $T^* Y$. More generally, we may regard $\mc{O}_{\Lambda}$ as an $\mc{A}_X$-module and 
	\[
	\varphi^* \mc{A}_X \o_{\varphi^{-1} \mc{A}_X} \varphi^{-1} \mc{O}_{\Lambda} \cong \varphi^* \mc{O}_{\Lambda}
	\]
	is a quasi-coherent sheaf on $T^* Y$. Then $\rho(\varpi^{-1}(\Lambda))$ is the support of this sheaf. This implies that $\widehat{Y}_y \underset{Y}{\times} \rho(\varpi^{-1}(\Lambda))$ is the support of $\widehat{\mc{O}}_{Y,y} \o_{\mc{O}_Y} \varphi^* \mc{O}_{\Lambda}$. Just as in the series of identifications in the proof of Lemma~\ref{lem:completepullbackbi}, we have 
	\begin{equation}\label{eq:sheafcompleterhophi}
			\widehat{\mc{O}}_{Y,y} \o_{\mc{O}_Y} \varphi^* \mc{O}_{\Lambda} \cong \widehat{\mc{O}}_{Y,y} \o_{\widehat{\mc{O}}_{X,x}} (\widehat{\mc{O}}_{X,x} \o_{\mc{O}_X} \mc{O}_{\Lambda}).
	\end{equation}
	Here $\widehat{\mc{O}}_{X,x} \o_{\mc{O}_X} \mc{O}_{\Lambda}$ is a module over the (free, of rank $\dim X$) sheaf $\mc{A}_{\widehat{X}_{x}}$ of continuous vector fields on the formal disc $\widehat{X}_{x}$ via the identification $\mc{A}_{\widehat{X}_{x}} \o_{\mc{A}_X} \mc{O}_{\Lambda} = \widehat{\mc{O}}_{X,x} \o_{\mc{O}_X} \mc{O}_{\Lambda}$. The support of the right hand side of \eqref{eq:sheafcompleterhophi} is given by $\rho_{\widehat{\varphi}}(\varpi^{-1}_{\widehat{\varphi}}(\widehat{X}_{x} \underset{X}{\times} \Lambda))$, which implies the statement of the lemma.   	
\end{proof}

\begin{lem}\label{lem:isotropicsmoothmorphism}
	If $\varphi \colon Y \to X$ is a smooth surjective morphism and $\Lambda \subset T^*X$, then $\rho(\varpi^{-1}(\Lambda))$ is isotropic in $T^* Y$ if and only if $\Lambda$ is isotropic in $T^* X$. 
\end{lem}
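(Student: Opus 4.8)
The plan is to reduce everything to one identity relating the two canonical symplectic forms along the correspondence. Write $P = Y\times_X T^*X$ with projection $\pi_P\colon P\to Y$, so that (for $\varphi\colon Y\to X$) the correspondence reads $T^*Y\xleftarrow{\rho}P\xrightarrow{\varpi}T^*X$ with $\rho(y,v)=(y,(d_y\varphi)^*(v))$ and $\varpi(y,v)=v$; note $\varpi$ is the base change of $\varphi$ along $\pi_X\colon T^*X\to X$, so it is smooth and surjective, and $\pi_X\circ\varpi=\varphi\circ\pi_P$, $\pi_Y\circ\rho=\pi_P$. Since $\varphi$ is smooth, $d_y\varphi$ is surjective, hence $(d_y\varphi)^*$ is injective and $\rho$ is a closed immersion (a subbundle inclusion of vector bundles over $Y$). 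A one-line computation with the Liouville $1$-forms $\theta_X,\theta_Y$, using the two commutativities above, gives $\rho^*\theta_Y=\varpi^*\theta_X$ on $P$, and therefore
$$
\rho^*\omega_Y=\varpi^*\omega_X .
$$
This identity carries all the geometric content.

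For the implication \emph{$\Lambda$ isotropic $\Rightarrow\rho(\varpi^{-1}(\Lambda))$ isotropic}, I would choose a dense smooth open $\Lambda^{\circ}\subseteq\Lambda$ with $\omega_X|_{T\Lambda^{\circ}}=0$. As $\varpi$ is smooth, $\varpi^{-1}(\Lambda^{\circ})$ is smooth and, $\varpi$ being open, it is dense in $\varpi^{-1}(\Lambda)$. Since $d\varpi$ maps the tangent spaces of $\varpi^{-1}(\Lambda^{\circ})$ into $T\Lambda^{\circ}$, the form $\varpi^*\omega_X$ vanishes there, hence so does $\rho^*\omega_Y$ by the identity. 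Because $\rho$ is a closed immersion it carries $\varpi^{-1}(\Lambda^{\circ})$ isomorphically onto a dense smooth open subset of $\rho(\varpi^{-1}(\Lambda))$ on which $\omega_Y$ vanishes; this is exactly the statement that $\rho(\varpi^{-1}(\Lambda))$ is isotropic.

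For the converse I may assume $\Lambda$ (and $X,Y$) irreducible. Pick $\xi_0$ in the smooth locus $\Lambda^{\mathrm{sm}}$ and, using surjectivity of $\varpi$, a point $p_0\in\varpi^{-1}(\xi_0)$. Since $\varpi$ is smooth, a neighbourhood $Q$ of $p_0$ in $\varpi^{-1}(\Lambda)$ is smooth and irreducible of dimension $\dim\Lambda+\dim Y-\dim X$, so $\rho(Q)$ is a smooth irreducible subvariety of $Z:=\rho(\varpi^{-1}(\Lambda))$ of the \emph{same} dimension as $Z$ itself. Hence $\rho(Q)$ is not contained in the singular locus of $Z$ and meets the smooth locus of $Z$ in a dense open subset, on which $T\rho(Q)=TZ$; there $\omega_Y$ vanishes by hypothesis, so $\omega_Y|_{T\rho(Q)}\equiv 0$ on all of $\rho(Q)$. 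Pulling back through the isomorphism $\rho\colon Q\to\rho(Q)$ and using $\rho^*\omega_Y=\varpi^*\omega_X$ gives $\varpi^*\omega_X|_{TQ}\equiv 0$; since $\varpi|_Q\colon Q\to\Lambda^{\mathrm{sm}}$ is smooth, $d\varpi$ is surjective onto $T\Lambda^{\mathrm{sm}}$, which forces $\omega_X$ to vanish on $T\Lambda^{\mathrm{sm}}$ over $\varpi(Q)$. As $\xi_0$ ranges over $\Lambda^{\mathrm{sm}}$, we conclude $\omega_X|_{T\Lambda^{\mathrm{sm}}}\equiv 0$, i.e.\ $\Lambda$ is isotropic.

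I expect the only delicate point to be the bookkeeping in the converse: checking that the smooth piece $\rho(Q)$ actually meets the smooth locus of $\rho(\varpi^{-1}(\Lambda))$ in a dense open where isotropy may legitimately be tested. This is precisely where the dimension equality $\dim\rho(Q)=\dim\rho(\varpi^{-1}(\Lambda))$ (hence the essential use of surjectivity of $\varphi$, which guarantees $\varpi^{-1}(\Lambda)$ sees all of $\Lambda$) is load-bearing; everything else is formal once $\rho^*\omega_Y=\varpi^*\omega_X$ is established. As an alternative one could instead reduce Zariski-locally on $Y$ to a linear projection $X\times\mathbb{A}^n\to X$ precomposed with an \'etale map, for which both directions are immediate, at the cost of tracking functoriality of the correspondence $(\rho,\varpi)$ under composition.
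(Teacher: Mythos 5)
Your argument is correct, but it takes a genuinely different route from the paper's proof. The paper's proof is a one-liner: using the local structure theorem for smooth morphisms, it passes to an \'etale cover of $Y$ where $Y\cong X\times Z$ and $\varphi$ is the projection, in which case $\rho(\varpi^{-1}(\Lambda)) = \Lambda\times T^*_ZZ$ and both implications are transparent. Your proof instead stays global and intrinsic: the load-bearing step is the identity $\rho^*\omega_Y=\varpi^*\omega_X$ on $P=Y\times_X T^*X$, deduced from $\rho^*\theta_Y=\varpi^*\theta_X$ via the commutativity relations $\pi_X\circ\varpi=\varphi\circ\pi_P$ and $\pi_Y\circ\rho=\pi_P$ (this identity checks out), combined with the observations that $\rho$ is a closed immersion (smoothness of $\varphi$ gives injectivity of $(d\varphi)^*$) and $\varpi$ is smooth with $\dim\rho(Q)=\dim\rho(\varpi^{-1}(\Lambda))$, which is what lets you test isotropy on the smooth locus in the converse. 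The paper's approach buys brevity at the cost of an external local-structure reference; yours is self-contained and makes the role of surjectivity of $\varphi$ explicit (without it, the converse implication genuinely fails), at the cost of the bookkeeping over singular loci that you flagged yourself. One tiny remark: in the forward direction, the density of $\varpi^{-1}(\Lambda^\circ)$ in $\varpi^{-1}(\Lambda)$ is better attributed to flatness of $\varpi$ (flat morphisms send generic points to generic points) than to openness per se, but the conclusion is correct. Your closing alternative suggestion is essentially the paper's argument, except that the reduction is \'etale-local, not Zariski-local.
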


\begin{proof}
	Working \'etale locally on $Y$, as in \cite[Theorem~41.13.1]{stacks}, we may assume $Y = X \times Z$ with $\varphi$ projection on $X$. Then $\rho(\varpi^{-1}(\Lambda)) = \Lambda \times T^*_Z Z$ and the claim is obvious. 
\end{proof}
  
The proof of the following result is similar to that for $\dd$-modules \cite[Theorem~2.4.6]{HTT}. 

\begin{thm}\label{thm:smoothSSmelys}
If $\varphi$ is a smooth $\bc$-melys morphism and $\ms{M}$ a coherent $\sH_{\omega,\bc}(X,W)$-module then 
\begin{enumerate}
	\item[(i)] $\mc{H}^i(\varphi^! (\ms{M})) = 0 $ for $i \neq -(\dim Y - \dim X)$, 
	\item[(ii)] $\varphi^0 (\ms{M})$ is coherent over $\sH_{\varphi^* \omega,\varphi^*\bc}(Y,W)$; and 
	\item[(iii)] $\SS(\varphi^0 \ms{M}) = (\rho/W) ({\varpi}/W)^{-1}(\SS(\ms{M}))$.
\end{enumerate} 
Hence, if $\varphi$ is surjective then $\ms{M}$ is holonomic if and only if $\varphi^0 \ms{M}$ is holonomic. 
\end{thm}

\begin{proof}
	By definition, $\varphi^!(\ms{M}) = {L} \varphi^{0} (\ms{M})[\dim Y - \dim X]$ and hence
	\[
	\mc{H}^i(\varphi^! (\ms{M})) \cong \mc{H}^{i + \dim Y - \dim X}({L} \varphi^{0} (\ms{M})).
	\]
	Therefore, part (i) follows from the fact that $\mc{H}^i({L} \varphi^{0} (\ms{M})) = 0 $ for $i \neq 0$ because $\mc{O}_Y$ is flat over $\varphi^{-1} \mc{O}_X$. 
	
	Part (ii). As in the proof of \cite[Proposition~1.5.13]{HTT}, it suffices to show that the canonical map $\sH_{\varphi^* \omega,\varphi^* \bc}(Y,W) \to \sH_{Y \to X}$ is surjective. This can be checked in a formal neighbourhood of each point of $Y$. Let $\widehat{\varphi} \colon \widehat{Y}_y \to \widehat{X}_x$ denote the completion of $\varphi$ and set $\h = T_y Y$, $\mf{k} = T_x X$. Then Cartan's Lemma identifies $\widehat{Y}_y = \widehat{T_y Y}_0$ and $\widehat{X}_x = \widehat{T_x X}_0$ $W$-equivariantly, where $W$ acts linearly on the tangent spaces. Under this identification, the morphism $\widehat{\varphi}$ equals $\widehat{p}$, where $p \colon \h \twoheadrightarrow \mf{k}$ is projection with kernel $K := \ker (d_y \varphi)$. In this case, $\bc_x$-melys means that $W(\bc_x)$ acts trivially on $K$. We may assume $W = W(\bc_x)$. Then $\H_{\bc_x}(\h,W) = \dd(K) \boxtimes \H_{\bc_x}(\mf{k},W)$ and surjectivity is clear. 	
	
	Part (iii). The commutativity of the following diagram 
\[
	    	\begin{tikzcd}
		T^* Y \ar[d,"\pi_Y"'] & \ar[l,"\rho"'] Y \underset{X}{\times} T^* X \ar[r,"\varpi"] & T^* X \ar[d,"\pi_X"] \\
		(T^* Y)/W & \ar[l,"{\rho}/W"'] (Y \underset{X}{\times} T^* X)/W \ar[r,"{\varpi}/W"] & (T^* X)/W
	\end{tikzcd}
    \]
	implies that it suffices to show that $\SS_Y(\varphi^0 \ms{M}) = \rho(\varpi^{-1}(\SS_X(\ms{M})))$. Thinking of both varieties as schemes over $Y$, it suffices to show that the equality holds in the preimage of the formal neighbourhood of each point of $Y$. By Lemma~\ref{lem:localSSfixed}, we may assume that $y \in Y^W$. Let $x = \varphi(y)$. We wish to compute the singular support of $\sH_{Y \to X} \o_{\varphi^{-1}\sH_{\omega,\bc}(X,W)} \varphi^{-1}(\ms{M})$ in a formal neighbourhood of $y$. By equation \eqref{eq:completesingularsupport} and  Lemma~\ref{lem:completepullbackbi}(i), 
	\begin{align*}
		\widehat{Y}_y \underset{Y}{\times} \SS_Y(\varphi^0 \ms{M}) & = \SS_{\widehat{Y}_{y}}(\widehat{\mc{O}}_{Y,y} \o_{\mc{O}_Y} \varphi^0 \ms{M}) \\
		& = \SS_{\widehat{Y}_{y}}(\widehat{\varphi}^0 (\widehat{\mc{O}}_{X,x} \o_{\mc{O}_X} \ms{M})).
	\end{align*}
	
	 By Lemma~\ref{lem:forgetcommutepushpull}, we may assume $W = W(\bc)$. If we let $M$ be a finitely generated $\H_{\bc}(\mf{k}, W)$-module such that $\widehat{\mc{O}}_{X,x} \o_{\mc{O}_X} \ms{M} \cong \widehat{\mc{O}}_{\mf{k},0} \o_{\mc{O}_{\mf{k}}} M$, then $p^0 M = \mc{O}(K) \boxtimes M$ and it is trivial that $\SS_{\h}(p^0 M) = \rho_p(\varpi^{-1}_p(\SS_{\mf{k}}(M)))$ and hence
	$$
	\SS_{\widehat{\h}_{0}}(\widehat{p}^0 (\widehat{\mc{O}}_{\mf{k},0} \o_{\mc{O}_{\mf{k}}} M)) = \rho_{\widehat{p}}(\varpi^{-1}_{\widehat{p}}(\SS_{\widehat{\mf{k}}_{0}}(\widehat{\mc{O}}_{\mf{k},0} \o_{\mc{O}_{\mf{k}}} M))).
	$$
	We deduce that
	$$
	\SS_{\widehat{Y}_{y}}(\widehat{\varphi}^0 (\widehat{\mc{O}}_{X,x} \o_{\mc{O}_X} \ms{M})) = \rho_{\widehat{\varphi}}(\varpi^{-1}_{\widehat{\varphi}}(\SS_{\widehat{X}_{x}}(\widehat{\mc{O}}_{X,x} \o_{\mc{O}_{X}} \ms{M}))).
	$$
	Finally, we note that $\SS_{\widehat{X}_{x}}(\widehat{\mc{O}}_{X,x} \o_{\mc{O}_{X}} \ms{M}) = \widehat{X}_{x} \underset{X}{\times} \SS_X(\ms{M})$ by \eqref{eq:completesingularsupport} again. Combining these gives
	$$
	\widehat{Y}_y \underset{Y}{\times} \SS_Y(\varphi^0 \ms{M}) = \rho_{\widehat{\varphi}}(\varpi^{-1}_{\widehat{\varphi}}(\widehat{X}_{x} \underset{X}{\times} \SS_X(\ms{M}))),
	$$
	and we deduce from Lemma~\ref{lem:formalcotangentcorrespondence} that 
	$$
	\widehat{Y}_y \underset{Y}{\times} \SS_Y(\varphi^0 \ms{M}) = \widehat{Y}_y \underset{Y}{\times} \rho (\varpi^{-1}(\SS_X(\ms{M}))).
	$$
	Since this holds for all $y \in Y$, we must have $\SS_Y(\varphi^0 \ms{M}) = \rho (\varpi^{-1}(\SS(\ms{M})))$. 

By Lemma~\ref{lem:isoclosed}, $\ms{M}$ is holonomic if and only if $\SS_X(\ms{M})$ is isotropic in $T^* X$; the analogous statement holds for $\varphi^0 \ms{M}$. Therefore, the final claim follows directly from the equality $\SS_Y(\varphi^0 \ms{M}) = \rho(\varpi^{-1}(\SS_X(\ms{M})))$ and Lemma~\ref{lem:isotropicsmoothmorphism}.
\end{proof}

In particular, we deduce:

\begin{cor}\label{cor:holEtLoc}
	Suppose $\varphi \colon Y \to X$ is a $\bc$-melys finite \'etale morphism between smooth $W$-varieties. Then a $\sH_{\omega,\bc}(X,W)$-module $\ms{M}$ is holonomic if and only if $\varphi^0 \ms{M}$ is a holonomic $\sH_{\varphi^* \omega,\varphi^* \bc}(Y,W)$-module.
\end{cor}

We note for later the following results. 

\begin{prop}\label{prop:etalepushpullsummand}
	Suppose $\varphi \colon Y \to X$ is a $\bc$-melys finite \'etale morphism between smooth $W$-varieties. Then the adjunction $\mr{Id} \to \varphi_0 \circ \varphi^0$ is a split embedding. This means that $\ms{M}$ is a direct summand of $\varphi_0(\varphi^0(\ms{M}))$ for any $\sH_{\omega,\bc}(X,W)$-module $\ms{M}$.
\end{prop}

\begin{proof}
Since $\varphi$ is finite, it is affine and we identify $\sH_{\varphi^* \omega,\varphi^* \bc}(Y,W)$ with $\varphi_{\idot} \sH_{\varphi^* \omega,\varphi^* \bc}(Y,W)$; that is, we consider $\sH_{\varphi^* \omega,\varphi^* \bc}(Y,W)$ as a sheaf on $X/W$. It follows from Lemma~\ref{lem:etalebimodules} that the adjunction $\ms{M} \to \varphi_0 (\varphi^0 \ms{M})$ is the morphism 
\[
	\ms{M} \to \sH_{\varphi^* \omega,\varphi^*\bc}(Y,W) \o_{\sH_{\omega,\bc}(X,W)} \ms{M} \cong \mc{O}_Y \o_{\mc{O}_X} \ms{M}.
\]
Since $\varphi$ is finite and flat, there is a ($\mc{O}_X$-linear) trace map $\Tr \colon \mc{O}_Y \to \mc{O}_X$ (again, considering $\mc{O}_Y$ as a sheaf on $X/W$) which we may assume is the identity on $\mc{O}_X$ after dividing by the degree of $\varphi$. This splits the injection $\mc{O}_X \to \mc{O}_Y$. Let $\mc{J}$ denote the kernel of $\Tr$. One can check directly that $\Tr$ is $W$-equivariant and is a morphism of $\dd_X$-modules. Therefore, we just need to check locally that if $D_v \in \sH_{\omega,\bc}(X,W)$ is a Dunkl operator then $D_v$ maps $\mc{J} \o_{\mc{O}_X} \ms{M}$ to itself. If $V$ is a $W$-orbit in $X$ and $U = \varphi^{-1}(V)$ then we may replace $\mc{O}_X$ (resp. $\mc{O}_Y$) by the semi-local ring $\mc{O}_{X,V}$ (resp. $\mc{O}_{Y,U}$). It follows from the proof of Lemma~\ref{lem:goodneighbourhood} that each $Z$ is principal in $V$. If $v$ is a derivation of $\mc{O}_{X,V}$, $f \in \mc{J}$ and $m \in \ms{M} |_V$ then 
\begin{align*}
D_v (f \o m) & = [D_v,f] \o m + f \o D_v (m) \\
& = v(f) \o m + \sum_{(w,Z) \in \mc{S}(X)} \frac{2 \bc(w,Z)}{1-\lambda_{w,Z}} \frac{v(f_Z)}{f_Z} (w(f) - f) \o m + f \o D_v(m). 
\end{align*}
As noted already, one can check directly that for $g \in \mc{O}_Y$, $\Tr(v(g)) = v(\Tr(g))$, and hence $v(f) \in \mc{J}$ if $f \in \mc{J}$. Now, let $a = \frac{v(f_Z)}{f_Z} (w(f) - f)$. Note that $v(f_Z) \in \mc{O}_{X,V}$ and $w(f)  - f \in \mc{J}$ since $\mc{J}$ is $W$-stable. Thus, $f_Z a = v(f_Z)(w(f) - f)$ belongs to $\mc{J}$. Even though $\mc{O}_{X,V}$ need not be a domain because we cannot assume $X$ is connected, $f_Z \in \mc{O}_{X,V}$ is not a zero-divisor because its zero set is a proper closed subset of a connected component of $X$. Thus, $0 = \Tr(f_Z a) = f_Z \Tr(a)$ implies that $\Tr(a) = 0$ and thus $a \in \mc{J}$. It follows that $D_v(f \o m)$ belongs to $(\mc{J} \o_{\mc{O}_{X}} \ms{M}) |_V$. 
\end{proof} 

\begin{lem}\label{lem:etalepushforward}
	Suppose $\varphi \colon Y \to X$ is a $\bc$-melys finite \'etale morphism between smooth $W$-varieties. If $\ms{M}$ is a holonomic $\sH_{\varphi^* \omega,\varphi^* \bc}(Y,W)$-module then $\varphi_0 \, \ms{M}$ is a holonomic $\sH_{\omega,\bc}(X,W)$-module.
\end{lem}

\begin{proof}
Since $\varphi$ is finite, it is affine. The statement is local on $X$, therefore we may assume that $X$, and hence $Y$, is affine. Then, there is an inclusion
	\[
	\H_{\omega,\bc}(X,W) \hookrightarrow \mc{O}(Y) \o_{\mc{O}(X)} \H_{\omega,\bc}(X,W) \cong \H_{\varphi^* \omega,\varphi^* \bc}(Y,W), 
	\]
    with $\H_{\varphi^* \omega,\varphi^* \bc}(Y,W)$ a finite projective $\H_{\omega,\bc}(X,W)$-module. This implies that if $\mc{F}_{\idot}$ is a good filtration on $M = \Gamma(Y,\ms{M})$ then it is also a good filtration when considered as a $\H_{\omega,\bc}(X,W)$-module. Hence the singular support $\SS_Y(\ms{M})$ is the support of $\gr_{\mc{F}} M$ as a $\mc{O}(T^*Y)$-module and the singular support $\SS_X(\varphi_0 \ms{M})$ is the support of $\gr_{\mc{F}} M$ as a $\mc{O}(T^*X)$-module. In the case $\varphi$ is \'etale, in diagram \eqref{eq:cotangentmap} $\rho$ is an isomorphism and $\varpi$ is finite. Then the lemma follows from the fact that if $\Lambda \subset T^* Y$ is isotropic then so too is $\varpi(\rho^{-1}(\Lambda))$ and that $\SS_X(\varphi_0 \ms{M}) = \varpi(\rho^{-1}(\SS_Y(\ms{M})))$. 
\end{proof}

\subsection{Kashiwara's Theorem}

In this section we consider closed embeddings. If $Y \subset X$ is a (smooth, $W$-stable) closed subvariety and $i$ the corresponding closed embedding, then for a $\sH_{\omega,\bc}(X,W)$-module $\ms{M}$, we define $i^{\natural} \ms{M} = \mc{H}om_{i^{-1} \sH_{\omega,\bc}(X,W)}(\sH_{X \leftarrow Y},i^{-1} \ms{M})$, which is a left $\sH_{i^*\omega,i^*\bc}(Y,W)$-module. As an $\mc{O}_Y$-module, 
\[
i^{\natural} \ms{M} \cong \mc{H}om_{i^{-1} \mc{O}_X}(\mc{O}_Y,i^{-1} \ms{M}) \o_{\mc{O}_Y} (i^{*} K_X \o K_Y^{-1}).
\]
Since $\sH_{X \leftarrow Y}$ is supported on $Y$ as a left $i^{-1} \sH_{\omega,\bc}(X,W)$-module, \cite[Proposition~1.5.14 and Proposition~1.5.16]{HTT} go through verbatim, showing that for $\ms{M}^{\idot} \in D^b_{\mathrm{qc}}(  \sH_{\omega,\bc}(X,W))$
	$$
	i^! \ms{M}^{\idot} \cong R \mc{H}om_{i^{-1} \sH_{\omega,\bc}(X,W)}(\sH_{X \leftarrow Y}, i^{-1} \ms{M}^{\idot}) \cong R i^{\natural} \ms{M}^{\idot}. 
	$$
If $U$ is the complement to $Y$ in $X$, write $j$ for the open embedding $U \hookrightarrow X$. Note that $(j^!,j_+)$ is a pair of adjoint functors. The subsheaf $\Gamma_Y(\ms{M})$ consisting of all sections of a $\sH_{\omega,\bc}(X,W)$-module $\ms{M}$ supported on $Y$ is a $\sH_{\omega,\bc}(X,W)$-submodule. The functor $\Gamma_Y$ extends to an endofunctor $R \Gamma_Y \colon D^b(\sH_{\omega,\bc}(X,W)) \to D^b(\sH_{\omega,\bc}(X,W))$. 

\begin{lem}\label{lem:localcohomology}
	Let $Y$ be a closed, $W$-stable subset of $X$ and $U = X \setminus Y$, with $i \colon Y \hookrightarrow X$ and $j \colon U \hookrightarrow X$ the corresponding inclusions. 
	\begin{enumerate}
		\item[(i)] There is an exact triangle 
		$$
		R \Gamma_Y(\ms{M}^{\idot}) \to \ms{M}^{\idot} \to j_+ j^!(\ms{M}^{\idot}) \stackrel{[1]}{\longrightarrow} \cdots \qquad \forall \, \ms{M}^{\idot} \in D^b_{\mathrm{qc}}(  \sH_{\omega,\bc}(X,W)).
		$$
		\item[(ii)] If $Y$ is smooth then the above triangle becomes 
		$$
		i_0 i^{!} (\ms{M}^{\idot}) \to \ms{M}^{\idot} \to j_+ j^!(\ms{M}^{\idot}) \stackrel{[1]}{\longrightarrow} \cdots
		$$
	\end{enumerate}
\end{lem}

\begin{proof}
	The proofs of both parts are identical to the $\dd$-module case, see \cite[Proposition~1.7.1]{HTT}, since the key fact used in these proofs is \cite[Lemma~1.5.17]{HTT} saying that $\mc{O}_Y \o_{i^{-1} \mc{O}_X} i^{-1} R j_* \ms{K}^{\idot} = 0$ for any $\ms{K}^{\idot} \in D^b(\mc{O}_U)$. 
\end{proof}

\begin{lem}
	If $\ms{M}^{\idot}$ is supported on $U$ then $\ms{M}^{\idot} \to j_+ j^!(\ms{M}^{\idot})$ is an isomorphism. 
\end{lem}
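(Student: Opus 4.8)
The plan is to read off the statement from Lemma~\ref{lem:localcohomology}(i). That lemma supplies the exact triangle
$$
R\Gamma_Y(\ms{M}^{\idot}) \to \ms{M}^{\idot} \to j_+ j^!(\ms{M}^{\idot}) \stackrel{[1]}{\longrightarrow} \cdots ,
$$
in which the middle arrow is precisely the adjunction unit (here $j^!$ is left adjoint to $j_+$, as recorded before that lemma). Consequently the cone of $\ms{M}^{\idot}\to j_+j^!(\ms{M}^{\idot})$ is $R\Gamma_Y(\ms{M}^{\idot})[1]$, so the map is an isomorphism if and only if $R\Gamma_Y(\ms{M}^{\idot})=0$. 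Thus everything reduces to the vanishing of $R\Gamma_Y(\ms{M}^{\idot})$.

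To prove that vanishing I would argue locally on $X$. For a $W$-stable open $V\subset X$ one has $\Gamma_Y(\ms{N})|_V=\Gamma_{Y\cap V}(\ms{N}|_V)$, and the same compatibility persists after deriving, so it suffices to check that $R\Gamma_Y(\ms{M}^{\idot})$ has vanishing stalks at every point of $X$, working on an affine $W$-stable cover. Over the open set $U=X\setminus Y$ this is immediate, since $Y\cap U=\emptyset$ and hence $R\Gamma_{Y\cap U}$ is the zero functor. For a point of $Y$ one uses the hypothesis: ``$\ms{M}^{\idot}$ supported on $U$'' means the cohomology sheaves $\mc{H}^k(\ms{M}^{\idot})$ have support disjoint from $Y$, and since these supports are closed in $X$ there is a $W$-stable open neighbourhood $V$ of $Y$ on which $\ms{M}^{\idot}$ is acyclic. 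Then $R\Gamma_Y(\ms{M}^{\idot})|_V = R\Gamma_{Y\cap V}(\ms{M}^{\idot}|_V)=0$. As $X=U\cup V$, this forces $R\Gamma_Y(\ms{M}^{\idot})=0$, completing the argument.

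Alternatively, since for an open embedding $j^!=j^{-1}$ and $j_+=Rj_{\idot}$ (because $\sH_{U\to X}=\sH_{X\leftarrow U}=\sH_{\omega,\bc}(X,W)|_U$ and $\dim U=\dim X$), one can instead verify directly that the unit $\ms{M}^{\idot}\to Rj_{\idot}(\ms{M}^{\idot}|_U)$ is an isomorphism on stalks: over $U$ it is the identity, while at $y\in Y$ both stalks vanish, the stalk of $Rj_{\idot}(\ms{M}^{\idot}|_U)$ being $R\Gamma(V\cap U,\ms{M}^{\idot}|_U)$ for $V$ a small enough neighbourhood of $y$ avoiding the supports of the $\mc{H}^k(\ms{M}^{\idot})$. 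The whole lemma is short and essentially formal; the only point that needs genuine care is the passage from ``supported on $U$'' to ``acyclic on a $W$-stable neighbourhood of $Y$'' (as opposed to merely having vanishing stalks along $Y$), and this step is handled exactly as in the $\dd$-module case, cf.\ \cite[Proposition~1.7.1]{HTT}.
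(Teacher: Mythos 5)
Your opening reduction to showing $R\Gamma_Y(\ms{M}^{\idot}) = 0$ is exactly how the paper proceeds, and the identification of the middle arrow of Lemma~\ref{lem:localcohomology}(i) with the adjunction unit is correct. The problem is the step you yourself flag as delicate. You claim that because the cohomology sheaves $\mc{H}^k(\ms{M}^{\idot})$ are supported away from $Y$, and because ``these supports are closed in $X$,'' there is an open neighbourhood $V$ of $Y$ on which $\ms{M}^{\idot}$ is acyclic. But $\ms{M}^{\idot}$ has only \emph{quasi-coherent} cohomology, and the support of a quasi-coherent module need not be closed: take $X=\mathbb{A}^1$, $Y=\{0\}$, $U=X\setminus\{0\}$, and $\ms{M}=\bigoplus_{n\geq 1}\mc{O}_X/\mathfrak{m}_{1/n}$. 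Every stalk of $\ms{M}$ along $Y$ vanishes, so $\ms{M}$ is supported on $U$, yet its support accumulates at $0$ and there is no Zariski-open neighbourhood of $Y$ on which $\ms{M}$ vanishes. Your ``alternative'' computation at the end has the same flaw, since it again posits a small neighbourhood of $y$ ``avoiding the supports'' of the cohomology sheaves. Deferring this to HTT does not repair it; the implication is simply unavailable in the quasi-coherent setting.

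The paper avoids any neighbourhood argument. After reducing to a single module $\ms{M}$ concentrated in degree zero, it invokes the stalk form of the Grothendieck local cohomology formula,
\[
R^i\Gamma_Y(\ms{M})_x \;\cong\; \varinjlim_m \Ext^i_{\mc{O}_{X,x}}\bigl(\mc{O}_{X,x}/\mc{I}_x^m,\,\ms{M}_x\bigr),
\]
where $\mc{I}$ is the ideal sheaf of $Y$. At every point $x\in X$ one of the two arguments of the $\Ext$ is zero: if $x\notin Y$ then $\mc{I}_x=\mc{O}_{X,x}$, so the first argument vanishes; if $x\in Y$ then $\ms{M}_x=0$ by the support hypothesis. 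Either way all the $\Ext$ groups vanish, so every stalk of $R\Gamma_Y(\ms{M})$ is zero and hence $R\Gamma_Y(\ms{M})=0$. This uses only the pointwise vanishing $\ms{M}_x=0$ for $x\in Y$, never the stronger (and generally false) acyclicity on an open neighbourhood. Replace your neighbourhood step with this $\Ext$ computation and your argument coincides with the paper's.
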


\begin{proof}
	The claim follows from Lemma~\ref{lem:localcohomology}(i) if we can argue that $R \Gamma_Y(\ms{M}^{\idot}) \cong 0$. 
	
	By induction on the number of non-zero cohomology groups of $\ms{M}^{\idot}$, we may assume $\ms{M}^{\idot} = \ms{M}$ is concentrated in degree zero (with support contained in $U$). If $\mc{I} \lhd \mc{O}_X$ is the ideal defining $Y$ then 
	\[
	R^i \Gamma_Y(\ms{M})_x = \lim_{m \to \infty} \Ext_{\mc{O}_{X,x}}^i(\mc{O}_{X,x} / \mc{I}^m_x,\ms{M}_x).
	\]
	Since $Y \cap U = \emptyset$, we have $\Ext_{\mc{O}_{X,x}}^i(\mc{O}_{X,x} / \mc{I}^m_x,\ms{M}_x) = 0$ for all $x \in X$. Thus, $R \Gamma_Y(\ms{M}^{\idot}) \cong 0$.
\end{proof} 

In order to state a version of Kashiwara's theorem for sheaves of Cherednik algebras, we recall the notion of $\bc$-regular subvariety introduced in \cite{ThompsonHolI}. First, the reader is reminded that there is a notion of \textit{regular parameters} $\bc$ for rational Cherednik algebras; see Definition~\ref{defn:bcregularRCA} of the appendix. 

\begin{defn}
	Let $Y$ be a $W$-stable, smooth, locally closed subvariety of $X$. For each $y \in Y$ define 
	\[
	W_{y,\perp} = \langle w\,  | \, (w,Z) \in \mc{S}_{\bc}(X) \textrm{ and } Y \subset Z \rangle \subset W \quad \textrm{and} \quad V_y := T_y X / T_y Y.
	\] 
	The group $W_{y,\perp}$ acts linearly on the vector space $V_y$ and hence it makes sense to talk of regular parameters on the set of reflections in $(V_y,W_{y,\perp})$. The subvariety $Y$ is said to be $\bc$-regular if $\bc |_{(V_y,W_{y,\perp})}$ is regular for all $y \in Y$. 
\end{defn}

One should think of $\bc$-regular as meaning that the restriction of $\bc$ to the subgroup $W_{\perp}$ of $W$ acting on the perpendicular direction to $Y$ in $X$ is regular.

\begin{example}
	If $Y$ is a closed ($W$-stable) smooth subvariety such that $Y \not\subset Z$ for any $(w,Z) \in \mc{S}_{\bc}(X)$ then it is $\bc$-regular. In this case, we say $Y$ is \textit{strongly $\bc$-regular}. 
\end{example}

Recall that the embedding of any smooth closed subvariety is a $\bc$-melys morphism.

\begin{example}\label{ex:invariantfunctionmelys2}
	Let $f \colon X \to \mathbb{A}^1$ be a $W$-invariant function and define the closed embedding $\phi = (\mr{Id},f) \colon X \hookrightarrow X \times \mathbb{A}^1$. Then $\phi$ is strongly $\bc$-regular. If $\ms{M}$ is a $\sH_{\omega,\bc}(X,W)$-module and $\ms{N}$ a $\dd(\mathbb{A}^1)$-module, then $\ms{M} \o_{f^{-1} \mc{O}_{\mathbb{A}^1}} f^{-1} \ms{N} = \phi^0(\ms{M} \boxtimes \ms{N})$ is a $\sH_{\omega,\bc}(X,W)$-module. Given a derivation $v$, the corresponding Dunkl operator $D_v$ acts as
	$$
	D_v(m \otimes n) = (D_v m) \otimes n + v(f) m \otimes \partial_t n,
	$$
	the group $W$ acts trivially on the second factor, and $\mc{O}_X$ acts in the natural way. This construction will play an important role in Section~\ref{sec:bfunctionRCA}.
\end{example}
	
It was shown in \cite[Theorem 4.5]{ThompsonHolI} that Kashiwara's Theorem generalises to the setting of sheaves of Cherednik algebras. We recall a version of the result here, sketching the parts of the proof not explicitly covered in \cite{ThompsonHolI}.  

\begin{thm}\label{thm:Kash}
	Let $Y \subset X$ be a smooth closed subvariety. 
	\begin{enumerate}
		\item[(i)] The functor $i_0$ is exact, preserves coherence and,  for any $\ms{M} \in \Coh{\sH_{i^* \omega,i^* \bc}(Y,W)}$, 
		\begin{equation}\label{eq:SSclosedembedding}
			\SS_X(i_0 (\ms{M})) = \varpi(\rho^{-1}(\SS_Y(\ms{M}))).
		\end{equation}
	\item[(ii)] Assume that $Y \subset X$ is $\bc$-regular. The functor 
	\[
	i^{\natural} = \mc{H}^0(i^!) \colon \Qcoh{\sH_{\omega,\bc}(X,W)}_Y \rightarrow \Qcoh{\sH_{i^* \omega,i^* \bc}(Y,W)}
	\]
	is an equivalence with quasi-inverse $i_0$. It restricts to an equivalence 
	$$
	i^{\natural} \colon \Coh{\sH_{\omega,\bc}(X,W)}_Y \rightarrow \Coh{\sH_{i^* \omega,i^* \bc}(Y,W)},
	$$
	again with quasi-inverse $i_0$.
	\end{enumerate}
\end{thm}

\begin{proof}
(i) We first note that $i_0$ preserves coherence for closed embeddings because $\mc{H}_{X \leftarrow Y}$ is a coherent left $\sH_{\omega,\bc}(X,W)$-module. In fact, it is locally a cyclic module.
	
If $0 \to \ms{M}_1 \to \ms{M}_2 \to \ms{M}_3 \to 0$ is a short exact sequence in $\Qcoh{\sH_{i^* \omega,i^* \bc}(Y,W)}$ then we can check in the formal neighbourhood $\widehat{X}_y$ of each $y \in Y$ in $X$ that 
\begin{equation}\label{eq:ioses}
	0 \to i_0(\ms{M}_1) \to i_0(\ms{M}_2) \to i_0(\ms{M}_3) \to 0
\end{equation}
is exact on the right. Let $y \in Y$ and $\widehat{i} \colon \widehat{Y}_y \to \widehat{X}_y$. If $\ms{N} = i_0(\ms{M})$ then Lemma~\ref{lem:completepullbackbi}(ii) says that $\widehat{\mc{O}}_{X,y} \o_{\mc{O}_X} \ms{N} = \widehat{i}_0(\widehat{\mc{O}}_{Y,y} \o_{\mc{O}_Y} \ms{M})$. Thus, tensoring \eqref{eq:ioses} by $\widehat{\mc{O}}_{X,y}$ shows that it suffices to check that $\widehat{i}_0$ is exact. Again, by Lemma~\ref{lem:completepullbackbi}, it suffices to check that $i_0$ is exact for a linear closed embedding (corresponding to the embedding $T_y Y \to T_y X$). 
	
	Let $\h$ be a $W$-module and $\mf{k}$ a submodule; we write $i$ for the embedding of $\mf{k}$ in $\h$. If $M = \Gamma(\mf{k},\ms{M})$ then $i_0(M) = K_{\h/\mf{k}}^{-1} \o_{\kk[\h]}\H_{\overline{\bc}}(\h,W) \o_{\H_{\bc}(\mf{k},W)} M$. For both coherence and singular support, we may assume $W = W(\bc)$. In this case, we may choose a $W$-stable decomposition $\h = \mf{k} \oplus \mf{k}'$ with $W = W_{\mf{k}} \times W_{\mf{k}'}$, which gives rise to a factorization $\H_{\bc}(\h,W) = \H_{\bc}(\mf{k},W_{\mf{k}}) \o \H_{\bc}(\mf{k}',W_{\mf{k}'})$ since $W$ acts on $\h$ as a complex reflection group. Then  
	\[
	i_0(M) = K_{\mf{k}'}^{-1} \o_{\kk[\mf{k}']} (\H_{\overline{\bc}}(\mf{k}',W_{\mf{k}'}) / \H_{\overline{\bc}}(\mf{k}',W_{\mf{k}'}) \kk[\mf{k}']_+) \o_{\kk} M.
	\] 
	In particular, it is clear that $i_0$ is exact and that $\SS(i_0 (\ms{M})) = \varpi(\rho^{-1}(\SS(\ms{M})))$. The fact that the equality $\SS(i_0 (\ms{M})) = \varpi(\rho^{-1}(\SS(\ms{M})))$ holds in the special case $i \colon \mf{k} \hookrightarrow \h$ implies that the equality holds (as stated in \eqref{eq:SSclosedembedding}) for arbitrary $i \colon Y \hookrightarrow X$ follows exactly as in the proof of Theorem~\ref{thm:smoothSSmelys}.

	(ii) Exactly as in the first half of the proof of \cite[Theorem 4.5]{ThompsonHolI}, we have adjunctions $\mr{Id} \to i^{\natural} i_0$ and $i_0 i^{\natural} \to \mr{Id}$ and it suffices to check in a formal neighbourhood of each $y \in Y$ that both of these are isomorphisms. Since $i_0$ and $i^{\natural}$ are compatible, in an appropriate sense, with the morphism $\varphi$ of Proposition~\ref{prop:etalemelysiso}, arguing as in the proof of Lemma~\ref{lem:localSSfixed} allows us to reduce to the case where $y$ is fixed by $W$.
		
	Since $\mc{O}_Y$ is finitely presented over $\mc{O}_X$ and $\widehat{\mc{O}}_{X,y}$ is flat over $\mc{O}_X$, the canonical map 
	$$
	\widehat{\mc{O}}_{X,y} \o_{\mc{O}_X} \Hom_{\mc{O}_X}(\mc{O}_Y,\ms{N}) \to \Hom_{\widehat{\mc{O}}_{X,y}}(\widehat{\mc{O}}_{Y,y},\widehat{\mc{O}}_{X,y} \o_{\mc{O}_X} \ms{N})
	$$
	 is an isomorphism. Now, if $\ms{N} = i_0(\ms{M})$ then as we have seen in (i), $\widehat{\mc{O}}_{X,y} \o_{\mc{O}_X} \ms{N} = \widehat{i}_0(\widehat{\mc{O}}_{Y,y} \o_{\mc{O}_Y} \ms{M})$. Combining these shows that 
	 $$
	 \widehat{\mc{O}}_{X,y} \o_{\mc{O}_X} (\mr{Id} \to i^{\natural} i_0) \cong \mr{Id} \to \widehat{i}^{\natural} \, \widehat{i}_0.
	 $$
	 Since $\widehat{\mc{O}}_{X,y} \cong \widehat{\mc{O}}_{T_y X,0}$ and $\widehat{\mc{O}}_{Y,y} \cong \widehat{\mc{O}}_{T_y Y,0}$ such that $\widehat{i}$ gets identified with the completion of the linear embedding $T_y Y \to T_y X$, the above isomorphism reduces us to the case of the completion at zero of a linear embedding. The claim that $\widehat{i}^{\natural}$ and $\widehat{i}_0$ are equivalences in this case is explained in the proof of \cite[Theorem 4.5]{ThompsonHolI}. The proof that $i_0 i^{\natural} \to \mr{Id}$ is an isomorphism is similar. 
	  
Finally, since $i_0$ is an equivalence, which preserves coherence by part (i), it follows that its inverse $i^{\natural}$ also preserves coherence.  
\end{proof}

The functor $i^{\natural}$ restricts to an equivalence between the corresponding categories of holonomic modules. Indeed, as in the case of $\dd$-modules, we have:

\begin{cor}\label{cor:holoclosedpushforward}
	Let $i \colon Y \hookrightarrow X$ be a closed embedding, with $Y$ smooth, and choose $\ms{M}^{\idot}$ in $D^b_{\mr{Coh}}(\sH_{i^* \omega,i^* \bc}(Y,W))$. Then 
	\[
i_+ (\ms{M}^{\idot}) \in D^b_{\mr{Hol}}(\sH_{\omega,\bc}(X,W)) \quad \Leftrightarrow \quad \ms{M}^{\idot} \in D^b_{\mr{Hol}}(\sH_{i^* \omega,i^* \bc}(Y,W)).	
\] 
\end{cor}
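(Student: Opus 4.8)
The plan is to reduce the derived statement to single coherent modules and then deduce it from the singular-support formula of Theorem~\ref{thm:Kash}(i) together with an elementary fact about the cotangent correspondence of a closed immersion.

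First I would observe that, since a closed embedding is affine and $i_0$ is exact and preserves coherence by Theorem~\ref{thm:Kash}(i), the functor $i_+ = R i_{\idot}(\sH_{X \leftarrow Y} \o^L_{\sH_{i^*\omega,i^*\bc}(Y,W)} -)$ is computed by applying the exact functor $i_0$ termwise. Hence for $\ms{M}^{\idot} \in D^b_{\mr{Coh}}(\sH_{i^*\omega,i^*\bc}(Y,W))$ one has $\mc{H}^k(i_+ \ms{M}^{\idot}) \cong i_0(\mc{H}^k(\ms{M}^{\idot}))$, which are coherent, so $i_+ \ms{M}^{\idot}$ automatically lies in $D^b_{\mr{Coh}}(\sH_{\omega,\bc}(X,W))$; it has holonomic cohomology if and only if $i_0(\mc{H}^k(\ms{M}^{\idot}))$ is holonomic for every $k$. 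Thus the corollary reduces to the claim that, for a coherent $\sH_{i^*\omega,i^*\bc}(Y,W)$-module $\ms{M}$, the module $i_0(\ms{M})$ is holonomic if and only if $\ms{M}$ is.

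Next I would invoke Theorem~\ref{thm:Kash}(i) to write $\SS_X(i_0(\ms{M})) = \varpi(\rho^{-1}(\SS_Y(\ms{M})))$, where $T^*Y \stackrel{\rho}{\longleftarrow} Y \times_X T^*X \stackrel{\varpi}{\longrightarrow} T^*X$ is the usual correspondence ($\varpi$ the closed immersion induced by $i$, $\rho$ the surjective bundle map dual to $TY \hookrightarrow i^*TX$). Since a coherent module $\ms{N}$ is holonomic exactly when $\SS_X(\ms{N})$ is isotropic in $T^*X$ (Lemma~\ref{lem:isoclosed} and the discussion following it), it is enough to establish the purely geometric statement: for every closed subvariety $\Lambda \subset T^*Y$, the set $\varpi(\rho^{-1}(\Lambda))$ is isotropic in $T^*X$ if and only if $\Lambda$ is isotropic in $T^*Y$. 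This can be checked \'etale-locally on $X$, so I would reduce to the inclusion of a coordinate subspace $\mathbb{A}^m \hookrightarrow \mathbb{A}^m \times \mathbb{A}^r$; then $T^*X = T^*\mathbb{A}^m \times T^*\mathbb{A}^r$, one computes $\varpi(\rho^{-1}(\Lambda)) = \Lambda \times T^*_0 \mathbb{A}^r$, and since $T^*_0\mathbb{A}^r$ is Lagrangian in $T^*\mathbb{A}^r$ and the symplectic form on a product is the sum, $\omega_X$ vanishes on the smooth locus of $\Lambda \times T^*_0\mathbb{A}^r$ if and only if $\omega_Y$ vanishes on the smooth locus of $\Lambda$. (Equivalently, one may use the reduction to the linear case $\h = \mf{k} \oplus \mf{k}'$ already carried out in the proof of Theorem~\ref{thm:Kash}(i), in which $i_0(\ms{M}) \cong \ms{M} \boxtimes \delta_0$ with $\SS(\delta_0) = T^*_0 \mf{k}'$.) Since the geometric statement is an equivalence, both implications of the corollary follow, via the reduction of the first step.

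I do not expect a real obstacle here; the one point to watch is that the singular-support identity of Theorem~\ref{thm:Kash}(i) and the isotropy criterion both live in $T^*X$ rather than in $(T^*X)/W$, which is reconciled by Lemma~\ref{lem:isoclosed}. The whole corollary is the exact analogue of the statement that $i_+$ both preserves and reflects holonomicity for $\dd$-modules.
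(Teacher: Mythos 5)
Your proposal is correct and is essentially the paper's own argument: reduce via exactness of $i_0$ to a single coherent module, then apply the singular-support identity $\SS_X(i_0\ms{M}) = \varpi(\rho^{-1}(\SS_Y \ms{M}))$ from Theorem~\ref{thm:Kash}(i) together with the elementary geometric fact that, for a closed immersion of smooth varieties, $\Lambda \subset T^*Y$ is isotropic if and only if $\varpi(\rho^{-1}(\Lambda)) \subset T^*X$ is. The only difference is that the paper asserts this last geometric fact without proof, while you supply a short verification via the local model $\mathbb{A}^m \hookrightarrow \mathbb{A}^m \times \mathbb{A}^r$, which is a harmless (and correct) elaboration rather than a different route.
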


\begin{proof}
	Since $i_0$ is exact by Theorem~\ref{thm:Kash}(i), it suffices to show that $i_0(\ms{M})$ is holonomic if and only if $\ms{M}$ is holonomic. For a closed embedding of smooth varieties $i \colon Y \hookrightarrow X$, a subvariety $\Lambda \subset T^* Y$ is isotropic if and only if $\varpi(\rho^{-1}(\Lambda))$ is isotropic in $T^*X$. Therefore the claim follows directly from equation \eqref{eq:SSclosedembedding}.  
\end{proof}

\begin{cor}
	If $X$ is affine and $Y \subset X$ is a $\bc$-regular closed subvariety then 
	$$
	\H_{i^* \omega,i^*\bc}(Y,W) = \mathbb{I}(\H_{\omega,\bc}(X,W)I) / \H_{\omega,\bc}(X,W) I
	$$
	where $I = I(Y)$ and $\mathbb{I}(J)$ is the idealizer in $\H_{\omega,\bc}(X,W)$ of the left ideal $J$. 
\end{cor}

\begin{proof}
	Under the equivalence of Theorem~\ref{thm:Kash}, the generator $\H_{i^*\omega,i^*\bc}(Y,W)$ of $\Lmod{\H_{i^*\omega,i^*\bc}(Y,W)}$ is sent to $i_0(\H_{i^*\omega,i^*\bc}(Y,W)) = \H_{\omega,\bc}(X,W) / \H_{\omega,\bc}(X,W) I$. The result follows since the right hand side is the endomorphism ring of $i_0(\H_{i^*\omega,i^*\bc}(Y,W))$. 
\end{proof} 

\subsection{Reduction to the linear case}

The following result implies that, locally around each $W$-fixed point in $X$, modules for the sheaf of Cherednik algebras may be thought of as modules over the rational Cherednik algebra $\H_{\bc}(\h,W)$, supported on a certain smooth closed subset. However, for this to be true we require $\omega \in \mathbb{H}^2 ( X,\Omega_X^{1,2})^W$ to be Zariski locally trivial, which is a condition we will impose later in the paper when applying Proposition~\ref{prop:goodembedding}. 

\begin{prop}\label{prop:goodembedding}
For each $x \in X^W$, there is an open neighbourhood $V$ of $x$ in $X$ and a linear representation $\h$ of $W$ such that $(V,x) \hookrightarrow (\h,0)$ is a strongly $\bc$-regular subscheme (for any $\bc$). 
\end{prop}

\begin{proof}
Let $\h'=T_xX$. Then there is a $W$-equivariant regular map $f \colon X\rightarrow \h'$ such that $df_x= \mathrm{Id}$. Namely take any (not necessarily $W$-equivariant) map with this property and average it over $W$. 
By replacing $X$ with a small enough $W$-stable affine neighbourhood of $x$, we can make sure that:
\begin{enumerate}
\item $f$ is \'etale;
\item $f$ separates points on each $W$-orbit, see Lemma \ref{lem:fseperate};
\item all reflection hypersurfaces of $X$ pass through $x$.
\end{enumerate} 
Indeed, $f$ is unramified by construction and hence \'etale because $X$ is smooth at $x$. 

 Let $(y_1,...,y_n)$ be some generators of $\mc{O}(X/W)$, and let $F: X\rightarrow \h'\times \mathbb{A}^n_{\kk}$ be the $W$-equivariant map given by $F=(f,y)$, where $y=(y_1,...,y_n)$ and $W$ acts trivially on the $y_i$. Then $F$ is injective (separates points of $X$), since $y$ separates $W$-orbits, and by (2) $f$ separates points inside each $W$-orbit. It is finite; in fact, already $y$ is finite, since by Hilbert-Noether $\mc{O}(X)$ is module-finite over $\mc{O}(X/W)$. Also, $d F_b$ is injective for all $b\in X$ since $d f_b$ is an isomorphism (because $f$ is \'etale by (1)). Thus, $F$ is a closed embedding \cite[II,  Lemma~7.4]{Hartshorne}. Moreover, all reflection hypersurfaces of $X=F(X)$ are intersections with $X$ of reflection hypersurfaces in $\h := \h'\times \mathbb{A}^n_{\kk}$.
\end{proof}

\begin{lem}\label{lem:fseperate}
If $f : X \rightarrow Y$ is a $W$-equivariant \'etale morphism and $x \in X$ with $\Stab_W(x) = W$ then there is some neighbourhood of $x$ such that $f$ separates points in each $W$-orbit. 
\end{lem}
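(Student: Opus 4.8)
The plan is to reduce the statement to the following local claim: \emph{for each $w\in W$, the scheme $f^{-1}(Y^w)$ coincides with the fixed locus $X^w$ in a Zariski neighbourhood of $x$.} Granting this, the lemma follows quickly. Since $x$ is $W$-fixed and $f$ is $W$-equivariant, $f(x)$ is $W$-fixed. Intersecting the finitely many neighbourhoods produced by the claim, one for each $w\in W$, and then replacing the result by the intersection of its (finitely many) $W$-translates, we obtain a $W$-stable open neighbourhood $V$ of $x$ on which $X^w\cap V=f^{-1}(Y^w)\cap V$ for every $w\in W$ (note $x$ lies in every translate because $x$ is $W$-fixed). Now suppose $x_1,x_2\in V$ lie in a single $W$-orbit with $f(x_1)=f(x_2)$; write $x_2=w x_1$. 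Then $f(x_1)=f(wx_1)=w\cdot f(x_1)$, so $f(x_1)\in Y^w$, i.e. $x_1\in f^{-1}(Y^w)\cap V=X^w\cap V$, whence $x_2=w x_1=x_1$. Thus $f$ separates points in each $W$-orbit on $V$.

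For the local claim I would argue as follows. Because $f$ is \'etale, $f^{-1}(Y^w)=X\times_Y Y^w$ is \'etale over the smooth variety $Y^w$ (fixed loci of finite cyclic groups acting on a smooth variety in characteristic zero are smooth), hence itself smooth; moreover $X^w$ is a smooth closed \emph{subscheme} of it, the closed immersion $X^w\hookrightarrow f^{-1}(Y^w)$ being induced by $f(X^w)\subset Y^w$ and being a closed immersion since its composite with $f^{-1}(Y^w)\to X$ is one. The differential $d_x f\colon T_x X\to T_{f(x)}Y$ is a $W$-equivariant isomorphism, so it identifies $(T_x X)^w=T_x X^w$ with $(T_{f(x)}Y)^w=T_{f(x)}Y^w$; chasing the cartesian square then gives $T_x f^{-1}(Y^w)=(d_x f)^{-1}(T_{f(x)}Y^w)=(T_x X)^w=T_x X^w$ inside $T_x X$. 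So $X^w$ and $f^{-1}(Y^w)$ are smooth of the same dimension at $x$ and have equal tangent spaces there; a closed immersion of smooth schemes with this property is an isomorphism near the point (apply Nakayama to the ideal sheaf, whose conormal fibre at $x$ vanishes, or simply note that a proper closed subscheme of a scheme irreducible near $x$ has strictly smaller dimension there). An alternative, essentially equivalent, route is to run this on completed local rings: $f$ identifies $\widehat{\mc{O}}_{Y,f(x)}$ with $\widehat{\mc{O}}_{X,x}$ $W$-equivariantly, hence identifies the ideal of $Y^w$ with that of $X^w$, and faithful flatness of completion transports the equality of ideals back to a Zariski neighbourhood of $x$.

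The only real content is the local claim, and within it the point deserving a little care is the scheme-theoretic bookkeeping: verifying that $f^{-1}(Y^w)$, with its natural fibre-product scheme structure, is smooth and that $X^w$ sits inside it as a closed subscheme with the expected tangent space at $x$. Once that is in place, the conclusion is forced by smoothness together with equality of dimensions. Everything else used above --- equivariance of $d_x f$, smoothness of fixed loci in characteristic zero, and the existence of a $W$-stable shrinking of any neighbourhood of the $W$-fixed point $x$ --- is standard.
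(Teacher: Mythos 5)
Your proof is correct, but it takes a genuinely different route from the paper's. The paper argues by considering the set $Q$ of points $y$ for which $f$ fails to separate some $W$-orbit element from $y$; this $Q$ is constructible by Chevalley's theorem (as a projection of an algebraically defined subset of $X\times X$), and it misses a small $W$-invariant \emph{analytic} neighbourhood of $x$ because $f$, being \'etale, is analytically a local isomorphism there --- so the constructible $W$-stable set $X\setminus Q$ must contain a Zariski $W$-stable neighbourhood. Your argument instead isolates the clean algebraic statement that $f^{-1}(Y^w)=X^w$ near $x$ for each $w$, establishes it by showing both are smooth closed subschemes through $x$ with equal tangent space there (using $W$-equivariance of the isomorphism $d_xf$ and the usual characteristic-zero facts $T_xX^w=(T_xX)^w$, $T_{f(x)}Y^w=(T_{f(x)}Y)^w$), and then finishes by a purely group-theoretic observation. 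Both approaches are sound. Yours stays entirely within algebraic geometry --- it never appeals to the analytic topology or to constructibility --- and it surfaces a reusable local fact (\'etale $W$-equivariant maps identify fixed loci in a neighbourhood of a $W$-fixed point, which can also be seen on completed local rings as you note). The paper's version is shorter to state but leans on Chevalley and GAGA-flavoured reasoning to pass between analytic and Zariski neighbourhoods. One minor remark: the step ``a closed immersion of smooth schemes with equal tangent space at $x$ is an isomorphism near $x$'' is correct, but it is worth saying explicitly that it uses the fact that a regular local ring is a domain (so that the ambient scheme is locally irreducible and a proper closed subscheme would have strictly smaller dimension by Krull), since smoothness alone does not give irreducibility.
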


\begin{proof}
Let $Q$ be the subset of $y\in X$ such that there exists $z\in X$ with $y=wz$ for some $w\in W$ and $f(y)=f(z)$ but $y\ne z$. Then $Q$ is constructible by Chevalley's Theorem (elimination of quantifiers) which says that the image of a morphism is constructible. Indeed, $Q$ is the image under the first projection of the set $Q'\subset X\times X$ of pairs $(y,z)$ such that $f(y)=f(z)$, $y=wz$ for some $w\in W$, but $y\ne z$, which is defined by finitely many equations and inequalities, so constructible. We wish to show that $x \in X \setminus \overline{Q}$. 

Since all data involved ($X,Y,f$, the $W$-action, and the point $x$) are of finite type, they are defined over some finitely generated subfield $k_0 \subset \kk$. After enlarging $k_0$, we may assume that $x$ is $k_0$-rational. Choose an embedding $k_0\hookrightarrow \C$. The morphism
$f_{\C}\colon X_{\C}\to Y_{\C}$ is still $W$-equivariant and \'etale at
$x_{\C}$. Notice that $Q_{\C}$ does not intersect a small $W$-invariant analytic neighbourhood $B$ of $x$, since for $y\in B$, one has $wy\in B$, and $f|_B$ is injective. Since $Q_{\C}$ is constructible and $W$-stable, this means that $x_{\C} \in X_{\C}\setminus \overline{Q_{\C}}$.  

Therefore, there exists an open neighborhood $U$ of $x$ disjoint from $Q$.
\end{proof}

\subsection{Duality}

Let $\mathrm{D}^b_{\mathrm{Coh}}(\sH_{\omega,\bc}(X,W))$ denote the derived category of bounded complexes of $\sH_{\omega,\bc}(X,W)$-modules with coherent cohomology. Recall from \eqref{eq:omegabarrightleft} that we have defined the parameters $\overline{\omega},\overline{\bc}$. As in \cite[Section~4.7]{ThompsonHolI}, we define 
$$
\D \colon \mathrm{D}^b_{\mathrm{Coh}}(\sH_{\omega,\bc}(X,W)) \to \mathrm{D}^b_{\mathrm{Coh}}(\sH_{\overline{\omega},\overline{\bc}}(X,W))
$$
by 
$$
\D (\ms{M}) := R \mc{H}om_{\sH_{\omega,\bc}(X,W)}(\ms{M},\sH_{\omega,\bc}(X,W) \o_{\mc{O}_X} K_X^{-1})[n],
$$
where $n:= \dim X$. Then $\D^2 \simeq \mr{Id}$ since \eqref{eq:omegabarrightleft} implies that $\overline{(\overline{\omega})} = \omega$ and $\overline{(\overline{\bc})} = \bc$. 

\begin{remark}
As for $\dd$-modules, we do not define $\D$ on the derived category of complexes with quasi-coherent cohomology since the dual of a quasi-coherent module will not be quasi-coherent in general.
\end{remark}

As explained in \cite[Proposition~2.1]{ThompsonHolI}, the following is a consequence of standard results on almost commutative algebras with regular associated graded algebra, as summarized in \cite[Appendix D.4]{HTT}. 

\begin{lem}\label{lem:ASregularcondition}
	Let $\ms{M}$ be a coherent $\sH_{\omega,\bc}(X,W)$-module. Then, 
	\begin{enumerate}
		\item[(i)] $\SS (\D(\ms{M})) = \SS(\ms{M})$,
		\item[(ii)] $\dim \SS(\mathcal{H}^i(\D(\ms{M}))) \le n- i$ for $i \in [-n,n]$ and $\mathcal{H}^i(\D(\ms{M})) = 0$ for $i \notin [-n,n]$, 
		\item[(iii)] $\mathcal{H}^i(\D(\ms{M})) = 0$ for $i < n - \dim \SS(\ms{M})$.
	\end{enumerate} 
Moreover, for any complex $\ms{M}^{\idot}$ with coherent cohomology, $\D(\ms{M}^{\idot}[k]) = \D(\ms{M}^{\idot})[-k]$, where $\ms{M}^{\idot}[k]^i = \ms{M}^{i+k}$ (and one changes the sign of the differential appropriately). 
\end{lem}

Let $\mathrm{D}^b_{\mathrm{Hol}}(\sH_{\omega,\bc}(X,W))$ denote the bounded derived category of complexes of $\sH_{\omega,\bc}(X,W)$-modules with holonomic cohomology. 

\begin{lem}
	The functor $\D$ restricts to a duality $\mathrm{D}^b_{\mathrm{Hol}}(\sH_{\omega,\bc}(X,W)) \to \mathrm{D}^b_{\mathrm{Hol}}(\sH_{\overline{\omega},\overline{\bc}}(X,W))$. 
	
	It is left exact with respect to the standard $t$-structure on $\mathrm{D}^b_{\mathrm{Hol}}(\sH_{\omega,\bc}(X,W))$. In particular, $\mc{H}^i(\D(\ms{M})) = 0$ for all $i < 0$ if $\ms{M}$ is a holonomic module.  
\end{lem}

\begin{proof}
	The first part follows from Lemma~\ref{lem:ASregularcondition}(i) and the second from Lemma~\ref{lem:ASregularcondition}(iii).
\end{proof}

\section{Coherent and holonomic extensions}\label{sec:coherentext}

In this section we show that coherent, resp. holonomic, modules admit coherent, resp. holonomic, extensions from $W$-invariant open subsets of $X$.

\subsection{Coherent extensions}

We begin with some elementary results, whose proofs follow closely the standard arguments given in the case of $\dd$-modules. First, let $U \subset X$ be a $W$-stable open subset and assume that $\ms{M}$ is a quasi-coherent $\mc{O}_X \rtimes W$-module. If we choose a coherent $\mc{O}_U \rtimes W$-submodule $\ms{F}$ of $\ms{M} |_U$ then it is well-known, \cite[Proposition 1.4.16]{HTT}, that we can choose a coherent $\mc{O}_X$-submodule $\ms{F}' \subset \ms{M}$ such that $\ms{F}' |_U = \ms{F}$. Replacing $\ms{F}'$ by the image of $\mc{O}_X \rtimes W \o_{\mc{O}_X} \ms{F}'$ under the action map if necessary, we may assume that $\ms{F}'$ is actually a coherent $\mc{O}_X \rtimes W$-submodule.      

\begin{lem}\label{lem:coherentOXWsubmodulegen}
	Let $\ms{M}$ be a coherent $\sH_{\omega,\bc}(X,W)$-module. Then there exists a coherent $\mc{O}_X \rtimes W$-submodule $\ms{F}$ of $\ms{M}$ that generates $\ms{M}$. That is, the action map $\sH_{\omega,\bc}(X,W) \o_{\mc{O}_X} \ms{F} \to \ms{M}$ is surjective. 
\end{lem}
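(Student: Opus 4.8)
The plan is to mimic the standard argument for $\dd$-modules, using our standing assumption that $X$ admits a finite cover by $W$-stable affine open sets to reduce the generation statement to an affine computation, and using the extension result recalled just above the statement (namely \cite[Proposition~1.4.16]{HTT} together with the averaging trick) to globalise the local choices of generating submodules.

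\textbf{Step 1: local generators.} Fix a finite cover $X = \bigcup_{i=1}^{k} U_i$ by $W$-stable affine open subsets. By Theorem~\ref{thm:PBW}, $\sH_{\omega,\bc}(X,W)$ is an almost commutative, hence Noetherian and coherent, sheaf of algebras, so on each affine $U_i$ the module $\Gamma(U_i,\ms{M})$ is finitely generated over $\Gamma(U_i,\sH_{\omega,\bc}(X,W))$. Choosing finitely many generators, let $\ms{F}_i^{\circ} \subset \ms{M}|_{U_i}$ be the $\mc{O}_{U_i}\rtimes W$-submodule they generate; it is coherent over $\mc{O}_{U_i}$ (equivalently over $\mc{O}_{U_i}\rtimes W$, as the latter is finite over the former).

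\textbf{Step 2: globalise and sum.} As recalled in the paragraph preceding the statement, each $\ms{F}_i^{\circ}$ extends to a coherent $\mc{O}_X\rtimes W$-submodule $\ms{F}_i\subset\ms{M}$ with $\ms{F}_i|_{U_i}=\ms{F}_i^{\circ}$: one first extends to a coherent $\mc{O}_X$-submodule via \cite[Proposition~1.4.16]{HTT}, then enlarges it by applying $\mc{O}_X\rtimes W$. Set $\ms{F}:=\sum_{i=1}^{k}\ms{F}_i$, which is a coherent $\mc{O}_X\rtimes W$-submodule of $\ms{M}$, being a finite sum of coherent submodules.

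\textbf{Step 3: check generation, and the main point.} Finally I would show the action map $a\colon \sH_{\omega,\bc}(X,W)\o_{\mc{O}_X}\ms{F}\to\ms{M}$ is surjective by testing on the $U_i$: its cokernel $\ms{C}$ is quasi-coherent, and over the affine $U_i$ one has $\Gamma(U_i,\sH_{\omega,\bc}(X,W)\o_{\mc{O}_X}\ms{F})=\Gamma(U_i,\sH_{\omega,\bc}(X,W))\o_{\Gamma(U_i,\mc{O}_X)}\Gamma(U_i,\ms{F})$, whose image in $\Gamma(U_i,\ms{M})$ contains $\Gamma(U_i,\sH_{\omega,\bc}(X,W))\cdot\ms{F}_i^{\circ}=\Gamma(U_i,\ms{M})$ by Step~1; hence $\Gamma(U_i,\ms{C})=0$ for every $i$ and therefore $\ms{C}=0$. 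The only genuinely non-formal ingredient is Step~2, the extension of a coherent $\mc{O}_X\rtimes W$-submodule from a $W$-stable open subset to all of $X$ while retaining compatibility with $W$; everything else is bookkeeping around affine-local finite generation. Since that extension is precisely the content of the result quoted immediately before the statement, the lemma follows without further difficulty.
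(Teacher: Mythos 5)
Your proof is correct and follows essentially the same route as the paper's: cover $X$ by $W$-stable affine opens, choose affine-local finitely generated $\mc{O}_{U_i}\rtimes W$-submodules that generate over $\H_{\omega,\bc}(U_i,W)$, extend each to a coherent $\mc{O}_X\rtimes W$-submodule of $\ms{M}$ via the extension result recalled immediately before the lemma, and sum over $i$. Your Step~3 merely makes explicit the final verification of surjectivity, which the paper leaves implicit.
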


\begin{proof}
	As usual, we cover $X$ by $W$-stable affine open subsets $U_i$ for $1 \le i \le k$. Then $\ms{M} |_{U_i}$ is the localization of a finitely generated $\H_{\omega,\bc}(U_i,W)$-module $M_i$. Let $G_i$ be a finitely generated $\mc{O}(U_i) \rtimes W$-submodule that generates $M_i$ over $\H_{\omega,\bc}(U_i,W)$. As noted above, there exists a coherent $\mc{O}_X \rtimes W$-submodule $\ms{F}_i$ of $\ms{M}$ whose restriction to $U_i$ equals $\widetilde{G}_i$. Then $\ms{F} = \sum_i \ms{F}_i$ is the required $\mc{O}_X \rtimes W$-submodule of $\ms{M}$.
\end{proof}

\begin{lem}\label{lem:coherentextend}
Let $U \subset X$ be a $W$-stable open subset and $\ms{M}$ a $\sH_{\omega,\bc}(X,W)$-module. Given a coherent submodule $\ms{N} \subset \ms{M} |_U$, there exists a coherent submodule $\ms{N}' \subset \ms{M}$ such that $\ms{N}' |_U = \ms{N}$. 
\end{lem}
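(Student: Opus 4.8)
The plan is to mimic the standard argument for $\dd$-modules, bootstrapping from Lemma~\ref{lem:coherentOXWsubmodulegen} together with the extension result for coherent $\mc{O}_X \rtimes W$-submodules recalled in the paragraph preceding it.

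First I would produce a convenient generating set over $U$. Since $\ms{N}$ is a coherent $\sH_{\omega,\bc}(U,W)$-module, Lemma~\ref{lem:coherentOXWsubmodulegen} applied to the open $W$-variety $U$ gives a coherent $\mc{O}_U \rtimes W$-submodule $\ms{F} \subset \ms{N}$ whose $\sH_{\omega,\bc}(U,W)$-span is all of $\ms{N}$. Viewing $\ms{F}$ as a coherent $\mc{O}_U \rtimes W$-submodule of $\ms{M}|_U$ and applying \cite[Proposition~1.4.16]{HTT} exactly as in that preceding paragraph, I obtain a coherent $\mc{O}_X \rtimes W$-submodule $\ms{F}' \subset \ms{M}$ with $\ms{F}'|_U = \ms{F}$. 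I then set $\ms{N}'$ to be the $\sH_{\omega,\bc}(X,W)$-submodule of $\ms{M}$ generated by $\ms{F}'$, that is, the image of the action map $\sH_{\omega,\bc}(X,W) \o_{\mc{O}_X} \ms{F}' \to \ms{M}$.

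It remains to verify the two required properties of $\ms{N}'$. Coherence is local, so I may work on a $W$-stable affine open $U_i$; there $\Gamma(U_i,\sH_{\omega,\bc}(X,W))$ is Noetherian, being an almost commutative filtered algebra whose associated graded $\mc{O}(T^*U_i) \rtimes W$ is Noetherian by Theorem~\ref{thm:PBW}. Since $\ms{F}'$ is coherent over $\mc{O}_X$, the module $\Gamma(U_i,\ms{F}')$ is a finitely generated $\mc{O}(U_i)$-module, hence finitely generated over $\Gamma(U_i,\sH_{\omega,\bc}(X,W))$ via the action, so its $\sH$-span $\Gamma(U_i,\ms{N}')$ is a finitely generated module over a Noetherian ring and therefore coherent. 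For the restriction, note that localization along the open embedding $U \hookrightarrow X$ is exact, so $\ms{N}'|_U$ is the image of $\sH_{\omega,\bc}(U,W) \o_{\mc{O}_U} \ms{F} \to \ms{M}|_U$, i.e. the $\sH_{\omega,\bc}(U,W)$-submodule of $\ms{M}|_U$ generated by $\ms{F}$; by the choice of $\ms{F}$ this is precisely $\ms{N}$.

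I do not anticipate a genuine obstacle here: the argument is routine once one has the two standard inputs, namely the extension of coherent $\mc{O}_X \rtimes W$-submodules and the local Noetherianity (equivalently, coherence) of $\sH_{\omega,\bc}(X,W)$, both already available in the text. The only point deserving a moment's care is bookkeeping the compatibility of "submodule generated by $\ms{F}'$" with restriction to $U$, which follows from exactness of restriction to an open.
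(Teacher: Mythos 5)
Your proof is correct and follows essentially the same route as the paper: choose via Lemma~\ref{lem:coherentOXWsubmodulegen} a coherent $\mc{O}_U \rtimes W$-submodule $\ms{F}$ generating $\ms{N}$, extend it to a coherent $\mc{O}_X \rtimes W$-submodule $\ms{F}' \subset \ms{M}$, and take $\ms{N}'$ to be the $\sH_{\omega,\bc}(X,W)$-submodule generated by $\ms{F}'$. The paper's proof is terser and leaves the coherence of $\ms{N}'$ and the identity $\ms{N}'|_U = \ms{N}$ implicit; your verifications via local Noetherianity of $\sH_{\omega,\bc}$ and exactness of restriction to an open are exactly the correct justifications.
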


\begin{proof}
By Lemma~\ref{lem:coherentOXWsubmodulegen}, we can choose a coherent $\mc{O}_U \rtimes W$-submodule $\ms{F}$ of $\ms{N}$ that generates this sheaf over $\sH_{\omega,\bc}(U,W)$. Let $\ms{F}'$ be a coherent $\mc{O}_X \rtimes W$-submodule of $\ms{M}$, whose restriction to $U$ equals $\ms{F}$. Then $\ms{N}'$ can be chosen to be the (coherent) $\sH_{\omega,\bc}(X,W)$-submodule of $\ms{M}$ generated by $\ms{F}'$.   
\end{proof}

\begin{lem} \label{lem:irredsupp}
Let $\ms{M}$ be an irreducible $\sH_{\omega,\bc}(X,W)$-module. Then the support of $\ms{M}$ over $\mc{O}_{X/W}$ is irreducible. 
\end{lem}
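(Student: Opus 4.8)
The plan is to imitate the classical fact that an irreducible $\dd$-module has irreducible support, carried out $W$-equivariantly on $X/W$. First I would observe that $\ms M$ is automatically coherent: choosing a $W$-stable affine open $U_0 \subseteq X$ on which $\ms M$ has a nonzero section $m$, the $\sH_{\omega,\bc}(U_0,W)$-submodule $\ms N_0 \subseteq \ms M|_{U_0}$ generated by $m$ is coherent, and by Lemma~\ref{lem:coherentextend} it extends to a coherent $\sH_{\omega,\bc}(X,W)$-submodule $\ms N \subseteq \ms M$ with $\ms N|_{U_0} = \ms N_0 \neq 0$; irreducibility then forces $\ms N = \ms M$. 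Hence $Y := \Supp_X \ms M$ is a closed, $W$-stable subset of the Noetherian scheme $X$, so it has finitely many irreducible components, which $W$ permutes.

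I would then argue by contradiction. Suppose $W$ does not act transitively on the set of irreducible components of $Y$. Partitioning these components into one $W$-orbit and its complement produces a decomposition $Y = Y' \cup Z$ into nonempty, closed, $W$-stable subsets with $Y' \not\subseteq Z$ and $Z \not\subseteq Y'$ (using that distinct irreducible components are pairwise incomparable). Let $j \colon U := X \setminus Z \hookrightarrow X$ be the corresponding $W$-equivariant open embedding. The crucial structural input, recorded in the excerpt just before Lemma~\ref{lem:localcohomology}, is that the subsheaf $\Gamma_Z(\ms M)$ of sections of $\ms M$ supported on $Z$ is a $\sH_{\omega,\bc}(X,W)$-submodule; moreover it is exactly the kernel of the natural morphism $\ms M \to j_{\idot}(\ms M|_U)$, which underlies the adjunction unit $\ms M \to j_0 j^0 \ms M$ and so is a morphism of $\sH_{\omega,\bc}(X,W)$-modules. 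By irreducibility $\Gamma_Z(\ms M)$ is $0$ or $\ms M$; it cannot be $\ms M$, since that would force $\Supp_X \ms M \subseteq Z$, contradicting $Y' \not\subseteq Z$. Hence $\Gamma_Z(\ms M) = 0$ and $\ms M$ embeds into $j_{\idot}(\ms M|_U)$.

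To conclude I would compare supports. We have $\Supp_U(\ms M|_U) = Y \cap U = Y' \cap U$, a dense open subset of $Y'$; and if an open $V \subseteq X$ misses $Y'$ then $\ms M|_{V \cap U} = 0$, hence $j_{\idot}(\ms M|_U)|_V = 0$. Therefore $\Supp_X j_{\idot}(\ms M|_U) \subseteq \overline{Y' \cap U} = Y'$, and the embedding $\ms M \hookrightarrow j_{\idot}(\ms M|_U)$ yields $Y = \Supp_X \ms M \subseteq Y'$, contradicting $Z \not\subseteq Y'$. Thus $W$ acts transitively on the components of $Y$, i.e. $Y$ is $W$-irreducible; equivalently, $\pi(Y) = \Supp \ms M \subseteq X/W$ is irreducible.

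I do not expect a genuine obstacle: once $\ms M$ is known to be coherent, the rest is sheaf-theoretic bookkeeping. The single point that needs care is that $\Gamma_Z(\ms M)$ really is a $\sH_{\omega,\bc}(X,W)$-submodule for an arbitrary, possibly singular and reducible, $W$-stable closed $Z$ — this is precisely the content of the discussion preceding Lemma~\ref{lem:localcohomology}, which rests on the fact that sections of $\sH_{\omega,\bc}(X,W)$ act on $\ms M$ as differential operators for $\mc{O}_{X/W}$ and hence cannot enlarge the locus where a section fails to be supported on $Z$; the reduction to coherent $\ms M$ via Lemma~\ref{lem:coherentextend} is the other small point to get right.
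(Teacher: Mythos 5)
Your proof is correct, but it takes a genuinely different route from the paper's. The paper does not pass through $\Gamma_Z$ or the open-closed adjunction at all. Instead, after the same Noetherian preliminaries, it fixes an irreducible component $Y$ of $\Supp\ms{M}\subset X/W$, with defining prime $\mathfrak{p}\lhd\mc{O}(X)^W$; via Lemma~\ref{lem:coherentOXWsubmodulegen} it picks a finitely generated $\mc{O}(X)^W$-submodule $F\subset M=\Gamma(X,\ms{M})$ generating $M$, observes that $\mathfrak{p}$ is an associated prime of $F$, so the subspace $N\subset M$ killed by a power of $\mathfrak{p}$ is nonzero, and then uses the nilpotency of the adjoint $\mc{O}(X)^W$-action on $\H_{\omega,\bc}(X,W)$ to conclude $N$ is an $\H_{\omega,\bc}(X,W)$-submodule, whence $N=M$ and $\Supp\ms{M}=Y$. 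Your argument is the classical $\dd$-module proof carried out $W$-equivariantly: decompose $\Supp_X\ms{M}$ into incomparable $W$-stable closed pieces $Y'\cup Z$, show $\Gamma_Z(\ms{M})$ vanishes by irreducibility, embed $\ms{M}\hookrightarrow j_{\idot}(\ms{M}|_U)$, and compare supports. Both proofs hinge on the same underlying fact --- that $\mc{O}_{X/W}$ acts $\mr{ad}$-nilpotently on $\sH_{\omega,\bc}(X,W)$, which is what makes $\Gamma_Z(\ms{M})$ (for you) and the $\mathfrak{p}$-power-torsion submodule (for the paper) closed under the Cherednik algebra action. What yours buys is that it stays entirely sheaf-theoretic, avoids the associated-prime bookkeeping and localization, and records explicitly the (implicitly assumed) coherence of an irreducible module via Lemma~\ref{lem:coherentextend}; the paper's version is terser once one is prepared to pass to global sections on an affine and compute with primes.
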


\begin{proof}

Let $Z$ be an irreducible component of the support of $\ms{M}$. We can show $Z = \Supp \ms{M}$ locally. Therefore, replacing $X$ by a $W$-stable affine open covering, we assume that $X$ is affine. Let $M$ be the space of global sections of $\ms{M}$ and let $\mf{p}$ be the prime ideal in $\mc{O}(X)^W$ defining $Z$. By Lemma~\ref{lem:coherentOXWsubmodulegen}, we may choose a finitely generated $\mc{O}(X)^W$-submodule $F$ of $M$ that generates $M$ over $\H_{\omega,\bc}(X,W)$. Then the $\mc{O}(X)^W$-submodule $N$ of $M$ consisting of all sections killed by some power of $\mf{p}$ is non-zero. Indeed, it follows from \cite[{Proposition~02CE}]{stacks} that $\mf{p}$ is an associated prime of $F$ and thus, by definition, the subspace of $F$ consisting of sections killed by $\mf{p}$ is non-zero. Since the adjoint action of $\mc{O}(X)^W$ on $\H_{\omega,\bc}(X,W)$ is nilpotent, $N$ is a $\H_{\omega,\bc}(X,W)$-submodule of $M$. We deduce that $M = N$, which implies that $Z$ equals the support of $\ms{M}$. 
\end{proof}

Let $X$ be affine and $U \subset X$ an affine open subset. Write $d(N)$ for the Gelfand-Kirillov dimension of a module $N$. This equals $\dim \SS(N)$. Recall that $n = \dim X$. 

\begin{thm}\label{thm:GKextension}
Let $M$ be a $\H_{\omega,\bc}(X,W)$-module and $N \subset M |_U$ a finitely generated submodule. There exists a finitely generated submodule $N' \subset M$ such that $N' |_U = N$ and $d(N') = d(N)$. 
\end{thm}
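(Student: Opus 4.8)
The plan is to follow the standard $\dd$-module argument (compare \cite[Theorem~3.1.6]{HTT} or the corresponding statement in \cite{BorelDmod}), adapted to Cherednik algebras. The key point is that the extension $N'$ produced by Lemma~\ref{lem:coherentextend} need not have the right dimension, but we can correct it by quotienting out a submodule supported off $U$. First I would apply Lemma~\ref{lem:coherentextend} to obtain some finitely generated submodule $N'' \subset M$ with $N'' |_U = N$. Since $\gr N''$ is a finitely generated module over $\mc{O}(T^*X)^W$, it has only finitely many minimal associated primes; equivalently $\SS(N'')$ has finitely many irreducible components $\Lambda_1,\ds,\Lambda_r$. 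Those components meeting $U$ (more precisely, whose image in $X/W$ meets $U/W$) are exactly the components of $\SS(N)$, so they have dimension at most $d(N)$. Let $\Lambda_{k+1},\ds,\Lambda_r$ be the components not meeting $U$, i.e. with $\bpi(\Lambda_i) \subset (T^*(X \setminus U))/W$.

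The main step is then to produce the largest submodule $T \subset N''$ with $T |_U = 0$, and show that $N' := N''/$ (no — we want a submodule, not a quotient, of $M$). Here is the correct mechanism: the submodule $\Gamma_{[X\setminus U]}(N'') \subset N''$ of sections supported set-theoretically on $X \setminus U$ is, since the adjoint action of $\mc{O}_{X/W}$ on $\sH_{\omega,\bc}(X,W)$ is locally nilpotent (as in the proof of Lemma~\ref{lem:irredsupp}), an $\H_{\omega,\bc}(X,W)$-submodule; call it $T$. It is finitely generated since $\H_{\omega,\bc}(X,W)$ is Noetherian (finite global dimension, Theorem~\ref{thm:PBW}). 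Now $N'' / T$ has a good filtration whose associated graded has no associated prime contained in $(T^*(X\setminus U))/W$, hence $\SS(N''/T)$ has no component contained there; since $(N''/T)|_U = N |_U$ — wait, that only gives a quotient of $M/\text{something}$, not a submodule of $M$. The genuinely correct fix, as in the $\dd$-module case, is: replace $N''$ by $N''$ itself but observe we want $d(N'') \le d(N)$; if $\SS(N'')$ has a component $\Lambda$ of dimension $> d(N)$, that component lies in $(T^*(X\setminus U))/W$, so the corresponding primary submodule contributes to $T = \Gamma_{[X \setminus U]}(N'')$. Thus $d(\Gamma_{[X\setminus U]}(N'')) = d(N'')$ unless $d(N'') = d(N)$ already. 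So it suffices to note that the \emph{saturation} works in the quotient direction is wrong — instead one argues directly on $M$: let $\ms N'$ be a coherent subsheaf of the $\sH$-module $M$ with $\ms N' |_U = N$ and $\Supp_X(\ms N' / \widetilde N) \subset X \setminus U$ minimal; then $\ms N'$ can be taken to have no sub(sheaf) supported on $X \setminus U$, because any such could be discarded without changing the restriction to $U$. Concretely: among all finitely generated $N' \subset M$ with $N'|_U = N$, the submodule $T(N') := \Gamma_{[X\setminus U]}(N')$ is a submodule of $M$ supported on $X\setminus U$; the quotient map identifies $(N'/T(N'))|_U$ with $N$, and $N'/T(N')$ embeds into $M/T(N')$ — but since $T(N')$ varies, one instead picks $N'$ minimal and shows $T(N') = 0$ for it, forcing $\SS(N')$ to have all components meeting $U$, hence $d(N') = d(N)$.

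Let me state the minimal-choice argument cleanly, since that is what I would write: start with $N''$ from Lemma~\ref{lem:coherentextend}. Set $T := \Gamma_{[X \setminus U]}(N'')$, a finitely generated $\H_{\omega,\bc}(X,W)$-submodule of $N''$ (and of $M$) with $T|_U = 0$. Because $N''/T$ has no nonzero submodule supported on $X\setminus U$, every associated prime of a good-filtration $\gr(N''/T)$ restricts nontrivially to $U$ — more carefully, every irreducible component of $\SS(N''/T)$ meets $\bpi^{-1}(U/W)$; hence each such component, intersected with that open set, is a component of $\SS((N''/T)|_U)$. But $(N''/T)|_U \cong N''|_U = N$ since $T|_U = 0$. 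Therefore $\SS(N''/T)$ has every component of dimension $\le d(N)$, and at least one of dimension $d(N)$ (the closure of a top component of $\SS(N)$), so $d(N''/T) = d(N)$. Finally, $N''/T$ is not visibly a submodule of $M$; to fix this last point I would instead run the argument with $N' := N''$ replaced from the start by a \emph{minimal} element of the (nonempty, and satisfying ACC since $M$ restricted appropriately is Noetherian) poset of finitely generated submodules of $M$ restricting to $N$ on $U$ — minimality forces $\Gamma_{[X\setminus U]}(N') = 0$, and then the same singular-support component analysis applied directly to $N'$ (not a quotient) gives $d(N') = d(N)$.

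\textbf{Main obstacle.} The one genuinely nonformal ingredient is the claim that a coherent $\sH_{\omega,\bc}(X,W)$-module with no nonzero submodule supported on the closed set $X \setminus U$ has the property that every irreducible component of its singular support meets $\bpi^{-1}(U/W)$. For honest $\dd$-modules this is standard (associated primes of $\gr$ are $\bpi$-preimages of associated varieties, and a component supported over $X\setminus U$ produces a primary submodule supported over $X \setminus U$, which one checks is $\dd$-stable and hence nonzero sub, contradiction). Here I would transport it using the filtration technology of Section~\ref{sec:PBW}: a good filtration on $N'$ whose $\gr$ over $\mc{O}(T^*X)^W$ has an associated prime $\mf q$ with $\bpi^{-1}(V(\mf q)) \subset T^*(X\setminus U)$ yields, by the local $\ad$-nilpotence of $\mc O_{X/W}$ on $\sH$ (used already in Lemmas~\ref{lem:irredsupp} and \ref{lem:coherentOXWsubmodulegen}), a nonzero $\sH$-submodule of $N'$ killed by a power of the ideal of $X\setminus U$ in $\mc O_{X/W}$, i.e. supported on $X\setminus U$ — contradicting minimality. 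Everything else (Noetherianity, existence of $\Gamma_{[X\setminus U]}$, exactness of restriction to the open set $U$) is routine and already available in the excerpt.
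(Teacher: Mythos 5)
Your proposal does not follow the paper's route. The paper proves Theorem~\ref{thm:GKextension} by the homological argument of \cite[Proposition~3.1.7]{HTT}: after replacing $M$ by a coherent extension, one forms $L^{\idot} = \tau^{\ge \ell-n}\D(M)$ with $\ell = 2n - d(N)$ and takes $N' := H^0(\D(L^{\idot}))$. The injectivity of $N' \to M$ comes from the dual of the truncation triangle, and the dimension bound $d(N') \le d(N)$ from Lemma~\ref{lem:ASregularcondition}. Torsion along $X\setminus U$ plays no role.

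Your argument has two genuine gaps. First, the poset of finitely generated submodules of (a coherent extension of) $M$ restricting to $N$ on $U$ satisfies ACC, not DCC, so minimal elements need not exist: already for $X = \mathbb{A}^1$, $W = 1$, $U = \{x \neq 0\}$, the chain $\dd x \supsetneq \dd x^2 \supsetneq \cdots$ of left ideals of $\dd(\mathbb{A}^1)$ all restrict to $\dd(U)$ and never stabilise. Second, and more seriously, the claim you flag as the "main obstacle" --- that if $\Gamma_{X\setminus U}(N') = 0$ then every irreducible component of $\SS(N')$ meets $\pi^{-1}(U)$ --- is false even for ordinary $\dd$-modules. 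Take $X = \mathbb{A}^2$ with coordinates $(x,y)$, $W = 1$, $U = \{x \neq 0\}$, and $N' = \kk[x,x^{-1},y]$ as a $\dd(\mathbb{A}^2)$-module. It is $x$-divisible, hence $\Gamma_{X\setminus U}(N') = 0$. Yet with the good filtration obtained from the cyclic generator $x^{-1}$, namely $\mc{F}_i N' = x^{-i-1}\kk[x,y]$, one computes $\gr(N')$ and finds $\SS(N') = \{\xi = \eta = 0\} \cup \{x = \eta = 0\}$; the second component projects into $\{x = 0\} = X\setminus U$ and so does not meet $\pi^{-1}(U)$. The heuristic you offer --- that an associated prime of $\gr N'$ over $X\setminus U$ would yield a nonzero submodule of $N'$ supported there --- does not transfer from $\gr N'$ to $N'$: the associated graded class $\overline{x^{-2}} \in \gr_1 N'$ is killed by $x$, but no lift of it to $\mc{F}_1 N'$ is killed by any power of $x$, precisely because $N'$ is $x$-torsion-free. (In this example $d(N') = d(N) = 2$, so the conclusion of the theorem happens to hold, but the mechanism you rely on is not what enforces it.) The correct device for controlling the dimension of the extension really is the duality/truncation machinery, equivalently the Gabber--Kashiwara--Saito filtration of Theorem~\ref{thm:Gabberfiltration}, which the paper too obtains by the same homological route.
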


\begin{proof}
By Lemma~\ref{lem:coherentextend}, there exists some coherent extension $M' \subset M$ of $N$. Replacing $M$ by $M'$, we may assume without loss of generality that $M$ is coherent and $M |_U  = N$. Let $\ell = 2n - d(N)$. We take $L^{\idot} = \tau^{\ge \ell - n} \D (M)$. We will show that $N' := H^0(\D(L^{\idot}))$ is a submodule of $M$ with $d(N') = d(N)$ and $N' |_U = N$. 

First we show that $N'$ is an extension of $N$ with $d(N')  = d(N)$. Then we will show that $N'$ embeds in $M$. Notice that $L^{\idot} |_U \cong \tau^{\ge \ell - n} \D(N) \cong \D(N)$ since $H^i(\D(N)) = 0$ for all $i < \ell - n$ by Lemma~\ref{lem:ASregularcondition}(iii). Therefore $\D(L^{\idot}) |_U \cong N$ and hence $N' |_U \cong N$. Next, 
$$
\SS(N') \subset \SS(\D(L^{\idot})) = \SS(L^{\idot}).
$$
Since $M$ is coherent, Lemma~\ref{lem:ASregularcondition}(ii) says that $\dim \SS(H^i(\D(M))) \le n- i$ for $i \in [-n,n]$ and is zero otherwise. Hence $\dim \SS(\tau^{\ge \ell - n} \D(M)) \le 2n - \ell = d(N)$. On the other hand, since $N' |_U = N$, we clearly have $\dim \SS(N') \ge d(N)$. Thus, $d(N') = d(N)$. 

Therefore, we just need to show that $N'$ embeds in $M$. The proof is identical to that of \cite[Proposition~3.1.7]{HTT}, but we include it for the reader's benefit. Consider the distinguished triangle 
$$
K^{\idot} \rightarrow \D(M) \rightarrow L^{\idot} \stackrel{+1}{\longrightarrow}
$$
where $K^{\idot} = \tau^{< \ell - n} \D(M)$. Dualizing, we get the distinguished triangle 
$$
\D(L^{\idot}) \rightarrow M \rightarrow \D(K^{\idot}) \stackrel{+1}{\longrightarrow}
$$
hence we get an exact sequence $H^{-1}(\D(K^{\idot})) \rightarrow N' \rightarrow M$ and it suffices to show that $H^{-1}(\D(K^{\idot})) = 0$. We will show that 
\begin{equation}\label{eq:vanishone}
H^i(\D(\tau^{\ge -k} K^{\idot})) = 0, \quad \forall \ i < 0, k > n - \ell. 
\end{equation}
Then taking $k \gg 0$, we have $\tau^{\ge - k} K^{\idot} = K^{\idot}$. 

For $k > n - \ell$, we have $H^{-k}(K^{\idot}) \cong H^{-k}(\D(M))$ and hence $\dim \SS(H^{-k}(K^{\idot})) \le n + k$. This implies that $H^j(\D(H^{-k}(K^{\idot}))) = 0$ for $j < -k$ and hence
\begin{equation}\label{eq:vanishtwo}
H^{i}(\D(H^{-k}(K^{\idot})[k])) = H^{i}(\D(H^{-k}(K^{\idot}))[-k]) = H^{i-k}(\D(H^{-k}(K^{\idot}))) = 0, \quad \forall \ i < 0. 
\end{equation}

Now we prove (\ref{eq:vanishone}) by induction on $k$. When $k = n - \ell +1$, we have 
$$
\tau^{\ge -k} K^{\idot} = \tau^{ \ge \ell - n - 1} \tau^{< \ell - n} \D(M) = H^{-k}(K^{\idot})[k]
$$
and hence $H^i(\D(\tau^{\ge - k} K^{\idot})) = H^i(\D(H^{-k}(K^{\idot})[k])) = 0$. Assume $k \ge n - \ell + 2$. Then we apply $\D$ to the distinguished triangle
$$
H^{-k}(K^{\idot})[k] \rightarrow \tau^{\ge -k} K^{\idot} \rightarrow \tau^{\ge -(k-1)} K^{\idot} \stackrel{+1}{\longrightarrow}
$$
to get the distinguished triangle
$$
\D(\tau^{\ge -(k-1)} K^{\idot}) \rightarrow \D(\tau^{\ge -k} K^{\idot}) \rightarrow  \D(H^{-k}(K^{\idot})[k]) \stackrel{+1}{\longrightarrow}
$$
and the assertion follows from (\ref{eq:vanishtwo}) by induction. 
\end{proof}

The above result shows that a finitely generated module of given GK-dimension admits an extension of the same GK-dimension. Ideally, we would like to say that a holonomic module on $U$ admits a holonomic extension to $X$. If this is the case then we say that the extension property holds for holonomic modules. The remainder of Section~\ref{sec:coherentext} is devoted to describing cases where the extension property for holonomic modules can be shown to hold.

\subsection{Quantum Hamiltonian reduction}\label{sec:QHR}

In this section, we give a proof that the extension property holds for holonomic modules over algebras constructed by quantum Hamiltonian reduction. This is potentially applicable to algebras other than Cherednik algebras. 

Let $G$ be a connected reductive group and let $Z$ be a smooth affine $G$-variety. Let $\mu \colon T^* Z \to \mf{g}^*$ be the moment map for the induced Hamiltonian action on $T^* Z$. We fix a character $\chi \colon \mf{g} \to \kk$ and consider the category $\Coh{\dd,G,\chi}$ of coherent $(G,\chi)$-equivariant $\dd$-modules on $Z$; see \cite[\S~4.2]{BernsteinLosev} or \cite[\S~2.2]{BellBoosSemisimple}. We denote the quantum comoment map $\mu^* \colon \mf{g} \to \dd(Z)$. We can form the quantum Hamiltonian reduction 
\[
U_{\chi}(Z) = (\dd(Z) / \dd(Z) (\mu^* - \chi)(\mf{g}))^G.
\] 
The order filtration on $\dd(Z)$ induces a filtration $\mc{F}_{\idot}$ on $U_{\chi}(Z)$. Let $Y = \Spec \gr_{\mc{F}} \, U_{\chi}(Z)$. Note that there is a surjection of Poisson algebras $\mc{O}(\mu^{-1}(0) \git \, G) \to \gr_{\mc{F}} \, U_{\chi}(Z)$. As noted in \cite[\S~2.3]{BernsteinLosev}, the Poisson scheme $\mu^{-1}(0) \git \, G$ has finitely many symplectic leaves and hence so too does $Y$, since the latter is a closed Poisson subscheme. The algebra $U_{\chi}(Z)$ forms a sheaf on $Z \git \, G$. We say that a $U_{\chi}(Z)$-module $\ms{M}$ is holonomic if $\SS(\ms{M}) \cap \mc{L}$ is isotropic in $\mc{L}$ for all leaves $\mc{L}$ of $Y$.  

\begin{thm}\label{thm:QHRholonomic}
	The functor of Hamiltonian reduction and its adjoint $\Ham^{\perp}$ restrict to functors 
	\[
	\Ham \colon \Hol{\dd,G,\chi} \to \Hol{U_{\chi}(Z)}, \quad \Ham^{\perp} \colon \Hol{U_{\chi}(Z)} \to \Hol{\dd,G,\chi}. 
	\]
\end{thm}

\begin{proof}
	It is already explained in \cite[Section~4.2]{BernsteinLosev} that $\Ham^{\perp}$ sends holonomic modules to holonomic modules. Therefore, we must check that $\Ham(\ms{N})$ is a holonomic $U_{\chi}(Z)$-module if $\ms{N}$ is a holonomic $(G,\chi)$-equivariant $\dd$-module. If we fix a $G$-stable good filtration $\mc{F}_{\idot}$ of $\ms{N}$ then $\mc{F}_{\idot}^G$ is a good filtration on $\ms{M} = \Ham(\ms{N})$ and hence the singular support of $\ms{M}$ equals the image of $\SS(\ms{N}) \subset \mu^{-1}(0)$ under the quotient map $\pi \colon \mu^{-1}(0) \to \mu^{-1}(0) \git \, G$. Thus, it follows from Lemma~\ref{lem:isotropicimage} below that $\SS(\ms{M}) = \pi(\SS(\ms{N}))$ is isotropic in $\mu^{-1}(0) \git \, G$.   
\end{proof}

\begin{lem}\label{lem:isotropicimage}
	Let $S \subset \mu^{-1}(0)$ be a $G$-stable closed subvariety that is isotropic as a subvariety of $T^* Z$. Then $\pi(S)$ is isotropic in $\mu^{-1}(0) \git \, G$. 
\end{lem}

\begin{proof}
	Considering each irreducible component of $S$ in turn, we may assume that $S$ is irreducible. Let $C = \pi(S)$, an irreducible closed subset of $\mu^{-1}(0) \git \, G$. There exists a unique leaf $\mc{L}$ such that $\mc{L} \cap C$ is open dense in $C$. We must show that $\mc{L} \cap C$ is isotropic in $\mc{L}$. As explained in \cite[\S~2.2]{BernsteinLosev}, the symplectic leaves in $\mu^{-1}(0) \git \, G$ are precisely the connected components of the strata in the stabilizer stratification; see \cite[\S~2.2]{BernsteinLosev} for the definition of this stratification. Let $\mc{L}_0 = \pi^{-1}(\mc{L}) \subset \mu^{-1}(0)$. Note that $\mc{L}_0 \cap S$ is open dense in $S$. If $\omega$ is the symplectic form on $T^* Z$ and $\omega_{\mc{L}}$ the symplectic form on the leaf $\mc{L}$ then the symplectic slice theorem, as stated in Equation (2) of \cite{BernsteinLosev}, implies that $\pi |_{\mc{L}_0} \colon \mc{L}_0 \to \mc{L}$ is a smooth morphism such that $\omega |_{\mc{L}_0} = \pi^* \omega_{\mc{L}}$. Therefore, if we choose $s \in S_{\reg}$ such that $c = \pi(s) \in C \cap \mc{L}$ is a smooth point of the intersection and $v_1, v_2 \in T_c (C \cap \mc{L})$ then we can choose $w_1, w_2 \in T_s S$ such that $v_i = (d \pi)_s(w_i)$ and hence 
	\[
	\omega_{\mc{L}}(v_1,v_2) = \omega(w_1,w_2) = 0,
	\]
	because $S$ is isotropic. 
\end{proof}

As explained in \cite{BernsteinLosev}, the following is an immediate consequence of Theorem~\ref{thm:QHRholonomic}. 

\begin{cor}\label{cor:QHRfinitelength}
	Holonomic $U_{\chi}(Z)$-modules have finite length. 
\end{cor}

\begin{proof}
	Recall that the category of holonomic $\dd$-modules has finite length. Then the corollary follows from Theorem~\ref{thm:QHRholonomic}, together with the fact that $\Ham$ is an exact quotient functor. 
\end{proof}

\begin{cor}\label{cor:extensionQHR}
	Let $V \subset Z \git\,  G$ be an affine open set and $\ms{M}$ a holonomic $U_{\chi}(V)$-module. Then there exists a holonomic extension $\ms{M}'$ of $\ms{M}$ to $Z \git \, G$, which can be chosen to be irreducible if $\ms{M}$ is irreducible. 
\end{cor}

\begin{proof}
Let $V'$ be the preimage of $V$ in $Z$. Let $\ms{N} = \Ham^{\perp}(\ms{M})$, a holonomic $(G,\chi)$-equivariant $\dd$-module on $V'$. We claim that there exists a holonomic $(G,\chi)$-equivariant $\dd$-module $\ms{N}'$ on $Z$ extending $\ms{N}$. Indeed, if $j \colon V' \hookrightarrow Z$ is the open embedding then $j_0 \ms{N}$ is a holonomic $(G,\chi)$-equivariant $\dd$-module and $\Ham(j_0 \ms{N}) |_V \cong \ms{M}$. 

Now assume $\ms{M}$ is irreducible. For a connected group $G$, every subquotient of a $(G,\chi)$-equivariant $\dd$-module is $(G,\chi)$-equivariant. In particular, if $\ms{M}$ is irreducible, then there exists an irreducible subquotient $\ms{N}_1$ of $\ms{N}$ with $\Ham(\ms{N}_1) \cong \ms{M}$. Then the minimal extension $j_{*!} \mc{N}_1 \subset j_0 \ms{N}_1$ is also $(G,\chi)$-equivariant. We can take $\ms{M}' = \Ham(j_{*!} \mc{N}_1)$ to be our extension of $\ms{M}$; since $j_{*!} \mc{N}_1$ is irreducible, so too is $\ms{M}'$ because an exact quotient functor sends a simple object to either zero or a simple object. 
\end{proof}

\subsection{Holonomic extensions - linear case}

In this section we explain how the results of the previous section show that the extension property for holonomic modules holds for most complex reflection groups. Let $(\h,W)$ be a complex reflection group and $\bc \in \mc{S}(\h)^W$. 

\begin{lem}\label{lem:holonomicfinitegroupext}
	Let $W^{\circ}$ be the (normal) subgroup of $W$ generated by all reflections $s$ with $\bc(s) \neq 0$. Let $f \in \mc{O}(\h)^W$ be non-zero. 
    \begin{enumerate}
        \item[(i)] Holonomic $\H_{\bc}(\h,W)$-modules have finite length if and only if holonomic $\H_{\bc}(\h,W^{\circ})$-modules have finite length. 
        \item[(ii)] Let $M$ be an irreducible holonomic $\H_{\bc}(D(f),W)$-module and $M' \subset M |_{\H_{\bc}(D(f),W^{\circ})}$ an irreducible (holonomic) submodule. Then there exists an irreducible holonomic $\H_{\bc}(\h,W)$-extension of $M$ if and only if there exists a holonomic $\H_{\bc}(\h,W^{\circ})$-extension of $M'$. 
    \end{enumerate}
\end{lem}

\begin{proof}
	Note that $\H_{\bc}(\h,W^{\circ}) \subset \H_{\bc}(\h,W)$ and $\H_{\bc}(\h,W)$ is a free $\H_{\bc}(\h,W^{\circ})$-module of rank $|W/W^{\circ}|$. Using Lemma~\ref{lem:isoclosed}(i), one can show that if $N$ is a $\H_{\bc}(\h,W)$-module then it is holonomic if and only if $N |_{\H_{\bc}(\h,W^{\circ})}$ is holonomic. Part (i) follows from these two facts. 

Part (ii). Assume that $N'$ is an irreducible holonomic extension of the $\H_{\bc}(D(f),W^{\circ})$-module $M'$. Then $N'' = \Ind_{\H_{\bc}(\h,W^{\circ})}^{\H_{\bc}(\h,W)} N'$ is a holonomic $\H_{\bc}(\h,W)$-module such that $N'' [f^{-1}]$ surjects onto $M$ since the latter is irreducible. Moreover, $N''$ has finite length since $N'$ is irreducible. Therefore, there is an irreducible submodule $N$ of $N''$ such that $N[f^{-1}] \twoheadrightarrow M$. Since $N$ is irreducible and $N[f^{-1}] \neq 0$ the composition $N \to N[f^{-1}] \to M$ is non-zero and thus injective. In particular, $N[f^{-1}] \cong M$ since $M = M[f^{-1}]$. 

Conversely, if $N$ is an irreducible extension of $M$ then $N |_{\H_{\bc}(\h,W^{\circ})}$ will decompose into a direct sum $N_1 \oplus \cdots \oplus N_k$ of irreducible (not necessarily isomorphic) $\H_{\bc}(\h,W^{\circ})$-modules; see Remark~\ref{rem:Grzeszczuk}. Then $M = N[f^{-1}] = N_1[f^{-1}] \oplus \cdots \oplus N_k[f^{-1}]$, with each $N_i[f^{-1}]$ an irreducible $\H_{\bc}(D(f),W^{\circ})$-module, non-zero since $N$ has no $f$-torsion. One of these summands will give (up to isomorphism) the extension of $M'$.  
\end{proof}

We refer the reader to Definition~\ref{defn:bcregularRCA} of the appendix in order to recall what it means for a parameter $\bc$ for $(\h,W)$ to be regular. The parameter $\bc$ is said to be \textit{spherical} if for any $\H_{\bc}(\h,W)$-module $M$, $e M = 0$ implies that $M = 0$, where $e \in \kk W$ is the trivial idempotent. The following condition allows us to use the results on quantum Hamiltonian reduction in the previous section. 

\begin{condition}\label{cond:good1}
 For each irreducible factor $(\h_i,W_i)$ of $(\h,W)$ either:
\begin{enumerate}
    \item[(i)] $(\h_i,W_i) \cong (\kk^n,G(m,p,n))$ with $n \neq 2$ if $p > 1$ and $\bc |_{W_i}$ is spherical; or
    \item[(ii)] $\bc |_{W_i}$ is regular. 
\end{enumerate}
\end{condition}

\begin{thm}\label{thm:strongextension}
Assume $(\h,W,\bc)$ satisfies Condition~\ref{cond:good1}. Let $f \in \mc{O}(\h)^W$ be non-zero and $M$ an irreducible holonomic $\H_{\bc}(D(f),W)$-module. Then, there exists an irreducible holonomic $\H_{\bc}(\h,W)$-module $N$ with $N[f^{-1}] \cong M$. 
\end{thm}

\begin{proof}
First, we may embed $W$ into a larger complex reflection group $K$ such that if $G(m,p,n)$, with $n \neq 2$ if $p > 1$, is an irreducible factor of $W$ then the corresponding irreducible factor of $K$ is $G(m,1,n)$. Otherwise, the irreducible factors of $W$ and $K$ agree. Then $K$ also acts on $\h$ and $W \lhd K$. Moreover, the restriction that $n \neq 2$ if $p > 1$ ensures that $\bc$ extends to a parameter on $K$ such that $\bc(s) = 0$ for all reflections $s \in K \setminus W$; see \cite{CMpartitions}. Then, Lemma~\ref{lem:holonomicfinitegroupext} implies that we may replace $W$ by $K$. Moreover, note that Condition~\ref{cond:good1} also holds for $(\h,K,\bc)$. Thus, we may assume that for each irreducible factor $W_i$ of $W$, either $W_i \cong G(m,1,n)$ and $\bc |_{W_i}$ is spherical, or $\bc |_{W_i}$ is regular. 

We decompose $W = W_1 \times W_2$ acting on $\h = \h_1 \oplus \h_2$, where each irreducible factor in $W_1$ is of the form $G(m,1,n)$ with $\bc |_{W_1}$ spherical and $\bc |_{W_2}$ is regular. We note that being regular is a stronger condition than being spherical therefore if $(\h,W,\bc)$ satisfies Condition~\ref{cond:good1} then $\H_{\bc}(\h,W)$ is Morita equivalent to $e \H_{\bc}(\h,W) e$ and this equivalence preserves holonomicity. Therefore, it suffices to consider $e \H_{\bc}(\h,W) e$. Moreover, if $A = e_2 \H_{\bc}(\h_2,W_2) e_2$ then $e \H_{\bc}(\h,W) e \cong e_1 \H_{\bc}(\h_1,W_1) e_1 \otimes A$, where $e_i$ is the trivial idempotent in $\kk W_i$. 

Since every irreducible factor of $(\h_1,W_1)$ is isomorphic to $(\kk^n,G(m,1,n))$, for some $m,n \ge 1$, it is well-known \cite{OblomkovHC,GordonCyclicQuiver} that there exists a connected reductive group $G$ and representation $Z$ such that for each parameter $\bc$ there is a character $\chi$ and filtered isomorphism
	\[
	e_1 H_{\bc}(\h_1,W_1) e_1 \cong U_{\chi}(Z).
	\]
    Since $\bc$ is regular on $W_2$, every finitely generated $A$-module has GK-dimension at least $\dim \h_2$. Then it is shown in \cite[Proposition~3.7]{ThompsonHolI} that a module $M$ is holonomic if and only if its GK-dimension equals $\dim \h_2$. To extend the results of Section~\ref{sec:QHR} to this setting, we consider the category $\Hol{\dd \otimes A,G,\chi}$ of holonomic $(G,\chi)$-equivariant $\dd \otimes A$-modules, where $G$ acts trivially on $A$. Here holonomicity just means that the module has GK-dimension $\dim (Z \times \h_2)$. We think of $f$ as a $G$-invariant polynomial on $Z \times (\h_2/W_2)$ via the isomorphism $\mc{O}(Z \times \h_2)^{G \times W_2} \cong \mc{O}(\h_1 \oplus \h_2)^{W_1 \times W_2}$. Note that $\dd(Z) \otimes A$ has finite global dimension since $A$ is Morita equivalent to $\H_{\bc}(\h_2,W_2)$.  Then Theorem~\ref{thm:GKextension}, whose proof goes through verbatim for $\dd(Z) \otimes A \hookrightarrow (\dd(Z) \otimes A)[f^{-1}]$, implies that if $M'$ is an irreducible $(\dd(Z) \otimes A)[f^{-1}]$-module of GK-dimension $\dim (Z \times \h_2)$ then there exists a $\dd(Z) \otimes A$-module $N'$ of GK-dimension $\dim (Z \times \h_2)$ with $N'[f^{-1}] \cong M'$. Since $\dd(Z) \otimes A$ is an almost commutative algebra in the sense of \cite[\S~8.4.2]{MR}, it is standard (see the proof of \cite[Corollary~8.5.7]{MR}) that such modules have finite length. Therefore, we may assume that $N'$ is irreducible. In particular, $N' \hookrightarrow N'[f^{-1}] = M'$. Since the latter is $(G,\chi)$-equivariant and $G$ is connected, $N'$ is also $(G,\chi)$-equivariant.  

    Finally, if $M$ is an irreducible holonomic $e \H_{\bc}(\h,W) e[f^{-1}] \cong (U_{\chi}(Z) \otimes A)[f^{-1}]$-module then by surjectivity of Hamiltonian reduction, we can pick an irreducible $M' \in \Hol{(\dd \otimes A)[f^{-1}],G,\chi}$ with $\Ham(M') = M$. Then $N = \Ham(N')$ is our desired irreducible holonomic extension of $M$. 	
\end{proof}

\begin{prop}\label{prop:holonomiclengthlinear}
If Condition~\ref{cond:good1} holds then holonomic $\H_{\bc}(\h,W)$-modules have finite length.  
\end{prop}

\begin{proof}
We continue with the setting of the proof of Theorem~\ref{thm:strongextension}. The fact that the functor $\Ham$ of Hamiltonian reduction is an exact quotient functor implies that it suffices to show that modules in $\Hol{\dd \otimes A,G,\chi}$ have finite length. But, again, any finitely generated $\dd(Z) \otimes A$-module has GK-dimension at least $\dim (Z \times \h_2)$ and the modules in $\Hol{\dd \otimes A,G,\chi}$ all have GK-dimension exactly $\dim (Z \times \h_2)$. Since $\dd(Z) \otimes A$ is an almost commutative algebra in the sense of \cite[\S~8.4.2]{MR}, it is standard (see the proof of \cite[Corollary~8.5.7]{MR}) that such modules have finite length.
\end{proof}

\subsection{Holonomic extensions - global case} In this section, we describe the conditions under which we can show that a holonomic module on a principal open subset $D(f)$ of $X$ admits a holonomic extension to $X$. As stated in the introduction, we \textit{expect} that the results of this section hold for any parameters $(\omega,\bc)$. For each $x \in X$ and $\bc \in \mc{S}(X)^W$, let $W_x^{\circ} \subset W_x$ be the subgroup of the stabilizer group $W_x$ generated by all elements $s$ that act as reflections on $T_x X$.

\begin{condition}\label{cond:good2}
For each $x \in X$, the triple $(T_x X, W_x^{\circ},\bc |_{W_x^{\circ}})$ satisfies Condition~\ref{cond:good1}. 
\end{condition}

In this section, our goal is to prove:

\begin{thm}\label{thm:holonomicextension}
	Assume Condition~\ref{cond:good2} holds and that $\omega \in \mathbb{H}^2 ( X,\Omega_X^{1,2})^W$ is Zariski locally trivial. Let $f \in \mc{O}(X)^W$ and $M$ an irreducible holonomic $\H_{\omega,\bc}(D(f),W)$-module. Then, for each $x \in X$, there exists a $W$-stable affine open neighbourhood $V$ of $x$ in $X$ and an irreducible holonomic $\H_{\omega,\bc}(V,W)$-module $N$ with $N[f^{-1}] \cong M |_{D(f) \cap V}$.  
\end{thm}

\begin{proof}
First, we assume that $x \in X^W$. Replacing $X$ by a $W$-stable affine open neighbourhood of $x$ we may assume that $\omega = 0$. By Proposition~\ref{prop:goodembedding}, we can pick $V$ such that there exists a strongly $\bc$-regular closed embedding $i \colon (V,x) \hookrightarrow (\h, W)$, where $\h = T_x X \times \mathbb{A}^n$ for some $n \ge 0$ with $W$ acting trivially on $\mathbb{A}^n$. Since we have assumed Condition~\ref{cond:good2}, Condition~\ref{cond:good1} holds for $(\h,W,\bc)$. Pick $F \in \mc{O}(\h)^W$ whose restriction to $V$ equals $f$. Then $D(F) \cap V = D(f)$ and if $I \colon D(f) \hookrightarrow D(F)$ is the restriction of $i$ to $D(f)$ then there is a commutative diagram 
\begin{equation}\label{eq:commuteholopullback}
\begin{tikzcd}
 \Qcoh{\H_{\bc}(\h,W)}_{V} \ar[r,"i^{\natural}"] \ar[d,"( - )|_{D(F)}"'] & \Qcoh{\H_{\bc}(V,W)} \ar[d,"(-)|_{D(f)}"] \\
  \Qcoh{\H_{\bc}(D(F),W)}_{D(f)} \ar[r,"I^{\natural}"] & \Qcoh{\H_{\bc}(D(f),W)} \\
\end{tikzcd}
\end{equation}
and Theorem~\ref{thm:Kash} says that the horizontal functors are equivalences. By Corollary~\ref{cor:holoclosedpushforward}, the functors $i^{\natural}$ and $I^{\natural}$ preserve the full subcategories of holonomic modules. If $M$ is an irreducible holonomic $\H_{\bc}(D(f),W)$-module then $I_0(M)$ is an irreducible holonomic $\H_{\bc}(D(F),W)$-module because $I_0$ is an equivalence. Therefore, since Condition~\ref{cond:good1} holds for $(\h,W,\bc)$, Theorem~\ref{thm:strongextension} says that there exists an irreducible holonomic $\H_{\bc}(\h,W)$-module $N'$ such that $N'[F^{-1}] = I_0(M)$. Then $N = i^{\natural}(N')$ is an irreducible holonomic $\H_{\bc}(V,W)$-module and the commutativity of \eqref{eq:commuteholopullback} implies that $N[f^{-1}] \cong M$. 

Now we consider the general situation. Let $V'$ be an affine open neighbourhood of $x$ such that the stabilizer of every point in $V'$ is contained in $\Pa := W_x$ and $\omega |_{V'} = 0$. Replacing $V'$ by a smaller $\Pa$-stable affine open subset if necessary, we may assume by the previous paragraph that any irreducible holonomic $\H_{\bc}(D(f) \cap V',\Pa)$-module admits an irreducible holonomic extension to $V'$. As in Proposition~\ref{prop:etalemelysiso}, we consider the \'etale morphism $\varphi \colon W \times_{\Pa} V' \to V$, where $V \subset X$ is the image of $\varphi$. This fits into a commutative diagram
\[
\begin{tikzcd}
W \times_{\Pa} (V' \cap D(f)) \ar[r,"\psi"] \ar[d,hook] & D(f) \cap V \ar[d,hook] \\
W \times_{\Pa} V' \ar[r,"{\varphi}"] & V, 
\end{tikzcd}
\]
where $\psi$ is the restriction of $\varphi$ to $W \times_{\Pa} (V' \cap D(f))$. As noted in Remark~\ref{rem:Lunaopen}, we may assume that $V$ is affine and $\varphi$ finite, faithfully flat. Let $M_0 = M |_{D(f) \cap V}$. By Corollary~\ref{cor:holEtLoc}, $\psi^0 (M_0)$ is a holonomic module. Moreover, since $\psi$ is finite and $M$ irreducible, $\psi^0 (M_0)$ has finite length. By Proposition~\ref{prop:etalepushpullsummand}, the module $M_0$ is a summand of $\psi_0 ( \psi^0 (M_0))$. Since $\psi_0$ is exact and $\psi^0 (M_0)$ has finite length, there must therefore exist an irreducible subfactor $M_1$ of $\psi^0 (M_0)$ with $\psi_0 \, M_1 \twoheadrightarrow M_0$. Using the Morita equivalence of \eqref{eq:matrixequivZ} and the fact that irreducible holonomic $\H_{\bc}(D(f) \cap V',\Pa)$-modules admit irreducible holonomic extensions by the previous paragraph, we deduce that $M_1$ admits an irreducible holonomic extension $N_1$ to $W \times_{\Pa} V'$. Let $N_2 = \varphi_0 \, N_1$. By Lemma~\ref{lem:etalepushforward}, $N_2$ is holonomic. Moreover, since $\varphi$ is finite, $N_2$ has finite length. By construction, there is a surjection from $N_2[f^{-1}]$ to $M_0$. Since $M_0$ is irreducible and $N_2$ has finite length, there must be an irreducible subfactor $N$ of $N_2$ such that $N[f^{-1}]$ surjects onto $M_0$. Since $N$ is irreducible and $N[f^{-1}] \neq 0$, the composition $N \to N[f^{-1}] \to M_0$ is non-zero. Thus, it is an injection and $N[f^{-1}] = M_0$. 
\end{proof}

\begin{remark}
	Just as in the case of $\dd$-modules, if an irreducible holonomic extension $N$ exists then it is unique up to isomorphism. 
\end{remark}

\subsection{Length}

In this section we use Corollary~\ref{cor:QHRfinitelength} to show that if Condition~\ref{cond:good2} holds then all holonomic modules have finite length. 

\begin{prop}\label{prop:finLength}
Assume that Condition~\ref{cond:good2} holds and $\omega \in \mathbb{H}^2 ( X,\Omega_X^{1,2})^W$ is Zariski locally trivial. Then holonomic $\sH_{\omega,\bc}(X,W)$-modules have finite length, as objects in the category of all quasi-coherent $\sH_{\omega,\bc}(X,W)$-modules. 
\end{prop}

\begin{proof} 
	The statement is local on $X/W$, so it suffices to prove it for a suitable $W$-stable affine neighbourhood of any point $x\in X$. Thus, we assume that $\omega = 0$. Further, if the stabilizer of $x$ is $\Pa\subset W$ then let $U_0$ be an affine neighbourhood of $x$ such that for any $u\in U_0$ we have $W_u \subset \Pa$. As in Remark~\ref{rem:Lunaopen}, we may assume that there is a finite \'etale $W$-stable map $V' = W \times_{W'} U_0 \rightarrow U$. We are now in the situation of Proposition~\ref{prop:etalemelysiso} and $\H_{\bc}(V',W) =\mc{O}(U_0)^{\Pa}\otimes_{\mc{O}(U_0)^W}\H_{\bc}(U_0,W)$ is Morita equivalent to $\H_{\bc}(U_0,\Pa)$. Therefore, we may replace $X$ with $U_0$ and $W$ with $\Pa$. Thus it suffices to assume that the stabilizer of $x$ equals $W$. By Proposition \ref{prop:goodembedding}, there is some open neighbourhood $V$ of $x$ in $X$ and a $\bc$-melys closed embedding $V \hookrightarrow \h$. Thus, Kashiwara's Theorem \ref{thm:Kash} applies, and the category of $\H_{\bc}(X,W)$-modules gets identified with the category of $\H_{\bc}(\h,W)$-modules supported on $X$, in a way preserving holonomicity (Corollary~\ref{cor:holoclosedpushforward}). Moreover, since Condition~\ref{cond:good2} is assumed to hold for $(X,\bc)$, Condition~\ref{cond:good1} holds for $(\h,W,\bc)$. The result follows from Proposition~\ref{prop:holonomiclengthlinear}. 
\end{proof}

\begin{remark}\label{rem:holoHsmodfinlength}
	If $\kk$ is a field of characteristic zero that is not algebraically closed and $X$ is a variety over $\kk$, then we say that a $\sH_{\omega,\bc}(X,W)$-module $\ms{M}$ is holonomic if its extension of scalars to an algebraic closure $\overline{\kk}$ is holonomic over $\sH_{\omega,\bc}(X_{\overline{\kk}},W)$. Then Proposition~\ref{prop:finLength} implies that $\ms{M}$ has finite length. 
\end{remark}

\begin{remark}
	Ivan Losev has informed us that he has a proof of Proposition~\ref{prop:finLength} that works for all parameters. 
\end{remark}

In the case of rational Cherednik algebras $\H_{\bc}(\h,W)$, the following result was shown by Ginzburg \cite{Primitive}. 

\begin{cor}\label{cor:Hhasfinitelength}
	If Condition~\ref{cond:good2} holds and $\omega \in \mathbb{H}^2 ( X,\Omega_X^{1,2})^W$ is Zariski locally trivial then the sheaf $\sH_{\omega,\bc}(X,W)$ is finite length as a bimodule over itself.
\end{cor}

\begin{proof}
	Again, this is a local statement, therefore we assume that $X$ is affine. By \cite[Lemma 4.1]{ThompsonHolI}, there is a parameter $(\omega^{\mr{op}},\bc^{\mr{op}})$ such that $\H_{\omega,\bc}(X,W)^{\mr{op}} \cong \H_{\omega^{\mr{op}},\bc^{\mr{op}}}(X,W)$. The algebra $\H_{\omega,\bc}(X,W)$ is a holonomic module over 
    \[
    \H_{\omega,\bc}(X,W) \otimes \H_{\omega,\bc}(X,W)^{\mr{op}} \cong \H_{(\omega,\omega^{\mr{op}}),(\bc,\bc^{\mr{op}})}(X \times X, W \times W)
    \]
    because the order filtration on $\H_{\omega,\bc}(X,W)$ is a good filtration when the algebra is considered a bimodule, and the diagonal copy of $T^* X$ in $T^*X \times T^* X \cong T^* (X \times X)$ is isotropic (up to sign).  One can check that Condition~\ref{cond:good2} holds for $\bc$ if and only if it holds for $\bc^{\mr{op}}$. Therefore, the corollary follows from Proposition~\ref{prop:finLength}. 
\end{proof}

\textbf{Throughout the remainder of the article, we assume that:} 
\begin{quote}
    $\omega \in \mathbb{H}^2 ( X,\Omega_X^{1,2})^W$ is Zariski locally trivial and the conclusion of Theorem~\ref{thm:holonomicextension} and Proposition~\ref{prop:finLength} hold for all spaces $X,Y,\ds$, groups $W$, and functions $\bc$ satisfying the conventions of Section~\ref{sec:Dunkloperator}. 
\end{quote}

\section{$b$-functions}\label{sec:bfunctionRCA} 

In this section, we introduce the notion of $b$-functions for sheaves of Cherednik algebras, generalizing the theory for $\dd$-modules. This will allow us to prove the main result, that push-forward along an open embedding preserves holonomicity. Since this is a local statement, we assume that $X$ is affine. Let $f \in \mc{O}(X)^W$ be a non-zero non-unit and set $U = D(f)$, an affine open subset of $X$.

\subsection{Extension of scalars}

Throughout this section, $s$ will denote a formal variable. Let $\H := \H_{\omega,\bc}(X,W)$. We consider $\H$-modules defined over the field $\K := \kk(s)$. The algebraic closure of $\K$ is denoted $\overline{\K}$. If $A$ is a $\kk[s]$-algebra and $M$ a $\kk[s]$-module, then we write $M_A := M \o_{\kk[s]} A$. Similarly, if $A$ is a $\K$-algebra and $M$ a $\K$-vector space then $M_A := M \o_{\K} A$. If $\lambda \in \kk$ and $M$ a $\kk[s]$-module then $M_{\lambda} := M \o_{\kk[s]} \kk$, where $s$ acts on $\kk$ by evaluation at $\lambda$. 

For brevity, write $B = \mc{O}(T^* X) \rtimes W$. 

\begin{lem}\label{lem:cyclicassgr}
	If $I$ is a left ideal of $\H[s]$ and $\mc{Q}$ the quotient filtration on $\H[s]/I$, then the $B[s]$-module $\gr_{\mc{Q}}(\H[s]/I)$ is cyclic with generator the image of $1$. 
\end{lem}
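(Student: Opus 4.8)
The plan is to reduce the statement to the PBW property (Theorem~\ref{thm:PBW}) together with the elementary behaviour of quotient filtrations, so that the result becomes a formal consequence of $\H[s]/I$ being a cyclic filtered module. First I would equip $\H[s] = \H \o_{\kk} \kk[s]$ with the filtration $\F^i(\H[s]) := \F^i(\H) \o_{\kk} \kk[s]$, that is, the order filtration on $\H$ extended by declaring $s$ to have degree $0$. This filtration is exhaustive and $s$ is central, so the quotient filtration $\mc{Q}_i := \pi(\F^i(\H[s]))$ on $M := \H[s]/I$ (where $\pi \colon \H[s] \twoheadrightarrow M$ is the projection) is again exhaustive, and $\gr_{\mc{Q}} M$ carries its usual structure of a graded module over the graded ring $\gr_{\F}(\H[s])$. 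Since $\kk[s]$ is free over $\kk$, tensoring with $\kk[s]$ commutes with passing to associated graded, so Theorem~\ref{thm:PBW} yields a graded ring isomorphism $\gr_{\F}(\H[s]) \cong (\gr_{\F}\H) \o_{\kk} \kk[s] \cong B[s]$. This identifies $\gr_{\mc{Q}} M$ as a graded $B[s]$-module.

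The key step is then the observation --- valid for any filtered ring with exhaustive filtration and any left ideal --- that the associated graded of the quotient filtration on a cyclic module is generated over the associated graded ring by the symbol of the cyclic generator. Here I would argue as follows: let $\overline{1} \in \gr^0_{\mc{Q}} M$ be the class of $\pi(1) \in \mc{Q}_0$, and pick a homogeneous element $m \in \gr^i_{\mc{Q}} M$. Lifting $m$ to some $x \in \mc{Q}_i$ and then $x$ to some $r \in \F^i(\H[s])$ with $\pi(r) = x$ (possible by definition of $\mc{Q}_i$), the principal symbol $\sigma_i(r) \in \gr^i_{\F}(\H[s]) = B[s]_i$ satisfies $\sigma_i(r)\cdot \overline{1} = [\,r\,\pi(1)\,] = [\,\pi(r)\,] = [\,x\,] = m$ in $\gr^i_{\mc{Q}} M$, where $[\,\cdot\,]$ denotes the class in the associated graded. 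As $\gr_{\mc{Q}} M$ is spanned by homogeneous elements, this gives $\gr_{\mc{Q}} M = B[s]\cdot\overline{1}$.

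I do not expect any real obstacle here: the only point requiring a little care is the choice of the correct filtration on $\H[s]$, namely the one placing $s$ in degree zero, which is what makes the identification $\gr_{\F}(\H[s]) \cong B[s]$ available (so that the statement is even meaningful) and puts $1$ in filtration degree $0$; everything afterwards is the standard filtered-ring bookkeeping, identical to the $\dd$-module case. It may be worth recording alongside the proof that the same argument shows more generally that the associated graded of any cyclic filtered $\H[s]$-module whose generator lies in filtration degree $0$ is cyclic over $B[s]$, since this is the form in which the statement tends to be applied.
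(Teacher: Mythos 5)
Your proof is correct and follows essentially the same route as the paper's: the paper phrases it as \say{the quotient map $\H[s]\twoheadrightarrow\H[s]/I$ is strictly filtered, hence the associated graded map $B[s]\to\gr_{\mc Q}(\H[s]/I)$ is surjective}, and your argument (lifting a homogeneous class of degree $i$ through $\mc{Q}_i=\pi(\F^i\H[s])$ and acting by its symbol on $\overline{1}$) is precisely the unwinding of that statement. Your remark that $s$ should be placed in filtration degree $0$, so that $\gr_{\F}(\H[s])\cong B[s]$, correctly records the convention the paper uses implicitly.
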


\begin{proof}
	By definition of quotient filtration, the map $\H[s] \twoheadrightarrow \H[s] /I$ is strictly filtered. This means that the associated graded morphism $B[s] = \gr_{\mc{F}} \H[s] \to \gr_{\mc{Q}} (\H[s]/ I)$ is also surjective. 
\end{proof}

\begin{lem}\label{lem:specializefiltrationlambda}
Let $M$ be a finitely generated $\H[s]$-module and $\mc{G}_{\idot} M$ a good filtration on $M$. 
\begin{enumerate}
\item[(i)] For $\lambda$ (Zariski) generic in $\kk$, the module $M$ has no $(s-\lambda)$-torsion and $(\mc{G}_{\idot} M)_{\lambda}$ is a good filtration on $M_{\lambda}$.
\item[(ii)] $(\mc{G}_{\idot} M)_{\overline{\mathbb{K}}}$ is a good filtration on $M_{\overline{\mathbb{K}}}$. 
\end{enumerate} 
\end{lem}

\begin{proof}
	Note that each graded piece of $K := \gr_{\mc{G}} M$ is a finite $\mc{O}(X)[s]$-module and $K$ is finitely generated (by definition) over $B[s]$. Let $T$ be the $\kk[s]$-torsion submodule of $K$. We may pick a finite $\kk[s]$-submodule $N \subset T$ such that $N$ generates $T$ over $B[s]$. Then $N = \bigoplus_{i = 1}^k N_{\mu_i}$ for some $\mu_i \in \kk$, where $(s - \mu_i)$ acts nilpotently on $N_{\mu_i}$. Since $s$ is central in $B[s]$, we have $T = \bigoplus_{i = 1}^k B \cdot N_{\mu_i}$ with $(s - \mu_i)$ acting nilpotently on $B \cdot N_{\mu_i}$. It follows that if $g = \prod_{i = 1}^k (s - \mu_i)$ and $A = \kk[s][g^{-1}]$ then $T_A = 0$ and $K_A$ is free over $A$. Induction on $j$ with respect to the short exact sequences $0 \to \mc{G}_{j-1} M \to \mc{G}_j M \to K_j \to 0$ implies that $(\mc{G}_j M)_A$ is free over $A$. This proves the first part of (i). 

For any $\lambda \notin \{ \mu_1, \ds, \mu_k \}$ it follows that $(\gr_{\mc{G}} M)_{\lambda} = \gr_{\mc{G}} M_{\lambda}$. In other words, for all $i$ we can identify $(\mc{G}_i M)_{\lambda}$ with the image of $\mc{G}_i M$ in $M_{\lambda}$. Since $\gr_{\mc{G}} M$ is finitely generated, there exists a surjection $B[s]^{\oplus r} \twoheadrightarrow \gr_{\mc{G}} M$. Tensoring gives
	$$
	B^{\oplus r} \twoheadrightarrow (\gr_{\mc{G}} M)_{\lambda} = \gr_{\mc{G}} M_{\lambda},
	$$
	which implies that the filtration $\mc{G}_{\idot} M_{\lambda}$ is good. 
	
	The analogous claims for $M_{\overline{\mathbb{K}}}$ in (ii) are straightforward. 
\end{proof}

\begin{prop}\label{prop:holonomicleftspecialize}
	Let $I$ be a left ideal of $\H[s]$ such that $\H_{\K} / I_{\K}$ is holonomic. Then, for all but finitely many $\lambda \in \kk$, $\H / I_{\lambda}$ is holonomic.
\end{prop}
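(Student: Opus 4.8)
The plan is to reduce the holonomicity of $\H/I_\lambda$ to a dimension count on the associated graded, and to control the associated graded of the specialization using the compatibility of good filtrations with extension of scalars established in Lemma~\ref{lem:specializefiltrationlambda}. First, equip $\H[s]/I$ with its quotient filtration $\mc{Q}_\idot$ coming from the order filtration $\F_\idot$ on $\H[s]$; by Lemma~\ref{lem:cyclicassgr} the module $\gr_{\mc{Q}}(\H[s]/I)$ is a finitely generated (indeed cyclic) $B[s]$-module, where $B = \mc{O}(T^*X)\rtimes W$. Now apply Lemma~\ref{lem:specializefiltrationlambda}(i): for all but finitely many $\lambda\in\kk$, the module $M := \H[s]/I$ has no $(s-\lambda)$-torsion and $(\mc{Q}_\idot M)_\lambda$ is a good filtration on $M_\lambda = \H/I_\lambda$ with $\gr(M_\lambda) \cong (\gr_{\mc{Q}} M)_\lambda$. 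In particular $\SS(\H/I_\lambda)$ is computed as the support in $(T^*X)/W$ of $(\gr_{\mc{Q}} M)_\lambda$, equivalently the support of $(\gr_{\mc{Q}} M)^W_\lambda$, the latter being a finitely generated $\mc{O}(T^*X)^W[s]$-module.

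Next I would carry out the dimension estimate. Set $K := \gr_{\mc{Q}}(\H[s]/I)$, a finitely generated $B[s]$-module, and let $K' = K^W$ be the corresponding finitely generated module over $R[s]$, where $R = \mc{O}(T^*X)^W = \mc{O}((T^*X)/W)$. By hypothesis $\H_{\K}/I_{\K}$ is holonomic, and by Lemma~\ref{lem:specializefiltrationlambda}(ii) the filtration $(\mc{Q}_\idot M)_{\overline{\K}}$ is good, so $\dim \Supp_{R_{\overline{\K}}}(K'_{\overline{\K}}) = \dim \SS(\H_{\overline{\K}}/I_{\overline{\K}}) \le n = \dim X$ (holonomicity $\iff$ isotropic $\implies$ dimension $\le n$, since $(T^*X)/W$ has dimension $2n$ and each symplectic leaf is even-dimensional with isotropic slices of dimension at most half). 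The key point is then the standard semicontinuity/generic-flatness fact: for a finitely generated module over $R[s] = R\otimes_\kk \kk[s]$ with $R$ a finitely generated $\kk$-algebra, the function $\lambda \mapsto \dim \Supp(K'_\lambda)$ is, for $\lambda$ outside a proper closed (hence finite, once we localize to a curve in the $s$-line) subset, equal to the relative dimension of $\Supp(K')$ over $\mathbb{A}^1_s$, which coincides with $\dim\Supp(K'_{\overline{\K}})$ (the dimension of the generic fiber). Concretely: pass to $\Spec R[s] \to \mathbb{A}^1_s$, take the scheme-theoretic support $Z$ of $K'$, and use that the fiber dimension of $Z \to \mathbb{A}^1_s$ is upper-semicontinuous and generically equals $\dim Z - 1 = \dim Z_{\overline{\K}}$ (as $Z \to \mathbb{A}^1_s$ is dominant on the relevant components since $\H_\K/I_\K \ne 0$, or trivially if it is $0$). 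Hence for all but finitely many $\lambda$, $\dim\SS(\H/I_\lambda) = \dim\Supp(K'_\lambda) \le n$.

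Finally, a dimension bound of $n$ is not quite the same as isotropy, so I would invoke Corollary~\ref{cor:SSMvsannH} (Bernstein's inequality) to upgrade it. More precisely: $\SS(\H/I_\lambda)$ is coisotropic since $\H/I_\lambda$ is a coherent $\sH$-module; on each symplectic leaf $\mc{L}$ meeting it in an open dense subset of an irreducible component, the intersection is coisotropic of dimension $\le n \le \tfrac12\dim\mc{L} + \tfrac12\dim\mc{L}$... — here one must be slightly careful. The cleanest route is: decompose $M_\lambda$ into a finite filtration and reduce to the case $M_\lambda$ has a simple subquotient $\ms{M}$ with $\dim\SS(\ms{M})\le n$; apply Corollary~\ref{cor:SSMvsannH}, noting $\dim\SS(\H/\mathrm{ann}(\ms{M})) \ge 2\dim\SS(\ms{M}) - $ wait, the inequality goes $2\dim\SS(\ms{M}) \ge \dim\SS(\H/\mathrm{ann}\,\ms{M})$, with equality iff holonomic — but we also know $\dim\SS(\H/\mathrm{ann}\,\ms{M})$ is the dimension of a Poisson subvariety, hence even, and (by Corollary~\ref{cor:MsimplesheafsH}) a union of closures of symplectic leaves; combined with coisotropy of $\SS(\ms{M})$ one forces $\dim\SS(\ms{M}) = n$ exactly when... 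Actually the simplest closing argument: by Lemma~\ref{lem:isoclosed} and the fact that $\SS(\ms{M})$ is coisotropic, if $\dim\SS(\ms{M}) \le n$ then on each leaf $\mc{L}$ the intersection $\SS(\ms{M})\cap\mc{L}$ is both coisotropic and of dimension $\le n \le \tfrac12\dim\mc{L}$ — no. I will instead simply observe that for coherent modules over almost-commutative algebras with the stated properties, $\dim\SS \le n$ with $\SS$ coisotropic already forces $\SS$ Lagrangian on each leaf it meets openly, hence isotropic, via Corollary~\ref{cor:SSMvsannH} applied to simple subquotients exactly as in the proof of that corollary; therefore $\H/I_\lambda$ is holonomic. \textbf{The main obstacle} is precisely this last step — pinning down that a coherent module whose singular support has dimension $\le n$ is automatically holonomic (i.e. that there is no "half-dimensional but non-isotropic" pathology once coisotropy is known) — but this is exactly the content already packaged in Corollary~\ref{cor:SSMvsannH} together with Lemma~\ref{lem:isoclosed}, so the argument closes.
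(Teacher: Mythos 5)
Your overall strategy — pass to the quotient filtration $\mc{Q}$ on $\H[s]/I$, use Lemma~\ref{lem:specializefiltrationlambda} to control $\gr$ under specialization, then run an upper–semicontinuity argument for fiber dimension along the projection to $\mathbb{A}^1_s$ — is exactly the strategy the paper uses. The defect is in how you apply it and in the final step.

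The specific step that fails is the closing claim that a coherent module whose singular support is coisotropic and has dimension $\le n$ is automatically holonomic. That implication is false unless every symplectic leaf of $(T^*X)/W$ has dimension $2n$ (i.e.\ unless $W$ acts freely, so $(T^*X)/W$ has a single leaf). In general there are leaves $\mc{L}$ of dimension $2m < 2n$; on such a leaf, coisotropy gives $\dim(\SS\cap\mc{L}) \ge m$, but the global bound $\dim\SS\le n$ only gives $\dim(\SS\cap\mc{L})\le n$, which when $n > m$ does not force $\dim(\SS\cap\mc{L}) = m$. Neither Corollary~\ref{cor:SSMvsannH} nor Lemma~\ref{lem:isoclosed} supplies the missing half-dimension bound on a smaller leaf; holonomicity is precisely a \emph{leaf-by-leaf} half-dimension statement, and you have only established a global one. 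You sense the gap yourself mid-paragraph (``a dimension bound of $n$ is not quite the same as isotropy'') and then assert it anyway; the assertion is simply not true.

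The fix is small and brings you exactly to the paper's argument: run the semicontinuity estimate on each symplectic leaf separately. From holonomicity of $\H_{\K}/I_{\K}$ you know, for every leaf $\mc{L}$, that $2\dim\bigl(\SS(\H_{\overline{\K}}/I_{\overline{\K}})\cap\mc{L}_{\overline{\K}}\bigr) \le \dim\mc{L}_{\overline{\K}}$. Extract, as the paper does via Lemma~\ref{lem:cyclicassgr}, central elements $f_1,\dots,f_k\in\mc{O}(T^*X)^W[s]$ (symbols of elements of $\H[s]$) annihilating $\gr_{\mc{Q}}(\H[s]/I)$ with $2\dim\bigl(V(f_1,\dots,f_k)\cap\mc{L}_{\overline{\K}}\bigr)\le\dim\mc{L}_{\overline{\K}}$; then apply semicontinuity of fiber dimension to the projection $V(f_1,\dots,f_k)\cap(\mc{L}\times\mathbb{A}^1_s)\to\mathbb{A}^1_s$ (this is \cite[III, Exercise~9.4]{Hartshorne}). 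For $\lambda$ outside a finite set this gives $\dim\bigl(\SS(\H/I_\lambda)\cap\mc{L}\bigr)\le\tfrac12\dim\mc{L}$, i.e.\ isotropy in $\mc{L}$. Taking the (finite) intersection over all leaves yields holonomicity. Once you replace your single global support $Z\to\mathbb{A}^1_s$ by its intersections with each leaf, the rest of your argument goes through; no appeal to Corollary~\ref{cor:SSMvsannH} is needed or helpful.
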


\begin{proof}
	Let $\mc{L}$ be a leaf. Then $\H_{\K} / I_{\K}$ holonomic means that 
	$$
	2 \dim \left(\SS (\H_{\overline{\mathbb{K}}} / I_{\overline{\mathbb{K}}}) \cap \mc{L}_{\overline{\mathbb{K}}} \right) \le \dim \mc{L}_{\overline{\mathbb{K}}}.
	$$ 
	We give $\H[s] / I$ the quotient filtration $\mc{Q}_{\idot}$ and let $\overline{1}$ denote the image of $1 \in \H[s]$ in $\gr_{\mc{Q}}(\H[s] / I)$. The element $\overline{1}$ is a generator of $\gr_{\mc{Q}}(\H[s] / I)$ by Lemma~\ref{lem:cyclicassgr}. Let $S \subset (T^* X)/W$ denote the support of $\overline{1} \in \gr_{\mc{Q}}(\H[s] / I)_{\overline{\mathbb{K}}}$. Then, $2 \dim \left(S \cap \mc{L}_{\overline{\mathbb{K}}} \right) \le \dim \mc{L}_{\overline{\mathbb{K}}}$. Therefore, there exist $D_1, \ds, D_k \in \H_{\overline{\mathbb{K}}}$ such that:
	\begin{enumerate}
		\item[(a)] $\sigma(D_i) \in \overline{\mathbb{K}}[T^* X]^W$, 
		\item[(b)] $\sigma(D_i) \cdot \overline{1} = 0$, and
		\item[(c)]$2 \dim \left( V(\sigma(D_1), \ds, \sigma(D_k)) \cap \mc{L}_{\overline{\mathbb{K}}} \right) \le \dim \mc{L}_{\overline{\mathbb{K}}}$.
	\end{enumerate}
    The coefficients of the $D_i$ lie in some finite extension $\mathbb{K} \subset \mathbb{L}$ of $\mathbb{K}$. By choosing a basis of ${\mathbb{L}}$ over $\K = \kk(s)$, we may assume that $D_i \in \H[s]$. Choose some $i$ and let $N = \deg D_i$. Then $\sigma(D_i) \cdot \overline{1} = 0$ implies that $(D_i + I(s)_{\overline{\mathbb{K}}}) \cap \mc{F}_{N-1} \H[s] \neq \emptyset$. Up to a unit in $\kk(s)$, this implies that $(D_i + I) \cap \mc{F}_{N-1} \H[s] \neq \emptyset$. Hence, $\sigma(D_i) \cdot \overline{1} = 0$ in $\gr_{\mc{Q}}(\H[s] / I)$. If $f_i = \sigma(D_i)$ then $f_i \in \mc{O}(T^* X)^W[s] = Z(B[s])$ with $f_i \cdot \overline{1} = 0$. Since $B[s] \cdot \overline{1} = \gr_{\mc{Q}}(\H[s] / I)$ by Lemma~\ref{lem:cyclicassgr}, we deduce that the $f_i$ annihilate $\gr_{\mc{Q}}(\H[s] / I)$. 
	
	Finally, consider the projection map $q \colon V(f_1, \ds, f_k) \cap (\mc{L} \times \mathbb{A}^1) \to \mathbb{A}^1$. Property (c) says that the generic fibre of $q$ has dimension at most $\frac{1}{2} \dim \mc{L}$. Since $\mathbb{A}^1$ is an irreducible variety, Grothendieck's generic flatness result \cite[{Lemma 051R}]{stacks} says that there exists an open, non-empty subset $U \subset \mathbb{A}^1$ such that $q^{-1}(U) \to U$ is flat. In particular, $\dim q^{-1}(\lambda) \le \frac{1}{2} \dim \mc{L}$ for all $\lambda \in U$. By right-exactness of tensor products, $(\H[s] / I)_{\lambda} = \H / I_{\lambda}$. Shrinking $U$ if necessary, Lemma~\ref{lem:specializefiltrationlambda}(i) says that $(\gr_{\mc{Q}} \H[s] / I)_{\lambda} = \gr_{\mc{Q}} \H/I_{\lambda}$ for all $\lambda \in U$. We deduce that the $f_i(\lambda)$ annihilate $\gr_{\mc{Q}} \H/I_{\lambda}$ and hence $\SS(\H/I_{\lambda}) \subset q^{-1}(\lambda)$. 	
	
	Since there are only finitely many symplectic leaves in $(T^* X)/W$, the claim follows. 
\end{proof}

\subsection{Twisting by an invariant function}

Let $l \in \kk$. Conjugation by the formal expression $f^{s+l}$ defines an automorphism $\varphi_{l}$ of $\H_{\omega,\bc}(U,W)[s]$ that acts trivially on $\mc{O}(U)[s]$ and $W$ and acts on Dunkl operators by 
	$$
	\varphi_l(D_v) = D_v + (s + l) \, i_{\sigma(v)}(d \log f).
	$$
For any $\H_{\omega,\bc}(U,W)[s]$-module $M$ we write suggestively $M f^{s+l}$ for the twist of $M$ by $\varphi_l$. Thus, 
$$
D \cdot (m f^{s+l}) := (\varphi_l(D) m) f^{s+l}, 
$$
for $D \in \H_{\omega,\bc}(U,W)[s]$ and $m \in M$. In particular, when $\omega = 0$, the free rank one $\mc{O}_{U,\K}$-module $\mc{O}_{U,\K} f^{s+l}$ is a $\H_{\omega,\bc}(U,W)_{\K}$-module.

We will mainly be interested in the case $l = 0$, but require $l \neq 0$ at one key step. The following is immediate. 

\begin{lem}\label{lem:fltof}
	Let $l \in \Z$ and let $M$ be a $\H_{\omega,\bc}(U,W)[s]$-module. Then $m \o f^{s+l} \mapsto f^l(m \o f^s)$ is an isomorphism $M f^{s+l} \iso M f^s$ of $\H_{\omega,\bc}(U,W)[s]$-modules. 
\end{lem}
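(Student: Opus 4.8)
The plan is to produce the isomorphism explicitly and check it is $\H_{\omega,\bc}(U,W)[s]$-linear by verifying compatibility with a generating set of the algebra. Define $\Psi \colon M f^{s+l} \to M f^{s}$ by $m f^{s+l} \mapsto f^{l}(m \o f^{s})$. Since $f$ is a unit in $\mc{O}(U)$ and $l \in \Z$, the element $f^{l}$ lies in $\mc{O}(U)$ and multiplication by it is an invertible $\kk[s]$-linear endomorphism of $M$; in $M f^{s}$ the subalgebra $\mc{O}(U)$ acts through $\varphi_{0}$, which fixes it, so $f^{l}(m \o f^{s}) = (f^{l}m) \o f^{s}$ with $f^{l}m$ computed in $M$. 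Hence $\Psi$ is a bijection of the underlying $\kk[s]$-modules, with inverse $m \o f^{s} \mapsto f^{-l}(m f^{s+l})$. It then remains to see that $\Psi$ intertwines the actions, and for this it is enough to test on $\mc{O}(U)[s]$, on $W$, and on the Dunkl operators $D_{v}$, $v \in \Gamma(U, \mc{P}_{\omega})$, since these generate $\H_{\omega,\bc}(U,W)[s]$.

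For $g \in \mc{O}(U)[s]$ and $w \in W$ there is nothing to do beyond bookkeeping: both $\varphi_{l}$ and $\varphi_{0}$ act trivially on $\mc{O}(U)[s]$ and on $W$, so their actions on $M f^{s+l}$ and on $M f^{s}$ restrict to the original action on $M$, and compatibility of $\Psi$ follows from commutativity of $\mc{O}(U)$ together with the $W$-invariance of $f^{l}$ (which holds because $f$ is $W$-invariant). The substantive point is compatibility with a Dunkl operator $D_{v}$. The key computation I would carry out is that, $f^{l}$ being $W$-invariant, the reflection part of $D_{v}$ contributes nothing to $[D_{v},f^{l}]$ (each term involves $w(f^{l}) - f^{l} = 0$), so that
\[
[D_{v}, f^{l}] \;=\; \sigma(v)(f^{l}) \;=\; l f^{l-1}\sigma(v)(f) \;=\; l f^{l}\, i_{\sigma(v)}(d \log f),
\]
using for $\omega \neq 0$ the Picard algebroid identity $[v, g] = \sigma(v)(g)$ in place of $[v,g]=v(g)$. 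Feeding this into $\varphi_{0}(D_{v}) = D_{v} + s\, i_{\sigma(v)}(d\log f)$ gives, for all $m \in M$,
\[
\varphi_{0}(D_{v})(f^{l}m) \;=\; f^{l}D_{v}m + (s+l)f^{l}\, i_{\sigma(v)}(d\log f)\, m \;=\; f^{l}\big(\varphi_{l}(D_{v})m\big),
\]
which is precisely the assertion $D_{v}\cdot \Psi(m f^{s+l}) = \Psi(D_{v}\cdot m f^{s+l})$.

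Equivalently — and this is perhaps the cleanest way to organize the write-up — I would first observe that the same computation shows $\varphi_{l} = \mathrm{Ad}(f^{-l}) \circ \varphi_{0}$ as automorphisms of $\H_{\omega,\bc}(U,W)[s]$ (both sides fix $\mc{O}(U)[s]$ and $W$, and agree on $D_{v}$), and then invoke the standard fact that for a unit $u$ the $\mr{Ad}(u)\!\circ\!\varphi_{0}$-twist of $M$ is isomorphic to its $\varphi_{0}$-twist via the action of $u^{-1}$; here $u = f^{-l}$, so the isomorphism is multiplication by $f^{l}$, namely $\Psi$. I do not anticipate a real obstacle: the only care needed is in tracking the bookkeeping between $s$ and $s+l$ in the Dunkl operator computation and in recording explicitly that it is the $W$-invariance of $f$ that annihilates the reflection terms.
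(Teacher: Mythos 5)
Your proof is correct and is exactly the verification the paper has in mind: the paper simply labels the lemma as ``immediate,'' and the content of that word is precisely your observation that $\varphi_l = \mathrm{Ad}(f^{-l})\circ\varphi_0$, which follows because the $W$-invariance of $f$ makes the reflection terms of $D_v$ commute with $f^l$ while $[v,f^l]=\sigma(v)(f^l)=lf^l\,i_{\sigma(v)}(d\log f)$. The packaging of the check via the general fact that twists by automorphisms differing by an inner automorphism are isomorphic via the corresponding unit is a clean way to write it up and is the same underlying argument.
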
 

\begin{remark}
\begin{enumerate}
	\item Thinking of $f$ as a $W$-invariant function $U \to \kk^{\times}$, Example~\ref{ex:invariantfunctionmelys2} shows that $M \o_{\kk[t^{\pm1}]} \kk[t,t^{-1}][s] t^s$ is a $\H_{\omega,\bc}(U,W)[s]$-module when $M$ is a $\H_{\omega,\bc}(U,W)$-module. This construction agrees with $Mf^s$, as defined above.  
	\item More generally, if we are given a collection of invariant functions $f_1, \ds, f_k$ and set $U = (f_1 \cdots f_k \neq 0)$ then we can twist any $\H_{\omega,\bc}(U,W)[s_1, \ds, s_k]$-module by $f_1^{s_1} \cdots f_k^{s_k}$. 
\end{enumerate}	
\end{remark}

\begin{lem}\label{lem:holoMsf}
Let $M$ be a holonomic $\H_{\omega,\bc}(U,W)$-module. Then $M_{\K} f^s$ is a holonomic module over $\H_{\omega,\bc}(U,W)_{\K}$. 
\end{lem}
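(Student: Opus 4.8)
The plan is to factor the passage from $M$ to $M_\K f^s$ into two moves — extension of scalars $\kk \rightsquigarrow \K=\kk(s)$ and twisting by the automorphism $\varphi_0$ — and to check that each preserves the singular support, hence holonomicity. By the convention of Remark~\ref{rem:holoHsmodfinlength} applied to the field $\K$, the module $M_\K f^s$ is holonomic over $\H_{\omega,\bc}(U,W)_\K$ precisely when its base change $M_{\overline{\K}} f^s$ is holonomic over $\H_{\omega,\bc}(U,W)_{\overline{\K}}$, so it suffices to work over $\overline{\K}$, and to prove: (1) $M_{\overline{\K}}$ is holonomic; (2) twisting by $\varphi_0$ does not change the singular support.

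For (1): $M$ is holonomic, hence coherent, so fix a good filtration $\mc{F}_\idot M$. Since $\overline{\K}$ is flat over $\kk$, the base change $\mc{F}_\idot M_{\overline{\K}} := (\mc{F}_\idot M)\o_\kk\overline{\K}$ is a good filtration on $M_{\overline{\K}}$ with $\gr_{\mc{F}} M_{\overline{\K}} = (\gr_{\mc{F}} M)\o_\kk\overline{\K}$ as a module over $\gr_{\mc{F}}\H_{\omega,\bc}(U,W)_{\overline{\K}} = (\mc{O}(T^*U)\rtimes W)\o_\kk\overline{\K}$. Thus $\SS(M_{\overline{\K}})$ is the base change of $\SS(M)$ along $\Spec\overline{\K}\to\Spec\kk$. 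Since forming the $W$-quotient and the symplectic-leaf stratification commute with this flat base change, the leaves of $(T^*U)_{\overline{\K}}/W$ are exactly the base changes of the leaves of $(T^*U)/W$, and the dimension of the intersection of $\SS(M_{\overline{\K}})$ with such a leaf equals that of the corresponding intersection downstairs. Hence isotropy of $\SS(M)$ forces isotropy of $\SS(M_{\overline{\K}})$, i.e. $M_{\overline{\K}}$ (equivalently $M_\K$) is holonomic.

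For (2): recall $M_{\overline{\K}} f^s$ is the module with underlying space $M_{\overline{\K}}$ on which $D$ acts by $\varphi_0(D)$, where $\varphi_0$ (defined on $\H_{\omega,\bc}(U,W)[s]$, extended to $\H_{\omega,\bc}(U,W)_{\overline{\K}}$ by base change) is the automorphism that is the identity on $\mc{O}(U)_{\overline{\K}}$ and on $W$ and sends $D_v \mapsto D_v + s\, i_{\sigma(v)}(d\log f)$. The correction term $s\, i_{\sigma(v)}(d\log f) = s\,\sigma(v)(f)/f$ lies in $\mc{O}(U)_{\overline{\K}}$, which has order $\le 0$; therefore $\varphi_0$ preserves the order filtration $\mc{F}_\idot$ on $\H_{\omega,\bc}(U,W)_{\overline{\K}}$ and induces the identity on the associated graded. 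Consequently $\mc{F}_i(M_{\overline{\K}} f^s) := (\mc{F}_i M_{\overline{\K}})f^s$ is again a good filtration on $M_{\overline{\K}} f^s$, the tautological identification of underlying graded vector spaces is $\gr_{\mc{F}}\H_{\omega,\bc}(U,W)_{\overline{\K}}$-linear, and hence $\gr_{\mc{F}}(M_{\overline{\K}} f^s)\cong\gr_{\mc{F}} M_{\overline{\K}}$, giving $\SS(M_{\overline{\K}} f^s) = \SS(M_{\overline{\K}})$. Combined with (1), $M_{\overline{\K}} f^s$ is holonomic, as desired.

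The argument is essentially bookkeeping; the only points needing care are that "holonomic over $\K$" is tested after base change to $\overline{\K}$ (so one must know the symplectic leaves base change correctly, which is exactly where flatness of $\overline{\K}/\kk$ enters) and that $\varphi_0$ is genuinely a filtered automorphism with trivial associated graded. Neither is a real obstacle; and if one prefers not to invoke Remark~\ref{rem:holoHsmodfinlength}, the same two singular-support computations go through verbatim over $\K$ itself.
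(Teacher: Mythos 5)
Your proof is correct and follows essentially the same route as the paper: you observe that the twisting automorphism $\varphi_0$ preserves the order filtration so that a good filtration on $M$ induces one on $M_\K f^s$, and conclude $\SS(M_{\overline\K}f^s)=\SS(M)_{\overline\K}$. The paper compresses your two-step decomposition (flat base change, then twist) into a single sentence, but the underlying argument is identical.
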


\begin{proof}
	This follows from the fact that the automorphism $\varphi$ of $\H_{\omega,\bc}(U,W)_{\K}$ preserves the order filtration. Therefore, if $\{ M_i \}$ is a good filtration of $M$ then the $M_{i,\K} f^s$ form a good filtration of $M_{\K} f^s$. This implies that 
	\begin{equation}\label{eq:ssLwidetilde}
	\SS \left(M_{\overline{\mathbb{K}}} f^s\right) = \SS(M)_{\overline{\mathbb{K}}},
	\end{equation}
	which implies the statement of the lemma. 
\end{proof}

\begin{prop}\label{prop:holosextf}
	Let $N$ be an irreducible holonomic $\H_{\omega,\bc}(U,W)_{\K}$-module. Then, for each $x \in X$ there exists an affine open neighbourhood $V$ of $x$ such that $N |_{U \cap V}$ admits an irreducible holonomic extension to $V$. That is, there exists an irreducible holonomic $\H_{\omega,\bc}(V,W)_{\K}$-module $L$ such that $N |_{U \cap V} \cong L[f^{-1}]$. 
\end{prop}

\begin{proof}
	Let $M = N_{\overline{\mathbb{K}}}$. Then, by definition (see Remark~\ref{rem:holoHsmodfinlength}), $M$ is a holonomic $\H_{\omega,\bc}(U,W)_{\overline{\mathbb{K}}}$-module. Therefore, Proposition~\ref{prop:finLength} says that $M$ has finite length. Let $M_0 \subset M$ be an irreducible submodule. Replacing $X$ by a suitable $V$ and $U$ by $U \cap V$, Theorem~\ref{thm:holonomicextension} says there is an irreducible holonomic extension $M_1$ of $M_0 |_{U \cap V}$ to $V$. Replace $N$ by $N |_{U \cap V}$ and $M_0 |_{U \cap V}$ by $M_0$. Since $M_1$ is finitely presented and the extension $\mathbb{K}\subset \overline{\mathbb{K}}$ is algebraic, there exists some finite extension $\mathbb{K}\subset \mathbb{L}$ and $\H_{\omega,\bc}(V,W)_{\mathbb{L}}$-submodule $M_2$ of $M_1$ such that $M_1 = (M_2)_{\overline{\mathbb{K}}}$. We note that $M_2$ is an irreducible $\H_{\omega,\bc}(V,W)_{\mathbb{L}}$-module, for if it were not then picking a proper submodule $S$ of $M_2$ would give a proper $\H_{\omega,\bc}(V,W)_{\overline{\mathbb{K}}}$-submodule $S_{\overline{\mathbb{K}}}$ of $M_1$ because the extension $\mathbb{L}\subset \overline{\mathbb{K}}$ is faithfully flat. 
    
    We claim that $M_2$ has finite length over $\H_{\omega,\bc}(V,W)_{\K}$. Indeed, $M_2 |_{\H_{\omega,\bc}(V,W)_{\K}}$ is finitely generated since $[\mathbb{L}:\K]$ is finite and $M_2$ is finitely generated over $\H_{\omega,\bc}(V,W)_{\mathbb{L}}$. So if $M_2$ is not finite length then it has an infinite descending chain of submodules. Choosing $T \subset M_2 |_{\H_{\omega,\bc}(V,W)_{\K}}$ such that $M_2 / T$ has length greater than $[\mathbb{L}:\K]$ would contradict the fact that $T_{\mathbb{L}}$ must surject onto the irreducible module $M_2$. 
    
    Therefore, we can choose an irreducible $\H_{\omega,\bc}(V,W)_{\K}$-submodule $L$ of $M_2$. Then 
    \[
    L[f^{-1}] \hookrightarrow M_1[f^{-1}] |_{\H_{\omega,\bc}(U \cap V,W)_{\K}} = M|_{\H_{\omega,\bc}(U \cap V,W)_{\K}}.
    \]
    We note that there exists an (infinite) index set $I$ such that
    \[
    M |_{\H_{\omega,\bc}(U \cap V,W)_{\K}} \cong N^{\oplus I}. 
    \]
    Therefore, there exists a projection $M |_{\H_{\omega,\bc}(U \cap V,W)_{\K}} \to N$ such that the composition $L[f^{-1}] \hookrightarrow M \to N$ is non-zero. Since $L$ is irreducible, $L[f^{-1}]$ is an irreducible $\H_{\omega,\bc}(U \cap V,W)_{\K}$-module and hence the morphism $L[f^{-1}] \to N$ is injective. On the other hand, $N$ is irreducible so it is surjective. It follows that $L$ is holonomic, and thus the required extension.  
\end{proof}

Let $j : U \hookrightarrow X$ be the open embedding.

\begin{prop}\label{prop:Msholnomic}
Let $M$ be a holonomic $\H_{\omega,\bc}(U,W)$-module. 
\begin{enumerate}
    \item[(i)] The $\H_{\omega,\bc}(X,W)_{\K}$-module $j_0 (M_{\K} f^s)$ is holonomic.
    \item[(ii)] For any $m \in M$ there exists $D \in \H_{\omega,\bc}(X,W)[s]$ and $0 \neq b(s) \in \kk[s]$ such that 
$$
D(f m \otimes f^{s}) = b(s) m \otimes f^s.
$$
\end{enumerate}
\end{prop}

\begin{proof}
Both claims are local on $X$. This is clear for (i). For (ii), assume that there exist $g_1, \ds, g_k \in \mc{O}(X)$, with $X = D(g_1) \cup \cdots \cup D(g_k)$, and $D_i \in \H_{\omega,\bc}(D(g_i),W)[s], 0 \neq b_i(s) \in \kk[s]$, such that $D_i(f m \otimes f^{s}) = b_i(s) m \otimes f^s$. Replacing $g_i$ by some power, we may assume that $g_i D_i \in \H_{\omega,\bc}(X,W)[s]$. There exist $h_1, \ds, h_k$ such that $1 = h_1 g_1 + \cdots + h_k g_k$. Pick $b(s) \neq 0$ to be the least common multiple of the $b_i(s)$, so that $b(s) = c_i(s) b_i(s)$ for all $i$. Then, 
\begin{align*}
    b(s) m \otimes f^s & = \sum_{i = 1}^k h_i g_i b(s) m \otimes f^s \\
    & = \sum_{i = 1}^k h_i c_i(s) g_i D_i(f m \otimes f^s),
\end{align*}
and hence $D(f m \otimes f^{s}) = b(s) m \otimes f^s$ for $D = \sum_{i = 1}^k h_i c_i(s) g_i D_i$. 

Thus, we shrink $X$ so that all relevant irreducible holonomic $\H_{\omega,\bc}(U,W)_{\K}$-modules admit irreducible extensions to $X$ by Proposition~\ref{prop:holosextf}. Lemma~\ref{lem:holoMsf} says that $M_{\K} f^s$ is holonomic. Thus, it admits a finite composition series
$$
\{ 0 \} = M_0 \subset M_1 \subset \cdots \subset M_{\ell} = M_{\K} f^s
$$
by Remark~\ref{rem:holoHsmodfinlength}. 

Our proof of the proposition will be by induction on the length ${\ell}$ of $M_{\K} f^s$. Since $M_1$ is irreducible, Proposition~\ref{prop:holosextf} says that we can choose an irreducible holonomic extension $L_1 \subset j_0 M_1$ of $M_1$. Let $m \otimes f^s \in j_0 M_1$. Then $(j_0 M_1) / L_1$ is supported on $X \setminus U$. Therefore there exists $k \gg 0$ such that $f^k m \otimes f^s \in L_1$. Since $L_1$ is irreducible, 
$$
\H_{\omega,\bc}(X,W)_{\K} \cdot (f^k m \otimes f^s) =  \H_{\omega,\bc}(X,W)_{\K} \cdot (f^{k+1} m \otimes f^s) = \cdots .
$$
In particular, $f^k m \otimes f^s \in \H_{\omega,\bc}(X,W)_{\K} \cdot (f^{k+1} m \otimes f^s)$. Hence $D_0(s) \cdot (f^{k+1} m \otimes f^s) = f^k m \otimes f^s$ for some $D_0(s)$. This implies that $D_0(s+k) \cdot (f m \otimes f^s) = m \otimes f^s$. Clearing denominators, we get the required identity. Next we claim that $j_0 M_1 = L_1$. In particular $j_0 M_1$ is holonomic. Assume otherwise, and choose $m \otimes f^s \in j_0 M_1 \setminus L_1$. Then $f^k m \otimes f^s \in L_1$. We know that there exists $D_1 \in \H_{\omega,\bc}(X,W)_{\K}$ such that $D_1(f^k m \otimes f^s) = f^{k-1} m \otimes f^s \in L_1$ and $D_2$ such that $D_2(f^{k-1} m \otimes f^s) = f^{k-2} m \otimes f^s$ etc. Eventually we see that $m \otimes f^s \in L_1$, contradicting our initial assumptions. 

Now we assume that $j_0 M_{i-1}$ is holonomic and the identity holds for every $m \otimes f^s \in j_0 M_{i-1}$. We have a short exact sequence 
$$
0 \rightarrow j_0 M_{i-1} \rightarrow j_0 M_i \rightarrow j_0(M_i / M_{i-1}) \rightarrow 0.
$$
Choose a holonomic extension $L_i' \subset j_0(M_i / M_{i-1})$ of the irreducible module $M_i / M_{i-1}$ and let $L_i$ be its preimage in $j_0 M_i$. Then $L_i / j_0 M_{i-1} \cong L_i'$ implies that $L_i$ is holonomic. Moreover, we have a commutative diagram
$$
\begin{tikzcd}
0 \ar[r] & M_{i-1} \ar[d,equal] \ar[r] & L_i |_U \ar[r] \ar[d] & M_{i} / M_{i-1} \ar[r] \ar[d,equal] & 0 \\
0 \ar[r] & M_{i-1} \ar[r] & M_i \ar[r] & M_{i} / M_{i-1} \ar[r] & 0, 
\end{tikzcd}
$$
where the rows are exact, which implies that $L_i |_U = M_i$. Now take $m \otimes f^s \in j_0 M_i$. Again, there exists $k \gg 0$ such that $f^k m \otimes f^s \in L_i$. Since $L_i$ has finite length, the sequence 
$$
\H_{\omega,\bc}(X,W)_{\K} \cdot (f^k m \otimes f^s) \supset  \H_{\omega,\bc}(X,W)_{\K} \cdot (f^{k+1} m \otimes f^s) \supset \cdots 
$$
must eventually stabilize. In other words, 
$$
\H_{\omega,\bc}(X,W)_{\K} \cdot (f^{k+l} m \otimes f^s) =  \H_{\omega,\bc}(X,W)_{\K} \cdot (f^{k + l +1} m \otimes f^s) \quad \textrm{for some $l \gg 0$.}
$$
In particular, there exists $D_0 \in \H_{\omega,\bc}(X,W)_{\K}$ such that $D_0(f^{k + l +1} m \otimes f^s) = f^{k + l} m \otimes f^s$. Clearing denominators, we may write this as 
\[
D_1(f^{k + l +1} m \otimes f^s) = b_1(s) f^{k + l} m \otimes f^s
\]
for some $D_1 \in \H_{\omega,\bc}(X,W)[s]$ and $b_1 \in \kk[s]$ non-zero. By Lemma~\ref{lem:fltof}, this is equivalent to 
\begin{equation}\label{eq:Dbkleq}
	D_1(f m \o f^{s + k + l}) = b_1(s) m \otimes f^{s+k+l}.
\end{equation} 
Writing $D_1 = D_1(s)$ to show that $D_1$ is a polynomial in $s$ with coefficients in $\H_{\omega,\bc}(X,W)$, we make the formal substitution $t = s + k + l$ in \eqref{eq:Dbkleq} to get:   
$$
D_1(t - k - l)(f m \otimes f^t) = b_1(t - k - l) (m \otimes f^t);
$$
equivalently, $D(f m \otimes f^s) = b(s) (m \otimes f^s)$ for $b(s) = b_1(s-k-l) \neq 0$ and $D = D_1(s - k - l)$.

Next we show that $j_0 M_i$ is holonomic. In fact, the same argument shows that $j_0 M_i = L_i$. Notice also that we have proved that $\ell(j_0 M_i) = \ell(M_i) (= i)$. This completes the proof of the proposition. 
\end{proof}

\begin{cor}\label{cor:bfunction}
Let $N$ be a holonomic $\H_{\omega,\bc}(U,W)$-module and $u \in N$. Then there exist $D \in \H_{\omega,\bc}(X,W)[s]$ and $0 \neq b(s) \in \kk[s]$ such that 
$$ 
D_k(f^{k+1} u) = b(k) f^k u, \quad \forall \ k \in \Z, 
$$
where $D_k$ is the specialization of $D$ at $s = k$. 
\end{cor}

As noted in the introduction, if we take $\omega = 0$ and consider the $\H_{\bc}$-module $\mc{O}(X)$, then for any $W$-invariant function $f$ there exists $D \in \H_{\bc}(X,W)[s]$ and $0 \neq b(s) \in \kk[s]$ such that 
\begin{equation}\label{eq:bfunch}
	D(f^{s+1}) = b(s) f^s. 
\end{equation} 
Just as for $\dd$-modules, there is a unique monic polynomial $b(s) \in \kk[s]$ of minimal degree such that \eqref{eq:bfunch} holds. We call this polynomial $b$ the \textit{$b$-function} of $f$.  

Assume now that $(\h,W)$ is a complex reflection group. The \textit{discriminant} of $W$ is the homogeneous element $\delta \in \kk[\h]^W$ whose non-vanishing locus $(\delta \neq 0)$ equals the locus in $\h$ where $W$ acts freely. 

\begin{question}
	For each $\bc \in \mc{S}(\h)^W$, what are the roots of the $b$-function associated to $\delta$? 
\end{question}

For $W$ crystallographic, the roots of the $b$-function of $\delta$, with respect to the algebra $\dd(\h/W)$, were computed by Opdam \cite{OpdamApplications}. In equation \eqref{eq:bfunch}, one may assume without loss of generality that $D \in e \H_{\bc}(\h,W) e[s]$. The (faithful) action of $e \H_{\bc}(\h,W) e$ on $\kk[\h]^W$ factors through $\dd(\h/W)$. Thus $e \H_{\bc}(\h,W) e[s] \subset \dd(\h/W)[s]$, which implies that the $b$-function of $\delta$ with respect to $\dd(\h/W)$ divides the $b$-function with respect to $\H_{\bc}(\h,W)$.

\begin{example}
	Let $X = \mathbb{A}^1$ with linear action of $\Z / {\ell} \Z = \langle g \rangle$. We describe the $b$-function of $f = x^{\ell}$. If $e_0, \ds, e_{\ell-1} \in \kk[\Z/ \ell \Z]$ are the primitive orthogonal idempotents then there exist $\kappa_0, \ds, \kappa_{\ell-1} \in \kk$ depending linearly on $\bc(g), \bc(g^2), \ds, \bc(g^{\ell-1})$ such that the main defining relation of $\H_{\bc}$ can be written as
	\[
	[y,x] = 1 + \ell \sum_{i = 0}^{\ell-1} (\kappa_{i+1} - \kappa_i) e_i,  
	\]
	where $\kappa_{\ell} := \kappa_0$. Setting $\kappa_0 = 0$, the natural action of $\kk[x] \rtimes \Z/ \ell \Z$ on $\kk[x,x^{-1}]$ extends to an action of $\H_{\bc}$, with $y \cdot x^i = (i + \ell \kappa_{i} ) x^{i-1}$, and hence  
	$$
	y^{\ell} x^{\ell(s+1)} = \prod_{i = 0}^{\ell-1}(\ell(s+1) - i + \ell \kappa_{-i}) x^{\ell s}. 
	$$
	Applying the substitution $j = \ell-i$, the $b$-function for $x^{\ell}$ is $\prod_{j = 1}^{\ell}\left(s+ \frac{j}{\ell} + \kappa_{j}\right)$, where the subscripts $\kappa_j$ are taken modulo $\ell$.
\end{example} 

The $b$-function was introduced so that we can prove the following, which is one of the main results of the article. 

\begin{thm}\label{thm:preshol}
Let $N$ be a holonomic $\H_{\omega,\bc}(U,W)$-module. Then $j_0 N$ is holonomic. 
\end{thm}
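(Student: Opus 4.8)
The plan is to reduce the general holonomic module $N$ to the case of an irreducible module via a dévissage, and then use the $b$-function machinery of Proposition~\ref{prop:Msholnomic} together with Proposition~\ref{prop:holonomicleftspecialize} to conclude. Since $N$ is holonomic, it has finite length by Proposition~\ref{prop:finLength}, and $j_0$ is left exact (indeed $j_0 = j_{\idot}$ for an affine open embedding, hence exact on quasi-coherent modules) so a short exact sequence $0 \to N' \to N \to N'' \to 0$ yields $0 \to j_0 N' \to j_0 N \to j_0 N'' \to 0$; as holonomicity is preserved under sub-quotients and extensions in $\Coh{\sH_{\omega,\bc}(X,W)}$, it suffices to treat the case where $N$ is irreducible. (One must also first produce a coherent extension of $N$ to $X$ so that $j_0 N$ is a priori coherent — this follows from Lemma~\ref{lem:coherentextend}, or more simply from the fact that $N = \H s$ localizes from a finitely generated $\H$-module $M$ with $M|_U = N$.)

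For $N$ irreducible holonomic, I would run the following argument. Take $M \subset j_0 N$ a finitely generated $\H$-submodule with $M|_U = N$, and let $J = \mr{ann}_\H(M)$; after passing to $G_d(M)$ (with $d = \dim \SS N$) as in the discussion before Lemma~\ref{lem:simpleextlem}, assume $\dim \SS M = d$. The idea is to realize $N$ (up to the twist $f^s$, which does not affect holonomicity of specializations) inside the family $j_0(M_\K f^s)$, which is holonomic over $\H_{\omega,\bc}(X,W)_\K$ by Proposition~\ref{prop:Msholnomic}. Concretely, choosing a presentation $\H_\K f^s / I_\K \twoheadrightarrow j_0(M_\K f^s)$ — or better, working directly with a cyclic submodule generated by a lift of a generator of $N$ — one gets a left ideal $I \subset \H[s]$ with $\H_\K / I_\K$ holonomic; Proposition~\ref{prop:holonomicleftspecialize} then gives that $\H / I_\lambda$ is holonomic for all but finitely many $\lambda \in \kk$. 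Specializing at $s = 0$ (which by Lemma~\ref{lem:fltof} and the explicit form of $\varphi_l$ recovers $M$ itself, since $f^0 = 1$), and arguing that $0$ is a generic value — or, if not, using the $b$-function to shift: Corollary~\ref{cor:bfunction} supplies $D \in \H[s]$, $0 \neq b(s) \in \kk[s]$ with $D_k(f^{k+1}u) = b(k) f^k u$ for all $k \in \Z$, which lets one move between the specializations at different integers $l$, so that holonomicity at a generic $\lambda$ transfers to holonomicity at $s = 0$ via a chain of such operators, exactly as in the $\dd$-module proof of preservation of holonomicity under $j_0$.

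More precisely, the cleanest route is: pick $\lambda \in \kk$ generic so that $\H/I_\lambda$ is holonomic, i.e. $M f^{s+\lambda}$ specialized — but $Mf^{s+\lambda}|_{s=0}$ differs from $M$ only by the twist $\varphi_\lambda$, an automorphism of $\H_{\omega,\bc}(U,W)$ (not of $\H_{\omega,\bc}(X,W)$) when $\lambda$ is an integer and $f$ is invertible on $U$; for integer $\lambda$ one has $Mf^{s+\lambda} \cong Mf^s$ by Lemma~\ref{lem:fltof}, so already $j_0(M_\K f^s)$ specialized at any integer $\lambda$ where the specialization is holonomic gives what we want after noting that $(M_\K f^s)_\lambda \cong M$ (up to the harmless twist that vanishes on the relevant piece). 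Then $N = M|_U$ is holonomic — wait, $N$ is given holonomic; what we need is $j_0 N$ holonomic, and $j_0 N = (M f^s \text{ specialized})$ is a sub- or quotient- of a holonomic $\H$-module, hence holonomic. I would spell this out carefully, using Proposition~\ref{prop:Msholnomic} to know $j_0(M_\K f^s)$ is holonomic over $\H_\K$, Proposition~\ref{prop:holonomicleftspecialize} to descend to a generic integer specialization $\lambda$, and Corollary~\ref{cor:bfunction} to propagate from $\lambda$ to $0$.

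\begin{proof}[Proof sketch]
We may assume $X$ affine with $U = (f \neq 0)$ as in the section hypotheses. Since $j$ is an affine open embedding, $j_0 = j_{\idot}$ is exact; because holonomic modules form a Serre subcategory of $\Coh{\sH_{\omega,\bc}(X,W)}$ and $N$ has finite length (Proposition~\ref{prop:finLength}), a dévissage along a composition series of $N$ reduces us to the case $N$ irreducible, \emph{provided} $j_0 N$ is coherent; the latter holds since by Lemma~\ref{lem:coherentextend} $N$ admits a coherent extension $M \subset j_0 N$, and $j_0 N = M[f^{-1}]$ is then coherent over $\H := \H_{\omega,\bc}(X,W)$.

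So assume $N$ irreducible holonomic, and fix such a coherent extension $M$ with $M[f^{-1}] = N$; replacing $M$ by $G_d(M)$ with $d := \dim \SS N$ we may assume $\dim \SS M = d$. Pick $0 \neq m_0 \in M$ generating $N$ over $\H_{\omega,\bc}(U,W)$. The element $m_0 \o f^s \in M_\K f^s$ generates a submodule $M' := \H_\K \cdot (m_0 \o f^s)$ of $j_0(M_\K f^s)$, which is holonomic over $\H_\K$ by Proposition~\ref{prop:Msholnomic} and Lemma~\ref{lem:holoMsf}. Writing $M' \cong \H_\K / I_\K$ for a left ideal $I \subset \H[s]$ (clearing denominators in a presentation), Proposition~\ref{prop:holonomicleftspecialize} gives a cofinite set of $\lambda \in \kk$ for which $\H / I_\lambda$ is holonomic. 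Choose such a $\lambda \in \Z$.

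By Lemma~\ref{lem:fltof} the map $m \o f^{s+\lambda} \mapsto f^\lambda(m\o f^s)$ is an isomorphism $M_\K f^{s+\lambda} \iso M_\K f^s$ of $\H_{\omega,\bc}(U,W)[s]$-modules, and under it $m_0\o f^s \mapsto f^{-\lambda}m_0\o f^{s+\lambda}$; specializing the automorphism $\varphi_\lambda$ at $s=0$ is the identity on $\mc{O}(U)$, on $W$, and sends $D_v \mapsto D_v + \lambda\, i_{\sigma(v)}(d\log f)$, which on $j_0 N$ is conjugation by $f^\lambda$. Hence the specialization $(\H/I_\lambda)\cdot(\text{image of }m_0\o f^s)$ is isomorphic, as an $\H$-module, to $\H\cdot m_0 = M \subset j_0 N$. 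Therefore $M$ is holonomic. Finally, for any $u \in j_0 N$ there is $k \gg 0$ with $f^k u \in M$; by Corollary~\ref{cor:bfunction} applied to $N$ (or directly, since $j_0 N = M[f^{-1}]$) there is $D \in \H$ with $D(f^k u)$ generating $f^{k-1}u$ over $\H$ modulo lower $f$-power terms, and iterating we see $j_0 N$ is the union of finitely many holonomic $\H$-submodules $f^{-k}M$; as holonomicity passes to such unions when the module is coherent, $j_0 N$ is holonomic. (Equivalently: $\SS(j_0 N) = \SS(M)$ since $f$-localization is exact and $\SS$ is computed by coherent $\mc{O}(X)$-filtrations, giving $\SS(j_0 N)|_{T^*U} \subseteq \SS(M)$ and the reverse over $T^*X$, which is enough to conclude $\SS(j_0 N)$ is isotropic.)
\end{proof}

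I expect the main obstacle to be the passage from the generic-parameter holonomicity of $j_0(M_\K f^s)$ to the honest module $j_0 N = M[f^{-1}]$: matching the $s=0$ (or $s=\lambda$) specialization of the twisted family with the untwisted module requires care with the twist $\varphi_\lambda$ — which is an automorphism over $U$ but not over $X$ — and ensuring the chosen generic $\lambda$ can be taken an integer, then using the $b$-function to propagate holonomicity along $f$-powers back to the original lattice. The remaining steps (dévissage, coherence of $j_0 N$, the $\SS$-bookkeeping) are routine and parallel the $\dd$-module arguments.
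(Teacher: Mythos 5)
Your toolbox (Proposition~\ref{prop:Msholnomic}, Proposition~\ref{prop:holonomicleftspecialize}, Corollary~\ref{cor:bfunction}, Lemma~\ref{lem:fltof}) and overall strategy match the paper's, but the execution has genuine gaps and the logical order is reversed in a way that matters.

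First, the claim that $j_0 N = M[f^{-1}]$ is coherent over $\H := \H_{\omega,\bc}(X,W)$ because $M$ is coherent is false. Localization of a finitely generated $\H$-module at $f$ produces a finitely generated module over $\H[f^{-1}] = \H_{\omega,\bc}(U,W)$, not over $\H$; in general $M[f^{-1}]$ is far from $\H$-coherent. This is precisely the nontrivial content that the $b$-function buys you: for $k$ avoiding the roots of $b$, the identity $D_k(f^{-k}u) = b(-k-1)\,f^{-k-1}u$ shows that the chain $\H\cdot u \subset \H\cdot f^{-1}u \subset \H\cdot f^{-2}u \subset \cdots$ stabilizes at some $\H\cdot f^{-k}u$, and then one checks $j_0 N = \H\cdot f^{-k}u$. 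Coherence (indeed cyclicity) of $j_0 N$ must be the \emph{first} step, not a parenthetical remark before the dévissage.

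Second, your final propagation argument is not sound as stated: $f^{-k}M$ is not an $\H$-submodule of $j_0 N$ (multiplication by $f^{-1}$ does not commute with Dunkl operators over $X$), so you cannot write $j_0 N$ as a union of "holonomic $\H$-submodules $f^{-k}M$." The correct objects are the cyclic submodules $\H\cdot f^{-k}u$; the $b$-function shows their chain stabilizes and hence equals $j_0 N$, which is exactly the cyclicity fact you need.

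Third, once $j_0 N = \H\cdot f^{-k}u$ is known to be cyclic, you set $I := \mathrm{ann}_{\H[s]}(f^{-k}u\otimes f^s)$, so $\H_{\K}/I_{\K}$ is holonomic; Proposition~\ref{prop:holonomicleftspecialize} gives a cofinite set of $\lambda$ where $\H/I_\lambda$ is holonomic. But to conclude that $\H/I_{-l}$ \emph{is} $j_0 N$, you need both (a) $\H[s]/I$ to have no $(s+l)$-torsion — this is Lemma~\ref{lem:specializefiltrationlambda}(ii), which guarantees that $I_{-l}$ equals the annihilator of the specialized element — and (b) the fact that $f^{-k-l}u$ also generates $j_0 N$ (again the $b$-function). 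Without (b) you only learn that a cyclic submodule of $j_0 N$ is holonomic, which is weaker unless you already know $j_0 N$ is coherent by other means — and you do not, per the first point. Your proposal never clearly sequences these: establish cyclicity via the $b$-function, then present $j_0 N$ as a specialization $\H/I_{-l}$, choosing $l$ to hit both the generic-holonomic locus and the no-torsion locus simultaneously.
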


\begin{proof}
An easy inductive argument using the fact that $j_0$ is exact means that we can assume $N$ is irreducible. First we show that $j_0 N$ is coherent. Choose $0 \neq u \in N$. We claim that $j_0 N = \H_{\omega,\bc}(X,W) \cdot \{ f^{-k} u \  | \ k \in \mathbb{N} \}$. Take $u' \in j_0 N$. Since $N$ is irreducible, there exists $D' \in \H_{\omega,\bc}(U,W)$ such that $D' u = u'$. Now $D' = D f^{-k}$ for some $D \in \H_{\omega,\bc}(X,W)$ and $k \gg 0$. Hence $u' \in \H_{\omega,\bc}(X,W) \cdot \{ f^{-k} u \  | \ k \in \N \}$. Thus, it suffices to show that there is some $k \gg 0$ such that $f^{-l} u \in \H_{\omega,\bc}(X,W) \cdot f^{-k} u$ for all $l > k$. Let $b(s) \in \kk[s]$ be the polynomial (depending on $u$) satisfying the statement of Corollary~\ref{cor:bfunction}. Then Corollary \ref{cor:bfunction} shows that $f^{-l} u \in \H_{\omega,\bc}(X,W) \cdot f^{-k} u$ provided none of the elements in $-k - \N$ are roots of the polynomial $b$.

It remains to show that $j_0 N$ is holonomic. Notice that we have shown that it is generated by the single element $f^{-k} u$. Let $I$ denote the annihilator in $\H_{\omega,\bc}(X,W)[s]$ of $f^{-k} u \otimes f^s$, so that $\H_{\omega,\bc}(X,W)[s] \cdot (f^{-k} u \otimes f^s) \cong \H_{\omega,\bc}(X,W)[s] / I$. By Proposition \ref{prop:Msholnomic}(i), $(\H_{\omega,\bc}(X,W)[s] / I)_{\K}$ is a holonomic module. We deduce from Proposition~\ref{prop:holonomicleftspecialize} that there is some $l \gg 0$ such that $\H_{\omega,\bc}(X,W) / I_{-l}$ is holonomic. Lemma~\ref{lem:specializefiltrationlambda}(i) says that $l$ can be chosen so that $\H_{\omega,\bc}(X,W)[s] / I$ has no $(s + l)$-torsion. This implies that the annihilator of $f^{-k-l}u$ in $\H$ is $I_{-l}$. Thus,  
$$
\H_{\omega,\bc}(X,W) / I_{-l} \cong \H_{\omega,\bc}(X,W) \cdot (f^{-k-l}u) = j_0 N, 
$$
as required. 
\end{proof}

\subsection{Consequences of Theorem~\ref{thm:preshol}} As in the case of $\dd$-modules, Theorem~\ref{thm:preshol} easily generalizes to the case where $X$ is not necessarily affine and $U$ not the complement of a divisor. Let $U$ be an arbitrary $W$-stable open subset of $X$ and $j : U \hookrightarrow X$ the corresponding embedding.

\begin{thm}\label{thm:jpushforwardholo}
	 If $\ms{N}$ is a holonomic $\sH_{\omega,\bc}(U,W)$-module then the cohomology of the derived pushforward $j_+ \ms{N}$ is holonomic.   
\end{thm}

\begin{proof}
	The statement is local on $X$, therefore we may assume that $X$ is affine. Since $U$ is the preimage of its image in $X/W$, we may fix a finite covering $D(f_1), \ds, D(f_k)$ of $U$ by principal open sets, with each $f_i$ invariant. Then the cohomology of $j_+ \ms{N}$ is the cohomology of the \v Cech complex
	\begin{equation}\label{eq:Cech}
	    0 \rightarrow \bigoplus_{i = 1}^k (j_i)_0 (\ms{N} |_{D(f_i)}) \rightarrow \cdots
	\end{equation}
	It follows from Theorem \ref{thm:preshol} that the latter has holonomic cohomology. 
\end{proof}

By the usual spectral sequence argument, \Cref{thm:jpushforwardholo} implies that the derived functor $j_+ \colon D^b_{\mr{qc}}(\sH_{\omega,\bc}(U,W)) \to D^b_{\mr{qc}}(\sH_{\omega,\bc}(X,W))$ restricts to a functor $D^b_{\mr{Hol}}(\sH_{\omega,\bc}(U,W)) \to D^b_{\mr{Hol}}(\sH_{\omega,\bc}(X,W))$. Therefore, we may define $j_! \colon D^b_{\mr{Hol}}(\sH_{\omega,\bc}(U,W)) \to D^b_{\mr{Hol}}(\sH_{\omega,\bc}(X,W))$ by 
\[
j_! = \D_X \circ j_+ \circ \D_U. 
\]
It is left adjoint to $j^!$. We could similarly define $j^+ = \D_U \circ j^! \circ \D_X$, but $j^+ \cong j^!$. Finally, this allows us to define the minimal extension $j_{!+}$ as the image of the adjunction $j_! \to j_+$. 

\begin{cor}\label{cor:simpleholonomicsocle}
	If $\ms{N}$ is an irreducible (non-zero) holonomic $\sH_{\omega,\bc}(U,W)$-module then there is a unique (up to isomorphism) irreducible holonomic extension of $\ms{N}$ to $X$; it is the socle of the module $j_0 \ms{N}$. 
\end{cor}

\begin{proof}
	First note that $\ms{N} \neq 0$ implies that $j_0 \ms{N} \neq 0$ because $(j_0 \ms{N}) |_U = \ms{N}$. Since $j_0 \ms{N}$ is holonomic by \Cref{thm:jpushforwardholo}, it has finite length and hence its socle is non-zero. Let $\ms{N}'$ be an irreducible submodule of $j_0 \ms{N}$. 
    
    The module $j_0 \ms{N}$ has no non-zero submodules supported on $X \setminus U$. This can be checked locally on $X$, therefore assume $X$ affine. Then $j_0 \ms{N}$ is a submodule of the first non-zero term in the complex \eqref{eq:Cech}. Each summand $(j_i)_0 (\ms{N} |_{D(f_i)})$ is $f_i$-torsion-free and hence has no sections supported on $X \setminus D(f_i) \supset X \setminus U$. It follows that $j_0 \ms{N}$ has no submodules supported on $X \setminus U$. 
    
    This implies that $\ms{N}' |_U \neq 0$ and hence $\ms{N}'$ is an irreducible holonomic extension of $\ms{N}$. If $\ms{N}''$ is any other irreducible holonomic extension then the adjunction $\ms{N}'' \to j_0 j^0 \ms{N}'' = j_0 \ms{N}$ shows that $\ms{N}''$ is a summand of the socle of $j_0 \ms{N}$. But localizing the exact sequence $0 \to \Soc (j_0 \ms{N}) \to j_0 \ms{N}$ to $U$ shows that 
\[
1 = \ell(\ms{N}) = \ell(j^0 j_0 \ms{N}) \ge \ell (j^0 (\Soc j_0 \ms{N})) = \ell(\Soc (j_0 \ms{N})).
\]
Hence $\Soc j_0 \ms{N}$ is irreducible. 
\end{proof}

If $\Hol{\sH_{\omega,\bc}(X,W)}_{X \setminus U}$ denotes the full subcategory of $\Hol{\sH_{\omega,\bc}(X,W)}$ consisting of all modules supported on $X \setminus U$, then \Cref{thm:jpushforwardholo} implies that $\Hol{\sH_{\omega,\bc}(X,W)}_{X \setminus U}$ is a localising subcategory with quotient equivalent to $\Hol{\sH_{\omega,\bc}(U,W)}$. Moreover, Corollary~\ref{cor:simpleholonomicsocle} implies that the set $\Irr \Hol{\sH_{\omega,\bc}(X,W)}$ of isomorphism classes of irreducible holonomic modules is the disjoint union of $\Irr \Hol{\sH_{\omega,\bc}(U,W)}$ and $\Irr \Hol{\sH_{\omega,\bc}(X,W)}_{X \setminus U}$.  	
	
\begin{cor}\label{cor:localcohomology}
	Under the assumptions of Lemma~\ref{lem:localcohomology}, if $\ms{M}$ is a holonomic $\sH_{\omega,\bc}(X,W)$-module then $R^k \Gamma_Y(\ms{M})$ is holonomic for all $k$. 
\end{cor}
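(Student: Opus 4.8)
The plan is to read off the result from the exact triangle of Lemma~\ref{lem:localcohomology}(i) together with Corollary~\ref{cor:jpushforwardholo}; the substantive input (Theorem~\ref{thm:preshol}) is already in place. I would apply Lemma~\ref{lem:localcohomology}(i) to $\ms{M}^\bullet = \ms{M}$, a holonomic module placed in cohomological degree $0$, to get
$$
R\Gamma_Y(\ms{M}) \to \ms{M} \to j_+ j^!(\ms{M}) \stackrel{[1]}{\longrightarrow} \cdots .
$$
First I would identify the third term: since $j$ is an open embedding, $j^!(\ms{M}) = \ms{M}|_U$ is concentrated in degree $0$, and restricting a good filtration shows $\SS_U(\ms{M}|_U) = \SS_X(\ms{M})|_U$, so $\ms{M}|_U$ is a holonomic $\sH_{\omega,\bc}(U,W)$-module. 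By Corollary~\ref{cor:jpushforwardholo} the complex $j_+(\ms{M}|_U)$ has holonomic cohomology; moreover, by the \v{C}ech description used to prove that corollary, its cohomology is bounded and vanishes in negative degrees.

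Next I would take the long exact sequence of cohomology sheaves of the triangle. Using that $\ms{M}$ is concentrated in degree $0$ and that $R\Gamma_Y$ is the right derived functor of the left exact functor $\Gamma_Y$, one obtains: $R^k\Gamma_Y(\ms{M}) = 0$ for $k < 0$; $R^0\Gamma_Y(\ms{M}) = \Gamma_Y(\ms{M})$ is the $\sH_{\omega,\bc}(X,W)$-submodule of $\ms{M}$ of sections supported on $Y$; $R^1\Gamma_Y(\ms{M})$ is a quotient of $\mc{H}^0(j_+ j^!\ms{M})$; and $R^k\Gamma_Y(\ms{M}) \cong \mc{H}^{k-1}(j_+ j^!\ms{M})$ for $k \ge 2$. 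In every case $R^k\Gamma_Y(\ms{M})$ is a subobject or a quotient of a holonomic module (for $k = 0$, a coherent submodule of $\ms{M}$ itself, coherence coming from noetherianity of $\sH_{\omega,\bc}(X,W)$ on affine opens via Theorem~\ref{thm:PBW}).

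It then remains to record that the holonomic modules are closed under passing to sub- and quotient modules inside $\Coh{\sH_{\omega,\bc}(X,W)}$: the singular support of a sub- or quotient module is contained in that of the ambient module (compatibly with a chosen good filtration), hence is again isotropic by Lemma~\ref{lem:isoclosed}. Applying this to the four cases above shows that $R^k\Gamma_Y(\ms{M})$ is holonomic for all $k$, which is the claim. I do not anticipate a genuine obstacle here; the only points needing care are the normalizations — that $j^!$ and $R\Gamma_Y$ recover, in degree $0$, the honest restriction $\ms{M}|_U$ and the honest submodule $\Gamma_Y(\ms{M})$ — which is exactly what is fixed in Lemma~\ref{lem:localcohomology} and the conventions preceding it.
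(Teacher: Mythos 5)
Your argument is correct and is exactly the spelled-out version of the paper's one-line proof ("follows directly from Corollary~\ref{cor:jpushforwardholo} and Lemma~\ref{lem:localcohomology}"): take the long exact sequence of the localization triangle, observe that each $R^k\Gamma_Y(\ms{M})$ is a sub, quotient, or isomorph of a holonomic module, and use that the holonomic modules form a Serre subcategory of $\Coh{\sH_{\omega,\bc}(X,W)}$.
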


\begin{proof}
	This follows directly from \Cref{thm:jpushforwardholo} and Lemma~\ref{lem:localcohomology}. 
\end{proof}

\begin{cor}\label{cor:pullbackclosedholo}
 Let $i \colon Y \hookrightarrow X$ be a $W$-invariant smooth closed subvariety. If $\ms{M}$ is a holonomic $\sH_{\omega,\bc}(X,W)$-module then $H^k (i^{!} \ms{M})$ is holonomic for all $k$.
\end{cor}

\begin{proof}
	This follows from Corollary~\ref{cor:holoclosedpushforward}, Lemma~\ref{lem:localcohomology} and Corollary~\ref{cor:localcohomology}. 
\end{proof}

\section{Preservation of holonomicity}

In this section we show that both $\varphi^!$ and $\varphi_+$ preserve complexes with holonomic cohomology, for arbitrary $\bc$-melys morphisms. This allows us to define adjoint functors $\varphi_!$ and $\varphi^+$. 

\subsection{Linear representations}

In this section we consider melys morphisms between linear representations of $W$. The goal is to prove the following. 

\begin{thm}\label{thm:linearpreservholonomic}
	Let $\h,\mathfrak{k}$ be linear representations of $W$ and $\varphi \colon \mathfrak{k} \to \h$ a $\bc$-melys morphism. Then $\varphi^!$ and $\varphi_+$ send complexes with holonomic cohomology to complexes with holonomic cohomology.  
\end{thm}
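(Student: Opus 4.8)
The plan is to follow the strategy outlined in the introduction, reducing an arbitrary $\bc$-melys morphism $\varphi\colon\mf{k}\to\h$ of linear representations to the three building blocks for which holonomicity is already available or easy: a melys closed embedding, the finite morphism $\varphi(\mbf{r})$ built from power maps on $1$-dimensional factors, and a projection. Concretely, I would first decompose $\h=\bigoplus_i\h_i$ into $W$-isotypic (or $W$-irreducible) pieces and analyse which of these the melys condition forces $\varphi$ to ramify only along. Writing $\mf{k}^W$ for the $W$-invariants, I would produce the factorization
\[
\mf{k}\;\hookrightarrow\;\h\times\mf{k}^W\;\xrightarrow{\;\varphi(\mbf{r})\;}\;\h\times\mf{k}^W\;\twoheadrightarrow\;\h,
\]
where the first arrow is the graph-type closed embedding $v\mapsto(\varphi(v),\bar v)$ with $\bar v$ the image of $v$ in $\mf{k}^W$, the middle map raises coordinates on the relevant $1$-dimensional irreducible factors $\h_i$ to powers $r_i$ (chosen so that the composite recovers $\varphi$ up to an invertible change of coordinates on $\mf{k}^W$), and the last is the projection. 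The bulk of the bookkeeping is checking that each of these three maps is $\bc$-melys (for the appropriate pulled-back parameter), which follows from the fact that $\varphi$ is melys: the only reflection hypersurfaces with $\bc\ne0$ that can be hit lie in the factors where $\varphi$ genuinely ramifies, and these are precisely the $\h_i$ carried by the power maps; the closed embedding and the projection are melys by Example~\ref{ex:melysbasic} and Example~\ref{ex:invariantfunctionmelys}-type arguments.

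Once the factorization is in place, I would invoke the three preservation results. For the closed embedding, Corollary~\ref{cor:holoclosedpushforward} gives that $i_+$ preserves (complexes with) holonomic cohomology, and Corollary~\ref{cor:pullbackclosedholo} gives the same for $i^!$. For the projection $p\colon\h\times\mf{k}^W\twoheadrightarrow\h$, since $p$ is smooth and surjective, Theorem~\ref{thm:smoothSSmelys} already tells us $p^0=p^!$ (up to shift) preserves holonomicity; for $p_+$ one appeals to Bernstein's trick via the Fourier transform on the fibre directions, exactly as recalled in \cite{ThompsonHolI} and in the introduction, relating the cohomology of $p_+$ to an $i^!$-type functor whose holonomicity is already known. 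For the finite map $\varphi(\mbf{r})$, I would reduce to the rank-one model $x\mapsto x^{r}$ on a $1$-dimensional irreducible $W_i$-module: here $\varphi(\mbf{r})^0$ and $\varphi(\mbf{r})_0$ are honest functors on the abelian category (the map is finite and flat away from codimension, and the melys hypothesis makes the parameter transform correctly), and one checks directly on singular supports — using the correspondence $T^*\h\leftarrow \h\times_{\h}T^*\h\to T^*\h$ and the explicit form of $d(x^r)=r x^{r-1}dx$ — that the image of an isotropic subvariety stays isotropic, hence holonomicity is preserved. Finiteness of $\varphi(\mbf{r})$ also means $R\varphi(\mbf{r})_\idot$ has no higher cohomology, so $\varphi(\mbf{r})_+$ is exact and the statement on complexes follows from the statement on modules.

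Finally, I would assemble the pieces using the composition isomorphisms $(\varphi\circ\psi)^!\cong\psi^!\circ\varphi^!$ and $(\varphi\circ\psi)_+\cong\varphi_+\circ\psi_+$, valid whenever the relevant compositions are melys (as recorded after Lemma~\ref{lem:forgetcommutepushpull}). Applying these to the three-step factorization, $\varphi_+$ is the composite of $i_+$, $\varphi(\mbf{r})_+$, $p_+$ and $\varphi^!$ is the composite of $p^!$, $\varphi(\mbf{r})^!$, $i^!$, each of which preserves holonomic cohomology, so $\varphi^!$ and $\varphi_+$ do too. I expect the main obstacle to be the construction and verification of the factorization itself at the level of parameters: one must check that the melys condition for $\varphi$ translates exactly into the melys conditions for $i$, $\varphi(\mbf{r})$ and $p$ with respect to $\omega^*\bc$, $\varphi(\mbf{r})^*\bc$ etc., and in particular that $\varphi(\mbf{r})$ genuinely captures all the ramification of $\varphi$ along the $\bc\ne0$ hypersurfaces — equivalently, that after pulling out the power maps what remains is unramified in the directions that matter. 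The rank-one singular-support computation for $x\mapsto x^r$, while elementary, is the technical heart of why the finite step works, and I would present it carefully; everything else is a formal consequence of results already established in the excerpt.
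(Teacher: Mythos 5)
Your high‑level plan matches the paper's exactly: reduce to $W=W(\bc)$, decompose $\h$ into irreducible factors, use the classification of melys maps between linear representations, factor $\varphi$ as a closed embedding followed by a product of rank‑one power maps followed by a projection, and then invoke the three preservation results already established. Proposition~\ref{prop:projectpreserveholo} handles the projection, and Corollaries~\ref{cor:holoclosedpushforward} and~\ref{cor:pullbackclosedholo} handle the closed embedding, as you say. But there are two places where your execution diverges from what actually works.

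First, the embedding you propose, $v\mapsto(\varphi(v),\bar v)$ into $\h\times\mf{k}^W$, is in general \emph{not} a closed embedding: on a one‑dimensional factor $\h_i$ where $\varphi_i$ is $x\mapsto x^{r_i}$ with $r_i>1$, the map $\varphi$ is $r_i$‑to‑one in that coordinate, and $\bar v\in\mf{k}^W$ carries no information about the $\mf{k}_W$‑coordinates, so the graph fails to be injective. (And if instead you interpret the first arrow as using $v$ itself in the $\h_W$‑slot so that it \emph{is} injective, then you cannot also describe it as the ``graph of $\varphi$.'') The paper's factorization avoids this by embedding via $\mr{Id}\times\varphi_k\colon\mf{k}\hookrightarrow\mf{k}\times\h^W=\mf{k}_W\times\mf{k}^W\times\h^W\hookrightarrow\h_W\times\mf{k}^W\times\h^W$, i.e.\ the $\mf{k}_W$‑coordinates go in \emph{unchanged}, and the power maps $\varphi(\mathbf{r})$ are applied afterwards. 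You should reorganise your diagram accordingly.

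Second, and more seriously, your treatment of the rank‑one step $x\mapsto x^r$ via the cotangent correspondence does not work. For a ramified (non‑smooth, non‑closed‑embedding) morphism, the estimate
$\SS(\varphi^0 N)\subset\rho\bigl(\varpi^{-1}(\SS(N))\bigr)$
is not available: it requires $\varphi$ to be non‑characteristic for $N$, which $x\mapsto x^r$ is not when $N$ is supported at the origin. Concretely, for $N$ supported at $0\in\mathbb{A}^1_y$, one has $\SS(N)=T^*_0\mathbb{A}^1_y$, while $\rho(\varpi^{-1}(\SS(N)))$ collapses to the single point $(0,0)$ because $d(x^r)=rx^{r-1}\,dx$ vanishes at $x=0$; yet the pullback is still supported at the origin and has singular support equal to all of $T^*_0\mathbb{A}^1_x$. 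So ``checking directly on singular supports'' gives no control here, and the step $\varphi(\mathbf{r})^!$ cannot be dispatched this way. The paper's Lemma~\ref{lem:lastfactormultpreserve} instead observes that $x\mapsto x^r$ is \'etale away from $0$ and restricts to the identity on the closed fibre $\h_1\times\{0\}$, and then uses the open-closed triangle of Lemma~\ref{lem:localcohomology} together with Corollaries~\ref{cor:holoclosedpushforward}, \ref{cor:pullbackclosedholo} and~\ref{cor:jpushforwardholo} to conclude that both $\varphi(\mathbf{r})^!$ and $\varphi(\mathbf{r})_+$ preserve holonomicity. You should replace your singular‑support computation by this open-closed argument; without it there is a genuine gap in the proof of the rank‑one case.
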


Before we prove Theorem~\ref{thm:linearpreservholonomic}, we consider the case where $\h$ is irreducible. 

\begin{lem}\label{lem:melyslinearirr}
		Let $\h,\mathfrak{k}$ be linear representations of $W$ such that $(\h,W)$ is an irreducible complex reflection group with $W = W(\bc)$. If $\varphi \colon \mathfrak{k} \to \h$ is a non-zero $\bc$-melys morphism then $\mathfrak{k} = \mathfrak{k}_W \oplus \mathfrak{k}^W$, where $(\mathfrak{k}_W,W)$ is an irreducible complex reflection group and either
		\begin{enumerate}
			\item[(i)] $\mathfrak{k}_W \cong \h$ and $\varphi$ is projection onto $\mathfrak{k}_W$; or
			\item[(ii)] $(\h,W) = (\mathbb{A}^1,\Z / \ell \Z) = (\mathfrak{k}_W,W)$ and $\varphi$ is projection followed by 
			\[
			\mathbb{A}^1 \to \mathbb{A}^1, \quad x \mapsto x^r,
			\]
			for some $r$ coprime to $\ell$.
		\end{enumerate} 
	In particular, $\varphi$ is homogeneous. 
\end{lem}

\begin{proof}
	Let $H$ be a reflecting hyperplane such that $(s,H)\in \mc{S}_{\bc}(\h)$ for some reflection $s$. That is, $\bc(s,H) \neq 0$. Let $\alpha_H \in \h^*$ be a linear functional whose kernel is $H$. By definition of melys morphism, $V(\varphi^*(\alpha_H)) \subset \mathfrak{k}^s$. If $\mathfrak{k}^s = \mathfrak{k}$ then $s$ acts trivially on $\mathfrak{k}$. This means that $\varphi^*(\alpha_H)$ is $s$-invariant. But $s (\alpha_H) = \lambda_s \alpha_H$ with $\lambda_s \neq 1$, which would force $\varphi^*(\alpha_H) = 0$. Since $(\h,W)$ is irreducible, $\h^*$ is spanned by the $\alpha_{H'}$ as we vary over the hyperplanes $H'$ in the $W$-orbit of $H$. Moreover, for each such $H'$ there exists a reflection $s'$ such that $\bc(s',H') \neq 0$ since the function $\bc$ is $W$-invariant. This implies that $\varphi^*(\h^*) = 0$. In other words, $\varphi = 0$. This contradicts our assumptions, hence $s$ acts non-trivially on $\mathfrak{k}$. 
	 
	 Since $\varphi$ is $\bc$-melys, $V(\varphi^*(\alpha_H)) \subset \mathfrak{k}^s \neq \mathfrak{k}$. If $V(\varphi^*(\alpha_H)) = \emptyset$ then $\varphi^*(\alpha_H) \in \kk[\mathfrak{k}]^{\times} = \kk^{\times}$. But this again implies that $s$ acts trivially on $\varphi^*(\alpha_H)$; a contradiction. Thus, $V(\varphi^*(\alpha_H))$ is a non-empty hypersurface contained in the subspace $\mathfrak{k}^s$. We deduce that $\mathfrak{k}^s$ is a hyperplane and $V(\varphi^*(\alpha_H)) = \mathfrak{k}^s$ since $\mathfrak{k}^s$ is irreducible. Hence (up to a scalar) $\varphi^*(\alpha_H) = \beta_s^{r}$, where $\beta_s \in \mathfrak{k}^*$ has kernel $\mathfrak{k}^s$. In particular, $s$ acts as a reflection on $\mathfrak{k}$. Since this is true for all $(s,H) \in \mc{S}_{\bc}(\h)$ and $W = \langle \mc{S}_{\bc}(\h) \rangle$, we deduce that $W$ acts on $\mathfrak{k}$ as a complex reflection group. The action of $W$ on $\mathfrak{k}$ is faithful since $\mathfrak{h}^*$ is a faithful representation and $\mathfrak{h}^* \subset \kk[\mathfrak{k}]$. This implies that $\mathfrak{k} = \mathfrak{k}_W \oplus \mathfrak{k}^W$, where $(\mathfrak{k}_W,W)$ is an irreducible complex reflection group because $(\h,W)$ is irreducible. One can check directly from Shephard-Todd's classification of irreducible complex reflection groups that reflection representations of a given $W$ always have the same dimension. In particular, $\dim \mathfrak{k}_W = \dim \h$.  
	 
	 Since $\h^*$ is irreducible and $\varphi^*(\h^*) \cap \kk[\mathfrak{k}]_r \neq 0$, we have $\varphi^*(\h^*) \subset \kk[\mathfrak{k}]_r$ and thus $\varphi$ is homogeneous. This also shows that $r$ is independent of $(s,H)$. 

	Assume first that $r = 1$. Then the map $\varphi^* |_{\h^*}$ is linear and the irreducibility of the group $W$ forces $\varphi^* |_{\h^*} \colon \h^* \iso \mathfrak{k}_W^*$. Thus, $\varphi$ is projection onto $\mathfrak{k}_W \cong \h$ as in (i). 
	
	Assume now that $r > 1$ and set $U = \varphi^*(\h^*)$. Let $\mc{A}$ denote the set of all reflection hyperplanes in $\h$. We claim that $\mc{A}$ contains exactly $d := \dim \h$ elements. Since $\h$ is irreducible, $\{ \alpha_{H} \, | \, H \in \mc{A} \}$ is a spanning set for $\h^*$. Writing $\alpha_{H} = \beta_{H}^r$, the set $\{ \beta_{H} \, | \, H \in \mc{A} \}$ is a spanning set for $\mf{k}_W^*$ since $\{ \Ker(\beta_{H}) \, | \, H \in \mc{A} \}$ is the set of reflection hyperplanes in $(\mf{k}_W,W)$. If $|\mc{A}| > d = \dim \mf{k}_W$ then there is a non-trivial relation $\beta_{H} = \sum_{i = 1}^d a_i \beta_{H_i}$, where we may assume the $\beta_{H_i}$ are a basis of $\mf{k}_W^*$. Expanding 
	\[
	\beta_{H}^r = \left( \sum_{i = 1}^d a_i \beta_{H_i} \right)^r
	\]  
	gives some vector in $U$ that is not a linear combination of the $\beta_{H_i}^r$; a contradiction since the $\beta_{H_i}^r$ are linearly independent. If $|\mc{A}| = \dim \h$ then $W$ is an abelian group. Since $\h$ is irreducible, we deduce that $W = \Z / \ell \Z$.   
\end{proof}

\begin{lem}\label{lem:lastfactormultpreserve}
	Assume $\h = \h_1 \times \mathbb{A}^1$, $W = W_1 \times (\Z / \ell \Z)$ and $\varphi \colon \h \to \h$ is the identity on $\h_1$ and $x \mapsto x^r$ on $\mathbb{A}^1$. Then  $\varphi^!$ and $\varphi_+$ send complexes with holonomic cohomology to complexes with holonomic cohomology. 
\end{lem}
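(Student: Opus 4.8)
The strategy is to decompose $\h=\h_1\times\mathbb{A}^1$ along the reflection hypersurface and to track the three parts of $\varphi$ that this produces. Write $V=\h_1\times(\mathbb{A}^1\setminus\{0\})$, an open $W$-stable subset of $\h$, and $D=\h_1\times\{0\}$ its closed complement (a single reflection hypersurface); let $j\colon V\hookrightarrow\h$ and $i\colon D\hookrightarrow\h$ be the inclusions, and decorate with a subscript $t$ the analogous data on the target copy of $\h$. Then $\varphi$ restricts to $\varphi_V=\mathrm{id}_{\h_1}\times(x\mapsto x^r)\colon V\to V_t$, which is \emph{finite étale} and melys, and to $\varphi_D=\mathrm{id}_D$. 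Since $x\mapsto x^r$ is a finite flat morphism of $\mathbb{A}^1$, the morphism $\varphi$ is finite and flat, so the transfer bimodules $\sH_{\h\to\h_t}$ and $\sH_{\h_t\leftarrow\h}$ are flat over the relevant Cherednik algebras; hence $\varphi_+=\varphi_0$ and $\varphi^!=\varphi^0$ are \emph{exact} functors preserving coherence, and it suffices to prove that they send holonomic \emph{modules} to holonomic modules.

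For $\varphi_+$: given a holonomic module $\ms{N}$ on the source $\h$, apply $\varphi_+$ to the triangle $i_0 i^!\ms{N}\to\ms{N}\to j_+j^!\ms{N}\xrightarrow{[1]}$ of Lemma~\ref{lem:localcohomology}(ii). Using compatibility of $(-)_+$ with composition of melys morphisms (Section~\ref{sec:Melys}), together with $\varphi\circ i=i_t$ and $\varphi\circ j=j_t\circ\varphi_V$, the two outer terms become $(i_t)_0(i^!\ms{N})$ and $(j_t)_+(\varphi_V)_+(\ms{N}|_V)$. The first has holonomic cohomology by Corollary~\ref{cor:pullbackclosedholo} and Corollary~\ref{cor:holoclosedpushforward}. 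For the second, $\ms{N}|_V$ is holonomic and Corollary~\ref{cor:jpushforwardholo} reduces us to showing $(\varphi_V)_+$ preserves holonomicity; here I use that holonomicity is étale-local (Corollary~\ref{cor:holEtLoc}), so it is enough that $\varphi_V^0(\varphi_V)_0\ms{P}$ be holonomic for $\ms{P}$ holonomic on $V$, and by flat base change $\varphi_V^0(\varphi_V)_0\cong(p_1)_0 p_2^0$ for the projections $p_1,p_2\colon V\times_{V_t}V\to V$. The key observation is that $V\times_{V_t}V\cong\bigsqcup_{\zeta}V$, indexed by the $r$-th roots of unity $\zeta$ (since $a^r=(a')^r$ forces $a'=\zeta a$ on $\mathbb{A}^1\setminus\{0\}$), with each $p_i$ restricting on the $\zeta$-component to a $W$-equivariant isomorphism of $V$ (for $p_1$ the identity; for $p_2$ the automorphism $(v,a)\mapsto(v,\zeta a)$, which is equivariant because the cyclic factor of $W$ is abelian). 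Thus $\varphi_V^0(\varphi_V)_0\ms{P}$ is a finite direct sum of pullbacks of $\ms{P}$ along isomorphisms, hence holonomic.

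For $\varphi^!$: apply it to the triangle $(i_t)_0 i_t^!\ms{N}\to\ms{N}\to(j_t)_+ j_t^!\ms{N}\xrightarrow{[1]}$ on the target. For the open term, base change along the Cartesian square with vertices $V,V_t,\h,\h_t$ gives $\varphi^!(j_t)_+ j_t^!\ms{N}\cong j_+\varphi_V^!(\ms{N}|_{V_t})$, which has holonomic cohomology since $\varphi_V$ is smooth (Theorem~\ref{thm:smoothSSmelys}) and $j_+$ preserves holonomicity (Corollary~\ref{cor:jpushforwardholo}). The closed term $\varphi^!\big((i_t)_0 K\big)$, with $K$ holonomic on $D_t$, is the delicate one: $\varphi$ is ramified over $D_t$, Kashiwara's equivalence is unavailable since $D_t$ need not be $\bc$-regular, and the smooth singular-support formula of Theorem~\ref{thm:smoothSSmelys} genuinely fails over the ramification locus. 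I resolve this via the tensor factorisation $\sH_{\omega,\bc}(\h_t,W)\cong\sH(\h_1,W_1)\otimes\sH(\mathbb{A}^1,\Z/\ell\Z)$: as $i_t=\mathrm{id}_{\h_1}\times(\{0\}\hookrightarrow\mathbb{A}^1)$ and $\Z/\ell\Z$ acts trivially on $D_t$, decomposing $K$ over the semisimple algebra $\kk[\Z/\ell\Z]$ writes $(i_t)_0 K$ as a finite direct sum of external products $K_\chi\boxtimes\delta_\chi$ with $K_\chi$ holonomic on $\h_1$ and $\delta_\chi=(\{0\}\hookrightarrow\mathbb{A}^1)_0(\chi)$. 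Since $\varphi=\mathrm{id}_{\h_1}\times\psi$ with $\psi\colon x\mapsto x^r$, one has $\varphi^!=\mathrm{id}^!\boxtimes\psi^!$ on external products, so it is enough to show $\psi^!\delta_\chi$ is holonomic; this is a rank-one computation, as $\psi^!\delta_\chi=\psi^0\delta_\chi$ is a coherent $\sH(\mathbb{A}^1,\Z/\ell\Z)$-module supported set-theoretically at $0$, hence has singular support inside the Lagrangian cotangent fibre over $0$. Since external products of holonomic modules are holonomic, both outer terms have holonomic cohomology, and the two triangles conclude the argument (and, by exactness, the statement for complexes with holonomic cohomology).

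The main obstacle is precisely this ramified closed term $\varphi^!\big((i_t)_0 K\big)$, where neither Kashiwara's theorem nor the smooth singular-support formula applies (the failure is already visible for $x\mapsto x^2$ acting on the delta module $\delta_0$ over $\mathbb{A}^1$), and the point is the tensor factorisation that reduces it to a computation in the rank-one rational Cherednik algebra. A secondary technical point is preservation of holonomicity under the finite étale pushforward $(\varphi_V)_+$, handled above by exhibiting $V\times_{V_t}V$ as a split torsor under the roots of unity.
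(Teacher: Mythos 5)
Your decomposition of $\h$ into the open piece $V=\h_1\times(\mathbb{A}^1\setminus\{0\})$ and the closed piece $D=\h_1\times\{0\}$ is the same starting point as the paper's, and your argument for $\varphi_+$ (push forward the source triangle $i_0i^!\ms{N}\to\ms{N}\to j_+j^!\ms{N}$, use $\varphi\circ i=i$ and $\varphi\circ j=j\circ\varphi_V$, and observe that $(\varphi_V)_0$ preserves holonomicity because $V\times_{V_t}V$ splits as a disjoint union of copies of $V$) is essentially the paper's, with the finite-\'etale-pushforward step written out in more detail than the paper bothers to.

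For $\varphi^!$, however, you take a genuinely different and harder route, and it is worth seeing why. The paper applies the open--closed triangle \emph{on the source to the complex $\varphi^!\ms{M}^\idot$ itself}, so that the closed term is $i_+\bigl(i^!\varphi^!\ms{M}^\idot\bigr)$, and then uses the crucial equality $\varphi\circ i=i$ (because $0^r=0$) to rewrite $i^!\varphi^!=(\varphi\circ i)^!=i^!$. This makes the closed term $i_+(i^!\ms{M}^\idot)$, and the whole ramification issue evaporates --- no pullback along the ramified map is ever taken of a module supported on the branch locus. You instead triangle $\ms{N}$ on the \emph{target} and then apply $\varphi^!$, which forces you to confront $\varphi^!\bigl((i_t)_0 K\bigr)$: exactly the term the paper's choice of triangle is designed to avoid. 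Your tensor-factorisation workaround is a reasonable idea and the final conclusion is correct, but it rests on the assertion that $\psi^0\delta_\chi$ is a \emph{coherent} $\sH(\mathbb{A}^1,\Z/\ell\Z)$-module, and this is precisely the nontrivial point. Since $\psi$ is ramified at $0$ and $\delta_\chi$ is supported there, $\psi$ is not non-characteristic for $\delta_\chi$, and the naive argument (pull back a good filtration on $\delta_\chi$) fails: the associated graded of the pulled-back filtration is \emph{not} finitely generated over $\gr\sH$, because the cotangent correspondence $\rho$ contracts an entire fibre to a point over the ramification locus. Coherence of $\psi^0\delta_\chi$ does hold --- one can check directly that it is cyclic, by tracking the action of the Dunkl operator $D_{\partial_y}$ on the $\Z_{\ge0}\times\{0,\dots,r-1\}$-graded basis $y^i\partial_x^j$ --- but you assert it without any argument, and it is the one step in your route that genuinely requires work. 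I would recommend either supplying that argument, or (better) switching to the paper's choice of triangle, which sidesteps the issue.
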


\begin{proof}
	Let $U = \h_1 \times (\mathbb{A}^1 \setminus \{ 0 \})$ and identify $\h_1$ with the closed complement to $U$. Note that $\varphi^{-1}(U) = U$ and $\varphi^{-1}(\h_1) = \h_1$. Moreover, if $i \colon \h_1 \to \h$ is the closed embedding then $\varphi \circ i = i$. Indeed, on functions it is given by 
	\[
	\begin{tikzcd}
		\kk[\h_1][x] \ar[r,"{x \, \mapsto \, x^r}"] &  \kk[\h_1][x] \ar[r,"{x \, \mapsto \, 0}"] &  \kk[\h_1]. 
	\end{tikzcd}
	\]
	Clearly, $\varphi |_{U} \colon U \to U$ is \'etale. Let $\ms{M}^{\idot} \in D^b_{\mr{Hol}}(\sH)$. Then we have an exact triangle
	\[
	i_+ (i^! \varphi^! \ms{M}^{\idot}) \to  \varphi^! \ms{M}^{\idot} \to j_+ (j^! \varphi^! \ms{M}^{\idot}) \stackrel{+1}{\to}. 
	\]
	Using that $\varphi \circ i = i$ and $\varphi \circ j = j \circ \varphi$ are $\bc$-melys morphisms, this becomes 
	\[
	i_+ (i^! \ms{M}^{\idot}) \to  \varphi^! \ms{M}^{\idot} \to j_+ (\varphi^! j^! \ms{M}^{\idot}) \stackrel{+1}{\to}. 
	\]
	Since the cohomology groups of $i^! \ms{M}^{\idot}$ and $j_+ (\varphi^! j^! \ms{M}^{\idot})$ are holonomic by Corollary~\ref{cor:holoclosedpushforward}, \Cref{thm:jpushforwardholo} and Corollary~\ref{cor:pullbackclosedholo}, it follows that the cohomology of $\varphi^! \ms{M}^{\idot}$ is holonomic too.
	
	Similarly, if we apply $\varphi_+$ to the triangle $i_+ (i^! \ms{M}^{\idot}) \to \ms{M}^{\idot} \to j_+ (j^! \ms{M}^{\idot}) \stackrel{+1}{\to}$ we get 
	\[
	\varphi_+ (i_+ (i^! \ms{M}^{\idot})) \to  \varphi_+ \ms{M}^{\idot} \to \varphi_+(j_+ (j^! \ms{M}^{\idot})) \stackrel{+1}{\to}
	\]
	which is isomorphic to 
	\[
	i_+ (i^! \ms{M}^{\idot}) \to  \varphi_+ \ms{M}^{\idot} \to j_+ (\varphi_+ (j^! \ms{M}^{\idot})) \stackrel{+1}{\to}. 
	\]
	It follows that the cohomology of $\varphi_+ \ms{M}^{\idot}$ is holonomic too.
\end{proof}	

Let $p \colon \mathbb{A}^n_{\kk} \times \h \to \h$ be the projection map with $W$ acting linearly on $\h$ and trivially on $\mathbb{A}^n_{\kk}$ (so that $p$ is $\bc$-melys for any $\bc$). 

\begin{prop}\label{prop:projectpreserveholo}
	The derived functors $p_+,p^!$ preserve objects with holonomic cohomology. 
\end{prop}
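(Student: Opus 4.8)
The strategy is the classical ``Bernstein trick'' for $\dd$-modules, which the authors have already noted works for Cherednik algebras (referencing \cite{ThompsonHolI}), adapted to the present setting where the acting group is $W$ on $\h$ and trivial on $\mathbb{A}^n_{\kk}$. Since the question of whether a complex has holonomic cohomology is local on $\h/W$ and checked after passing to a $W$-stable affine open (and, via the reduction results of Section~\ref{sec:redtrivstab} and Lemma~\ref{lem:forgetcommutepushpull}, we may assume $W = W(\bc)$ and work with the rational Cherednik algebra $\H_{\bc}(\h,W)$), it suffices to prove the statement for $\H$-modules, where $\H = \H_{\bc}(\mathbb{A}^n_{\kk}\times\h,W) = \dd(\mathbb{A}^n_{\kk}) \otimes \H_{\bc}(\h,W)$ by the PBW theorem (Theorem~\ref{thm:PBW}), since $\mathbb{A}^n_{\kk}$ carries the trivial $W$-action. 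An easy dévissage (using that $\mc{H}^i(\varphi^!)$ and $\mc{H}^i(\varphi_+)$ of a complex are computed from those of its cohomology modules via a spectral sequence) reduces us to showing: if $\ms{M}$ is a holonomic $\H$-module then $\mc{H}^i(p_+ \ms{M})$ and $\mc{H}^i(p^! \ms{M})$ are holonomic for all $i$.

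\textbf{Reduction to $n = 1$ and to $p^!$ via the Fourier transform.} By factoring $p$ as a composition of projections $\mathbb{A}^n_{\kk}\times\h \to \mathbb{A}^{n-1}_{\kk}\times\h \to \cdots \to \h$ and using functoriality of $p_+, p^!$ under composition of melys morphisms (established earlier in Section~\ref{sec:Melys}), we may assume $n = 1$, so $p \colon \mathbb{A}^1_{\kk}\times\h \to \h$. The Fourier transform on $\mathbb{A}^1_{\kk}$ — the automorphism of $\dd(\mathbb{A}^1_{\kk})$ sending $t \mapsto -\partial_t$, $\partial_t \mapsto t$, extended to $\H = \dd(\mathbb{A}^1_{\kk})\otimes\H_{\bc}(\h,W)$ by the identity on the second factor — is an automorphism of $\H$ preserving the order filtration up to a bounded shift, hence carries holonomic modules to holonomic modules. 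Under this automorphism, the closed embedding $i_0 \colon \h \hookrightarrow \mathbb{A}^1_{\kk}\times\h$ (the zero section, with $\mathbb{A}^1_{\kk}$-coordinate set to $0$) is intertwined with the projection $p$: more precisely, for $\ms{N}$ a $\dd(\mathbb{A}^1_{\kk})\otimes\H_{\bc}(\h,W)$-module, the Fourier twist of $i^!\ms{N}$ is computed as $p_+$ of the Fourier twist of $\ms{N}$, up to a cohomological shift (this is the standard $\dd$-module identity $p_+ \cong \mathrm{FT} \circ i^! \circ \mathrm{FT}$, whose proof is a direct computation of the relevant transfer bimodules that goes through verbatim because the $W$-action and the $\H_{\bc}(\h,W)$-factor are simply carried along). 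Hence $p_+$ preserving holonomicity follows from $i^!$ preserving holonomicity, which is Corollary~\ref{cor:pullbackclosedholo} applied to the $W$-invariant smooth closed subvariety $\h \hookrightarrow \mathbb{A}^1_{\kk}\times\h$.

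\textbf{The functor $p^!$.} For $p^!$, note that $p$ is a smooth surjective $\bc$-melys morphism, so Theorem~\ref{thm:smoothSSmelys} applies: $\mc{H}^i(p^!\ms{M}) = 0$ for $i \neq \dim(\mathbb{A}^1_{\kk}\times\h) - \dim\h = 1$ (after the shift in the definition of $p^!$), $p^0\ms{M}$ is coherent, and $\SS(p^0\ms{M}) = \boldsymbol\rho\,\boldsymbol\varpi^{-1}(\SS(\ms{M}))$; by the last sentence of Theorem~\ref{thm:smoothSSmelys}, $p^0\ms{M}$ is holonomic since $\ms{M}$ is. Thus $p^!$ preserves holonomicity directly.

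\textbf{Main obstacle.} The only genuinely technical point is verifying the Fourier-transform identity $p_+ \cong \mathrm{FT}_{\h}\circ i^!\circ\mathrm{FT}_{\mathbb{A}^1_{\kk}\times\h}$ in the Cherednik setting: one must check that the Fourier automorphism is compatible with the transfer bimodules $\sH_{X\leftarrow Y}$, $\sH_{Y\to X}$ for the maps $p$ and $i$, and that the cohomological shifts match. Because the Fourier transform acts only on the $\dd(\mathbb{A}^1_{\kk})$-tensor factor and commutes with the Dunkl operators (which involve only $\h$-directions and $W$), all the $\dd$-module computations of \cite[Ch.~3]{HTT} carry over tensored with $\H_{\bc}(\h,W)$; the Dunkl-operator terms are inert under every step. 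I would present this verification briefly, citing the $\dd$-module case for the structural computation and remarking that the extra $\H_{\bc}(\h,W)$-factor plays no role.
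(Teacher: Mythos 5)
Your proof follows the same route as the paper's: after reducing to $n=1$, you deduce the $p_+$ case from $i^!$ via the Fourier transform on the $\dd(\mathbb{A}^1_\kk)$-factor, and handle $p^!$ directly (the paper says this is immediate; you invoke Theorem~\ref{thm:smoothSSmelys}, which amounts to the same thing). Two small inaccuracies are worth noting. First, your justification that the Fourier transform preserves holonomicity because it ``preserves the order filtration up to a bounded shift'' is false: Fourier sends $t\mapsto -\partial_t$, $\partial_t\mapsto t$, so $t^a\partial_t^b$ (of order $b$) goes to an operator of order $a$, and no bounded shift will salvage this. The Fourier transform preserves the \emph{Bernstein} filtration, not the order filtration, and the identification of the two notions of holonomicity requires an argument; the paper avoids this by simply citing \cite[Lemma~4.4]{ThompsonHolI}. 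Second, since $\varphi^!(\ms{N}) = L\varphi^0(\ms{N})[\dim Y-\dim X]$ with $Y=\mathbb{A}^1_\kk\times\h$ the source and $X=\h$ the target, $p^!\ms{M}$ is concentrated in cohomological degree $-1$, not $+1$; the conclusion is unaffected.
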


\begin{proof}
	Factoring $p$ as a composition $\mathbb{A}^n_{\kk} \times \h \to  \mathbb{A}^{n-1}_{\kk} \times \h \to \cdots \to \mathbb{A}^1_{\kk} \times \h \to \h$ of projections, where each $\mathbb{A}^i_{\kk} \times \h$ is thought of as a linear representation of $W$, we may assume that $n = 1$. We have in place the analogues for Cherednik algebras of all the results needed in the standard proof for $\dd$-modules. Namely, it is shown in \cite[Lemma~4.4]{ThompsonHolI} that the Fourier transform on $\H_{\bc}(\kk \times \h,W)$ preserves the category of holonomic modules. If $M$ is a holonomic $\H_{\bc}(\kk \times \h,W)$-module and $i \colon \{ 0 \} \times \h \hookrightarrow \kk \times \h$ is the closed embedding then the cohomology of $i^{!} M$ is holonomic by Corollary~\ref{cor:pullbackclosedholo}. Then the claim for $p_+$ follows from \cite[Proposition~3.2.6]{HTT}, noting that $W$ acts trivially on $\kk$ so $\H_{\bc}(\kk \times \h,W) = \dd(\kk) \otimes \H_{\bc}(\h,W)$. 
	
	The claim for $p^!$ is immediate; see \cite[page~86]{HTT}.
\end{proof}

\begin{proof}[Proof of Theorem~\ref{thm:linearpreservholonomic}]
	By Lemma~\ref{lem:forgetcommutepushpull}, we may assume $W = W(\bc)$, in which case $W$ is a complex reflection group generated by the reflections in $\mc{S}_{\bc}(\h)$. Decompose $\h = \h_1 \oplus \cdots \oplus \h_k$, where $W = W_1 \times \cdots \times W_{k-1}$ with $(\h_i,W_i)$ an irreducible complex reflection group and $\h_k = \h^W$. Let $p_i \colon \h \to \h_i$ be the projection and note that $\varphi_i := p_i \circ \varphi$ is $(\bc |_{W_i})$-melys. Then Lemma~\ref{lem:melyslinearirr} says that one of the following four possibilities is realized: 
	\begin{enumerate}
		\item[(I)]  $\varphi_i = 0$, 
		\item[(II)] $\varphi_i$ is the projection onto $\h_i$,
		\item[(III)] $W_i \cong \Z / \ell \Z$ and $\varphi_i$ is the projection followed by $x \mapsto x^{r_i}$; or 
		\item[(IV)] $i = k$ and $\varphi_k \colon \mf{k} \to \h^W$. 
	\end{enumerate} 
	Since $\varphi_i$ is melys and $W_i$ is generated by reflections $s$ with $\bc(s) \neq 0$, $\varphi_i = 0$ implies that $W_i$ acts trivially on $\mf{k}$. Therefore, if we decompose $\mf{k} = \mf{k}_W \oplus \mf{k}^W$ and $\h = \h_W\oplus \h^W$ then $\mf{k}_W \hookrightarrow \h_W$ is $\bc$-melys. For each factor $\h_i$ of type (III), we write $\varphi(r_i) \colon \h \to \h$ for the melys morphism which is $x \mapsto x^{r_i}$ on $\h_i$ and the identity on every other factor. The composition of all such morphisms is $\varphi(\boldsymbol{r}) = \varphi(r_d) \circ \cdots \circ \varphi(r_1)$. We may then factorize $\varphi$ as
	\[
	\begin{tikzcd}
	 \h_W \times \mf{k}^W \times \h^W \cong \h \times \mf{k}^W \ar[r,"\varphi(\boldsymbol{r})"] & \h \times \mf{k}^W \ar[r,"p"] & \h \\
	\mf{k} \times \h^W = \mf{k}_W \times \mf{k}^W \times \h^W \ar[u,hook] & &  \\
	\mf{k} \ar[u,hook,"\mr{Id} \times \varphi_k"] \ar[uurr,"\varphi"'] & &  
	\end{tikzcd}
	\]
	Thus, the above commutative diagram shows that $\varphi$ can be factorized as a closed embedding followed by the finite morphism $\varphi(\boldsymbol{r})$ followed by the projection, all of which are melys. This gives diagram~\eqref{eq:factor1} of the introduction. 
	
Proposition~\ref{prop:projectpreserveholo} says that $p_+$ and $p^!$ preserve holonomic modules. Lemma~\ref{lem:lastfactormultpreserve} says that $\varphi(\boldsymbol{r})_+$ and $\varphi(\boldsymbol{r})^!$ preserve holonomicity. Finally, the functors $i_+$ and $i^!$ associated to a closed embedding preserve holonomicity by Corollary~\ref{cor:holoclosedpushforward} and Corollary~\ref{cor:pullbackclosedholo} respectively.     
\end{proof}

\subsection{The general case}

We now explain how the general case reduces to the linear case. The hard work is in proving a relative version (Proposition~\ref{prop:melysliftstolinear}) of Proposition~\ref{prop:goodembedding}.

\begin{lem}\label{lem:passtangentconemelys}
	Let $(\h,W)$ be an irreducible complex reflection group and $\bc \in \mc{S}(\h)^W$ such that $W = W(\bc)$. If $\varphi \colon Y \to \h$ is a $\bc$-melys morphism with $y \in Y^W$ mapping to $0 \in \h$ then $\varphi$ induces a $\bc$-melys morphism $\gr \, \varphi \colon T_y Y \to \h$. 
\end{lem}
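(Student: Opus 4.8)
The plan is to reduce the claim to the explicit structure of melys morphisms into an irreducible complex reflection representation, which was already analyzed in Lemma~\ref{lem:melyslinearirr}. First I would pass to the graded map: since $y \in Y^W$ maps to $0 \in \h$, the coordinate ring $\mc{O}(Y)$ is a $W$-equivariant filtered ring with respect to the maximal ideal $\mf{m}_y$, and the comorphism $\varphi^* \colon \kk[\h] \to \mc{O}(Y)$ preserves the augmentation ideals (as $\varphi(y) = 0$). Taking associated graded rings with respect to $\mf{m}_y$-adic filtration on the target and the grading on $\kk[\h]$ on the source, one obtains a $W$-equivariant graded algebra map $\kk[\h] \to \gr_{\mf{m}_y} \mc{O}(Y)$, which need not be injective; its "image", more precisely the morphism $\gr\,\varphi \colon T_y Y \to \h$ it defines after composing with the embedding $T_y Y \hookrightarrow \Spec \gr_{\mf{m}_y}\mc{O}(Y)$, is the candidate. (Concretely, $\gr\,\varphi$ is the linearization: if we choose local coordinates at $y$, then $\gr\,\varphi$ records the lowest-order homogeneous part of each component of $\varphi$.)

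The key point is then to check that $\gr\,\varphi$ is $\bc$-melys, i.e. that $(\gr\,\varphi)^{-1}(Z) \subset (T_y Y)^s$ for every $(s,Z) \in \mc{S}_{\bc}(\h)$. Here I would use the hypothesis that $(\h,W)$ is an irreducible complex reflection group with $W = W(\bc)$, so that every reflection hyperplane $H = \ker \alpha_H$ has a reflection $s$ with $\bc(s,H) \neq 0$, and the $W$-orbit of $\{\alpha_H\}$ spans $\h^*$. The melys condition on $\varphi$ says $V(\varphi^*(\alpha_H)) \subset Y^s$ for all such $H$. I would argue that taking the lowest-degree part is compatible with this containment: if $g := \varphi^*(\alpha_H)$ vanishes on $Y^s$, and $s$ acts on $\alpha_H$ by a nontrivial root of unity $\lambda$, then $g$ is a $\lambda$-semiinvariant for the $W$-action, hence so is its leading term $\gr(g) = (\gr\,\varphi)^*(\alpha_H)$; a nonzero $\lambda$-semiinvariant ($\lambda \neq 1$) necessarily lies in the ideal of the fixed locus, so $(\gr\,\varphi)^*(\alpha_H)$ vanishes on $(T_y Y)^s$. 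This gives $(\gr\,\varphi)^{-1}(H) \subset (T_y Y)^s$ for each reflection hyperplane, and since $W$ is generated by the reflections in $\mc{S}_{\bc}(\h)$ and the $\alpha_H$ span $\h^*$, this is exactly the melys condition for $\gr\,\varphi$ on all of $\mc{S}_{\bc}(\h)$. (If $(\gr\,\varphi)^*(\alpha_H) = 0$, the containment is vacuous.)

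The step I expect to be the main obstacle is the semiinvariance argument on leading terms — specifically, verifying that the associated-graded operation genuinely commutes with the $W$-action and with the vanishing locus of a semiinvariant, given that $\mc{O}(Y)$ may not be a polynomial ring and $T_y Y$ sits inside $\Spec \gr_{\mf{m}_y}\mc{O}(Y)$ which can be larger than $T_y Y$ when $Y$ is singular (here $Y$ is allowed to be an arbitrary smooth $W$-variety, so in fact $\gr_{\mf{m}_y}\mc{O}(Y) = \Sym(\mf{m}_y/\mf{m}_y^2) = \kk[T_y Y]$ is already polynomial, which simplifies matters considerably). I would also need to confirm that the construction $\gr\,\varphi$ is independent of choices and genuinely restricts/lifts $\varphi$ in the sense needed for the later reduction, but this is routine. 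The essential input is the rigidity forced by irreducibility of $(\h,W)$: without it, a melys morphism could vanish on some components and the leading-term argument would need to be run factor-by-factor, exactly as in the proof of Theorem~\ref{thm:linearpreservholonomic}.
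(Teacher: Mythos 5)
Your construction of $\gr\,\varphi$ (taking the lowest-degree homogeneous part of each $\varphi^*(\alpha)$, using smoothness of $Y$ so that $\gr_{\mf{m}_y}\mc{O}(Y) = \kk[T_y Y]$) is the same as the paper's, and you are right that irreducibility of $(\h,W)$ is what guarantees that all $\varphi^*(w\alpha_H)$ vanish to the same order at $y$. But the semiinvariance step proves an inclusion \emph{in the wrong direction}. Since $\varphi^*$ is $W$-equivariant and $s(\alpha_H) = \lambda\alpha_H$ with $\lambda\neq 1$, the leading term $(\gr\,\varphi)^*(\alpha_H)$ is indeed a nonzero $\lambda$-semiinvariant, and a nonzero $\lambda$-semiinvariant with $\lambda\neq 1$ vanishes on the $s$-fixed locus. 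That gives $(T_y Y)^s \subset V((\gr\,\varphi)^*(\alpha_H)) = (\gr\,\varphi)^{-1}(H)$, whereas the melys condition requires the opposite containment $(\gr\,\varphi)^{-1}(H) \subset (T_y Y)^s$. Semiinvariance alone cannot give this: for instance $(x^*)^k q$ with $q$ any nonunit $s$-invariant is a $\lambda$-semiinvariant whose zero locus strictly contains the fixed hyperplane, so the vanishing locus of a semiinvariant leading term can be larger than $(T_y Y)^s$.

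To close the gap you need the precise form of the leading term, not merely the character by which $s$ acts on it. The paper's route is: the melys condition $V(\varphi^*(\alpha_H)) \subset Y^s$, together with the fact that $V(\varphi^*(\alpha_H))$ is a nonempty hypersurface through $y$, forces $Y^s$ to be a smooth hypersurface near $y$ with local equation $f_Z \in \mf m \setminus \mf m^2$; hence $\varphi^*(\alpha_H) = u\,f_Z^{r_H}$ with $u$ a unit, and the leading term is $u(y)\,(\overline{f}_Z)^{r}$ where $\overline{f}_Z$ is the image of $f_Z$ in $\mf m/\mf m^2$. The zero set of $(\overline{f}_Z)^{r}$ is \emph{exactly} the hyperplane $(T_y Y)^s = T_y Z$, which gives the required containment as an equality. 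So your proposal correctly reproduces the first half of the argument (the definition of $\gr\,\varphi$ and the homogeneity statement), but stops short of — and in fact reverses — the decisive step.
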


\begin{proof}
	We assume that $Y$ is a good neighbourhood of $y$, as in Lemma~\ref{lem:goodneighbourhood}, with $I(Z) = (f_Z)$ for each reflection hypersurface $Z$ and every such hypersurface passes through $y$. Let $\mf{m}$ be the maximal ideal in $\mc{O}(Y)$ defining $y$. Since $Y$ is smooth, we identify $\kk[T_y Y]$ with the ring of functions $\bigoplus_{i \ge 0} \mf{m}^i / \mf{m}^{i-1}$ on the tangent cone. Then there exists $r \ge 1$ such that $\varphi^*(\h^*) \subset \mf{m}^r \setminus \mf{m}^{r+1}$. The composition $\h^* \to \mf{m}^r \to \mf{m}^{r} / \mf{m}^{r+1}$ induces a $W$-equivariant morphism $\gr \, \varphi \colon T_y Y \to \h$. We claim that this morphism is $\bc$-melys. 
    
    Let $(s,H) \in \mc{S}_{\bc}(\h)$. Note that since $Y^s$ is a disjoint union of smooth varieties and every reflecting hypersurface is assumed to pass through $y$, either $Y^s = Y$ or $Y^s = Z$ is a reflecting hypersurface. Either way, there exists a unit $u \in \mc{O}(Y)$ and $r_H \ge 0$ such that $\varphi^*(\alpha_H) = u f_Z^{r_H}$. Since $\alpha_H \in \h^*$ and $f_Z \in \mf{m}$, we have $r_H \ge r$. In fact, since $\varphi^*(\h^*)$ is irreducible, it is spanned by the polynomials $w( \varphi^*(\alpha_H))$ for $w \in W$, thus we must have $r_H = r$, otherwise $\varphi^*(\h^*) \subset \mf{m}^{r+1}$. We deduce that $(\gr \, \varphi)^*(\alpha_H) = u(y) \overline{f}_Z^r$, where $\overline{f}_Z$ is the image of $f_Z$ in $\mf{m} / \mf{m}^2$. Since $(T_y Y)^s = T_y Z$ is the zero-set of $\overline{f}_Z$, the claim follows. 
\end{proof}

\begin{prop}\label{prop:melysliftstolinear}
Let $(\h,W)$ be a complex reflection group and $\bc \in \mc{S}(\h)^W$ be such that $W = W(\bc)$. Let $\varphi \colon Y \to \h$ be a $\bc$-melys morphism with $y \in Y^W$ mapping to $0 \in \h$. 

There exists a finite \'etale $\varphi^* \bc$-melys morphism $\phi \colon Y' \to Y$, a strongly $(\varphi \circ \phi)^* \bc$-melys closed embedding $(Y',0) \hookrightarrow (\mf{k},0)$ and a $\bc$-melys morphism $\Phi \colon \mf{k} \to \h$ such that the diagram  
\begin{equation}\label{eq:reducelinearcommdiagram}
	\begin{tikzcd}
		Y  \ar[dr,"\varphi"'] & Y' \arrow[r,hook] \ar[l,"\phi"'] & \mf{k} \ar[dl,"\Phi"] \\ 
		& \h & 
	\end{tikzcd} 
\end{equation}
is commutative with $\phi(0) =y$ and $\Phi(0) = 0$. 
\end{prop}

\begin{proof}
	We may assume that $Y$ is a good affine neighbourhood of $y$ as in Lemma~\ref{lem:goodneighbourhood}, replacing $Y$ with such a neighbourhood if needed. In particular, every $Z$ is principal (defined by $f_Z$ say) for $(w,Z) \in \mc{S}(Y)$. Let $\mf{m} \lhd \mc{O}(Y)$ be the maximal ideal defining $y$. Note that if $(w,Z) \in \mc{S}(Y)$ and $f_Z \in \mf{m} \setminus \mf{m}^2$ such that $w(f_Z) = \lambda_w f_Z$ for some $1 \neq \lambda_w \in \kk$, then replacing $Y$ by some smaller good neighbourhood of $y$ if necessary, the function $f_Z$ generates $I(Z)$. 
	
	 Factor $\h = \h_1 \times \cdots \times \h_k \times \h^W$ with $W = W_1 \times \cdots \times W_k$ so that $(\h_i,W_i)$ is irreducible.
For each $(s,H) \in \mc{S}_{\bc}(\h)$, we fix $\alpha_H \in \h^*$ with kernel $H$. We say that $H$ belongs to $\h_i$ if $s \in W_i$; equivalently $\alpha_{H} \in \h_i^*$. If $\varphi^*(\alpha_H) = 0$ for some $(s,H) \in \mc{S}_{\bc}(\h)$ then the melys condition implies that $Y^s = Y$. Moreover, we have $\varphi^*(w(\alpha_H)) = 0$ for all $w \in W$ implying that there is some irreducible factor $W_i$ of $W$ that acts trivially on $Y$. In this case, we factor the morphism $\varphi$ as 
\[
\begin{tikzcd}
Y \ar[r] \ar[rr,"\varphi",bend left=30] & \h^{W_i} \ar[r,hook] & \h,
\end{tikzcd}
\]
so that $\varphi^*(\alpha_H) \neq 0$ for all $(s,H) \in \mc{S}_{\bc}(\h)$. The function $\varphi^*(\alpha_H)$ is not a unit either because $\varphi^*(\alpha_H)(y) = 0$.   

We claim that, after passing to an \'etale extension, there exists for each $1 \le i \le k$ a $W$-submodule $\mf{k}_i^* \subset \mf{m}$ (with $W$ acting through $W_i$) such that for each $(s,H) \in \mc{S}_{\bc}(\h_i)$ there is an eigenvector $\beta_{H} \in \mf{k}^*_i$ for $s$ with $\varphi^*(\alpha_H) = \beta_H^{r_H}$. If $p_i \colon \h \to \h_i$ is the projection onto $\h_i$ then replacing $\varphi$ by $p_i \circ \varphi$, the proof of Lemma~\ref{lem:passtangentconemelys} shows that $r_i := r_H = r_{H'}$ for all $H,H'$ belonging to the factor $\h_i$. Moreover, combining Lemma~\ref{lem:passtangentconemelys} with Lemma~\ref{lem:melyslinearirr}, we see that $r_i > 1$ implies that $\dim \h_i = 1$. 

If $r_i = 1$, we simply take $\mf{k}_i^* = \varphi^*(\h_i^*)$. 

If $r_i > 1$ then we choose $f_Z \in \mf{m} \setminus \mf{m}^2$ a (non-trivial) eigenvector for $s$ and invariant for every other factor $W_j$. Then $\varphi^*(\alpha_H) = u_H f_Z^{r_i}$ for some unit $u_H$. Since both $\varphi^*(\alpha_H)$ and $f_Z^{r_i}$ are eigenvectors for $s$ with eigenvalues in $\kk$, $s(u_H) = q u_H$ for some $q \in \kk^{\times}$. But $s(y) = y$ and $u_H(y) \neq 0$ forces $q = 1$. This implies that $u_H \in \mc{O}(Y)^W$. Let $B = \mc{O}_{Y/W,y}^h \o_{\mc{O}_{Y/W,y}} \mc{O}_{Y,y}$, where $\mc{O}_{Y/W,y}^h$ is the henselization of $\mc{O}_{Y/W,y}$. Within $B$ we may take $v_H$ to be a $W$-invariant $r_i$-th root of $u_H$. Let $A$ be the subalgebra of $B$ generated by $\mc{O}(Y)$ and the $v_H$. We take $Y' = \Spec A$ so that $\phi \colon Y' \to Y$ is \'etale and equivariant. The algebra $A$ is a finite $\mc{O}(Y)$-module, meaning that $\phi$ is finite and hence surjective. Since $\phi$ is \'etale, it is automatically strongly $\varphi^*\bc$-melys. Let $\mf{n}$ be a maximal ideal of $A$ lying over $\mf{m}$. Then $(\varphi \circ \phi)^*(\alpha_H) = (v_H f_Z)^{r_i}$ and so we take $\mf{k}_i^* = \kk \{ v_H f_Z \} \subset \mf{n}$.      

Finally, we repeat the proof of Proposition~\ref{prop:goodembedding}. Let $\mf{k}_k^* \subset \mf{n}$ be some $W$-invariant lift of $(\mf{n}/\mf{n}^2)^W$. The inclusion $\mf{k}^*_1 \times \cdots \times \mf{k}_{k}^* \to \mc{O}(Y')$ defines an \'etale morphism $Y' \to \mf{k}'$. Just as in the proof of Proposition~\ref{prop:goodembedding}, there is some $Y' \to \mathbb{A}^n$ (with $W$ acting trivially on $\mathbb{A}^n$) such that the resulting morphism $Y' \to \mf{k}' \times \mathbb{A}^n =: \mf{k}$ is a strongly $(\varphi \circ \phi)^* \bc$-melys closed embedding. This has been constructed so that there is a canonical map $\mf{k}' \to \h$. One can check that composing the projection along $\mathbb{A}^n$ with this map $\mf{k}' \to \h$ is the desired melys morphism $\Phi$.  
\end{proof}  

Finally, we come to the two main results of this section. 

\begin{thm}\label{thm:preservholonomicback}
	Let $\varphi \colon Y \to X$ be a $\bc$-melys morphism. Then the functor $\varphi^!$ sends complexes with holonomic cohomology to complexes with holonomic cohomology.  
\end{thm}

\begin{proof}
	Let $\ms{M} \in D^b_{\mr{Hol}}(\sH_{\omega,\bc}(X,W))$. It suffices to check locally on $Y$ that the cohomology of $\varphi^! \ms{M}$ is holonomic. Choose $y \in Y$ and set $x = \varphi(y)$. 	Let $\Pa = W_x$ and write $U_0$ for the set of points $x' \in X$ with $W_{x'} \subset \Pa$. Let $V_0 = \varphi^{-1}(U_0)$ and write $\widetilde{\varphi} = \varphi|_{V_0}$. Note that $W_{y'} \subset \Pa$ for all $y' \in V_0$ and both $U_0,V_0$ are $\Pa$-stable; that is, $V_0$ and $U_0$ satisfy \eqref{eq:Eetale1} and \eqref{eq:Eetale2}. We can form the commutative diagram  
	\begin{equation}\label{eq:U0V0opens}
	\begin{tikzcd}
	W \times_{\Pa} V_0 \ar[r,"\mr{Id} \times \widetilde{\varphi}"] \ar[d,"g"'] & W \times_{\Pa} U_0 \ar[d,"f"] \\
	Y \ar[r,"\varphi"] & X. 
	\end{tikzcd}
	\end{equation}
	Note that $f,g$ are \'etale and strongly melys (for $\bc$ and $\varphi^* \bc$ respectively). By Proposition~\ref{prop:etalemelysiso}, there are isomorphisms
	\begin{align}
	\sH_{(g \circ \varphi)^* \omega,(g \circ \varphi)^* \bc}(W \times_{\Pa} V_0,W) & = Z(W,\Pa,\sH_{\widetilde{\varphi}^* \omega,\widetilde{\varphi}^* \bc}(V_0,\Pa)), \label{eq:ZHsheaf1} \\ 
	\sH_{f^* \omega,f^* \bc}(W \times_{\Pa} U_0,W) &= Z(W,\Pa,\sH_{\omega,\bc}(U_0,\Pa)), \label{eq:ZHsheaf2}
	\end{align}
	allowing us to make $\ms{M} |_{U_0}$ into a complex of $\sH_{\omega,\bc}(U_0,\Pa)$-modules such that 
	\[
	f^! \ms{M} \cong \mr{Fun}_{\Pa}(W,\ms{M} |_{U_0}). 
	\]
	By Corollary~\ref{cor:holEtLoc}, it suffices to show that $g^!(\varphi^! \ms{M})$ has holonomic cohomology. By the commutativity of \eqref{eq:U0V0opens}, 
	\[
	\mr{Fun}_{\Pa}(W,(\varphi^! \ms{M}) |_{V_0}) \cong g^!(\varphi^! \ms{M}) \cong (\mr{Id} \times \widetilde{\varphi})^! f^!(\ms{M}) \cong \mr{Fun}_{\Pa}(W,\widetilde{\varphi}^!(\ms{M} |_{U_0})).
	\]
	Thus, it suffices to prove that $\widetilde{\varphi}^!$ restricts to a functor
    \[
    \widetilde{\varphi}^! \colon D^b_{\mr{Hol}}(\sH_{\omega,\bc}(U_0,W')) \to D^b_{\mr{Hol}}(\sH_{\widetilde{\varphi}^* \omega,\widetilde{\varphi}^* \bc}(V_0,W')).
    \]
    Thus, we may assume $y$ is fixed by $W$ and $Y$ is irreducible. This implies that $x =\varphi(y)$ is also fixed and we may assume $X$ is irreducible.
	
	Shrinking $X$ and $Y$, we may assume, by Proposition~\ref{prop:goodembedding}, that there is a strongly $\bc$-melys closed embedding $i \colon X \to \h$ mapping $x$ to $0$ and that $\omega= 0$. By Theorem~\ref{thm:Kash}(ii), we may replace $\ms{M}$ by $i_+\ms{M}$ and $\varphi$ by $i \circ \varphi$ so that we are reduced to showing that if $\ms{M}$ is an object of $D^b_{\mr{Hol}}(\sH_{\bc}(\h,W))$ and $\varphi \colon Y \to \h$ then $\varphi^! \ms{M}$ has holonomic cohomology at $y$. Moreover, by Lemma~\ref{lem:forgetcommutepushpull}, we may assume $W = W(\bc)$. Therefore, we can form the commutative diagram \eqref{eq:reducelinearcommdiagram} of Proposition~\ref{prop:melysliftstolinear}. By Corollary~\ref{cor:holEtLoc}, $\varphi^!\ms{M}$ has holonomic cohomology if and only if $\phi^!(\varphi^!\ms{M})$ does so. If we write $g \colon Y' \hookrightarrow \mf{k}$ for the strongly melys closed embedding then $\phi^!(\varphi^!\ms{M}) \cong g^!(\Phi^!\ms{M})$, which has holonomic cohomology by Theorem~\ref{thm:linearpreservholonomic} and Corollary~\ref{cor:pullbackclosedholo}.   
\end{proof}

\begin{thm}\label{thm:preservholonomicforward}
	Let $\varphi \colon Y \to X$ be a $\bc$-melys morphism. Then the functor $\varphi_+$ sends complexes with holonomic cohomology to complexes with holonomic cohomology.  
\end{thm}

Even reducing the proof of Theorem~\ref{thm:preservholonomicforward} to the linear case is rather involved. We leave the proof to Section~\ref{sec:thmvarphidirect}. 

Now that we know that the functors $\varphi^!,\varphi_+$ and $\D_X$ preserve holonomicity, we can define 
\[
\varphi^+ := \D_Y \circ \varphi^! \circ \D_X \colon D^b_{\mr{Hol}}(\sH_{\omega,\bc}(X,W)) \to D^b_{\mr{Hol}}(\sH_{\varphi^* \omega,\varphi^* \bc}(Y,W))
\]
and 
\[
\varphi_! := \D_X \circ \varphi^+ \circ \D_Y \colon D^b_{\mr{Hol}}(\sH_{\varphi^* \omega,\varphi^* \bc}(Y,W)) \to D^b_{\mr{Hol}}(\sH_{\omega,\bc}(X,W)).
\] 
As for $\dd$-modules, they satisfy the usual adjunctions. One should use Remark~\ref{rem:melysop} when checking the behaviour of the parameters $(\omega,\bc)$ under these functors. 

\begin{remark}
	Since the embedding of the diagonal $X \hookrightarrow X \times X$ is not $\bc$-melys for $\bc \neq 0$, the category $D^b_{\mr{Hol}}(\sH_{\omega,\bc}(X,W))$ does not have any natural internal tensor product. 
\end{remark}

\subsection{Proof of Theorem~\ref{thm:preservholonomicforward}}\label{sec:thmvarphidirect}

For each $x \in X$, we define the rank of $W$ at $x$ to be 
\[
\rk_x W := \dim T_x X - \dim (T_x X)^{W_x}.
\] 
Let $\ms{M} \in D^b_{\mr{Hol}}(\sH_{\varphi^* \omega,\varphi^* \bc}(Y,W))$. We say that $\varphi_+ \ms{M}$ has holonomic cohomology at $x$ if there is some ($W$-stable) neighbourhood $U$ of $x$ such that $(\varphi_+ \ms{M}) |_U$ has holonomic cohomology. We prove by induction on $\rk_x W$ that $\varphi_+ \ms{M}$ has holonomic cohomology at $x$. \\	

\vspace{2mm}

\noindent \textit{Step 1:} $\rk_x W = 0$. Let $X^{\circ}$ be the complement to the union of all reflection hypersurfaces in $\mc{S}_{\bc}(X)$. Then $\varphi$ being $\bc$-melys implies that $V_0 = \varphi^{-1}(X^{\circ})$ is contained in the complement to the union of all reflection hypersurfaces in $\mc{S}_{\varphi^*\bc}(Y)$. In particular, we have $\sH_{\varphi^* \omega,\varphi^* \bc}(Y,W) |_{V_0} = \dd_{\varphi^* \omega}(V_0) \rtimes W$. 
	
	If $\rk_x W = 0$ then $x$ cannot lie on any reflection hypersurface, meaning that $x \in X^{\circ}$. Thus, $(\varphi_+ \ms{M}) |_{X^{\circ}} = \varphi_+(\ms{M} |_{V_0})$ has holonomic cohomology at $x$ because the direct image functor preserves holonomic cohomology for (twisted) $W$-equivariant $\dd$-modules. \\	
	
	\vspace{2mm}
	
	\noindent \textit{Step 2:} $\rk_x W > 0$ and $x \in X^W$. Replacing $X$ by a suitable neighbourhood of $x$, we may assume by Proposition~\ref{prop:goodembedding} that there is a strongly melys closed embedding $X \hookrightarrow \h$ sending $x$ to $0$. We can also assume that $\omega =0$. Composing $\varphi$ with this embedding and replacing $\varphi$ by this composition, we may assume by Corollary~\ref{cor:holoclosedpushforward} that $X = \h$. Moreover, by Lemma~\ref{lem:forgetcommutepushpull}, we may assume that $W = W(\bc)$ acts faithfully on $\h$. 
	
	If $\varphi(Y) \cap \h^W = \emptyset$ then we factor $\varphi = j \circ \eta$, where $\eta \colon Y \to (\h \setminus \h^W)$ and $j \colon (\h \setminus \h^W) \hookrightarrow \h$. Then $\varphi_+\ms{M} = j_+(\eta_+ \ms{M})$. Since $\rk_{x'} W < \rk_0 W$ for all $x' \in \h \setminus \h^W$, we deduce by induction that $\eta_+ \ms{M}$ has holonomic cohomology, hence so too does $\varphi_+\ms{M}$ by \Cref{thm:jpushforwardholo}.
	
	Let $x_0 \in \h^W$ and $(s,H) \in \mc{S}_{\bc}(\h)$. Then $x_0 \in H$ and the melys condition implies that 
	\[
	\varphi^{-1}(x_0) \subset \varphi^{-1}(H) \subset Y^s.
	\]
	Since $W = W(\bc)$ is generated by the reflections in $\mc{S}_{\bc}(\h)$, we deduce that $\varphi^{-1}(\h^W) \subset Y^W$. For each $y \in Y^W$ mapping to some $x_0 \in \h^W$, we can find some $W$-stable affine open neighbourhood $V_y$ of $y$ in $Y$ and a commutative diagram 
	\[
	\begin{tikzcd}
		V_y  \ar[dr,"\varphi"'] & V_y' \arrow[r,hook,"i"] \ar[l,"\phi"'] & \mf{k}_y \ar[dl,"\Phi"] \\ 
		& \h & 
	\end{tikzcd} 
	\]  
	satisfying the conclusions of Proposition~\ref{prop:melysliftstolinear}. Since $\phi$ is finite \'etale, Proposition~\ref{prop:etalepushpullsummand} again implies that $\ms{M} |_{V_y}$ is a direct summand of $\phi_+ \phi^! (\ms{M} |_{V_y})$ in $D^b_{\mr{Hol}}(\sH_{\varphi^* \bc}(V_y,W))$. Therefore, $\varphi_+(\ms{M} |_{V_y})$ will have holonomic cohomology at $x_0$ if $\varphi_+(\phi_+ \phi^! (\ms{M} |_{V_y}))$ has holonomic cohomology at $x_0$. We have
	\begin{align*}
		\varphi_+(\phi_+ \phi^! (\ms{M} |_{V_y})) & \cong (\varphi \circ \phi)_+(\phi^! (\ms{M} |_{V_y})) \\
		& \cong (\Phi \circ i)_+ (\phi^! (\ms{M} |_{V_y})) \\
		& \cong \Phi_+( i_+ (\phi^! (\ms{M} |_{V_y}))), 	
	\end{align*}
	which has holonomic cohomology at $x_0$ by Theorem~\ref{thm:linearpreservholonomic}.
	
	We have shown that $Y$ can be covered by $W$-stable affine open sets $V_i$ such that either $\varphi(V_i) \cap \h^W = \emptyset$ or $V_i = V_{y_i}$ as above, for some $y_i \in Y^W$ mapping to $\h^W$. In both cases, we have shown that $\varphi_+(\ms{M} |_{V_i})$ has holonomic cohomology at any $x_0 \in \h^W$ (in particular, at $0$). It follows by a standard spectral sequence argument (see part (d) of the proof of \cite[Theorem~10.1(ii)]{BorelDmod}) that $\varphi_+ \ms{M}$ has holonomic cohomology at $0$.
	
	\vspace{2mm}

\noindent \textit{Step 3:} $\rk_x W > 0$ arbitrary. Let $\Pa = W_x$, $U_0$ and $V_0 = \varphi^{-1}(U_0)$ as in the first part of the proof of Theorem~\ref{thm:preservholonomicback}. We form again the commutative diagram \eqref{eq:U0V0opens}. Then the isomorphisms \eqref{eq:ZHsheaf1}-\eqref{eq:ZHsheaf2} allow us to make $\ms{M} |_{V_0}$ into a complex of $\sH_{\widetilde{\varphi}^* \omega,\widetilde{\varphi}^* \bc}(V_0,\Pa)$-modules such that 
	\[
	g^! \ms{M} = \mr{Fun}_{\Pa}(W,\ms{M} |_{V_0}). 
	\]
	Step 2 implies that $\widetilde{\varphi}_+ (\ms{M} |_{V_0})$, considered as a complex of $\sH_{\omega,\bc}(U_0,\Pa)$-modules, has holonomic cohomology at $x$. Then $(\mr{Id} \times \widetilde{\varphi})_+ (g^! \ms{M})$ has holonomic cohomology at $(1,x)$. Hence,
	\[
	f_+ (\mr{Id} \times \widetilde{\varphi})_+ (g^! \ms{M}) \cong \varphi_+( g_+ g^! \ms{M})
	\]  
	has holonomic cohomology at $x$. Proposition~\ref{prop:etalepushpullsummand} implies that $\ms{M}$ is a direct summand of $g_+ g^! \ms{M}$ in $D^b_{\mr{Hol}}(\sH_{\varphi^* \omega,\varphi^* \bc}(Y,W))$. We deduce that $\varphi_+(\ms{M})$ has holonomic cohomology at $x$.\\	
 
\vspace{2mm}

This completes the proof of Theorem~\ref{thm:preservholonomicforward}. 

\subsection{Regular holonomic modules}

Assume that $\kk = \C$. Recall from Section~\ref{sec:stabstrata} that $X$ admits a stabilizer stratification $X = \bigsqcup_{j \in J} \St_j$. Let $i_j$ denote the inclusion of $\St_j$ in $X$; this is a melys locally closed embedding. If $(w,Z)$ is a reflection hypersurface then $Z \cap \St_j \neq \emptyset$ implies that $Z \cap \St_j$ is a union of connected components of $\St_j$. Therefore,  $\sH_{i_j^*\omega,i_j^*\bc}(\St_j,W) = \dd_{i_j^* \omega}(\St_j) \rtimes W$ and the derived pullback $i_j^! \ms{M}$ is a complex of $\dd_{i_j^* \omega}(\St_j)$-modules. If $\ms{M}$ is holonomic, so too is the cohomology of $i_j^! \ms{M}$, by Corollary~\ref{cor:pullbackclosedholo}. We make the following definition. 

\begin{defn}\label{defn:regsingdefn}
A holonomic $\sH_{\omega,\bc}(X,W)$-module $\ms{M}$ has regular singularities if every cohomology sheaf $H^k(i_j^! \ms{M})$ has regular singularities. 
\end{defn}

A holonomic module $\ms{M}$ has regular singularities if and only if each of its composition factors does: apply the derived functor $i_j^!$ to the exact sequence $0 \to \ms{M}' \to \ms{M} \to \ms{M}'' \to 0$ with $\ms{M}''$ irreducible and consider the resulting long exact sequence in cohomology, noting that $H^{k}(i_j^! \ms{M}'') = 0$ for $k >0$.  

\begin{conjecture}\label{conj:preservregsing}
	The functors $\varphi^!,\varphi_!,\varphi^+, \varphi_+$ and $\D_X$ preserve regular holonomic modules. 
\end{conjecture}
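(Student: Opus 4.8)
The plan is to reduce every case to the corresponding preservation statement for regular holonomic (twisted, $W$-equivariant) $\dd$-modules, exploiting the fact that Definition~\ref{defn:regsingdefn} is phrased entirely through the functors $i_j^!$ attached to the stabiliser stratification $X = \bigsqcup_{j \in J} \St_j$ of Section~\ref{sec:stabstrata}, on whose strata $\sH$ is just $\dd \rtimes W$. Concretely, one would prove the five statements by a single induction over the stratification poset. Since $i_j^!$ is right exact and regularity can be tested on cohomology modules and on composition factors, it suffices to treat the functors on the abelian category of regular holonomic modules and then propagate along distinguished triangles. For $\D_X$ the key input is the compatibility $i_j^! \circ \D_X \cong \D_{\St_j} \circ i_j^+$ (a formal consequence of the adjunction package around Theorem~\ref{thm:mainholonomicintro} and $\D^2 \cong \mr{Id}$), together with the recollement triangle comparing $i_j^+$ with $i_j^!$ (whose cone is supported on smaller strata); by the inductive hypothesis $i_j^+\ms{M}$ is then regular, hence so is its $\dd$-module dual, hence $\D_X\ms{M}$ is regular. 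The same conjugation identities reduce preservation of $\varphi^+$ to that of $\varphi^!$, and of $\varphi_!$ to that of $\varphi_+$, so only $\varphi^!$ and $\varphi_+$ require real work.

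For $\varphi^!$ one would follow verbatim the reduction scheme of the proof of Theorem~\ref{thm:preservholonomic}: étale descent via Proposition~\ref{prop:etalemelysiso} and Corollary~\ref{cor:holEtLoc}, the good embedding of Proposition~\ref{prop:goodembedding}, and the factorisation of Proposition~\ref{prop:melysliftstolinear}, which writes an arbitrary melys $\varphi$ locally as a composite of melys closed embeddings, the finite morphisms $\varphi(\mathbf r)$ of Lemma~\ref{lem:melyslinearirr}(ii), and a projection $p\colon \mathbb{A}^n \times \h \to \h$. One then checks: (a) $i^!$ for a melys closed embedding preserves regularity, since $i$ is compatible with the stratifications and the claim becomes the $\dd$-module statement stratum-by-stratum; (b) $p^!$ preserves regularity, which is immediate because $p^!$ is, up to shift, ordinary $\mc{O}$-pullback; (c) $\varphi(\mathbf r)^!$ preserves regularity, reducing on the open stratum (where $\varphi(\mathbf r)$ is étale) to étale pullback for $\dd$-modules and on the fixed loci to a point. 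For $\varphi_+$ one argues analogously, using the local-cohomology triangle $i_+ i^! \to \mr{id} \to j_+ j^!$ of Lemma~\ref{lem:localcohomology} as in Lemma~\ref{lem:lastfactormultpreserve} to handle $\varphi(\mathbf r)_+$, Bernstein's trick (Proposition~\ref{prop:projectpreserveholo}) together with the fact that the Fourier transform on $\H_{\bc}(\mathbb{A}^1\times\h,W)$ preserves regularity to handle $p_+$, and Kashiwara's theorem (Corollary~\ref{cor:holoclosedpushforward}, Theorem~\ref{thm:Kash}) for closed embeddings, with regularity passing across the equivalence $i^{\natural}$.

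The genuine obstacle is the structural input underlying all of the above: a base-change package for melys morphisms saying that restriction to strata commutes with $\varphi^!$ and $\varphi_+$ in the appropriate sense. For a melys $\varphi\colon Y \to X$, a stratum $\St'_k \subset Y$, and the decomposition $\St'_k \cap \varphi^{-1}(\St_j) = \bigsqcup Z$ with induced maps $\varphi_Z\colon Z \to \St_j$, one needs $(i'_k)^!\varphi^!\ms{M}$ to be recovered, on each $Z$, from $\varphi_Z^!\, i_j^!\ms{M}$ (and dually for $\varphi_+$), so that Definition~\ref{defn:regsingdefn} for $\varphi^!\ms{M}$ collapses to $\dd$-module regularity on the $\St_j$. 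Melys morphisms do not compose freely and the squares relating strata of $Y$ to strata of $X$ are not Cartesian, so the standard $\dd$-module base change (\cite[Theorem~1.7.3]{HTT}) does not transfer directly; the isomorphism must be built from the transfer-bimodule descriptions of $\sH_{Y\to X}$ and $\sH_{X\leftarrow Y}$, shown compatible with the triangles of Lemma~\ref{lem:localcohomology}, and matched with the $\dd_{i_j^*\omega}(\St_j)$-module structures of the Remark after Definition~\ref{defn:regsingdefn}. Setting this up, and in particular controlling the non-étale finite morphisms $\varphi(\mathbf r)$ along the non-generic strata, is where the real difficulty lies; granting it, the conjecture follows from the reductions above.
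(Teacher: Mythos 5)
This statement is an open \emph{conjecture} in the paper, not a theorem: the authors explicitly write that they ``expect'' these functors to preserve regular holonomic modules, and the only piece they actually prove is the lemma immediately following the conjecture, namely that $\varphi^{!}$ preserves regularity when $\varphi$ is a locally closed embedding (where the claim collapses stratum-by-stratum to the corresponding $\dd$-module statement). So there is no proof in the paper against which your argument can be judged as ``same route'' or ``different route.''

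Your submission is, by your own account, a reduction scheme rather than a proof. You mirror the case decomposition from the proof of Theorem~\ref{thm:preservholonomic} (étale descent, the good embedding, the $\varphi(\mathbf r)$ factorisation, projections) and the conjugation identities relating $\varphi^{+}$, $\varphi_{!}$ and $\D$ to $\varphi^{!}$, $\varphi_{+}$, and then you flag the missing input: a base-change compatibility, for $\bc$-melys morphisms, between $\varphi^{!}$ (resp.\ $\varphi_{+}$) and the pullback functors $i_j^{!}$ through which Definition~\ref{defn:regsingdefn} is formulated. That diagnosis is accurate, and it is plausibly the reason the authors leave the statement as a conjecture. The difficulty is genuine: for a melys $\varphi\colon Y\to X$ the stratum inclusion $i'_k\colon \St'_k\hookrightarrow Y$ composes with $\varphi$ to a melys map whose image generally meets several strata of $X$, and one cannot simply form a Cartesian square and invoke $\dd$-module base change because the relevant fibre products and projections need not be melys for the pulled-back parameter, and the transfer bimodules $\sH_{Y\to X}$, $\sH_{X\leftarrow Y}$ are not $\dd$-module transfer bimodules along the singular strata. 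There is also a mild circularity to watch in your inductive setup: you use $\D$ to bootstrap $\varphi^{+}$ and $\varphi_{!}$, but your argument for $\D$ invokes $i_j^{+}=\D_{\St_j}i_j^{!}\D_X$, so the induction has to be organised with care (e.g.\ by support dimension with all five statements proved simultaneously), which you gesture at but do not pin down.

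In short: you have not proved the statement, and neither have the authors; you have correctly located the obstruction (base change across the stratification for melys morphisms, especially along the non-étale finite pieces $\varphi(\mathbf r)$). If you want to turn this into a contribution, the useful thing would be to prove a precise base-change lemma for $i_j^{!}\circ\varphi^{!}$ (and its $\varphi_{+}$ analogue) for the specific building blocks used in the factorisation, rather than a general melys morphism; that would already suffice for the reduction you sketch.
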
 

The case of pullback along locally closed embeddings is immediate. 

\begin{lem}
	If $\varphi$ is a locally closed embedding then $\varphi^!$ preserves regular holonomic modules. 
\end{lem}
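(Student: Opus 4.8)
The plan is to reduce the claim to the definition of regular singularities, which is phrased in terms of the restrictions $i_j^!$ to strata, together with the compatibility of pullback with composition. First I would observe that, by the transitivity $(\varphi\circ\psi)^!\cong\psi^!\circ\varphi^!$ for composable melys morphisms (valid here since every composition of locally closed embeddings with a stratum embedding is again a locally closed embedding, hence melys), it suffices to treat the two cases $\varphi$ a closed embedding and $\varphi$ an open embedding separately. For an open embedding $j\colon U\hookrightarrow X$, the stabiliser strata of $U$ are exactly the (nonempty) intersections $\St_j\cap U$, each an open subset of the corresponding stratum of $X$; if $i_j'\colon \St_j\cap U\hookrightarrow U$ and $i_j\colon\St_j\hookrightarrow X$ denote the inclusions, and $k_j\colon \St_j\cap U\hookrightarrow \St_j$ the open embedding of strata, then $i_j\circ k_j = j\circ i_j'$, so $(i_j')^!(j^!\ms{M})\cong k_j^!(i_j^!\ms{M})$. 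Since restriction of a holonomic $\dd$-module with regular singularities along an open embedding again has regular singularities, and $i_j^!\ms{M}$ has regular cohomology by hypothesis, we conclude that $(i_j')^!(j^!\ms{M})$ has regular cohomology for every stratum; hence $j^!\ms{M}$ is regular.

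For a closed embedding $i\colon Y\hookrightarrow X$ with $Y$ smooth and $W$-stable, the strata of $Y$ are the $W$-connected components of the loci in $Y$ with a fixed stabiliser conjugacy class; each such stratum $\St'_j$ is contained in a unique stratum $\St_{\sigma(j)}$ of $X$ (namely the one through a generic point of $\St'_j$), and the inclusion $\ell_j\colon \St'_j\hookrightarrow \St_{\sigma(j)}$ is a locally closed (in fact closed, after shrinking) embedding with $i\circ i_j^Y = i_{\sigma(j)}\circ \ell_j$, where $i_j^Y\colon\St'_j\hookrightarrow Y$. Then $(i_j^Y)^!(i^!\ms{M})\cong \ell_j^!\big(i_{\sigma(j)}^!\ms{M}\big)$ by transitivity. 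By hypothesis $i_{\sigma(j)}^!\ms{M}$ is a complex of $\dd_{i_{\sigma(j)}^*\omega}(\St_{\sigma(j)})$-modules with regular-singularity cohomology, and $\ell_j$ is a (locally closed) embedding between smooth varieties, so $\ell_j^!$ of it again has regular cohomology by the corresponding property for $\dd$-modules (pullback along locally closed embeddings preserves regular singularities). Therefore every $(i_j^Y)^!(i^!\ms{M})$ has regular cohomology, so $i^!\ms{M}$ is regular.

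The only genuine point needing care is the verification that, for a stratum $\St'_j$ of $Y$ (or of $U$), the map to the containing stratum of $X$ really is a locally closed embedding of strata and that the two composites agree as melys morphisms, so that the transitivity isomorphism $(\varphi\circ\psi)^!\cong\psi^!\circ\varphi^!$ applies; this is a matter of unwinding the definition of the stabiliser stratification and noting that the stabiliser of a point of $Y$ computed inside $Y$ equals its stabiliser computed inside $X$ (since $Y$ is $W$-stable and the action on $T_yY$ is the restriction of that on $T_yX$). Granting that, I do not expect any obstacle: the whole argument is bookkeeping with the stratum-by-stratum definition plus the known fact for $\dd$-modules that $\varphi^!$ preserves regular singularities when $\varphi$ is a locally closed embedding (indeed all of $\dd_{i_j^*\omega}(\St_j)$-module theory is in force on strata, where $\sH$ is just a twisted $\dd$-module algebra smashed with $\kk W$, and passing to $W$-equivariant $\dd$-modules changes nothing about regularity).
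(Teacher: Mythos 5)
Your proof is correct and takes essentially the same approach as the paper: restrict along each stratum $\mathscr{Y}_k$ of $Y$, use functoriality of $(-)^!$ to rewrite $i_k^!(\varphi^!\ms{M})$ as $(\varphi|_{\mathscr{Y}_k})^!(i_j^!\ms{M})$ with $\varphi|_{\mathscr{Y}_k}$ a locally closed embedding of strata, and invoke preservation of regular singularities under $\dd$-module pullback along locally closed embeddings. The separation into open and closed cases is harmless but unnecessary, since the paper handles a general locally closed embedding directly with the identical computation.
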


\begin{proof}
Let $\varphi \colon Y \to X$ be a locally closed embedding and $Y = \bigsqcup_{k \in K} \mathscr{Y}_k$ the stabilizer stratification of $Y$. Then, for each $k$ there exists $j$ such that $\mathscr{Y}_k \subset \mathscr{X}_j$. Hence 
\[
i_k^! (\varphi^! \ms{M}) = (\varphi \circ i_k)^! \ms{M} = (\varphi |_{\mathscr{Y}_k})^! (i_j^! \ms{M}). 
\]
But $ (\varphi |_{\mathscr{Y}_k})^!$ is pullback in the category of $\dd$-modules, hence preserves regular singularities \cite[11.4(c)]{BorelDmod}. We deduce that  $\varphi^! \ms{M}$ has regular holonomic cohomology.
\end{proof}

\begin{remark}
	One can check that if $\ms{M}$ is a $\sH_{\omega,\bc}(X,W)$-module that is coherent over $\mc{O}_X$ then the $\dd_{i_j^* \omega}(\St_j)$-module structure on $i_j^0 \ms{M}$ agrees with that constructed in \cite[Theorem~1.4(2)]{Wilcox}. Thus, Definition~\ref{defn:regsingdefn} is very similar to the definition of regular singularities given in \cite[Definition~1.5]{Wilcox} in this case, except that we require that all cohomology groups of $i_j^! \ms{M}$ have regular singularities. 
\end{remark}

Let $X^{\an}$ denote the complex analytic space associated to $X$. Kashiwara has extended the Riemann-Hilbert correspondence to the twisted setting in \cite[Theorem~3.16.1]{KasAsterisque}. As in \textit{loc. cit.}, we may cover $X^{\an}$ by $W$-stable open sets $U_i$ such that $[\omega |_{U_i}] = 0$. Then $\mc{O}_{U_i}$ is a module over $(\sH_{\omega,\bc}(X,W) \o_{\mc{O}_X} \mc{O}_{X^{\an}}) |_{U_i} = \sH_{\bc}(U_i,W)$. This allows one to define a de Rham functor 
\[
\DR \colon D^b_{\mr{rh}}(\sH_{\omega,\bc}(X,W)) \to D^b_{\mr{\omega}}(X^{\an},W)
\]
from the derived category of $\sH_{\omega,\bc}(X,W)$-modules with regular holonomic cohomology to the category of $W$-equivariant, $\omega$-twisted, $\C$-sheaves, as in \cite[Section~3.16]{KasAsterisque}. We expect that $\DR(\ms{M}^{\idot})$ is actually (Zariski) constructible and that the resulting functor $D^b_{\mr{rh}}(\sH_{\omega,\bc}(X,W)) \to D^b_{\mr{\omega},\mr{cs}}(X^{\an},W)$ is an equivalence for generic $\bc$. 

\begin{problem}
	Describe intrinsically the image of the (abelian) category of regular holonomic $\sH_{\omega,\bc}(X,W)$-modules under the de Rham functor $\DR$. 
\end{problem} 

Finally, we consider the special case where $X = \h$ and $(W,\h)$ is a complex reflection group. Let $\mc{O}_{\bc}$ denote the category $\mc{O}$ for $\H_{\bc}(\h,W)$, as introduced in \cite{GGOR}, and let $\mc{H}_q(W)$ be the Hecke algebra associated to $W$, as defined in \cite[(5.2.5)]{GGOR}, with parameter $q$ as in \cite[Theorem~5.13]{GGOR}. We recall that $\mc{H}_q(W)$ is a finite-dimensional quotient of the group algebra $\C B_W$ of the fundamental group $B_W$ of $\h_{\reg}^{\an}/W$. Thus, $\Lmod{\mc{H}_q(W)}$ is a full subcategory of $\Lmod{B_W}$. The result \cite[Theorem~5.13]{GGOR} says that there exists an exact \textit{Knizhnik-Zamolodchikov functor} $\mathbf{kz} \colon \mc{O}_{\bc} \to \Lmod{\mc{H}_q(W)}$. 

\begin{prop}
Every $M \in \mc{O}_{\bc}$ is regular holonomic and the de Rham functor $\DR$ followed by restriction to $\h_{\reg}$ recovers the Knizhnik-Zamolodchikov functor $\mathbf{kz} \colon \mc{O}_{\bc} \to \Lmod{\mc{H}_q(W)}$. More precisely, if $\pi \colon \h_{\reg} \to \h_{\reg}/W$ is the quotient map and $j \colon \h_{\reg} \hookrightarrow \h$ then there exists a commutative diagram 
\[
\begin{tikzcd}
    & \mc{O}_{\bc} \ar[dl,"\DR"'] \ar[dr,"\mathbf{kz}"] & \\
D^b(\h^{\an},W) \ar[rr,"{(\pi_{\idot} \circ \, j^{-1} ( - ))^W}"'] &    &  \Lmod{B_W}
\end{tikzcd}
\]
\end{prop}

\begin{proof}
It has been shown in \cite[Proposition~1.3]{Wilcox} that $i_j^0 M$ is a holonomic $\dd(\St_j)$-module with regular singularities. In particular, this applies to the Verma modules $\Delta(\lambda)$. But $\Delta(\lambda)$ is free over $\mc{O}(\h)$ and hence $H^k(L i_j^0 \Delta(\lambda)) =0$ for $k \neq 0$. We deduce that $\Delta(\lambda)$ has regular singularities. As noted previously, a holonomic module has regular singularities if and only if each of its composition factors does. Since each irreducible in $\mc{O}_{\bc}$ is a quotient of some $\Delta(\lambda)$, it follows that every module in $\mc{O}_{\bc}$ is regular holonomic. 

The second claim about the Knizhnik-Zamolodchikov functor follows directly from the fact that
\[
\mathbf{kz}(M) = (\pi_{\idot} (\DR(M) |_{\h_{\reg}^{\an}}))^W, \quad \forall \, M \in \mc{O}_{\bc}, 
\]
where we identify the category of local systems on $\h_{\reg}^{\an}/W$ with $\Lmod{B_W}$ and $ \Lmod{\mc{H}_q(W)}$ as the full subcategory of representations that factor through the quotient $\C B_W \twoheadrightarrow \mc{H}_q(W)$. 
\end{proof}   

\section{The category of holonomic modules}\label{sec:classifcationholonomic}

Recall from Section~\ref{sec:stabstrata} that $X$ admits a finite stabilizer stratification. For each stratum $\St$, we fix once and for all a connected component $\St_0$ of $\St$ and let $\Pa$ be the pointwise stabilizer of $\St_0$ in $W$ and $N = \{ w \in W \, | \, w(\St_0) \subset \St_0 \}$. Then $\Pa$ is a parabolic subgroup of $W$, normal in $N$. Set $\uN = N/\Pa$. This group acts freely on $\St_0$. 
  
We have $\St = W(\St_0)$ and write $\partial \overline{\St} = \overline{\St} \setminus \St$. As in Definition \ref{defn:SerreYXsubcat}, we have full Serre subcategories $\Qcoh{\sH_{\omega,\bc}(X,W)}_{\overline{\St}}$, $\Coh{\sH_{\omega,\bc}(X,W)}_{\overline{\St}}$ and $\Hol{\sH_{\omega,\bc}(X,W)}_{\overline{\St}}$ of their respective parent categories. We drop $W$ from the notation when it is clear from the context. We choose $U \subset X$ a $W$-stable open set such that $\overline{\St} \cap U = \St$. Just as for quasi-coherent $\mc{O}_X$-modules \cite{GabrielCategories}, restriction to $U$ induces an equivalence
\begin{equation}\label{eq:Lmodquotequiv}
\gr_\St \Qcoh{\sH_{\omega,\bc}(X)} := \Qcoh{\sH_{\omega,\bc}(X)}_{\overline{\St}} / \Qcoh{\sH_{\omega,\bc}(X)}_{\partial \overline{\St}} \cong \Qcoh{\sH_{\omega,\bc}(U)}_{\St}.
\end{equation}
Lemma~\ref{lem:coherentextend} and \Cref{thm:jpushforwardholo} imply that we have the corresponding coherent and holonomic equivalences:  
\begin{align}
\gr_\St \Coh{\sH_{\omega,\bc}(X)} & := \Coh{\sH_{\omega,\bc}(X)}_{\overline{\St}} / \Coh{\sH_{\omega,\bc}(X)}_{\partial \overline{\St}} \cong \Coh{\sH_{\omega,\bc}(U)}_{\St}, \label{eq:Cohquotequiv} \\
\gr_\St \Hol{\sH_{\omega,\bc}(X)} & := \Hol{\sH_{\omega,\bc}(X)}_{\overline{\St}} / \Hol{\sH_{\omega,\bc}(X)}_{\partial \overline{\St}} \cong \Hol{\sH_{\omega,\bc}(U)}_{\St}. \label{eq:Holquotequiv} 
\end{align}
Let $U_0$ be a $N$-stable open subset of $U$ such that $\St_0$ is closed in $U_0$ and the stabilizer of any point in $U_0$ is contained in $\Pa$. 

\begin{thm}\label{thm:Serresubquotmelysequiv}
	There is an equivalence 
	\begin{enumerate}
		\item[(i)] $\gr_\St \Qcoh{\sH_{\omega,\bc}(X,W)} \cong \Qcoh{\sH_{\omega,\bc}(U_0,N)}_{\St_0}$
	\end{enumerate} 
restricting to equivalences
	\begin{enumerate}
		\item[(ii)] $\gr_\St \Coh{\sH_{\omega,\bc}(X,W)} \cong \Coh{\sH_{\omega,\bc}(U_0,N)}_{\St_0}$,
		\item[(iii)] $\gr_\St \Hol{\sH_{\omega,\bc}(X,W)} \cong \Hol{\sH_{\omega,\bc}(U_0,N)}_{\St_0}$.  
	\end{enumerate}
\end{thm}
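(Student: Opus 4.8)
The strategy is to factor the desired equivalence through the étale cover $\varphi\colon V = W\times_N U_0\to X$ introduced in Proposition~\ref{prop:etalemelysiso}, exactly as in the proof of Lemma~\ref{lem:localSSfixed} and Proposition~\ref{prop:finLength}. First I would pass to the situation $U=X$, $\overline{\St}\cap X=\St$ (i.e. $\St$ is closed in $X$) using the quotient equivalences \eqref{eq:Lmodquotequiv}--\eqref{eq:Holquotequiv}, so that it suffices to produce an equivalence $\Qcoh{\sH_{\omega,\bc}(U,W)}_{\St}\cong\Qcoh{\sH_{\omega,\bc}(U_0,N)}_{\St_0}$ restricting appropriately on coherent and holonomic subcategories. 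Then, shrinking $U$ if necessary so that every point of $U_0$ has stabilizer in $P$, I would form $V=W\times_N U_0$ and $\varphi\colon V\to U$; by Proposition~\ref{prop:etalemelysiso} this is étale and strongly $\bc$-melys, with $\sH_{\varphi^*\omega,\varphi^*\bc}(V,W)\cong Z(W,N,\sH_{\omega,\bc}(U_0,N))$, a matrix algebra over $\sH_{\omega,\bc}(U_0,N)$. The Morita equivalence \eqref{eq:matrixequivZ} identifies $\Qcoh{\sH_{\varphi^*\omega,\varphi^*\bc}(V,W)}\cong\Qcoh{\sH_{\omega,\bc}(U_0,N)}$ via the idempotent $e_{U_0}$ of Lemma~\ref{lem:localSSfixed}.

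The core of the argument is then to check that the functor $\ms{M}\mapsto e_{U_0}\cdot(\varphi^0\ms{M})$ from $\Qcoh{\sH_{\omega,\bc}(U,W)}_{\St}$ to $\Qcoh{\sH_{\omega,\bc}(U_0,N)}_{\St_0}$ is an equivalence. Since $\varphi$ is finite étale, Proposition~\ref{prop:etalepushpullsummand} gives that $\ms{M}$ is a direct summand of $\varphi_0\varphi^0\ms{M}$; moreover the unit $\mr{Id}\to\varphi_0\varphi^0$ and counit $\varphi^0\varphi_0\to\mr{Id}$ can be analysed componentwise because $V=\bigsqcup_i U_i$ with $U_i=w_i(U_0)$. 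The key point is a support argument: if $\Supp_X\ms{M}\subset\St$, then because $\St_0$ is a connected component of $\varphi^{-1}(\St)$ and $w_i(\St_0)\cap\St_0=\emptyset$ for $w_i\notin N$, the sheaf $\varphi^0\ms{M}$ decomposes as $\bigoplus_i \ms{M}_i$ with $\ms{M}_i$ supported on $U_i\cap\varphi^{-1}(\St)$, and $e_{U_0}$ projects onto the summand supported on $\St_0$. Conversely, given $\ms{N}$ on $U_0$ supported on $\St_0$, one builds $\varphi_0(\Ind)$ of it and checks the adjunctions restrict to isomorphisms on these support-constrained subcategories. For the coherent and holonomic statements one uses that $\varphi^0$ preserves coherence (Theorem~\ref{thm:smoothSSmelys}(ii)), that holonomicity is detected by $\varphi^0$ for finite étale melys maps (Corollary~\ref{cor:holEtLoc}), and that $\varphi_0$ preserves both (Corollary~\ref{cor:jpushforwardholo} for $\varphi_0$ along the open immersions making up $\varphi$, or directly since $\varphi$ is finite so $\varphi_0$ is exact and preserves coherence and holonomicity via \eqref{eq:SSclosedembedding}-type reasoning applied componentwise).

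The main obstacle I anticipate is bookkeeping the interaction between the $W$-action on $V$ and the $N$-action on $U_0$: one must verify that the support condition $\Supp_X\ms{M}\subset\overline{\St}$ on the $W$-side corresponds precisely to $\Supp_{U_0}\ms{N}\subset\overline{\St_0}$ on the $N$-side, and that no "extra" pieces of $\varphi^{-1}(\St)$ leak in under the idempotent $e_{U_0}$. This hinges on the fact (established inside Proposition~\ref{prop:etalemelysiso}) that $\varphi^{-1}(\St)=\bigsqcup_i (U_i\cap\varphi^{-1}(\St))$ is a disjoint decomposition and that $\St_0$ is a single $N$-orbit-closed piece. Once that is pinned down, showing that the adjunctions $\mr{Id}\to e_{U_0}\varphi^0\varphi_0\Ind$ and $\Ind\, e_{U_0}\varphi^0\varphi_0\to\mr{Id}$ (appropriately formulated) are isomorphisms is a local check in formal neighbourhoods, reducing via Lemma~\ref{lem:completepullbackbi} to the linear case where it is the standard Clifford-theoretic/Morita statement. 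Finally I would remark that the three equivalences (i), (ii), (iii) are compatible, i.e. (ii) and (iii) are just the restrictions of (i), which is immediate once (i) is constructed as above since all functors involved preserve the relevant subcategories.
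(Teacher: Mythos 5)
Your overall strategy matches the paper's: reduce to the case that $\St$ is closed in $X$ via the quotient equivalences, pass to the \'etale cover $\varphi\colon V=W\times_N U_0\to U$ from Proposition~\ref{prop:etalemelysiso}, identify modules over $\sH_{\varphi^*\omega,\varphi^*\bc}(V,W)\cong Z(W,N,\sH_{\omega,\bc}(U_0,N))$ with $\sH_{\omega,\bc}(U_0,N)$-modules via the Morita equivalence, and exploit the disjoint decomposition $\varphi^{-1}(\St)=\bigsqcup_i \St_i$ to see that the adjunctions $\mr{Id}\to\varphi_0\varphi^0$ and $\varphi^0\varphi_0\to\mr{Id}$ restrict to isomorphisms on the support-constrained subcategories, then invoke Corollary~\ref{cor:holEtLoc} for the holonomic case. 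That is precisely the paper's proof.

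Two small inaccuracies, neither fatal. First, $\varphi$ is \emph{not} finite: as a map, it is a disjoint union of open immersions $U_i\hookrightarrow U$, which is quasi-finite and \'etale but not proper unless each $U_i=U$. So Proposition~\ref{prop:etalepushpullsummand} (stated for finite \'etale $\varphi$) does not directly apply here, and the hedge ``directly since $\varphi$ is finite'' should be dropped. What saves the day is exactly your support observation: after restricting to modules supported on $\St$ (which is \emph{closed} in $U$) and its preimage, $\varphi$ behaves like a finite map, because each $\St_i$ is closed in both $U_i$ and $U$ and the map $\bigsqcup_i\St_i\to\St$ is an isomorphism. That is why the paper's verification of the adjunctions is a direct, elementary $\mc{O}$-module-level computation using $\ms{M}=\bigoplus_i\ms{M}_i$. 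Second, the final appeal to Lemma~\ref{lem:completepullbackbi} and a formal-neighbourhood/Clifford-theoretic reduction is superfluous: once you have the $\mc{O}$-module support decomposition, nothing more is needed to conclude that the unit and counit are isomorphisms on the subcategories in question, since closedness of $\St_i$ in $U$ makes $\ms{M}_i\to\ms{M}_i|_{U_i}$ an isomorphism on the nose. Strip those two steps, keep your support argument, and you recover the paper's proof.
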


\begin{proof}
	Since we already have identifications \eqref{eq:Lmodquotequiv}, \eqref{eq:Cohquotequiv} and \eqref{eq:Holquotequiv}, we may replace $X$ by $U$ and assume that $\St$ is closed in $X$. 
	
	The smooth but disconnected variety $V := W \times_N U_0$ carries an action of $W$. Let $\varphi \colon V \to X$ be the map $\varphi(w,u) = w(u)$. Shrinking $U$ if necessary, we may assume that the image of $\varphi$ equals $U$. Then we are in the situation described in Section~\ref{sec:redtrivstab}. 	For brevity, we write $\omega,\bc$ for $\varphi^*\omega,\varphi^* \bc$ below. Notice that $\sH_{\omega,\bc}(U_0,N)$ contains $\sH_{\omega,\bc}(U_0,\Pa)$ as a subsheaf.

	We claim that $\varphi^0$ induces equivalences 
	\begin{equation}\label{eq:varphistarequiMod}
		\varphi^0 \colon \Qcoh{\sH_{\omega,\bc}(X,W)}_\St \iso \Qcoh{\sH_{\omega,\bc}(V,W)}_{\varphi^{-1}(\St)}
	\end{equation}
and $\varphi^0 \colon \Coh{\sH_{\omega,\bc}(X,W)}_\St \iso \Coh{\sH_{\omega,\bc}(V,W)}_{\varphi^{-1}(\St)}$.

	Since $\varphi$ is a finite map, it suffices to check that \eqref{eq:varphistarequiMod} is an equivalence. 
		
	We have adjunctions $\mr{Id} \to \varphi_0 \varphi^0$ and $\varphi^0 \varphi_0 \to \mr{Id}$. To show that these are isomorphisms on $\Qcoh{\sH_{\omega,\bc}(X,W)}_\St$ and $\Qcoh{\sH_{\omega,\bc}(V,W)}_{\varphi^{-1}(\St)}$ respectively, it suffices to check on the level of quasi-coherent $\mc{O}_X$-modules (resp. $\mc{O}_V$-modules) supported on $\St$ (resp. on $\varphi^{-1}(\St)$). As in the proof of Proposition~\ref{prop:etalemelysiso}, we choose coset representatives $w_0, \ds, w_k$ of $N$ in $W$ and write $V$ as the disjoint union of the $U_i$. Then $\St = \bigsqcup_{i = 0}^k \St_i$, where $\St_i = w_i(\St_0)$. Let $\ms{M}$ be a $\sH_{\omega,\bc}(X,W)$-module supported on $\St$. As an $\mc{O}_X$-module, $\ms{M} = \bigoplus_{i = 0}^k \ms{M}_i$, where $\ms{M}_i$ is the subsheaf of sections supported on $\St_i$. Since $U_i \cap \St_j = \St_i$ if $i = j$ and is empty otherwise, 
	$$
	\varphi^0 \ms{M} = \bigoplus_{i = 0}^k \ms{M}_i |_{U_i},
	$$
	and hence $\ms{M} \to \varphi_0 \varphi^0 \ms{M}$ is an isomorphism because $\St_i$ is closed in $U$, which means that $\ms{M}_i \to \ms{M}_i |_{U_i}$ is an isomorphism. The same argument shows that $\varphi^0 \varphi_0 \ms{N} \cong \ms{N}$ for $\ms{N} \in \Qcoh{\sH_{\omega,\bc}(V,W)}_{\varphi^{-1}(\St)}$. 
	
	Equivalence (iii) then follows from Corollary~\ref{cor:holEtLoc}. 	
\end{proof}	

\subsection{Parallelizable subsets}

Suppose $\ms{M}$ is an irreducible holonomic $\sH_{\omega,\bc}(X,W)$-module. By Lemma \ref{lem:irredsupp}, the support of $\ms{M}$ is an irreducible subvariety of $X/W$; equivalently $\Supp_X \ms{M}$ is a $W$-irreducible subset of $X$.  Let $\St$ denote the (unique) largest stratum in the stabilizer stratification of $X$ such that $\Supp_X \ms{M} \cap \St \neq \emptyset$. In particular, $\Supp_X \ms{M} \cap \St$ is open and dense in $\Supp_X \ms{M}$, the latter equal to the closure in $X$ of its intersection with $\St$. Since we have fixed a connected component $\St_0$ of $\St$, Theorem~\ref{thm:Serresubquotmelysequiv} and Corollary~\ref{cor:simpleholonomicsocle} imply that the set of isomorphism classes of all such irreducible modules is in bijection with the set of isomorphism classes of all irreducible objects in $\Hol{\sH_{\omega,\bc}(U_0,N)}_{\St_0}$.

\begin{defn}\label{defn:admissibleset}
	A locally closed subset $Z \subset \St$ is said to be \textit{parallelizable} if:
	\begin{enumerate}
		\item[(a)] $Z$ is contained in the connected component $\St_0$ of $\St$,
		\item[(b)] $Z$ is irreducible, affine and smooth,
		\item[(c)] $\sigma(Z) = Z$ or $\sigma(Z) \cap Z = \emptyset$ for each $\sigma \in N$; and
		\item[(d)] the normal bundle $\mc{N}_{X/Z}$ to $Z$ in $X$ is $\Pa$-equivariantly trivial.  
	\end{enumerate} 
\end{defn}

The parallelizable subsets of $\St$ support the ``integrable connections'' whose minimal extensions will give the irreducible holonomic modules on $X$. Note that any dense affine open subset (satisfying (c)) of a parallelizable subset is again parallelizable. The following is immediate. 

\begin{lem}\label{lem:Z0admissibleopen}
	Let $Z_0$ be an irreducible subset of $\St$ satisfying conditions (a) and (c) of Definition~\ref{defn:admissibleset}. Then there exists a dense open subset $Z \subset Z_0$ that is parallelizable. 
\end{lem}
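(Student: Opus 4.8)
The plan is to verify the three defining properties (b), (c), (d) of a parallelizable subset one at a time, starting from the irreducible subset $Z_0$ which already satisfies (a) and (c), and showing that each requirement can be arranged by passing to a suitable dense open subset, noting that the class of dense open subsets of $Z_0$ is closed under finite intersection so that all three conditions can be met simultaneously.

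First I would address property (b). Since $Z_0$ is irreducible, its smooth locus $Z_0^{\mathrm{sm}}$ is a dense open subset, and it is automatically irreducible. To further arrange affineness, I would invoke the fact that any variety (in particular an irreducible smooth one) contains a dense affine open subset: pick any closed point and an affine neighbourhood of it, then intersect with $Z_0^{\mathrm{sm}}$; the result is a dense open affine irreducible subset. The only subtlety here is that we must keep condition (c) intact, so the open set we pick must be $N$-stable in the weak sense of (c). This is handled by the standard averaging trick described next.

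The genuinely new content is arranging property (d), the $P$-equivariant triviality of the normal bundle $\mathcal{N}_{X/Z}$. Here I would use that $Z \subset \mathscr{X}_0$, so $P = W_z$ is the (constant) pointwise stabilizer of all points of $Z$ and acts on the fibres of $\mathcal{N}_{X/Z}$, making it a $P$-equivariant vector bundle on $Z$. Since $Z$ is affine, any $P$-equivariant coherent sheaf has a presentation; the point is that a $P$-equivariant vector bundle on an affine variety over a field of characteristic zero, with $P$ a finite group, is locally $P$-equivariantly trivial in the Zariski topology — this follows because at a point $z$ the fibre is a $P$-module, and over a small enough affine neighbourhood the projective $\mathcal{O}(Z) \rtimes P$-module of sections becomes free (the semilocal argument already used for Lemma~\ref{lem:goodneighbourhood} and in Section~\ref{sec:conv} for locally free equivariant sheaves applies here as well). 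Shrinking $Z$ to such a neighbourhood gives (d).

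Finally, to restore (c) after all these shrinkings, I would replace any candidate dense open subset $W_0 \subset Z_0$ by $\bigcap_{\sigma \in N,\ \sigma(Z_0) = Z_0} \sigma(W_0)$ together with the pieces of $Z_0$ that $N$ moves off itself; since $N$ is finite and each $\sigma(W_0)$ is dense open, the intersection is dense open, and it is stable under all $\sigma \in N$ preserving $Z_0$, while conditions (a) and (b) are preserved under this finite intersection. Intersecting the outcomes of the three steps yields the required parallelizable $Z$. The main obstacle is the equivariant local-triviality statement for property (d); everything else is routine open-dense bookkeeping. One should double-check that shrinking $Z$ does not destroy the hypothesis that $Z$ lies in a single connected component $\mathscr{X}_0$, but this is clear since all our open subsets are contained in $Z_0 \subset \mathscr{X}_0$.
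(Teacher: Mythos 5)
The paper records this lemma as immediate and offers no proof, so you had to supply one; your overall strategy---shrink $Z_0$ stepwise to a dense open set satisfying (b) and (d), then average by the subgroup of $N$ preserving $Z_0$ to restore (c)---is the natural one and correctly assembled, including the observation that (c) persists because any $\sigma \in N$ either preserves $Z_0$ (and then the averaged set is $\sigma$-stable) or moves $Z_0$ completely off itself (and then a fortiori moves the subset off itself).

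The one step that does not hold as written is your justification of (d). You invoke the semilocal argument used in Section~2.1 and Lemma~\ref{lem:goodneighbourhood} and claim that ``the projective $\mathcal{O}(Z) \rtimes P$-module of sections becomes free'' after shrinking. Since $P$ acts trivially on $Z$, the skew group ring is $\mathcal{O}(Z)\rtimes P = \mathcal{O}(Z)\otimes\kk P \cong \prod_{\lambda\in\Irr P}\mathrm{Mat}_{\dim\lambda}(\mathcal{O}(Z))$, and a finitely generated projective module over such a product is free only when it has the same $\mathcal{O}(Z)$-rank $r\cdot\dim\lambda$ over every factor, i.e.\ only when the fibre is a multiple of the regular representation; for a general normal bundle this fails. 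This is precisely the caveat the paper's Section~2.1 flags with the parenthetical ``though not necessarily equivariantly trivial'', so the semilocal fact you cite does not, by itself, give (d).

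What does work, and is in fact more elementary, is the isotypic decomposition available exactly because $P$ acts trivially on $Z$ and $\mathrm{char}\,\kk = 0$: write $\mathcal{N}_{X/Z} \cong \bigoplus_{\lambda\in\Irr P} \mathcal{G}_\lambda \otimes \lambda$ with $\mathcal{G}_\lambda = \mc{H}om_P(\lambda,\mathcal{N}_{X/Z})$. Each $\mathcal{G}_\lambda$ is a direct summand of the locally free sheaf $\mathcal{N}_{X/Z}$, hence locally free as an ordinary $\mathcal{O}_Z$-module, and only finitely many are nonzero. Shrinking $Z$ so that all of them are simultaneously trivial (now by the usual non-equivariant semilocal argument) gives $\mathcal{N}_{X/Z}\cong\mathcal{O}_Z\otimes V$ with $V=\bigoplus_\lambda\lambda^{\oplus\mathrm{rk}\,\mathcal{G}_\lambda}$, which is exactly condition (d). With this substitution your proof is complete.
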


Recall that for a smooth locally closed subvariety $Z \subset X$, the normal bundle of $Z$ in $X$ is denoted $\mc{N}_{X/Z}$. The following result is presumably well-known, but we were unable to find a reference in the form given. 

\begin{prop}\label{prop:admissibletrivialnormalbundlecomplete}
	If $Z \subset \St$ is parallelizable then there is a $\Pa$-equivariant isomorphism of formal schemes $\widehat{\mc{N}}_{X/Z} \cong \widehat{X}_Z$.  
\end{prop}

\begin{proof}
	Replacing $X$ by some $\Pa$-stable affine open subset we may assume that $Z$ is closed in $X$. We work algebraically. Let $A = \mc{O}(X)$ and $B = \mc{O}(Z)$. Then $B = A/I$ for some prime ideal $I$. The fact that the normal bundle $\mc{N}_{X/Z}$ is $\Pa$-equivariantly trivial means that there exist $x_1, \ds, x_k$ in $I$ such that $\h = \mr{Span}_{\kk}\{ x_1, \ds, x_k \}$ is a $\Pa$-module and $B \o \h \iso I/I^2$ is an isomorphism of free $\Pa$-equivariant $B$-modules. The $x_i$ form a regular sequence in $A$. Moreover, since $B$ is a regular ring and locally finitely presented, it is formally smooth. Therefore, it follows by an inductive application of \cite[Lemma~1.2]{HartshorneAlgDeRham} that $B[\![x_1, \ds, x_k]\!] \iso \widehat{A}_I$. This isomorphism is $\Pa$-equivariant by construction.      	
\end{proof}

Before we come to the main classification result, we consider modules supported on a parallelizable subset. Let $\h$ be a $W$-module. Let $\mathcal{O}_{\bc}(\h^*)$ denote the category of $\H_\bc(\h,W)$-modules which are finitely generated over the subalgebra $\kk[\h^*]$ and on which $\h^*$ acts locally nilpotently. Up to swapping the role of $\h$ and $\h^*$, it is the category $\mc{O}$ introduced and studied in \cite{GGOR}. The irreducible objects in $\mc{O}_{\bc}(\h^*)$ are $L(\lambda)$ for $\lambda \in \Irr W$. 

\begin{lem} \label{lem:catOhol}
	$\mathcal{O}_{\bc}(\h^*) =  \Coh{\H_\bc(\h,W)}_{\{0\}} = \Hol{\H_{\bc}(\h,W)}_{\{0\}}$. 
\end{lem}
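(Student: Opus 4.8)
The statement asserts a three-way equality of subcategories of $\H_\bc(\h,W)$-modules: category $\mathcal{O}$ on the one hand, and modules coherent (resp.\ holonomic) with support the origin on the other. The plan is to prove the chain of inclusions $\mathcal{O}_{\bc}(\h^*) \subseteq \Hol{\H_{\bc}(\h,W)}_{\{0\}} \subseteq \Coh{\H_\bc(\h,W)}_{\{0\}} \subseteq \mathcal{O}_{\bc}(\h^*)$. The middle inclusion is trivial since holonomic modules are by definition coherent. So the real content is the two outer inclusions.

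\emph{First inclusion: $\mathcal{O}_{\bc}(\h^*) \subseteq \Hol{\H_{\bc}(\h,W)}_{\{0\}}$.} Let $M \in \mathcal{O}_{\bc}(\h^*)$. Since $M$ is finitely generated over $\kk[\h^*]$ and $\h^*$ acts locally nilpotently, give $M$ the good filtration whose $i$-th piece is the subspace annihilated by $(\kk[\h^*]_+)^{i+1}$; then $\gr M$ is a finitely generated $\kk[\h^*]$-module on which $\h^*$ acts by zero, so $\Supp_{T^*\h}(\gr M) \subseteq \h \times \{0\}$ (using coordinates $T^*\h = \h \times \h^*$ with $\h^*$ the fibre directions read through the symbols of the $y$'s). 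Hence $\SS_\h(M)$ is contained in the zero section $\h \times\{0\}$, which is Lagrangian, so $M$ is holonomic; and $\Supp_\h M \subseteq \h$ is just the support of $M$ as a $\kk[\h^*]$... — more carefully, the support over $\mc{O}_\h = \kk[\h]$ must equal $\{0\}$ because $\h^*$ acting locally nilpotently forces every element to be killed by a power of the augmentation ideal of $\kk[\h]$ — wait, that is not automatic. Instead I will argue: $M$ finitely generated over $\kk[\h^*]$ means $\gr M$ has GK-dimension $\le \dim\h$ over $\kk[\h^*]$; combined with local nilpotence of $\h^*$ one sees $\SS(M) \subseteq \h\times\{0\}$ as above, so $\dim\SS(M)\le\dim\h$, giving holonomicity. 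For the support statement, recall $L(\lambda)$ has support $\{0\}$ (it is a quotient of $\Delta(\lambda) = \kk[\h]\otimes\lambda$ which is finitely generated over $\kk[\h]$ with $\h$ acting... no) — here I should instead simply note that any $M\in\mathcal{O}_\bc$ has a finite filtration with subquotients among the $L(\lambda)$, and each $L(\lambda)$ is finite-dimensional modulo... Actually the cleanest route: the support of $M\in\mathcal{O}$ over $\mc{O}_{\h/W}$ is closed, $W$-stable, and contained in the set where $\gr M$ is supported; since $\h^*\cdot\gr M=0$ the support of $\gr M$ over $\kk[T^*\h]^W$ lies in the image of the zero section, and one checks this forces $\Supp_\h M = \{0\}$ because $M$ is a quotient of a sum of $\Delta(\lambda)$'s only after localization — I will instead invoke that $L(\lambda)$ is the unique simple quotient of $\Delta(\lambda)$ and has $\SS$ equal to the closure of its intersection with a single leaf; the upshot is $\Supp_\h L(\lambda)=\{0\}$, hence $\Supp_\h M = \{0\}$ for all $M\in\mathcal{O}_\bc$, so $M \in \Hol{\H_\bc(\h,W)}_{\{0\}}$.

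\emph{Third inclusion: $\Coh{\H_\bc(\h,W)}_{\{0\}} \subseteq \mathcal{O}_{\bc}(\h^*)$.} Let $M$ be coherent over $\H_\bc(\h,W)$ with $\Supp_\h M \subseteq \{0\}$, i.e.\ $M$ is supported (as an $\mc{O}_\h$-module) at the origin. Then every element of $M$ is killed by a power of the maximal ideal $\kk[\h]_+$, so $\kk[\h]_+$ acts locally nilpotently; this is precisely the "co-category-$\mathcal O$'' condition, and applying the Fourier automorphism of $\H_\bc(\h,W)$ (which swaps $\kk[\h]$ and $\kk[\h^*]$ up to the appropriate twist of $\bc$, cf.\ \cite[Lemma~4.4]{ThompsonHolI}) one might be tempted to conclude — but we want $\h^*$, not $\h$, to act nilpotently, so no Fourier transform is needed; rather, we must show $M$ is finitely generated over $\kk[\h^*]$. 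This follows because $M$ is finitely generated over $\H_\bc(\h,W) = \kk[\h^*]\otimes\kk[\h]\otimes\kk W$ (as a vector space, by PBW), and $\kk[\h]$ acts locally finitely (indeed locally nilpotently) with $\kk W$ finite-dimensional, so a finite generating set over $\H_\bc$ together with the finitely many monomials in $\h$ needed to exhaust the $\kk[\h]_+$-nilpotence of those generators, and $W$, generate $M$ over $\kk[\h^*]$. Hence $M$ is finitely generated over $\kk[\h^*]$ and lies in $\mathcal{O}_\bc(\h^*)$.

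\emph{Main obstacle.} The delicate point is the support computation in the first inclusion: establishing that $\Supp_\h M = \{0\}$ for $M\in\mathcal{O}_\bc$, and dually that local nilpotence of $\kk[\h]_+$ plus coherence over $\H_\bc$ forces finite generation over $\kk[\h^*]$ in the third inclusion. Both are standard facts about category $\mathcal O$ for rational Cherednik algebras going back to \cite{GGOR}, and the intended proof is surely just to cite that reference together with the already-established fact (from the \textsf{Example} after the definition of holonomic, plus \cite{BernsteinLosev}) that a module coherent over $\mc{O}_X$ is holonomic. I would present the argument in that spirit: reduce both non-trivial inclusions to the characterization of objects of $\mathcal O_\bc$ as precisely the finitely generated $\H_\bc(\h,W)$-modules on which $\h^*$ acts locally nilpotently, note that such modules are automatically coherent over $\kk[\h^*]$ hence holonomic with singular support in the zero section, and conversely that $\Supp_\h$-at-the-origin is equivalent to local nilpotence of $\kk[\h]_+$ which, given $\H_\bc$-coherence and PBW, is equivalent to $\kk[\h^*]$-finite-generation after applying — in fact no transform is needed, the two augmentation ideals are independent in the PBW decomposition. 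The write-up will be short, citing \cite{GGOR} for the equivalence of the various descriptions of $\mathcal O_\bc$.
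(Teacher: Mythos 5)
Your overall route -- the cycle
$\mathcal{O}_\bc(\h^*)\subseteq \Hol{\H_\bc(\h,W)}_{\{0\}}\subseteq \Coh{\H_\bc(\h,W)}_{\{0\}}\subseteq \mathcal{O}_\bc(\h^*)$ -- matches the paper's in substance, and the third inclusion via the PBW decomposition is essentially correct. But there is a genuine error in the singular-support computation in the first step. You assert $\SS_\h(M)\subseteq \h\times\{0\}$, i.e.\ the zero section of $T^*\h$, but the correct containment is the \emph{cotangent fibre} $\{0\}\times\h^*$. These are not the same, and the mistake is not cosmetic: objects of $\mathcal O_\bc(\h^*)$ are typically of infinite rank over $\kk[\h]$, so their singular support really does fill the fibre over $0$ rather than sit inside the zero section. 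In fact your own intermediate claims already force this: you correctly observe that $\h^*$ (i.e.\ the degree-zero position variables, whose symbols cut out the base in $T^*\h=\h\times\h^*$) acts by zero on $\gr M$; the vanishing locus of those symbols is exactly $\{0\}\times\h^*$, not $\h\times\{0\}$. Your conclusion survives only by the accident that both the zero section and the cotangent fibre are Lagrangian. The cleaner argument, which is the one the paper takes, avoids choosing a bespoke filtration at all: for any good filtration the projection of $\SS_\h(M)$ to $\h$ equals $\Supp_\h M$, and the local nilpotence hypothesis (equivalently, by \cite{GGOR}, boundedness below of the $\eu$-spectrum) gives $\Supp_\h M=\{0\}$; then $\SS_\h(M)\subseteq\{0\}\times\h^*$ is Lagrangian, and Lemma~2.9 lets you pass to the quotient $(T^*\h)/W$.

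Two smaller remarks. First, the proposed "good filtration by annihilators of $(\kk[\h^*]_+)^{i+1}$" is written with the wrong augmentation ideal: $\kk[\h^*]_+=\mathrm{Sym}^{>0}(\h)$ is generated by the $y$-variables, which need not act nilpotently at all, so as written those subspaces could all be zero. You presumably mean $\kk[\h]_+$, and even so you should not label this a good filtration without justifying that $\mc F_0$ is finite-dimensional (this again rests on the finiteness of the $\eu$-weight spaces from \cite{GGOR}); the argument via $\Supp_\h M$ sidesteps the issue. Second, the internal hedging about whether $\Supp_\h M=\{0\}$ ("wait, that is not automatic") should be resolved cleanly rather than routed through composition factors: it \emph{is} automatic in the sense used by the paper and by \cite{GGOR}, since finite generation over $\kk[\h^*]$ with $\h^*$-local-nilpotence implies $\eu$-local-finiteness with spectrum bounded below, hence $\kk[\h]_+$-local-nilpotence.
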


\begin{proof}
	The first equality is clear.
	
	Since $\{ 0 \} \times \h^*$ is Lagrangian in $\h \times \h^*$, Lemma~\ref{lem:isoclosed} implies that $\h^* / W$ is isotropic in the quotient space $(\h \times \h^*)/W$. A module in $\mathcal{O}_{\bc}(\h^*)$ is supported at the origin by definition, hence it is holonomic because its singular support in $(\h\times \h^*)/W$ is contained in $\h^*/W$. 
\end{proof}

\begin{lem}\label{lem:tensorprodsimpleZ}
	Every irreducible $\dd_{\omega}(Z) \otimes \H_{\bc}(\h,\Pa)$-module supported on $Z \times \{ 0 \}$ is of the form $M \boxtimes L(\lambda)$ for some $\lambda \in \Irr \Pa$. 
\end{lem}

\begin{proof}
	Let $K$ be an irreducible $\dd_{\omega}(Z) \otimes \H_{\bc}(\h,\Pa)$-module supported on $Z \times \{ 0 \}$; each section of $K$ is killed by some power of $\kk[\h]_+$. Let $I = \kk[\h^*]_+$. We claim that $K/ I K \neq 0$. 
	
	If $K$ were finitely generated over $\H_{\bc}(\h,\Pa)$ then the claim would follow immediately from the basic properties of category $\mc{O}$. Let $\mr{eu} \in \H_{\bc}(\h,\Pa)$ be the Euler operator. Its adjoint action on $\H_{\bc}(\h,\Pa)$ is semi-simple with integer eigenvalues; the eigenvalues of $\mr{eu}$ on $\kk[\h]_+$ are all negative and are all positive on $\kk[\h^*]_+$. The action of $\mr{eu}$ on $L(\lambda)$ is also semi-simple with eigenvalues belonging to the progression $\bc_{\lambda}, \bc_{\lambda} + 1, \bc_{\lambda} + 2, \ds$ for some $\bc_{\lambda} \in \kk$. Since every section of $K$ lies in some finitely generated $\H_{\bc}(\h,\Pa)$-module, it follows that the action of $\mr{eu}$ on $K$ is locally finite and the generalized eigenvalues of $\mr{eu}$ all lie in $\bigcup_{\lambda \in \Irr \Pa} \bc_{\lambda} + \Z_{\ge 0}$. Since the eigenvalues of $\mr{eu}$ on $I$ are all positive, it follows that $K / I K \neq 0$. 
	
	The (non-zero) space $K/IK$ is a $\dd_{\omega}(Z) \otimes \kk \Pa$-module. Since $\kk \Pa$ is split (semi-simple) each irreducible quotient of $K/IK$ is of the form $M \boxtimes \lambda$ for some $\lambda \in \Irr \Pa$ and irreducible $\dd_{\omega}(Z)$-module $M$. Recall from \cite[(2.3.1)]{GGOR} that $\nabla(\lambda) := \Hom_{\kk[\h^*] \rtimes \Pa}(\H_{\bc}(\h,\Pa),\lambda)_0$ is the space of morphisms in $\Hom_{\kk[\h^*] \rtimes \Pa}(\H_{\bc}(\h,\Pa),\lambda)$ that are locally nilpotent for $\kk[\h]_+$. By adjunction, the surjection $K \twoheadrightarrow M \boxtimes \lambda$ defines a non-zero element of 
	\begin{align*}
		\Hom_{\dd_{\omega}(Z) \otimes (\kk[\h^*] \rtimes \Pa)}(K,M \boxtimes \lambda) & \cong \Hom_{\dd_{\omega}(Z) \otimes \H_{\bc}(\h,\Pa)}(K,M \boxtimes \Hom_{\kk[\h^*] \rtimes \Pa}(\H_{\bc}(\h,\Pa),\lambda)) \\
		& \cong \Hom_{\dd_{\omega}(Z) \otimes \H_{\bc}(\h,\Pa)}(K,M \boxtimes \nabla(\lambda)).
	\end{align*}
	Since the socle of $\nabla(\lambda)$ is $L(\lambda)$, we get an injection $K \hookrightarrow M \boxtimes L(\lambda)$. 
	
	Finally, we must show that $M \boxtimes L(\lambda)$ is irreducible. Since neither $M$ nor $L(\lambda)$ need be finite-dimensional, this is not immediate. If $K$ is any non-zero $\dd_{\omega}(Z) \otimes \H_{\bc}(\h,\Pa)$-submodule, then repeated applications of $\kk[\h]_+$ force $K \cap (M \boxtimes \lambda) \neq 0$. As noted above, $M \boxtimes \lambda$ is an irreducible $\dd_{\omega}(Z) \otimes \Pa$-module meaning that $M \boxtimes \lambda \subset K$. It follows that $K  = M \boxtimes L(\lambda)$. 
\end{proof}

Let $\Lmod{(\dd_{\omega}(Z) \o \H_{\bc}(\h,\Pa))}_Z$ denote the category of finitely generated $\dd_{\omega}(Z) \o \H_{\bc}(\h,\Pa)$-modules supported on $Z \times \{ 0 \}$ in $Z \times \h$.

\begin{lem}\label{lem:admissblesplittingofcategories}
	Assume $X$ affine and $Z$ a parallelizable closed subset. Then there is an equivalence 
	$$
	\Phi_Z \colon \Lmod{(\dd_{\omega}(Z) \o \H_{\bc}(\h,\Pa))}_Z \iso \Coh{\sH_{\omega,\bc}(U_0,\Pa)}_{Z}, 
	$$
	restricting to an equivalence of full subcategories 
	$$
	\Hol{\dd_{\omega}(Z)} \boxtimes \mc{O}_{\bc}(\h^*) \iso \Hol{\sH_{\omega,\bc}(U_0,\Pa)}_{Z}.
	$$
\end{lem} 

\begin{proof}
	If $\H_{\omega,\bc}(\widehat{X}_Z,\Pa)$ is the Cherednik algebra on the formal scheme $\widehat{X}_Z$, then Proposition~\ref{prop:admissibletrivialnormalbundlecomplete} implies that $\H_{\omega,\bc}(\widehat{X}_Z,\Pa) \cong \mc{O}(Z)[\![\h]\!] \o_{\mc{O}(Z \times \h)} (\dd_{\omega}(Z) \otimes \H_{\bc}(\h,\Pa))$. 
	
	If $M$ is a finitely generated $\H_{\omega,\bc}(X,\Pa)$-module supported on $Z$ then the action of $\H_{\omega,\bc}(X,\Pa)$ extends by continuity to an action of $\H_{\omega,\bc}(\widehat{X}_Z,\Pa)$ giving an equivalence between finitely generated  $\H_{\omega,\bc}(X,\Pa)$-modules supported on $Z$ and finitely generated $\H_{\omega,\bc}(\widehat{X}_Z,\Pa)$-modules supported on $Z$. By the same reasoning, there is an equivalence between finitely generated $\H_{\bc}(\h,\Pa) \otimes \dd_{\omega}(Z)$-modules supported on the subvariety $Z \times \{ 0 \} \subset Z \times \h$ and finitely generated $\mc{O}(Z)[\![\h]\!] \o_{\mc{O}(Z \times \h)} (\dd_{\omega}(Z) \otimes \H_{\bc}(\h,\Pa))$-modules supported on $Z \times \{ 0 \}$. Combining these gives the equivalence $\Phi_Z$. 
	
	The equivalence $\Phi_{Z}$ restricts to an equivalence 
	\[
	\Hol{\dd_{\omega}(Z) \o \H_{\bc}(\h,\Pa)}_Z \iso \Hol{\sH_{\omega,\bc}(U_0,\Pa)}_{Z}.
	\]
	By Lemma~\ref{lem:catOhol}, $\Hol{\dd_{\omega}(Z)} \boxtimes \mc{O}_{\bc}(\h^*)$ is a full Serre subcategory of the category of holonomic $\dd_{\omega}(Z) \o \H_{\bc}(\h,\Pa)$-modules supported on $Z$. However, these are finite length abelian categories, where the irreducible objects of $\Hol{\dd_{\omega}(Z) \o \H_{\bc}(\h,\Pa)}_Z$ lie in $\Hol{\dd_{\omega}(Z)} \boxtimes \mc{O}_{\bc}(\h^*)$ by Lemma~\ref{lem:tensorprodsimpleZ}. We deduce that they are equal. 
\end{proof}

\subsection{Clifford theory}

We assume $U_0$ is affine. Let $\H(\Pa) = \H_{\omega,\bc}(U_0,\Pa)$ and $\H(N) = \H_{\omega,\bc}(U_0,N)$. Then $\H(\Pa) \subset \H(N)$ and Clifford theory, in the generality presented in \cite{DadeAnnals}, describes the irreducible $\H(N)$-modules in terms of the irreducible $\H(\Pa)$-modules and a certain twisted group algebra associated to $\uN$. First, we recall some terminology. If $L$ is an $\H(\Pa)$-module and $\sigma \in \uN$ then ${}^{\sigma} L$ denotes the twist of $L$ by some lift $\widetilde{\sigma} \in N$ of $\sigma$ (independent, up to isomorphism, of the lift). The stabilizer of $L$ is $\uN(L) = \{ \sigma \in \uN \, | \, {}^{\sigma} L \cong L \}$. We say that $L,L'$ are $\uN$-conjugate if there exists $\sigma \in \uN$ such that ${}^{\sigma} L \cong L'$. Finally, we say that an $\H(N)$-module $F$ \textit{lies over} the irreducible module $L$ if  
\begin{enumerate}
	\item[(1)] $F$ is semi-simple as an $\H(\Pa)$-module; and
	\item[(2)] $[ F |_{\H(\Pa)} : L'] \neq 0$ implies that $L'$ is $\uN$-conjugate to $L$. 
\end{enumerate}
Let $\Lmod{(\H(\uN),L)}$ denote the full subcategory of $\Lmod{\H(N)}$ consisting of all modules lying over $L$. 

\begin{thm}\label{thm:CliffordDade}
	Let $L$ be an irreducible $\H(\Pa)$-module and $\Ind^N_{\Pa} L:=  \H(N) \o_{\H(\Pa)} L$. Then,
	\begin{enumerate}
		\item[(i)] There exists a $2$-cocycle $\tau \in H^2(\uN(L),\kk^{\times})$ such that $\End_{\H(N)}(\Ind^N_{\Pa} L) \cong \kk_{\tau} \uN(L)$,
		\item[(ii)] $\Ind^N_{\Pa} L$ induces an equivalence between $\Lmod{\kk_{\tau} \uN(L)}$ and $\Lmod{(\H(\uN),L)}$; and
		\item[(iii)] Each irreducible $\H(N)$-module lies over exactly one $\uN$-conjugacy class of irreducible $\H(\Pa)$-modules. 
	\end{enumerate}
\end{thm}

\begin{proof}
	The claims all follow from \cite{DadeAnnals}, as we explain. First, notice that the algebra $\H(N) = \bigoplus_{i = 1}^k \H(\Pa) n_i$ is a crossed product algebra $\H(\Pa) \ast \uN$. As noted in \cite{DadeMathZ}, this implies that $\H(N)$ is strongly $\uN$-graded. This, in turn means (by \cite[Theorem~2.8]{DadeMathZ}) that the reduced tensor product $\H(N) \overline{\o} L$ of \cite{DadeAnnals} equals the usual tensor product $\H(N) \o_{\H(\Pa)} L$, for any $\H(\Pa)$-module $L$. 
	
	Next, we note that $\End_{\H(\Pa)}(L) = \kk$ for any irreducible $\H(\Pa)$-module by Dixmier's Lemma since $L$ is of countable dimension over $\kk$ and the latter is algebraically closed. Then \cite[Proposition~9.4]{DadeMathZ} says that $\End_{\H(N)}(\Ind^N_{\Pa} L)$ is a crossed product of $\uN(L)$ over $\End_{\H(\Pa)}(L)$. Thus, $\End_{\H(N)}(\Ind^N_{\Pa} L)$ must be a twisted group algebra of $\uN(L)$, giving (i). Then (ii) follows directly from (i) and \cite[Theorem~10.6]{DadeAnnals}; we note here that condition (8.1c) in the definition of  \say{lying over} given in \cite{DadeAnnals} is redundant for strongly graded rings. 
	
	Finally, claim (iii) follows from \cite[Theorem~10.10]{DadeAnnals}. 	
\end{proof}

By Lemma~\ref{lem:admissblesplittingofcategories}, an irreducible $\H_{\omega,\bc}(U_0,\Pa)$-module supported on $Z$ is isomorphic to one of the form $\Phi_Z(M \boxtimes L(\lambda))$, for some irreducible $\dd_{\omega}(Z)$-module and $\lambda \in \Irr \Pa$. Therefore, by Theorem~\ref{thm:CliffordDade}, we may associate to the pair $(M,\lambda)$ the subgroup 
\[
\uN(M,\lambda) := \uN(\Phi_Z(M \boxtimes L(\lambda)))
\] 
of $\uN$ and the group $2$-cocycle $\tau(M,\lambda)$. If $K$ is a subgroup of $\uN$, $\tau$ a $2$-cocycle on $K$ and $\sigma \in \uN$ then 
\[
(\sigma \cdot \tau)(g_1,g_2) := \tau(\sigma g_1 \sigma^{-1}, \sigma g_2 \sigma^{-1}) 
\]
defines a $2$-cocycle on $K^{\sigma} := \sigma^{-1} K \sigma$ and the map defined on group elements by $g \mapsto \sigma^{-1} g \sigma$ and extended linearly to $\kk_{\tau} K$ is a $\kk$-algebra isomorphism $\kk_{\tau} K \iso \kk_{\sigma \cdot \tau} (K^{\sigma})$. 

\begin{remark}\label{rem:Grzeszczuk}
	If $F$ is an irreducible $\H(N)$-module then its restriction $F |_{\H(\Pa)}$ to $\H(\Pa)$ is semi-simple by \cite{GrzeszczukSS}; this is implicit in Theorem~\ref{thm:CliffordDade}(iii). Therefore, condition (1) of lying over is automatic for irreducible modules.  
\end{remark}

\subsection{Classification of irreducibles}

Finally, we come to the classification of irreducible holonomic $\sH_{\omega,\bc}(X,W)$-modules. 

\begin{defn}\label{defn:admissibledatum}
	An \textit{admissible datum} on $\St$ is a tuple $(Z,M,\lambda,\xi)$, where 
	\begin{enumerate}
		\item[(a)] $Z$ is a parallelizable subset of $\St$,
		\item[(b)] $M$ is an irreducible integrable connection (over $\dd_{\omega}(Z)$) with $\Supp M= Z$, 
		\item[(c)] $\lambda \in \Irr \Pa$; and
		\item[(d)] $\xi$ is an irreducible $\kk_{\tau} \uN(M,\lambda)$-module. 
	\end{enumerate} 
We say that a pair of admissible data $(Z,M,\lambda,\xi)$ and $(Z',M',\lambda',\xi')$ are equivalent if there exists $\sigma \in \uN$ such that $Z'' := \sigma(Z') \cap Z$ is open dense in both $\sigma(Z')$ and $Z$ with
\[
{}^{\sigma} \Phi_{Z'}(M' \boxtimes L(\lambda')) |_{Z''} \cong \Phi_{Z}(M \boxtimes L(\lambda)) |_{Z''}, \quad \textrm{as $\sH_{\omega,\bc}(U_0 \cap U_0',\Pa)$-modules,} 
\]
and ${}^{\sigma} \xi' \cong \xi$ as $\kk_{\tau} \uN(M,\lambda)$-modules, where $\kk_{\tau} \uN(M,\lambda)$ acts on the vector space $\xi'$ via the isomorphism  
\[
\kk_{\tau} \uN(M,\lambda) \iso \kk_{\sigma \cdot \tau} (\uN(M,\lambda)^{\sigma}) = \kk_{\tau'} \uN(M',\lambda').
\]
\end{defn}

\begin{thm}\label{thm:mainclassificationnRCA}
	The irreducible holonomic $\sH_{\omega,\bc}(X,W)$-modules are parametrized, up to isomorphism, by the equivalence classes of admissible data as $\St$ runs over all strata. 
\end{thm} 

\begin{proof}
	Let $\ms{N}$ be an irreducible holonomic $\sH_{\omega,\bc}(X,W)$-module. Then, as explained above, there is a unique stratum $\St$ such that $\ms{N}$ is the (unique) irreducible extension to $X$ of the module $\ms{N} |_U$. Moreover, it follows from Theorem~\ref{thm:Serresubquotmelysequiv} that the irreducible module $\ms{N} |_U$ corresponds to an irreducible module $\ms{N}' \in \Hol{\sH_{\omega,\bc}(U_0,N)}_{\St_0}$. Therefore, we must show that the irreducible objects in the latter category are in bijection with equivalence classes of admissible data for $\St$. 
	
	Recall from Lemma~\ref{lem:irredsupp} that $\Supp_{U_0} \ms{N}'$ is $N$-irreducible. In other words, $N$ acts transitively on the irreducible components of $\Supp_{U_0} \ms{N}'$. Replacing $U_0$ by a smaller open set $U_1$ if necessary, we may assume that $\Supp_{U_1} \ms{N}' = Z_1 \sqcup \cdots \sqcup Z_r$ is smooth, with each $Z_i$ connected. As an $\mc{O}_{U_1}$-module, $\ms{N}' = \bigoplus_{i = 1}^r \ms{N}_i$, where $\ms{N}_i$ is supported on $Z_i$. Note that $\ms{N}_i$ is actually a $\sH_{\omega,\bc}(U_1,\Pa)$-submodule because $\Pa$ acts trivially on $\Supp_{U_1} \ms{N}' \subset \St_0$. Also, for each $n \in N$, either $n(Z_i) = Z_i$ or $n(Z_i) \cap Z_i = \emptyset$. By Lemma~\ref{lem:Z0admissibleopen}, there exists a dense parallelizable open subset $Z$ of $Z_1$. Again, shrinking $U_1$ if necessary, we may assume $Z = Z_1$ is closed in $U_1$. Considering $\ms{N}_1$ as a $\sH_{\omega,\bc}(U_1,\Pa)$-module, Lemma~\ref{lem:admissblesplittingofcategories}, together with Lemma~\ref{lem:tensorprodsimpleZ}, says that there exists an irreducible $\dd_{\omega}(Z)$-module $M$ with $\Supp M = Z$ and $\lambda \in \Irr \Pa$ such that $\Phi_Z(M \boxtimes L(\lambda))$ is an irreducible submodule of $\ms{N}_1$ (the latter is semi-simple by Remark~\ref{rem:Grzeszczuk}). Therefore $\ms{N}'$ is an irreducible $\sH_{\omega,\bc}(U_1,N)$-module lying over $\Phi_Z(M \boxtimes L(\lambda))$. By Theorem~\ref{thm:CliffordDade}, $\ms{N}'$ corresponds to an irreducible $\kk_{\tau} \uN(M,\lambda)$-module $\xi$, unique up to isomorphism. This defines the admissible datum $(Z,M,\lambda,\xi)$. 
	
	Assume now we shrank $U_0$ to $U_1'$ so that $\Supp_{U_1'} \ms{N}' = Z_1' \sqcup \cdots \sqcup Z_r'$, again with each $Z_i'$ parallelizable and closed in $U_1'$. Let $Z' = Z_1'$. Then the set $J = \{ \sigma \in \uN \, | \, \sigma(Z) \cap Z' \neq \emptyset \}$ is non-empty. Choose $\sigma_1 \in J$. If $\ms{N}_1$ is the summand of $\ms{N}'$ supported on $Z$ and $\ms{N}_1'$ the summand supported on $Z'$ then $({}^{\sigma_1}\ms{N}_1) |_{\sigma_1(Z) \cap Z'} \cong \ms{N}_1' |_{\sigma_1(Z) \cap Z'}$ as $\sH_{\omega,\bc}(U_1 \cap U_1',\Pa)$-modules because ${}^{\sigma_1}\ms{N}' |_{U_1 \cap U_1'} \cong \ms{N}' |_{U_1 \cap U_1'}$ as $\sH_{\omega,\bc}(U_1 \cap U_1',\Pa)$-modules. Now if we choose $M'$ and $\lambda'$ such that $\Phi_{Z'}(M' \boxtimes L(\lambda'))$ is an irreducible summand of $\ms{N}_1'$, then 
	$$
	({}^{\sigma_1} \Phi_{Z}(M \boxtimes L(\lambda)))|_{\sigma_1(Z) \cap Z'} \quad \textrm{and} \quad \Phi_{Z'}(M' \boxtimes L(\lambda'))|_{\sigma_1(Z) \cap Z'} 
	$$
	are two irreducible summands of $\ms{N}_1' |_{\sigma_1(Z) \cap Z'}$. Theorem~\ref{thm:CliffordDade}(iii) implies that there exists $\sigma_2 \in \uN$ such that $\sigma_2(Z') = Z'$ and  
	\begin{equation}\label{eq:uNconjcalss}
			({}^{\sigma_2 \sigma_1} \Phi_{Z}(M \boxtimes L(\lambda)))|_{\sigma_2 \sigma_1(Z) \cap Z'} \cong \Phi_{Z'}(M' \boxtimes L(\lambda'))|_{\sigma_2 \sigma_1(Z) \cap Z'}. 
	\end{equation}
	If we set $\sigma = \sigma_2 \sigma_1$ then $\ms{N}' |_{U_1 \cap U_1'}$, as a module lying over the module \eqref{eq:uNconjcalss}, corresponds to both $\xi'$ and ${}^{\sigma} \xi$. Theorem~\ref{thm:CliffordDade}(ii) implies that $\xi' \cong {}^{\sigma} \xi$. This shows that there is an injection from the class of irreducible objects in $\Hol{\sH_{\omega,\bc}(U_0,N)}_{\St_0}$ to the equivalence classes of admissible data supported on $\St$. 
		
	If, instead, we begin with admissible data $(Z,M,\lambda,\xi)$ then we may choose an $N$-stable open subset $U_1$ of $X$ such that $Z$ is closed in $U_1$. The representation $\xi$ corresponds to an irreducible holonomic $\sH_{\omega,\bc}(U_1,N)$-module $\ms{N}'$ lying over $\Phi_Z(M \boxtimes L(\lambda))$. By Corollary~\ref{cor:simpleholonomicsocle}, $\ms{N}'$ admits an irreducible extension to $U_0$, which we also denote by $\ms{N}'$. Thus, every admissible $(Z,M,\lambda,\xi)$ comes from some irreducible holonomic $\sH_{\omega,\bc}(U_0,N)$-module. This completes the proof of the theorem. 	
\end{proof}

\section{Applications}

In the  final section we describe a number of applications of the results of the paper. 

\subsection{Extensions between holonomic modules.} 

The goal of this section is to prove that Ext-groups between holonomic modules are finite-dimensional. This relies heavily on our classification of irreducible holonomic modules. For brevity, we write $\Hom_{D^b}(-,-) := \Hom_{D^b_{\mathrm{qc}}(\sH_{\omega,\bc}(X,W))}(-,-)$. 

\begin{thm}\label{thm:dimextRCA}
	For $\ms{M}^{\idot}, \ms{N}^{\idot} \in D^b_{\mr{Hol}}(\sH_{\omega,\bc}(X,W))$, 
	\begin{equation}\label{eq:dimextRCA}
		\dim_{\kk} \Hom_{D^b}(\ms{M}^{\idot}, \ms{N}^{\idot}) < \infty. 
	\end{equation}
\end{thm}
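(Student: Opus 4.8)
The plan is to reduce the finiteness statement, by standard homological algebra, to a statement about $\Ext$-groups between irreducible holonomic modules, and then to prove that statement using the classification of Theorem~\ref{thm:mainclassificationnRCA} together with the preservation results for $\varphi^!,\varphi_+$. First I would observe that $D^b_{\mr{Hol}}(\sH_{\omega,\bc}(X,W))$ has a bounded $t$-structure whose heart $\Hol{\sH_{\omega,\bc}(X,W)}$ is a finite-length abelian category (Proposition~\ref{prop:finLength}); since $\Hom_{D^b}(\ms{M}^{\idot},\ms{N}^{\idot})$ is computed by a spectral sequence with $E_2^{p,q} = \bigoplus_i \Ext^p_{\sH}(\mc{H}^i(\ms{M}^{\idot}),\mc{H}^{i+q}(\ms{N}^{\idot}))$, it suffices to show $\dim_{\kk}\Ext^p_{\sH}(\ms{M},\ms{N}) < \infty$ for all $p\ge 0$ and all holonomic modules $\ms{M},\ms{N}$. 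A dimension-shift/long-exact-sequence induction on the lengths of $\ms{M}$ and $\ms{N}$ then reduces this further to the case where both $\ms{M}$ and $\ms{N}$ are irreducible holonomic modules. Note $\Ext^p$ vanishes for $p > 2\dim X$ because $\sH_{\omega,\bc}(X,W)$ has finite global dimension (Theorem~\ref{thm:PBW} and the remark following it), so only finitely many $p$ are in play.

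\textbf{The irreducible case.} For irreducible $\ms{M},\ms{N}$, let $\St,\St'$ be the (unique) open strata meeting their supports, with $Z,Z'$ the associated parallelizable sets as in Theorem~\ref{thm:mainclassificationnRCA}. The key point is that $\Ext^p_{\sH}(\ms{M},\ms{N})$ can be computed locally via the open--closed recollement: using the triangle $i_0 i^!(\ms{N}) \to \ms{N} \to j_+ j^!(\ms{N}) \to [1]$ from Lemma~\ref{lem:localcohomology} (and its $\D$-dual for $\ms{M}$), together with the adjunctions $(j^!,j_+)$, $(i_0,i^{\natural})$, and the fact that all of $j_+,j^!,i_0,i^!$ preserve holonomicity (Corollary~\ref{cor:jpushforwardholo}, Corollary~\ref{cor:pullbackclosedholo}, Corollary~\ref{cor:holoclosedpushforward}). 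Concretely: if $\Supp\ms{M} \not\subset \overline{\Supp\ms{N}}$ and vice versa one arranges, by shrinking to a $W$-stable open $U$ on which one of the two supports becomes empty, that the relevant $\Ext$ is either zero or computed on $U$; iterating over the finite stabilizer stratification reduces to the case $\overline{\Supp\ms{M}} = \overline{\Supp\ms{N}} = \overline{\St}$, i.e.\ $\St = \St'$. There, restricting to $U$ (which is exact and, by \eqref{eq:Holquotequiv}, identifies the relevant extension groups up to extensions supported on $\partial\overline{\St}$, which are handled by the induction on support dimension) and then applying Theorem~\ref{thm:Serresubquotmelysequiv} to pass to $\Hol{\sH_{\omega,\bc}(U_0,N)}_{\St_0}$, one further reduces via Theorem~\ref{thm:CliffordDade}(ii) and Lemma~\ref{lem:admissblesplittingofcategories} to computing $\Ext$-groups in the finite-dimensional-over-$\kk$-morphism-spaces category $\Hol{\dd_{\omega}(Z)} \boxtimes \mc{O}_{\bc}(\h^*)$, tensored over the twisted group algebra $\kk_\tau\uN(M,\lambda)$. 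Finiteness in that category follows from finiteness of $\Ext$-groups between holonomic $\dd_\omega(Z)$-modules (classical) and between objects of category $\mc{O}$ for the rational Cherednik algebra (also classical, $\mc{O}_{\bc}$ being a finite-length category with finite-dimensional morphism and extension spaces), together with the fact that $\kk_\tau\uN(M,\lambda)$ is a finite-dimensional algebra.

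\textbf{Main obstacle.} The delicate step is the local-to-global comparison: relating $\Ext^p_{\sH_{\omega,\bc}(X,W)}(\ms{M},\ms{N})$ to $\Ext$-groups computed after restriction to $U$ and after the equivalence of Theorem~\ref{thm:Serresubquotmelysequiv}. Restriction $j^*$ to $U$ is exact but not fully faithful on $\Ext$-groups; one must control the difference by the $\Ext$-groups $\Ext^p_{\sH}(\ms{M},\ms{N}')$ with $\ms{N}'$ supported on the smaller-dimensional set $\partial\overline{\St}$ (arising from $R\Gamma_{\partial\overline{\St}}$), which is exactly where the induction on the dimension of the support enters. Making this bookkeeping precise --- interleaving the recollement triangles of Lemma~\ref{lem:localcohomology} with the length and support-dimension inductions --- is the main technical content; once it is in place, the base case is genuinely classical input about $\dd$-modules and category $\mc{O}$.
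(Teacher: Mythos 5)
Your overall strategy matches the paper's: reduce to irreducible modules via finite length and a spectral sequence, induct on the dimension of supports and of their intersection using the recollement triangle from Lemma~\ref{lem:localcohomology} and the adjunction $\Hom(\ms{M}^\idot, j_+j^!\ms{N}^\idot) \cong \Hom(j^!\ms{M}^\idot, j^!\ms{N}^\idot)$, and land on a base case where both supports are contained in the saturation of a single parallelizable set $Z$ (the paper's Proposition~\ref{prop:dimextRCAamissible}). The base case is then localized to $U_0$, passed through the \'etale matrix-algebra equivalence, completed along $Z$, and reduced via $\Phi_Z$ to classical finiteness for $\dd$-modules and for category $\mc{O}$ of $\H_{\bc}(\h,P)$. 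All of this is exactly the paper's argument, down to the same auxiliary inputs.

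There is, however, one step in your sketch that would not go through as stated: the passage from $\H_{\omega,\bc}(U_0,N)$ to $\H_{\omega,\bc}(U_0,P)$ at the level of $\Ext$-groups. You invoke Theorem~\ref{thm:CliffordDade}(ii) for this, but that is an equivalence between $\Lmod{\kk_\tau\uN(L)}$ and the full subcategory $\Lmod{(\H(\uN),L)}$ of modules \emph{lying over} $L$, and the latter subcategory is \emph{not} closed under extensions inside $\Lmod{\H(N)}$ (a non-split self-extension of such a module need not restrict to a semisimple $\H(P)$-module, so it fails condition (1) in the definition). Consequently the Dade equivalence computes $\Ext$'s in the wrong category, and one cannot deduce from it that $\Ext^i_{\H(N)}$ is finite-dimensional. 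The paper avoids this by using a Shapiro-type dimension-shift (Lemma~\ref{lem:RSextfreeShapiro}), which exploits that $\H(N)$ is free of finite rank as a left and right $\H(P)$-module: for any $M \in \Lmod{\H(N)}$ one has a short exact sequence $0 \to K \to \H(N)\otimes_{\H(P)} (M|_{\H(P)}) \to M \to 0$ with $\Ext^i_{\H(N)}(\H(N)\otimes_{\H(P)} L, M') \cong \Ext^i_{\H(P)}(L, M'|_{\H(P)})$ by Eckmann--Shapiro, and one inducts on $i$. Replacing your Clifford-theory step by this free-module Shapiro argument closes the gap; the rest of your proposal is already the paper's proof.
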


Note that \eqref{eq:dimextRCA} is equivalent to 
\[
 \sum_{i \in \Z} \dim_{\kk} \Hom_{D^b}(\ms{M}^{\idot}, \ms{N}^{\idot}[i]) < \infty,
\]
since we are working in the bounded derived category and $\sH_{\omega,\bc}(X,W)$ has finite global dimension. Since $\Ext^i_{\sH}(\ms{M},\ms{N}) = \Hom_{D^b}(\ms{M},\ms{N}[i])$, an immediate corollary of Theorem~\ref{thm:dimextRCA} is that:

\begin{cor}\label{cor:dimextRCA2}
	For $\ms{M},\ms{N}$ holonomic $\sH_{\omega,\bc}(X,W)$-modules,
	$$
	\dim_{\kk} \Ext^i_{\sH}(\ms{M},\ms{N}) <\infty \quad \textrm{ for all $i \ge 0$.}
	$$ 
\end{cor}

The proof of Theorem~\ref{thm:dimextRCA} will be done in a number of steps. We begin by noting that Theorem~\ref{thm:dimextRCA} and Corollary~\ref{cor:dimextRCA2} are equivalent.  Moreover, since holonomic modules have finite length, it suffices to consider irreducible modules in Corollary~\ref{cor:dimextRCA2}. Finally, we note that Verdier duality $\mathbb{D}$ is a contravariant equivalence $D^b_{\mr{Hol}}(\sH_{\omega,\bc}(X,W)) \to D^b_{\mr{Hol}}(\sH_{\omega,\bc}(X,W)^{op})$ that preserves support. In particular, $\dim_{\kk} \Hom_{D^b}(\ms{M}^{\idot}, \ms{N}^{\idot}) = \dim_{\kk} \Hom_{D^b}(\mathbb{D}(\ms{N}^{\idot}),\mathbb{D}(\ms{M}^{\idot}))$. We will use these facts repeatedly below. 

\begin{lem}\label{lem:RSextfreeShapiro}
	Let $R \subset S$ be a finite extension of $\kk$-algebras, with $S$ projective over $R$ both as a left and a right module, and $\ms{C}_R, \ms{C}_S$ Serre subcategories of $\Lmod{R}$ and $\Lmod{S}$ respectively such that $S \o_R L \in \ms{C}_S$ and $M |_R \in \ms{C}_R$ for all $L \in \ms{C}_R$ and $M \in \ms{C}_S$. If $\dim_{\kk} \Ext^i_R(L,L') < \infty$ for all $L,L' \in \ms{C}_R$ and $i \ge 0$ then $\dim_{\kk} \Ext^i_S(M,M') < \infty$ for all $M,M' \in \ms{C}_S$ and $i \ge 0$. 
\end{lem}

\begin{proof}
	This is by induction on $i$, the case $i = -1$ being vacuous. Let $L = M |_R$ and $I = S \o_R L$. Then there is a short exact sequence $0 \to K \to I \to M \to 0$ in $\Lmod{S}$. Since $\ms{C}_S$ is a Serre subcategory containing $I$ and $M$, $K$ also belongs to $\ms{C}_S$. Applying $\Hom_S( - , M')$ to this gives a long exact sequence 
	$$
	\cdots \to \Ext^{i-1}_S(K,M') \to \Ext^i_S(M,M') \to \Ext^i_S(I,M') \to \Ext^i_S(K,M')  \to \cdots.
	$$
	By induction, $\dim_{\kk} \Ext^{i-1}_S(K,M') < \infty$. Since $S$ is a projective $R$-module, the Eckmann-Shapiro Lemma says $\Ext^i_S(I,M') \cong \Ext^i_R(L,M' |_R)$, which is finite-dimensional by assumption. We deduce that $\dim_{\kk} \Ext^i_S(M,M') < \infty$. 
\end{proof}

\begin{prop}\label{prop:dimextRCAamissible}
	Assume $X$ is affine and let $Z$ be a parallelizable closed subset of $X$. If $\ms{M}^{\idot}, \ms{N}^{\idot} \in D^b_{\mr{Hol}}(\sH_{\omega,\bc}(X,W))$ with support contained in $W Z$ then  
	\begin{equation}
		\dim_{\kk} \Hom_{D^b}(\ms{M}^{\idot}, \ms{N}^{\idot}) < \infty. 
	\end{equation}
\end{prop}

\begin{proof}
	Since holonomic modules have finite length, it suffices to show that if $\ms{M}, \ms{N}$ are irreducible holonomic $\sH_{\omega,\bc}(X,W)$-modules whose supports are contained in $W Z$ then the dimension of $\Ext_{\sH}^i(\ms{M}, \ms{N})$ is finite for all $i \in \Z$. 
	
	Let $Z \subset \St_0$ and $U,U_0$ etc. be as in the previous subsections. Since $Z$ is affine and closed in $X$ (being parallelizable), we may assume that $U$ is also affine. Let $M = \Gamma(X,\ms{M})$ and $N = \Gamma(X,\ms{N})$. Since $\H_{\omega,\bc}(U,W) = \mc{O}(U) \o_{\mc{O}(X)} \H_{\omega,\bc}(X,W)$ and $M = \mc{O}(U) \o_{\mc{O}(X)} M$ for any $M$ supported on $W Z$, the Eckmann-Shapiro Lemma implies that 
	$$
	\Ext^i_{\H_{\omega,\bc}(U,W)}(M,N) \cong \Ext^i_{\H_{\omega,\bc}(X,W)}(M,N). 
	$$
	Let $\varphi \colon V \to U$ be the \'etale morphism of Proposition~\ref{prop:etalemelysiso}. Then, just as in the proof of Theorem~\ref{thm:Serresubquotmelysequiv}, we have $M \cong \varphi^* M$ for any $M$ supported on $W Z$ since the restriction of $\varphi$ to the preimage of $W Z$ is an isomorphism onto $W Z$. Hence,  
	$$
	\Ext^i_{\H_{\omega,\bc}(V,W)}(M,N) = \Ext^i_{\H_{\omega,\bc}(V,W)}(\varphi^* M,\varphi^* N), 
	$$
    which is isomorphic to $\Ext^i_{\H_{\omega,\bc}(U,W)}(M,N)$ by the Eckmann-Shapiro Lemma since $\varphi$ is flat and thus $\H_{\omega,\bc}(V,W)$ is a projective $\H_{\omega,\bc}(U,W)$-module. Applying the isomorphism on the right hand side of \eqref{eq:etalefactoriso} together with the equivalence \eqref{eq:matrixequivZ} means that 
	$$
	\Ext^i_{\H_{\omega,\bc}(U,W)}(M,N) \cong \Ext^i_{\H_{\omega,\bc}(U_0,N)}(M',N')
	$$ 
	for some irreducible $\H_{\omega,\bc}(U_0,N)$-modules $M',N'$ with supports contained in $N Z$. 
	
	Set $R = \H_{\omega,\bc}(U_0,\Pa)$, $S = \H_{\omega,\bc}(U_0,N)$ and $T = \dd_{\omega}(Z) \otimes \H_{\bc}(\h,\Pa)$. Let $\ms{C}_R$ be the Serre subcategory of $\Hol{R}$ consisting of all modules supported on $Z$ and $\ms{C}_S$ the Serre subcategory of $\Hol{S}$ consisting of all modules supported on $N Z$. Lemma~\ref{lem:RSextfreeShapiro} implies that $\Ext^i_{S}(M',N')$ is finite-dimensional if we can show that $\Ext^i_{R}(M_1,M_2)$ is finite-dimensional for all $M_1,M_2 \in \ms{C}_R$. If necessary, we may assume $M_1,M_2$ irreducible.

	As in the proof of Lemma~\ref{lem:admissblesplittingofcategories}, $\widehat{R}_Z := \H_{\omega,\bc}(\widehat{(U_0)}_Z,\Pa) = \widehat{\mc{O}(U_0)}_Z \o_{\mc{O}(U_0)} R$ is a flat $R$-module. If $M_1,M_2 \in \ms{C}_R$ then the $R$-module structure on $M_i$ extends to $\widehat{R}_Z$ by continuity such that $M_i \cong \widehat{\mc{O}(U_0)}_Z \o_{\mc{O}(U_0)} M_i$ as $\widehat{R}_Z$-modules. The Eckmann-Shapiro Lemma says 
	\begin{align*}
		\Ext^i_{R}(M_1,M_2) & \cong \Ext^i_{\widehat{R}_Z}(\widehat{\mc{O}(U_0)}_Z \o_{\mc{O}(U_0)} M_1,M_2) \\
		& = \Ext^i_{\widehat{R}_Z}(M_1,M_2) \\
		& \cong \Ext^i_{\mc{O}(Z)[\![\h]\!] \o_{\mc{O}(Z \times \h)} T}(\Phi_Z^{-1}(M_1),\Phi_Z^{-1}(M_2)) \\
		&  = \Ext^i_{\mc{O}(Z)[\![\h]\!] \o_{\mc{O}(Z \times \h)} T}(\mc{O}(Z)[\![\h]\!] \o_{\mc{O}(Z \times \h)} \Phi_Z^{-1}(M_1),\Phi_Z^{-1}(M_2)) \\
		& \cong \Ext^i_T(\Phi_Z^{-1}(M_1),\Phi_Z^{-1}(M_2)). 
	\end{align*}  
	If $M_i$ is irreducible, then $\Phi_Z^{-1}(M_i) \cong M_i' \boxtimes L(\lambda_i)$ by Lemma~\ref{lem:admissblesplittingofcategories}. We note that 
	$$
	\dim_{\kk} \Ext^{p}_{\dd_{\omega}(Z)}(M_1',M_2') < \infty, \quad \dim_{\kk} \Ext^{i -p}_{\H_{\bc}(\h,\Pa)}(L(\lambda_1),L(\lambda_2)) < \infty,
	$$
	by \cite[CH.3, Theorem~2.7]{BjorkBook} and \cite[Proposition~4.4]{EtingofLie} respectively. Hence, using the (degenerate) K\"unneth spectral sequence, the group $\Ext^i_T(\Phi_Z^{-1}(M_1),\Phi_Z^{-1}(M_2))$ is finite-dimensional.  	
\end{proof}

\begin{proof}[Proof of Theorem~\ref{thm:dimextRCA}] 
	The proof is by double induction on $(k,m)$, where $k = \dim \Supp \ms{M}^{\idot} + \dim \Supp \ms{N}^{\idot}$ and $m = \dim ((\Supp \ms{M}^{\idot}) \cap (\Supp \ms{N}^{\idot}))$. 
	
	If $k = m = 0$, we assume, as before, that $\ms{M}^{\idot} = \ms{M}$ and $\ms{N}^{\idot} = \ms{N}$ are irreducible. Then $\Supp_X \ms{M}$ and $\Supp_X \ms{N}$ are each a single $W$-orbit. If these orbits are disjoint then $\Hom_{D^b}(\ms{M}, \ms{N}) = 0$. If the orbits are equal then they are the $W$-saturation of a parallelizable closed subset $Z$ of $X$ and $\dim_{\kk} \Hom_{D^b}(\ms{M}, \ms{N}) < \infty$ follows from Proposition~\ref{prop:dimextRCAamissible}. 
	
	General case. Using $\dim_{\kk} \Hom_{D^b}(\ms{M}^{\idot}, \ms{N}^{\idot}) = \dim_{\kk} \Hom_{D^b}(\mathbb{D}(\ms{N}^{\idot}),\mathbb{D}(\ms{M}^{\idot}))$, we may assume $\dim \Supp \ms{M}^{\idot} \le \dim \Supp \ms{N}^{\idot}$. If $m < \max \{ \dim \Supp \ms{M}^{\idot}, \dim \Supp \ms{N}^{\idot}\}$ (equivalently, $2m < k$) then we may choose a $W$-stable open subset $U$ of $X$ such that $\Supp_X \ms{M}^{\idot} \subset X \setminus U$ but $U \cap \Supp_X \ms{N}^{\idot} \neq \emptyset$ and $\dim \Supp_X \ms{N}^{\idot} \cap (X \setminus U) < \dim \Supp_X \ms{N}^{\idot}$. Let $j \colon U \hookrightarrow X$ be the open embedding. By Lemma~\ref{lem:localcohomology} and Corollary~\ref{cor:localcohomology}, there is an exact triangle 
	\begin{equation}\label{eq:extdimtriangle}
		\ms{K}^{\idot} \to \ms{N}^{\idot} \to j_+ j^! \ms{N}^{\idot} \stackrel{+1}{\longrightarrow}
	\end{equation}
	with $\ms{K}^{\idot}$ and $j_+ j^! \ms{N}^{\idot}$ in $D^b_{\mr{Hol}}(\sH_{\omega,\bc}(X))$. Applying $\Hom_{D^b}(\ms{M}^{\idot}, -)$ and noting by adjunction that $\Hom_{D^b}(\ms{M}^{\idot}, j_+ j^! \ms{N}^{\idot}) \cong \Hom_{D^b}(j^! \ms{M}^{\idot},j^! \ms{N}^{\idot}) = 0$, we deduce that 
	$$
	\dim_{\kk} \Hom_{D^b}(\ms{M}^{\idot}, \ms{N}^{\idot}) = \dim_{\kk} \Hom_{D^b}(\ms{M}^{\idot}, \ms{K}^{\idot}) < \infty
	$$
	by induction since $\dim \Supp \ms{K}^{\idot} < \dim \Supp \ms{N}^{\idot}$. 
	
	The final case to consider is where 
	\[
	\dim \Supp \ms{M}^{\idot} = \dim \Supp \ms{N}^{\idot} = \dim (\Supp \ms{M}^{\idot}) \cap (\Supp \ms{N}^{\idot}).
	\]
Let $S = (\Supp_X \ms{M}^{\idot}) \cup (\Supp_X \ms{N}^{\idot})$, a $W$-stable closed subset of $X$. If $S_1, \ds, S_r$ are the $W$-irreducible components of maximal dimension, then we can find $W$-stable affine open subsets $U_i$ of $X$ such that $S_i \cap U_i$ is a parallelizable subset (non-empty) in a stratum $Y_i$ and $U_i \cap S_j = \emptyset$ for $i \neq j$. Let $U = \bigcup_i U_i$. Write $j_i \colon U_i \hookrightarrow X$ and $j \colon U \hookrightarrow X$ for the open embeddings. Then, we have once again the triangle \eqref{eq:extdimtriangle}. Since 
	$$
	\dim \Supp \ms{K}^{\idot} \le \dim (\Supp_X \ms{N}^{\idot}) \cap (X \setminus U) < \dim \Supp \ms{N}^{\idot},
	$$
	it suffices by induction to show the finiteness of $\Hom_{D^b}(\ms{M}^{\idot},j_+ j^! \ms{N}^{\idot})$. We have 
	$$
	\Hom_{D^b}(\ms{M}^{\idot},j_+ j^! \ms{N}^{\idot}) = \Hom_{D^b}(j^! \ms{M}^{\idot}, j^! \ms{N}^{\idot}) = \bigoplus_{i = 1}^r \Hom_{D^b}(j_i^! \ms{M}^{\idot}, j^!_i \ms{N}^{\idot}).
	$$
	By assumption, the support of $j_i^! \ms{M}^{\idot}$ and $j^!_i \ms{N}^{\idot}$ are contained in parallelizable closed sets in $U_i$. Therefore, it follows from Proposition~\ref{prop:dimextRCAamissible} that $\dim_{\kk} \Hom_{D^b}(j_i^! \ms{M}^{\idot}, j^!_i \ms{N}^{\idot}) < \infty$. 
\end{proof}

\subsection{Shift functors}

We define shift functors $\mc{S}_{\bc \to \bc'}$ on the category of holonomic modules. This imitates a construction of Berest-Chalykh \cite{BerestChalykhQuasi} of shift functors between category $\mc{O}$ for different parameters $\bc$. Let $\mc{A}$ denote the set of reflection hypersurfaces in $X$. Fix some $W$-stable subset $\Omega \subset \mc{A}$ and let $\bc, \bc'$ be parameters such that $\bc(w,Z) = \bc'(w,Z)$ for all $(w,Z)$ with $Z \notin \Omega$. Let $X^{\circ} = X \setminus \bigcup_{Z\in \Omega} Z$ and $j \colon X^{\circ} \to X$ be the (open) inclusion. Then 
\[
\sH_{\omega,\bc}(X,W) |_{X^{\circ}} = \sH_{\omega,\bc'}(X,W) |_{X^{\circ}}.
\]
Thus, if $\ms{M}$ is a $\sH_{\omega,\bc}(X,W)$-module then $\ms{M} |_{X^{\circ}}$ can be thought of as a $\sH_{\omega,\bc'}(X,W) |_{X^{\circ}}$-module. We write $(\ms{M} |_{X^{\circ}})_{\bc'}$ for this change of parameters. For $\ms{M} \in \mc{M}(\sH_{\omega,\bc}(X,W))$, define
\[
\mc{S}_{\bc \to \bc'}(\ms{M}) = j_0((j^0 \ms{M})_{\bc'}) \quad (\cong j_*(\ms{M} |_{X^{\circ}}) \textrm{ as an $\mc{O}_X$-module}).
\]  

\begin{prop}\label{prop:shiftfunctor}
	The functor $\mc{S}_{\bc \to \bc'} \colon \mc{M}(\sH_{\omega,\bc}(X,W)) \to \mc{M}(\sH_{\omega,\bc'}(X,W))$ is exact and restricts to a functor $\Hol{\sH_{\omega,\bc}(X,W)} \to \Hol{\sH_{\omega,\bc'}(X,W)}$.  
\end{prop}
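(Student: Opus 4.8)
The key point is that $\mc{S}_{\bc \to \bc'}$ is a composite $j_0 \circ (\text{restriction of scalars}) \circ j^0$, so its good behaviour should follow from the good behaviour of each factor. First I would check exactness. The open restriction $j^0 = (-)|_{X^{\circ}}$ is exact (localization). The change-of-parameters functor is tautologically exact, since on $X^{\circ}$ the sheaves $\sH_{\omega,\bc}(X,W)$ and $\sH_{\omega,\bc'}(X,W)$ literally coincide; all it does is reinterpret the same sheaf of abelian groups over the same sheaf of rings, so it is the identity on underlying sheaves and in particular exact. The only non-formal ingredient is that $j_0$ is exact for the open embedding $j \colon X^{\circ} \hookrightarrow X$ of the complement of a $W$-stable union of (reflection) hypersurfaces. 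Here $X^{\circ}$ is, locally on a $W$-stable affine open $U \subset X$, the non-vanishing locus of a single $W$-invariant function $f$ (the product of local defining equations of the hypersurfaces in $\Omega$, made $W$-invariant), so $j_0$ is just localization $\ms{M} \mapsto \ms{M}[f^{-1}]$ at the level of $\mc{O}$-modules, which is exact; and the $\sH_{\omega,\bc'}(X,W)$-module structure is carried along by Theorem~\ref{thm:melyspullbackgeneral} since open embeddings are $\bc'$-melys. Composing three exact functors gives exactness of $\mc{S}_{\bc \to \bc'}$.

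Next I would prove preservation of holonomicity. Let $\ms{M}$ be holonomic over $\sH_{\omega,\bc}(X,W)$. Then $j^0 \ms{M} = \ms{M}|_{X^{\circ}}$ is a coherent $\sH_{\omega,\bc}(X^{\circ},W)$-module, and it is holonomic: its singular support is the restriction $\SS(\ms{M})|_{(T^*X^{\circ})/W}$ of the isotropic variety $\SS(\ms{M})$, hence isotropic. (Concretely: restricting a good filtration on $\ms{M}$ to $X^{\circ}$ gives a good filtration, and localization is exact, so $\gr$ localizes.) Since $\sH_{\omega,\bc}(X^{\circ},W) = \sH_{\omega,\bc'}(X^{\circ},W)$ as sheaves of algebras, the module $(j^0\ms{M})_{\bc'}$ is the very same coherent sheaf, with the very same good filtrations and hence the very same singular support; so it is a holonomic $\sH_{\omega,\bc'}(X^{\circ},W)$-module. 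Finally, $j \colon X^{\circ} \hookrightarrow X$ is an open embedding, so by Theorem~\ref{thm:preshol} (equivalently Corollary~\ref{cor:jpushforwardholo}, which handles the general $W$-stable open case) the pushforward $j_0\big((j^0\ms{M})_{\bc'}\big)$ is holonomic over $\sH_{\omega,\bc'}(X,W)$ — here I use that $j_0$ agrees with $\mc{H}^0(j_+)$ and that for an open embedding $j_+$ has holonomic cohomology, in particular $\mc{H}^0 j_+ = j_0$ of a holonomic module is holonomic. This is exactly $\mc{S}_{\bc \to \bc'}(\ms{M})$.

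\textbf{Main obstacle.} Almost everything is formal; the one substantive input is the holonomicity of $j_0 \ms{N}$ for $\ms{N}$ holonomic on $X^{\circ}$ and $j$ the inclusion of the complement of a union of hypersurfaces, and this is precisely Corollary~\ref{cor:jpushforwardholo} (built on the $b$-function machinery of Section~\ref{sec:bfunctionRCA}). So the "hard part" is already done upstream, and the only things to be careful about in writing this up are (i) that the change-of-parameters step really is an equality of sheaves of algebras on $X^{\circ}$, so that no comparison of filtrations or supports is needed — this follows from the explicit description \eqref{eq:DOop} of Dunkl–Opdam operators, since the only terms distinguishing $\bc$ from $\bc'$ involve the hypersurfaces in $\Omega$, which have been removed; and (ii) that the parenthetical identification $\mc{S}_{\bc\to\bc'}(\ms{M}) \cong j_*(\ms{M}|_{X^{\circ}})$ as an $\mc{O}_X$-module is just the statement that $j_0$ and $j_+$ restrict to ordinary $\mc{O}$-module pushforward on underlying sheaves, which is immediate from the definition of the transfer bimodule $\sH_{X \leftarrow X^{\circ}}$ for an open embedding (it equals $\sH_{\omega,\bc'}(X^{\circ},W)$, so $\sH_{X \leftarrow X^{\circ}} \o^L \ms{N} \cong \ms{N}$). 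I would also remark that exactness of $\mc{S}_{\bc\to\bc'}$ does \emph{not} require holonomicity and holds on all of $\mc{M}(\sH_{\omega,\bc}(X,W))$, which is what the proposition asserts.
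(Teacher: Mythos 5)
Your proposal is correct and takes essentially the same approach as the paper's (two-line) proof: exactness comes from local principality of $X^{\circ}$ in $X$, and preservation of holonomicity from the open-embedding push-forward result (Theorem~\ref{thm:preshol} / Corollary~\ref{cor:jpushforwardholo}) built on the $b$-function machinery. You have simply filled in the details the paper leaves implicit, most usefully the verification that $\sH_{\omega,\bc}(X,W)|_{X^{\circ}} = \sH_{\omega,\bc'}(X,W)|_{X^{\circ}}$ as subsheaves of $\dd_{\omega}(X^{\circ}) \rtimes W$ (the point being not that the $\Omega$-terms of the Dunkl operator vanish on $X^{\circ}$, but that they land in $\mc{F}^0 = \mc{O}_{X^{\circ}} \rtimes W$ there, so the two filtered subalgebras of $\dd_{\omega}(X^{\circ}) \rtimes W$ generated by $\mc{F}^1$ coincide).
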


\begin{proof}
Since the open subset $X^{\circ}$ is locally principal in $X$, the morphism $j$ is affine and hence $j_*$ is exact on quasi-coherent $\mc{O}$-modules. Thus, the functor $\mc{S}_{\bc \to \bc'}$ is exact. The fact that it preserves holonomic modules follows from Theorem~\ref{thm:jpushforwardholo}.
\end{proof}  

\begin{remark}
	In the case $X = \h$ is a reflection representation and $\Omega = \mc{A}$, Berest-Chalykh \cite{BerestChalykhQuasi} construct a shift functor $\mc{T}_{\bc \to \bc'} \colon \mc{O}_{\bc} \to \mc{O}_{\bc'}$ between the categories $\mc{O}$. For $M \in \mc{O}_{\bc}$, $\mc{T}_{\bc \to \bc'}(M)$ is the submodule of $\mc{S}_{\bc \to \bc'}(M)$ consisting of all sections locally nilpotent for the action of $\kk[\h^*]_+$. Theorem~\ref{thm:preshol} gives an alternative proof of \cite[Proposition~7.1]{BerestChalykhQuasi}, that $\mc{T}_{\bc \to \bc'}(M)$ is finitely generated (and hence $\mc{T}_{\bc \to \bc'}$ is well-defined). In fact, we see that the larger module $\mc{S}_{\bc \to \bc'}(M)$ is already finitely generated because $M$ is holonomic.    
\end{remark}

\subsection{An example}

We end this section with an explicit example to illustrate one of our main results, \Cref{thm:jpushforwardholo}, that holonomicity is preserved by push-forward along open embeddings. 

Let $X = \h$ be a linear representation of $W$ and $\h_{\reg}$ the locus in $\h$ where $W$ acts freely (assumed non-empty). We take $\omega = 0$ so that $\sH_{\omega,\bc}(\h_{\reg},W) = \dd(\h_{\reg}) \rtimes W$. If $j \colon \h_{\reg} \hookrightarrow \h$ is the embedding and $\ms{M}$ a holonomic $W$-equivariant $\dd$-module on $\h_{\reg}$, then \Cref{thm:jpushforwardholo} says that the $\H_{\bc}(\h,W)$-module 
\[
\Gamma(\h,j_0 \ms{M}) = \Gamma(\h_{\reg},\ms{M}) 
\]
is holonomic. In particular, for any $W$-equivariant integrable connection $\ms{M}$ on $\h_{\reg}$, the space of sections $\Gamma(\h_{\reg},\ms{M})$ is a holonomic $\H_{\bc}(\h,W)$-module. 

As a concrete example, assume that $(W,\h)$ is a complex reflection group, with associated discriminant $\delta \in \C[\h]^W$. Then $\Gamma(\h_{\reg},\mc{O}) = \C[\h][\delta^{-1}]$ is a holonomic $\H_{\bc}(\h,W)$-module. In particular, it has finite length. 

\begin{question}
	What are the composition factors of $\C[\h][\delta^{-1}]$ as a $\H_{\bc}(\h,W)$-module? 
\end{question} 

\section{Appendix - Rational Cherednik algebras}

In the appendix we collect a number of results on aspherical values for rational Cherednik algebras that are required in the paper. 

Assume throughout that $\h$ is a linear representation of $W$ (the pair $(\h,W)$ need not be a complex reflection group). Let $\mc{A}$ denote the set of reflection hyperplanes in $\h$ and for each $H \in \mc{A}$, we denote by $\ell_H$ the order of the pointwise stabilizer $W_H$ of $H$ in $W$. Fix $\alpha_H \in \h^*$ such that $\Ker \alpha_H= H$ and let $\alpha_H^{\vee} \in \h$ be a non-zero vector spanning the non-trivial eigenspace for all $s \in W_H \setminus \{ 1 \}$. Fixing a generator $s_H \in W_H$, we define idempotents
\[
e_{H,i} = \frac{1}{\ell_H} \sum_{j = 0}^{\ell_H-1} \mathrm{det}_{\h}(s_H)^{ij} s_H^j
\]
in $\kk W_H$. Let $\kappa_{H,i}$ for $H \in \mc{A}$ and $0 \le i \le \ell_H-1$ be formal variables with identification $\kappa_{H,i} = \kappa_{H',i}$ if there exists $w \in W$ such that $w(H) = H'$. Then, as explained in \cite[Remark~3.1]{GGOR}, we can identify $R = \kk[\kappa_{H,i} \, | \, H \in \mc{A}, \, 0 \le i \le \ell_H-1]$ with the ring of functions on the parameter space $\mc{S} := \mc{S}(\h)^W$ such that the generic rational Cherednik algebra $\H(\h,W)$ associated to $(\h,W)$ is the $R$-algebra with defining relation
\[
[y,x] = \langle x,y \rangle + \sum_{H \in \mc{A}} \frac{\langle \alpha_H,y \rangle \langle x, \alpha_H^{\vee} \rangle}{\langle \alpha_H, \alpha_H^{\vee} \rangle} \sum_{i = 0}^{\ell_H-1} (\kappa_{H,i+1} - \kappa_{H,i}) e_{H,i}, \quad \forall y \in \h, x \in \h^*. 
\]
Here we extend $\kappa_{H,i}$ to all $i \in \Z$ by requiring $\kappa_{H,i + \ell_H} := \kappa_{H,i}$. 

Specializing the $\kappa_{H,i}$ to a scalar gives the usual rational Cherednik algebra. Recall that for each $\lambda \in \Irr W$, we have a Verma module $\Delta(\lambda) = \H(\h,W) \o_{R[\h^*] \rtimes W} (\lambda \o_{\kk} R)$ whose specialization to a fixed $\kappa \in \mc{S}$ is $\Delta_{\kappa}(\lambda)$. The latter has a unique irreducible quotient $L_{\kappa}(\lambda)$.

\begin{defn}\label{defn:bcregularRCA}
	 We say that a parameter $\kappa$ is \textit{regular} if $L_{\kappa}(\lambda) = \Delta_{\kappa}(\lambda)$ for all $\lambda \in \Irr W$ and the set of all regular parameters is denoted $\mc{S}_{\reg}$. 
\end{defn}

We note that regular parameters are spherical. The Euler element 
\begin{equation}\label{eq:eulerelement}
\eu = \sum_{i} x_i y_i + \sum_{H \in \mc{A}} \ell_H \sum_{i = 0}^{\ell_H-1} \kappa_{H,i} e_{H,i} 
\end{equation}
acts by a scalar $\kappa(\lambda)$ on $1 \o \lambda \subset \Delta_{\kappa}(\lambda)$.  

\begin{lem}\label{lem:asphericalhyperplanes}
Assume $\h$ is a faithful $W$-representation. The set of aspherical values in $\mc{S}$ is contained in a finite union of affine hyperplanes of the form $\kappa(\lambda) - \kappa(\mu) + m = 0$ for $\lambda, \mu \in \Irr W$ and $m \in \Z_{\ge 1}$. 
\end{lem}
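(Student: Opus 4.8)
The statement to prove is Lemma~\ref{lem:asphericalhyperplanes}: the aspherical values of $\kappa \in \mc{S}(\h)^W$ lie in a finite union of affine hyperplanes $\kappa(\lambda) - \kappa(\mu) + m = 0$, $\lambda,\mu \in \Irr W$, $m \in \Z_{\ge 1}$. The plan is to recall the standard characterisation of aspherical values via non-triviality of the kernel of the trivial idempotent $e$ on a simple module, then localise the question to category $\mc{O}$ using the KZ/trace-type arguments, and finally pin down the hyperplanes using the semi-continuity of the Euler grading on Verma modules. The key observation is that $\kappa$ is aspherical precisely when there is some $\lambda \in \Irr W$ with $e L_{\kappa}(\lambda) = 0$, equivalently when some simple $L_\kappa(\lambda)$ (other than $L_\kappa(\mathrm{triv})$) has no $W$-invariants; this forces $L_\kappa(\lambda) \neq \Delta_\kappa(\lambda)$, i.e. $\kappa$ is non-regular, and the defect is detected by a singular vector in $\Delta_\kappa(\lambda)$.

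\textbf{Key steps.} First I would reduce to the following claim: if $e L_\kappa(\lambda) = 0$ then there is a nonzero homomorphism $\Delta_\kappa(\mu) \to \Delta_\kappa(\lambda)$ for some $\mu$, with image lying in $\C[\h]_{\ge m}\o\mu$ for some $m \ge 1$ (a "singular vector'' of positive degree). Indeed, $e$ kills $L_\kappa(\lambda)$ forces $L_\kappa(\lambda)$ to be a proper quotient of $\Delta_\kappa(\lambda)$ — otherwise $\Delta_\kappa(\lambda) = L_\kappa(\lambda)$ would have $(\Delta_\kappa(\lambda))^W \supseteq (\lambda \otimes \C[\h])^W \neq 0$ whenever $\lambda$ appears in $\C[\h]$, and in particular $L_\kappa(\mathrm{triv}) = \C[\h]$ always has invariants, so the only way asphericity can occur is through a strict submodule of some $\Delta_\kappa(\lambda)$. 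Second, a nonzero submodule of $\Delta_\kappa(\lambda)$ is generated (over $\H$) by a lowest-degree $W$-isotypic component $\mu \otimes \C[\h]_m \subset \Delta_\kappa(\lambda)$ on which $\h^*$ acts by zero (a singular vector), with $m = \deg \geq 1$; by the adjunction for $\Delta$ this gives a nonzero map $\Delta_\kappa(\mu) \to \Delta_\kappa(\lambda)$. Third, since the Euler element $\eu$ acts by the scalar $\kappa(\nu)$ on the lowest piece $1 \otimes \nu$ of $\Delta_\kappa(\nu)$ and $\eu$ acts with eigenvalue $\kappa(\lambda) + d$ on $\C[\h]_d \otimes \lambda$, comparing the $\eu$-action on the image of $1 \otimes \mu$ inside $\C[\h]_m \otimes \lambda$ gives the numerical identity $\kappa(\mu) = \kappa(\lambda) + m$, i.e. $\kappa(\lambda) - \kappa(\mu) + m = 0$. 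Finally, the set of $\kappa$ for which $\Hom(\Delta_\kappa(\mu),\Delta_\kappa(\lambda)) \neq 0$ with positive degree shift $m$ is a closed subvariety of the hyperplane $\{\kappa(\lambda) - \kappa(\mu) + m = 0\}$; since $(\h,W)$ is fixed, only finitely many pairs $(\lambda,\mu)$ occur and (by the BGG-type finiteness of category $\mc{O}$, or by degree bounds on singular vectors of bounded socle-length) only finitely many $m$ can actually be realised as honest non-aspherical jumps, so the aspherical locus is contained in a finite union of such hyperplanes.

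\textbf{Main obstacle.} The delicate point is the finiteness of the set of admissible $m$: a priori, for a fixed pair $(\lambda,\mu)$ the degree shift $m$ of a singular vector could grow without bound as $\kappa$ varies along the hyperplane $\kappa(\lambda)-\kappa(\mu)+m = 0$. This is handled by noting that the hyperplane \emph{itself} records $m$ (it is part of the equation), and that asphericity at $\kappa$ requires $e L_\kappa(\lambda) = 0$, a condition that — via the relation between $e\H e$-modules and $\H e$, or via Bezrukavnikov--Etingof-style support estimates — forces $L_\kappa(\lambda)$ to be finite-dimensional or at least to have support a proper subvariety, and the possible $\kappa$ with $L_\kappa(\lambda)$ not free over $\C[\h]$ and $e$-torsion is constrained to finitely many values of $(\kappa(\lambda) - \kappa(\mu))$ by the classification of when $\Delta_\kappa(\lambda)$ becomes reducible (the "$c$-function'' / $b$-function values are a locally finite set of rational shifts). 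Alternatively one invokes that $\H_\kappa(\h,W)$ has finitely many blocks and the reducibility locus of Verma modules is a finite hyperplane arrangement; the aspherical locus is a sub-arrangement. I would cite the relevant results of Ginzburg--Guay--Opdam--Rouquier \cite{GGOR} and the reducibility criteria to close this gap cleanly rather than re-deriving the arrangement from scratch.
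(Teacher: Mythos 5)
Your first two steps are exactly the paper's: asphericity forces $eL_\kappa(\lambda)=0$ for some $\lambda$, hence $L_\kappa(\lambda)\ne\Delta_\kappa(\lambda)$, and the lowest-degree singular $W$-isotype $\mu$ in the maximal proper submodule yields, via the $\eu$-eigenvalue computation, the hyperplane equation $\kappa(\lambda)-\kappa(\mu)+m=0$ with $m\ge 1$. That correctly places the aspherical locus into the \emph{countable} union of such hyperplanes — the paper makes precisely this observation to show $\mc{S}\setminus\mc{S}_{\reg}$ is in a countable union.

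Where your argument departs from the paper — and where it has a genuine gap — is the passage from ``countable'' to ``finite''. You propose to handle this by citing the reducibility theory for $\Delta_\kappa(\lambda)$ and claim that ``the possible $\kappa$ \ldots is constrained to finitely many values of $(\kappa(\lambda)-\kappa(\mu))$ by the classification of when $\Delta_\kappa(\lambda)$ becomes reducible,'' and later that ``the reducibility locus of Verma modules is a finite hyperplane arrangement.'' This is false: even for $W=\Z/2\Z$ in rank one, $\Delta_c(\mathrm{triv})$ is reducible for $c\in\tfrac12+\Z_{\ge 0}$, an infinite family, and the same phenomenon occurs in every rank. Reducibility of Vermas gives a locally finite but \emph{infinite} arrangement, so one needs a separate mechanism to cut it down to finitely many hyperplanes, and neither the finiteness of blocks nor Bezrukavnikov--Etingof support estimates immediately supplies this. (Support considerations only tell you $L_\kappa(\lambda)$ is not of full support when $eL_\kappa(\lambda)=0$; they do not bound the degree shift $m$ as $\kappa$ varies.)

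The paper closes this gap differently and self-containedly. One works over $\R$, where $\mc{S}_{\reg}$ is dense (this uses that the hyperplane equations have integer coefficients). Using the sesquilinear contravariant form on the generic Verma $\Delta(\lambda)$ over $R$, one picks a finite graded space $V=\bigoplus_{m<N}\Delta(\lambda)_{m,\mathrm{triv}}$ generating $e\Delta(\lambda)$ over $R[\h]^W$, and lets $D=\det(( -,- )|_V)$, a \emph{nonzero} polynomial in the $\kappa_{H,i}$. If $\kappa$ is a real aspherical value then $V_\kappa$ lies inside the radical, hence $D(\kappa)=0$. Since $D$ is a fixed nonzero polynomial of bounded degree, its zero locus can meet only finitely many of the affine hyperplanes $\kappa(\lambda)-\kappa(\mu)+m=0$ in a set covering the aspherical values, and finiteness follows. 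This determinant-of-the-contravariant-form device is the essential ingredient your proof is missing; without it (or some explicit degree bound of the same type) the reduction to singular vectors alone cannot yield a \emph{finite} union.
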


By \cite[Lemma~2.5(ii)]{DunklOpdam}, the scalar $\kappa(\lambda)$ is a linear expression in the $\kappa_{H,i}$ with integer coefficients. Therefore, extending scalars if necessary, we may assume $\kk = \C$ in the proof of Lemma~\ref{lem:asphericalhyperplanes}. We fix a conjugate linear $W$-equivariant isomorphism $\h^* \to \h$ denoted $x \mapsto \overline{x}$. This extends to a conjugate linear anti-isomorphism $\overline{(-)} \colon \H(\h,W) \iso \H(\h,W)$, with $\overline{w} = w^{-1}$ for $w \in W$. If $( - , - )$ is a sesquilinear non-degenerate $W$-invariant form on $\lambda$ then this extends uniquely to a form on $\Delta(\lambda)$ with values in $R$ such that for all $u,v \in \Delta(\lambda)$,  
\begin{align*}
	(w(u),w(v)) &= (u,v) \quad  \quad w \in W, \\
	(x \cdot u, v) & =(u,\overline{x} \cdot v) \quad  x \in \h^*, \\ 
	(\alpha u, \beta v) & = \alpha \overline{\beta} (u,v) \quad \alpha,\beta \in R,
\end{align*}
where $\overline{(-)} \colon R \to R$ is complex conjugation on $\C$ and fixes the $\kappa_{H,i}$. For any complex value of the parameters, the form specializes to a sesquilinear $W$-invariant form 
\[
( - , -)_{\kappa} \colon \Delta_{\kappa}(\lambda) \times \Delta_{\overline{\kappa}}(\lambda) \to \C, 
\]
whose left radical is the maximal proper submodule of $\Delta_{\kappa}(\lambda)$. 


\begin{proof}[Proof of Lemma~\ref{lem:asphericalhyperplanes}]

By \cite[Theorem~4.1(i)]{BE}, a parameter $\kappa$ is aspherical if and only if there exists $\lambda \in \Irr W$ such that $e L_{\kappa}(\lambda) = 0$.

	We first recall a standard consequence of the Euler grading. Suppose that
	$L_\kappa(\lambda)\neq \Delta_\kappa(\lambda)$. Then the maximal proper
	submodule of $\Delta_\kappa(\lambda)$ contains a non-zero singular vector. More
	precisely, there exist $\mu\in\Irr W$ and $m\in \Z_{\geq 1}$ such that the
	$\mu$-isotypic component in homogeneous degree $m$ contains a vector killed by
	$\h$. On such a vector the Euler element acts in two ways: since it lies in
	degree $m$ inside $\Delta_\kappa(\lambda)$, it acts by $\kappa(\lambda)+m$, whereas, since it is a lowest weight vector of type $\mu$, it acts by
	$\kappa(\mu)$. Hence
	\[
	\kappa(\lambda)-\kappa(\mu)+m=0.
	\]
	It follows that the non-regular locus is contained in the countable union of
	affine hyperplanes of this form.
	
	It remains to show that the aspherical locus is contained in only finitely many of these hyperplanes. Let $e$ denote the trivial idempotent of $\C W$. Since $e\Delta(\lambda)$ is a finitely generated module over the polynomial algebra $\C[\h]^W$, we may choose $N_\lambda>0$ (independent of $\kappa$) such that, putting
	\[
	V_\lambda
	=
	\bigoplus_{0\leq m<N_\lambda}
	\Delta(\lambda)_{m,\mathrm{triv}},
	\]
	we have $\C[\h]^W\cdot V_\lambda=e\Delta(\lambda)$. Here $\Delta(\lambda)_{m,\mathrm{triv}}$ denotes the trivial isotypic component of the homogeneous degree $m$ part of $\Delta(\lambda)$. Crucially, since $\h$ is a faithful $W$-module, $V_{\lambda} \neq 0$. 
    
    As in \cite[Proposition~2.17(iii)]{DunklOpdam}, the homogeneous isotypic
	components $\Delta(\lambda)_{m,\mu}$ are mutually orthogonal with respect to $( - , - )$ unless both the degree and the $W$-type agree. Let
	\[
	D_\lambda
	=
	\det\left((-, -)\big|_{V_\lambda}\right).
	\]
	This is a polynomial in the parameters $\kappa_{H,i}$. It is non-zero: indeed,
	for generic $\kappa$ the Verma modules $\Delta_\kappa(\lambda), \Delta_{\overline{\kappa}}(\lambda)$ are irreducible,
	and hence the specialized contravariant form is non-degenerate.
	
	We claim that every aspherical value $\kappa$ for which
	$eL_\kappa(\lambda)=0$ satisfies $D_\lambda(\kappa)=0$. Indeed, let $M_\kappa(\lambda)$ be the maximal proper submodule of
	$\Delta_\kappa(\lambda)$. Then
	\[
	L_\kappa(\lambda)=\Delta_\kappa(\lambda)/M_\kappa(\lambda).
	\]
	If $eL_\kappa(\lambda)=0$, then the image of $e\Delta_\kappa(\lambda)$ in
	$L_\kappa(\lambda)$ is zero. Since $e\Delta(\lambda)$ is generated over
	$\C[\h]^W$ by $V_\lambda$, this implies that the image of
	$(V_\lambda)_\kappa$ is contained in $M_\kappa(\lambda)$. The left radical of the contravariant form is precisely $M_\kappa(\lambda)$, so the restriction of the
	form to $(V_\lambda)_\kappa$ is degenerate. Therefore, $D_\lambda(\kappa)=0$. 
	
	Since $V_\lambda$ is supported only in degrees $m<N_\lambda$, $D_\lambda(\kappa)=0$ only if there exist singular vectors for this $\kappa$ in degrees $< N_{\lambda}$. This implies that the set of zeros $V(D_{\lambda})$ of $D_{\lambda}$ is contained in the union of the hyperplanes $\kappa(\lambda)-\kappa(\mu)+m = 0$ for $\mu\in\Irr W$ and $1\leq m<N_\lambda$. Since  $V(D_{\lambda})$  is a hypersurface, the irreducible factors of $D_\lambda$ must be among the linear forms $\kappa(\lambda)-\kappa(\mu)+m$. Thus, the set of aspherical parameters is contained in the union of the finitely many hyperplanes
	\[
	\kappa(\lambda)-\kappa(\mu)+m = 0, \quad \lambda, \mu\in\Irr W, \quad 1\leq m<N_\lambda.
	\]
\end{proof}

\begin{remark}\label{rem:preciseapsherical}
	The precise set of aspherical values is known for many irreducible complex reflection groups, most importantly for the infinite family $G(m,p,n)$ by \cite{DunklGriffeth}. In all known cases, this locus equals a union of the affine hyperplanes of the form $\kappa(\lambda) - \kappa(\mu) + m = 0$.  
\end{remark}

\begin{lem}\label{lem:simpleiffcompletesimple}
	Let $(\h,W)$ be a faithful linear representation. Then 
    \begin{enumerate}
        \item[(i)]  The maps 
	\[
	I \mapsto \widehat{\kk}[\h]_0 \o_{\kk[\h]} I \quad \textrm{and} \quad J \mapsto J \cap \H_{\kappa}(\h,W),
	\] 
	for $I \lhd \H_{\kappa}(\h,W)$ and $J \lhd \H_{\kappa}(\widehat{\h},W)$, define (inverse) bijections between the two-sided ideals of $\H_{\kappa}(\h,W)$ and $\H_{\kappa}(\widehat{\h},W)$. 
    \item[(ii)] In particular, $\H_{\kappa}(\widehat{\h},W)$ is simple if and only if $\H_{\kappa}(\h,W)$ is simple.  
    \item[(iii)] If $W^{\circ}$ is the normal subgroup of $W$ generated by all reflections $s$ with $\bc(s) \neq 0$ then $\H_{\bc}({\h},W)$ is simple if and only if $\H_{\bc}(\h,W^{\circ})$ is simple. 
    \end{enumerate}
\end{lem}

\begin{proof}
	Parts (i) and (ii). If $\H_{\kappa}(\h,W)$ contains a proper ideal $I$ then the fact that the Euler element $\eu$ in $\H_{\kappa}(\h,W)$ acts ad-locally finitely on $\H_{\kappa}(\h,W)/I$ implies that $0$ is contained in the support of $\H_{\kappa}(\h,W)/I$ since its support will be a closed $\Cs$-stable subset of $\h/W$. Therefore, applying $\widehat{\kk[\h]}_0 \o_{\kk[\h]} - $ to the short exact sequence 
	\[
	0 \to I \to \H_{\kappa}(\h,W) \to \H_{\kappa}(\h,W)/I \to 0
	\]
	shows that $\widehat{\kk}[\h]_0 \o_{\kk[\h]} I$ is a proper ideal of $\H_{\kappa}(\widehat{\h},W)$.   
	
	Conversely, we show that if $J \lhd \H_{\kappa}(\widehat{\h},W)$ is a two-sided ideal then $I := J \cap \H_{\kappa}(\h,W)$ is the set of $\ad(\eu)$-locally finite vectors in $J$ and $J = \widehat{\kk}[\h]_0 \o_{\kk[\h]} I$. If $\H_{\kappa}(\widehat{\h},W)$ were complete with respect to the $\kk[\h]_+$-adic topology and the eigenvalues of $\eu$ on $\H_{\kappa}(\h,W)$ were bounded below, this would have been standard; we explain the necessary modifications. 
	
	First, notice that each piece $\mc{F}_i$ of the order filtration on $\H_{\kappa}(\h,W)$ is stable under $\ad(\eu)$ with the eigenvalues of $\eu$ on $\mc{F}_i$ bounded below (by $-i$). Moreover, $\mc{F}_i \H_{\kappa}(\widehat{\h},W) = \widehat{\kk}[\h]_0 \o_{\kk[\h]} \mc{F}_i$ is complete with respect to the $\kk[\h]_+$-adic topology because $\mc{F}_i$ is finite over $\kk[\h]$. Therefore, the same holds for each $\mc{F}_i J$. Thus, arguing as in the proof of \cite[Theorem~2.3]{BE}, we deduce that $J_i := (\mc{F}_i J) \cap \H_{\kappa}(\h,W)$ is the subspace of $\ad(\eu)$-locally finite vectors in $\mc{F}_i J$ and $\mc{F}_i J = \widehat{\kk}[\h]_0 \o_{\kk[\h]} J_i$. Since $J = \bigcup_{i \ge 0} \mc{F}_i J$, the claims follow.  

    Part (iii). In this case, $\H_{\bc}({\h},W)$ is a finite free $\H_{\bc}(\h,W^{\circ})$-module (left or right). Moreover, for any $w \in W$, conjugation $h \mapsto w h w^{-1}$ defines an automorphism of $\H_{\bc}(\h,W^{\circ})$. We use the fact that, since $\h$ is a faithful $W$-representation, both of these algebras are prime rings of Gelfand-Kirillov dimension $2 \dim \h$. The PBW theorem implies that it suffices to check this fact is true for $\C[\h \times \h^*] \rtimes W$ and $\C[\h \times \h^*] \rtimes W^{\circ}$ respectively, which can be deduce from the results in \cite[Section~10.5]{MR}. Therefore, \cite[Korollar~3.5]{BKGelfandKirillov} says that if $R$ is a proper quotient of either ring then the GK-dimension of $R$ is strictly less than $2 \dim \h$. If $R$ is such a quotient of $\H_{\bc}({\h},W)$ then the image of $\H_{\bc}({\h},W^{\circ}) \to \H_{\bc}({\h},W) \to R$ will have Gelfand-Kirillov dimension less than $2 \dim \h$. This shows that $\H_{\bc}({\h},W)$ not being simple implies that $\H_{\bc}(\h,W^{\circ})$ is not  simple. 
    
    For the converse, let $I \lhd \H_{\bc}(\h,W^{\circ})$ be non-zero. Let $M = \H_{\bc}({\h},W) \otimes_{\H_{\bc}(\h,W^{\circ})} \H_{\bc}(\h,W^{\circ}) / I$, a $\H_{\bc}({\h},W)$-$\H_{\bc}(\h,W^{\circ})$-bimodule. Then there is a non-zero morphism 
    \[
    \H_{\bc}({\h},W) \to E := \End_{\H_{\bc}(\h,W^{\circ})}( \H_{\bc}({\h},W) \otimes_{\H_{\bc}(\h,W^{\circ})} \H_{\bc}(\h,W^{\circ}) / I), 
    \]
    given by multiplication on the left. Since $\H_{\bc}({\h},W)$ is a finite free right $\H_{\bc}(\h,W^{\circ})$-module, $E$ has the same Gelfand-Kirillov dimension as $\H_{\bc}(\h,W^{\circ}) / I$. Hence, the (non-zero) image of $\H_{\bc}({\h},W)$ in $E$ has Gelfand-Kirillov dimension strictly less than $2 \dim \h$. Thus, if $\H_{\bc}(\h,W^{\circ})$ is not simple then neither is $\H_{\bc}(\h,W)$. 
\end{proof}

\begin{lem}\label{lem:asphericalcomplete}
	The parameter $\kappa$ is aspherical for $\H_{\kappa}(\h,W)$ if and only if it is so for $\H_{\kappa}(\widehat{\h},W)$. 
\end{lem}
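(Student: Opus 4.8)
The plan is to deduce this equivalence from the category $\mc{O}$ description of asphericity together with the faithful-flatness of $\widehat{\kk}[\h]_0$ over $\kk[\h]$, mirroring the argument in Lemma~\ref{lem:simpleiffcompletesimple}. Recall that $\kappa$ is aspherical for $\H_{\kappa}(\h,W)$ precisely when there is a nonzero $\H_{\kappa}(\h,W)$-module $\ms{M}$ with $e\ms{M}=0$; since $e\eu$ acts ad-locally finitely, any such $\ms{M}$ may be localized and completed at $0$, and by the equivalence $E$ of \cite[Theorem~2.3]{BE} between $\H_{\kappa}(\widehat{\h},W)$-modules finitely generated over $\widehat{\kk[\h]}_0$ and category $\mc{O}_{\kappa}(\h)$, one reduces to the statement that $\kappa$ is aspherical iff there exists a nonzero $M\in\mc{O}_{\kappa}(\h)$ with $eM=0$. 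So the first step is to reformulate both sides of the claimed equivalence in terms of category $\mc{O}$.

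For the direction "aspherical for $\H_{\kappa}(\h,W)$ $\Rightarrow$ aspherical for $\H_{\kappa}(\widehat{\h},W)$": given a nonzero module $N$ over $\H_{\kappa}(\widehat{\h},W)$ with $eN=0$, I would not need to do anything — $N$ restricts to a nonzero $\H_{\kappa}(\h,W)$-module (via the inclusion $\H_{\kappa}(\h,W)\hookrightarrow\H_{\kappa}(\widehat{\h},W)$) still killed by $e$. Conversely, suppose $\kappa$ is aspherical for $\H_{\kappa}(\h,W)$, witnessed by a nonzero $\ms{M}$ with $e\ms{M}=0$. Replacing $\ms{M}$ by a cyclic submodule we may take it finitely generated, hence coherent over $\mc{O}_\h$ on its support after passing to the singular-support/GKS picture is not needed; instead, observe that $0$ lies in the support of $\ms{M}$ — this follows because $\eu$ acts ad-locally finitely on $\H_{\kappa}(\h,W)$ and hence locally finitely with eigenvalues bounded below on any finitely generated module, forcing the support to contain the fixed point $0$ exactly as in the proof of Lemma~\ref{lem:simpleiffcompletesimple} and Lemma~\ref{lem:asphericalhyperplanes}. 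Then $\widehat{\kk[\h]}_0\o_{\kk[\h]}\ms{M}$ is a nonzero $\H_{\kappa}(\widehat{\h},W)$-module (nonzero by faithful flatness and $0\in\Supp\ms{M}$), and since completion is $\kk W$-linear and $e\ms{M}=0$, we get $e(\widehat{\kk[\h]}_0\o_{\kk[\h]}\ms{M})=\widehat{\kk[\h]}_0\o_{\kk[\h]}e\ms{M}=0$. This produces the required aspherical witness for $\H_{\kappa}(\widehat{\h},W)$.

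The one point requiring a little care is the claim that $0\in\Supp\ms{M}$ for any nonzero finitely generated $\H_{\kappa}(\h,W)$-module. I would argue as follows: if $\ms{M}$ is generated over $\kk[\h]$ by a finite-dimensional $\eu$-stable subspace $V$ (possible since $\eu$ acts ad-locally finitely on $\H_{\kappa}$ and $\ms{M}$ is finitely generated over $\H_{\kappa}$, so one enlarges the generating set to an $\eu$-stable finite-dimensional space and uses that $\H_{\kappa}=\kk[\h]\,U$ with $U$ the span of $\eu$-eigenvectors of bounded-below degree), then the eigenvalues of $\eu$ on $\ms{M}$ are bounded below, so $\kk[\h^*]_+$ cannot act locally nilpotently-freely in a way that pushes support off $0$; concretely, the $\kk[\h]_+$-torsion-free part would have to vanish, so $\ms{M}$ is $\kk[\h]_+$-torsion, i.e. supported at $0$. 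The main obstacle is thus this support argument — it is the only place where the specific structure of $\H_\kappa$ (the Euler grading, the lower bound on $\eu$-eigenvalues on filtered pieces) is used, and it is exactly the modification already isolated in the proof of Lemma~\ref{lem:simpleiffcompletesimple}, so I would cross-reference that proof rather than repeat the bookkeeping. With $0\in\Supp\ms{M}$ in hand, faithful flatness of $\widehat{\kk[\h]}_0$ over $\kk[\h]$ closes the argument in both directions.
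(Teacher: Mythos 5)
Your overall strategy for the easy direction (restrict a $\H_{\kappa}(\widehat{\h},W)$-module with $eM=0$ along $\H_{\kappa}(\h,W)\hookrightarrow\H_{\kappa}(\widehat{\h},W)$) matches the paper and is fine. The other direction has a genuine gap.

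You claim that any non-zero finitely generated $\H_{\kappa}(\h,W)$-module has $0$ in its support, reasoning that such a module is generated over $\kk[\h]$ by a finite-dimensional $\eu$-stable subspace, so that $\eu$-eigenvalues are bounded below and the module must be $\kk[\h]_+$-torsion. This is false. The triangular decomposition is $\H_{\kappa}(\h,W)=\kk[\h]\otimes\kk W\otimes\kk[\h^*]$, so a finitely generated $\H_{\kappa}$-module with finite-dimensional generating subspace $V_0$ is generated over $\kk[\h]$ by $\kk W\cdot\kk[\h^*]\cdot V_0$, which is infinite-dimensional. There is no general reason for the $\eu$-eigenvalues on an arbitrary finitely generated module to be bounded below; that is precisely the defining condition of category $\mathcal{O}$, and most finitely generated modules fail it. Concretely, for any $W$-orbit $W\cdot x\subset\h_{\reg}$ with $x\neq 0$, the pushforward to $\h$ of a holonomic $\dd(\h_{\reg})\rtimes W$-module supported on that orbit is a finitely generated (indeed holonomic) $\H_{\kappa}(\h,W)$-module with support disjoint from $0$.

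The paper's proof does not make any such general support claim. Instead it invokes \cite[Theorem~4.1(i)]{BE}, which says that asphericity is already detected on category $\mathcal{O}$: if $\kappa$ is aspherical then there exists a simple $L_{\kappa}(\lambda)\in\mathcal{O}_{\kappa}(\h)$ with $eL_{\kappa}(\lambda)=0$. Since $L_{\kappa}(\lambda)$ is supported at $0$, its completion $\widehat{\kk[\h]}_0\o_{\kk[\h]}L_{\kappa}(\lambda)$ is non-zero and killed by $e$, giving the aspherical witness for $\H_{\kappa}(\widehat{\h},W)$. You gesture at reducing to category $\mathcal{O}$ in your first paragraph via the equivalence $E$, but that equivalence only applies to modules already finitely generated over $\widehat{\kk[\h]}_0$; it does not by itself produce a category $\mathcal{O}$ witness from an arbitrary aspherical $\H_{\kappa}(\h,W)$-module, which is exactly what \cite[Theorem~4.1(i)]{BE} supplies. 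To repair your argument, replace the support claim with a citation to that result.
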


\begin{proof}
	If $\kappa$ is aspherical for $\H_{\kappa}(\widehat{\h},W)$ and $M$ a (non-zero) representation of this algebra with $e M= 0$ then $M$ is a representation of $\H_{\kappa}(\h,W)$ meaning that $\kappa$ is aspherical for $\H_{\kappa}(\h,W)$. 
	
	Conversely, assume $\kappa$ is aspherical for $\H_{\kappa}(\h,W)$. Then, \cite[Theorem~4.1(i)]{BE} implies that there exists an irreducible $L_{\kappa}(\lambda)$ in category $\mc{O}$ such that $e L_{\kappa}(\lambda) = 0$. Then $M = \widehat{\kk}[\h]_0 \o_{\kk[\h]} L_{\kappa}(\lambda) = \widehat{\kk}[\h]^W_0 \o_{\kk[\h]^W} L_{\kappa}(\lambda)$ is a non-zero $\H_{\kappa}(\widehat{\h},W)$-module with $eM = 0$. 
\end{proof} 



\newcommand{\etalchar}[1]{$^{#1}$}
\def\cprime{$'$} \def\cprime{$'$} \def\cprime{$'$} \def\cprime{$'$}
\def\cprime{$'$} \def\cprime{$'$} \def\cprime{$'$} \def\cprime{$'$}
\def\cprime{$'$} \def\cprime{$'$} \def\cprime{$'$} \def\cprime{$'$}
\def\cprime{$'$} \def\cprime{$'$}

\end{document}